\documentclass[11pt]{aims}
\usepackage{amsmath, amssymb, mathrsfs}
\usepackage{bbm}
\usepackage{paralist}
\usepackage{graphics} 
\usepackage{epsfig} 
\usepackage{graphicx} 
\usepackage{epstopdf}
\usepackage[colorlinks=true]{hyperref}
\hypersetup{urlcolor=blue, citecolor=blue}
\usepackage[toc,page]{appendix}
\usepackage{chngcntr}
\usepackage{hyperref}

\usepackage{titlesec}
\titleformat{\section}{\Large\bfseries}{\thesection.}{4pt}{}
\titleformat{\subsection}{\large\bfseries}{\thesection.\arabic{subsection}.}{4pt}{}
\titleformat{\subsubsection}{\bfseries}{\thesection.\arabic{subsection}.\arabic{subsubsection}.}{4pt}{}
\titleformat*{\paragraph}{\bfseries}
\titleformat*{\subparagraph}{\bfseries}
\setcounter{secnumdepth}{3}

\usepackage[margin=1in]{geometry}

\newtheorem{theorem}{Theorem}[section]

\newtheorem{lemma}[theorem]{Lemma}
\newtheorem{proposition}[theorem]{Proposition}
\theoremstyle{definition}
\newtheorem{definition}[theorem]{Definition}
\newtheorem{remark}[theorem]{Remark}

\newcommand{\R}{\mathbb{R}}
\newcommand{\N}{\mathbb{N}}

\numberwithin{equation}{section}

\title[Blowup solutions for Yang Mills heat flow]{
	Non-self similar blowup  solutions to the higher dimensional Yang Mills heat flows }

\author[A. Bensouilah, G. K. Duong, and T. E. Ghoul  ]{}
\subjclass{Primary: 35K50, 35B40; Secondary: 35K55, 35K57.}
\keywords{Blowup solutions, Finite time blowup, Type II blowup, Geometric heat flows, Singularity.  \\
	\textbf{Acknowledgement:} The work by Giao Ky Duong was supported by the International Center for Research and Postgraduate Training in Mathematics, Institute of Mathematics, Vietnam Academy of Science and Technology, code ICRTM04\_2021.05.	The work by   Tej-Eddine Ghoul  is based upon work supported by Tamkeen under the NYU Abu Dhabi Research Institute grant CG002.}


\thanks{\today}
\begin{document}

	\maketitle
	
	\centerline{A.   Bensouilah$^{(1),(2)}$, G.  K. Duong$^{(3),(4)}$, and  T. E. Ghoul$^{(1)}$} 
	\medskip
	{\footnotesize
		\centerline{ $^{(1)}$ NYUAD Research Institute, New York University Abu Dhabi, PO Box 129188, Abu Dhabi, UAE}
		\centerline{ $^{(2)}$ School of Mathematics and Data Science, Emirates Aviation University, PO Box 53044, Dubai, UAE}
		\centerline{ $^{(3)}$ International Centre for Research and Postgraduate Training, and}
		\centerline{ Institute of
			Mathematics, Vietnam Academy of Science and Technology, Hanoi, Vietnam.}
		\centerline{ $^{(4)}$ Institute of Applied Mathematics, University of Economics Ho Chi Minh City, Vietnam.}
	}

	\bigskip
	\begin{center}\thanks{\today}\end{center}

	\begin{abstract} 
		In this paper, we consider the Yang-Mills heat flow on $\R^d \times SO(d)$ with $d \ge 11$. Under a certain symmetry preserved by the flow, the Yang-Mills equation can be reduced to the following nonlinear  equation:
		$$ \partial_t u =\partial_r^2 u  +\frac{d+1}{r} \partial_r u -3(d-2) u^2  - (d-2) r^2 u^3, \text{ and } (r,t) \in \R_+ \times \R_+.  $$
		We are interested in describing the singularity formation of this parabolic equation. More precisely,  we aim to  construct non self-similar blowup solutions  in higher dimensions $d \ge 11$,  and prove that the asymptotic of the solution is of the form 
		$$   u(r,t)  \sim  \frac{1}{\lambda_\ell(t)}  \mathcal{Q} \left( \frac{r}{\sqrt{\lambda_\ell (t)}} \right),  \text{ as } t \to T ,$$
		where $\mathcal{Q}$ is  the  steady state  corresponding to the  boundary conditions $\mathcal{Q}(0)=-1, \mathcal{Q}'(0)=0$ and the blowup speed $\lambda_\ell$ verifies
		$$\lambda_\ell (t) = \left( C(u_0) +o_{t\to T}(1) \right) (T-t)^{\frac{2\ell }{\alpha}} \text{ as }  t \to T,~~ \ell \in \mathbb{N}^*_+, ~~\alpha>1.$$
		In particular,  the case   $\ell = 1$   corresponds to  the stable type II blowup regime, whereas for the cases $ \ell  \ge 2$ corresponds  to a finite co-dimensional stable regime.
		
		Our approach here is not based on energy estimates but on a careful construction of time dependent eigenvectors and eigenvalues combined with maximum principle and semigroup pointwise estimates.
	\end{abstract}
	
	\section{Introduction}
	Recently, geometric heat flows received a lot of attention from both the mathematics and physics communities. Among these geometric flows, the Yang-Mills heat flow is of a great interest. Let us give a brief survey of the physics behind it (more details can be found in \cite{GMZ01} and \cite{GMZ02}). 	The Yang-Mills theory is in some sense a \textit{non-commutative} version of Maxwell's electromagnetism where in the latter, the gauge group is the \textit{abelian} group $U(1)$. In order to describe the weak nuclear force, governing the nuclear decay of some particles, Yang and Mills proposed to substitute for the Maxwell's gauge group $U(1)$ the \textit{non-abelian} gauge group $SU(2)$. Let us describe the mathematical setting of the theory.
	Consider a Riemannian manifold $M$ of dimension $d$, with a structure group $G$ (i.e., a semi-simple Lie group) and denote by $\pi$ the canonical projection. Let $\mathcal{G}$ be the Lie algebra of $G$ and $E$ a principal fibre bundle over $M$. Let $D_A$ be a covariant derivative from $E$ to $Ad(E) \otimes T^*M$. On each  coordinate chart $U_\alpha$, $D_A$ can be represented by  the $\mathcal{G}$-valued 1-form of $\kappa + A_{\alpha} $  where $\kappa$ is some fixed reference connection (e.g. usual exterior derivative), and $A_{(\alpha)}$ a  $\mathcal{G}$-valued 1-form 
	$$ A_{(\alpha)} = \sum_{\mu =1}^d A_{\alpha, \mu} dx^\mu.  $$
	Since the transition functions are smooth, we can set $A_{(\alpha)} = A$. Physically, the vector $A$ represents the electromagnetic potential.
	
	Let the curvature $F_A$ be the tensor $D_A D_A $. By using a local chart $U_\alpha$, one can represent $F_A$  by the $\mathcal{G}$-valued 2-form 
	$$F_A = \sum_{\mu, \nu} F_{\mu,\nu} dx^\mu \wedge  dx^\nu,$$
	where 
	$$ F_{\mu,\nu}   =  \partial_\mu A_\nu - \partial_\nu A_\mu + \left[ A_\mu, A_\nu \right]  .$$
	The second rank covariant tensor $F_{\mu,\nu}$ is the well-known electromagnetic tensor. The Yang-Mills connections are defined as the \textit{critical points} of  Yang-Mills functional $\mathcal{F}_A$ given by
	$$ \mathcal{F}_A:= \int_{M} \left| F_A \right|^2 d vol_M.$$
	The Euler Lagrange equations corresponding to these critical points are
	$$ \sum_{\nu =1}^d D^\nu F_{\mu,\nu} =0 , \forall \mu =1,...,d,  $$
	where $D_\nu = \partial_\nu + \left[ A_\nu, \cdot \right] $.
	
	The Yang-Mills \textit{heat flow} is defined as the gradient flow associated to the above problem where $A$ is the Yang-Mills connection. By using a local chart, the time-dependent connection  locally satisfies 
	\begin{equation}\label{equa-YM-connections}
		\left\{ \begin{array}{rcl}
			& & \partial_t A_\mu (x,t) + \partial^\nu F_{\mu \nu}(x,t) + \left[ A^\nu,  F_{\mu,\nu}\right](x,t) = 0,~~t >0,     \\
			& &  A_\mu(x,0) = A_{\mu,0}(x). 
		\end{array}  
		\right.
	\end{equation}
	Note that equation \eqref{equa-YM-connections} is invariant under the following scaling
	\begin{equation}\label{scaling-A-connections}
		A_{\lambda} (x,t)= \lambda A\left( \lambda x, {\lambda }^2 t\right), \text{ for }  \lambda >0.
	\end{equation}
	
	However, the Yang-Mills \textit{functional} is invariant under scaling symmetry for $d=4$, this is why we refer to this dimension as the energy critical one. For $d\geq 5$, we say that the equation is supercritical.  
	Results on the long time existence and uniqueness  were obtained in \cite{RJRAM92} for $d=2,3$, \cite{KMNNMJ95,SCVPDE94} for $d=4$ for weak solutions (see also \cite{SJRAM96} and \cite{SST-ZAJM98}  for the existence of smooth solutions).  In particular,  in the case $d=4$,  the authors in  \cite{SST-ZAJM98}  conjectured  finite time singularities do not occur on a compact manifold which recently confirmed by \cite{WIM19}. For the energy \textit{ supercritical} problem,  i.e. $d \ge 5$, there is few  results  on the global existence and this due to the  the gauge invariance of the Yang-Mills heat flow.\\
	
	\medskip
	Let us restrict ourselves to  a special  case where $M=\R^d$ and $ E = \R^d \otimes SO(d) $ is the trivial bundle. In this case,  the Yang-Mills connection $A_\mu (\mu \in \{1,..,d\})$ is globally given by its $\mathcal{SO}(d)$-valued coefficient functions $A_\mu( \mu =1,...,d)$. In particular,  the Lie algebra  $\mathcal{SO}(d)$  is simply the space of   \textit{skew-symmetric}  $d \times d$ matrices endowed   with the commutator bracket. Let us denote  the coefficient functions by $ A_\mu = A^{i,j}_\mu$ and make (as in \cite{DSCM82}) the following $\mathcal{SO}(d)$-equivariant ansatz 
	$$A_\mu^{i,j}(x,t) = u(|x|,t) \sigma^{i,j}_\mu(x), \text{ where } \sigma^{i,j}_\mu(x) = \delta_\mu^i x^j - \delta_\mu^j x^i,i,j \in\{1,...,d\}. $$
	We emphasize  here that the covariant derivative of $\sigma$ is zero, so that the ansatz amounts to consider the problem in the Lorentz gauge. The equation reduces to (see \cite{GMZ01})
	\begin{equation}\label{equa-Yang-Mills-}
		\partial_t u =\partial_r^2 u  +\frac{d+1}{r} \partial_r u -3(d-2) u^2  - (d-2) r^2 u^3, \text{ and } (r,t) \in \R_+ \times \R_+.  
	\end{equation}
	The solution to this  equation is invariant under the scaling
	\begin{equation}\label{scaling-law}
		u_{\lambda} (x,t)= \frac{1}{\lambda} u\left( \frac{x}{\sqrt{\lambda}}, \frac{t}{\lambda } \right) 
	\end{equation}
	for $\lambda>0$.  Let us remark that \eqref{equa-Yang-Mills-} is locally well posed in the weighted $L^2$ space $$L^\infty_{1+r^\alpha}(\R_+) =\{  f  \text{ measurable  such that  }   \|(1+ r^\alpha)f(r) \|_{L^\infty(\mathbb{R}_+)} < +\infty, \alpha \ge \frac{2}{3} \}.$$ The local well-posedness can be easily proved  (via a fixed point argument) by extending to a  $\R^{d+2}$-problem. Thus,  with  an arbitrary initial data in $L^\infty_{1+r^\alpha}$ the solution is either global  or  it  develops a singularity in finite time $T$ in this  space. 
	As a matter of fact, we are interested in the blowup phenomenon and a variety of papers were devoted to the study of singularity formation.  First, in \cite{GMZ02}, the author constructed  self-similar blowup  solutions with   $ 5 \le d  \le 9$.  Besides that,  the authors in  \cite{WCVPDE04}  also gave explicit  examples  (so-called Weinkove solutions) 
	$$ u_W(x,t) = \frac{1}{T-t} W\left(\frac{r}{\sqrt{T-t}} \right), $$
	with 
	$$ W(r) = - \frac{1}{a_1(d) r^2 + a_2(d)}.$$
	Here $a_1(d)= \frac{\sqrt{d-2}}{2\sqrt{2}}, a_2(d) = \frac{1}{2} \left( 6d-16-(d+2)\sqrt{2d-4} \right) $. Recently, the authors in \cite{DSJDG19} have constructed non trivial  solutions in the range $5 \le d \le 9 $ which approach $u_W$ in $L^\infty(\R^+)$ and these  solutions corresponding to similar blowup setting. The stability of Weinkove solutions was also  proved by   \cite{DSJDG19}  and \cite{GSCPDE20}.
	For higher dimension $d \ge 10$,  the authors in \cite{BWIMRN15} excluded the existence of self similar blowup solutions and then  non-self similar solutions are expected.

	\medskip
	We have been successful in constructing non-self similar blowup solutions  (so-called Type II blowup solutions). Our results are stated in  the following.

	\begin{theorem}[Existence of stable blowup solution]\label{theorem-existence-Type-II-blowup}
		Let $ d \ge 11$ be an integer.  Then, there exist initial data  $u_0 \in C^\infty_0 (\R_+, \R)$ such that the  corresponding  solution to \eqref{equa-Yang-Mills-}  blows up in finite time $T(u_0)$. Moreover,  the following decomposition holds true
		\begin{equation}\label{blowup-profile}
			u(r,t)  = \lambda^{-1}(t) Q\left( \frac{r}{\sqrt{ \lambda(t)}} \right) + \tilde u(r,t), t \in [0,T),
		\end{equation}
		where  $ Q$ is the  ground state of \eqref{equa-Yang-Mills-} satisfying  $Q(0)= -1$ and $ Q'(0) =0$;  and  the  error $ \tilde u (r,t)$ satisfies 
		\begin{equation}\label{estima-error-tilde-u}
			\lambda(t) \| \tilde u(.,t)\|_{L^\infty(\R^+)}   \to 0 \text{ as   } t \to T,
		\end{equation}
		and  the blowup speed $\lambda(t)$ exactly behaves  as follows
		\begin{equation}\label{blowup-speed-lambda-t}
			\lambda(t) = C(u_0)( 1 + o(1)) (T-t)^{ \frac{2}{\alpha }  } .     
		\end{equation}
		as  $t \to T $ and $\alpha $ defined in   \eqref{defi-alpha-intro}. In particular, the constructed  blowup behavior is stable.
	\end{theorem}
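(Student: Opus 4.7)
The plan is to carry out a modulation/spectral/bootstrap scheme adapted to pointwise and maximum-principle methods rather than energy estimates. First I would pass to renormalized variables $y = r/\sqrt{\lambda(t)}$ and $ds/dt = 1/\lambda(t)$, and decompose the solution as $u(r,t) = \lambda^{-1}(t)\bigl(Q(y) + w(y,s)\bigr)$ with $\lambda(t)$ a free modulation parameter. Inserting this ansatz into \eqref{equa-Yang-Mills-} and setting $b(s) := -\lambda_s/\lambda$, the remainder $w$ satisfies
\[
\partial_s w + \mathcal{H} w \;=\; b(s)\,\Lambda(Q+w) + \mathcal{N}(w),
\]
where $\Lambda = 1 + \tfrac12 y\partial_y$ is the infinitesimal generator of the scaling \eqref{scaling-law}, $\mathcal{H}$ is the linearization of the rescaled nonlinearity at $Q$, and $\mathcal{N}$ collects the quadratic and cubic remainders in $w$. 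The scaling symmetry forces $\Lambda Q$ to be a resonance of $\mathcal{H}$, and the exponent $\alpha$ appearing in \eqref{blowup-speed-lambda-t} is read off from the characteristic roots of $\mathcal{H}$ at $y=\infty$: the threshold $d\ge 11$ is precisely what makes these roots real and non-oscillatory, yielding a slowly decaying homogeneous solution with tail $|y|^{-\alpha}$ and $\alpha>1$; apart from a one-dimensional unstable eigenspace, $\mathcal{H}$ has a spectral gap.

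Next, I would construct a high-order approximate profile by iteration,
\[
\Psi_{b}(y) \;=\; Q(y) + \sum_{k=1}^{L} b^{k}\, T_{k}(y),
\]
each $T_k$ obtained by inverting $\mathcal{H}$ against the forcing produced at order $k-1$, with free constants chosen so that the tail matches the slowly decaying homogeneous solution $|y|^{-\alpha}$. Writing down the solvability conditions at the top order produces the scalar modulation ODE $b_s + \alpha b^{2} = O(b^{3})$, whose integration gives $b(s) \sim \alpha/s$. Reinjecting into $\lambda_s = -b\lambda$ and changing variables back to $t$ then reproduces exactly \eqref{blowup-speed-lambda-t} for the stable regime $\ell=1$.

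Third, I would close the bootstrap on $w$. A Brouwer selection of the single free parameter in the initial data $\Psi_{b_0}+\varepsilon_0$ kills the projection onto the unstable eigenfunction of $\mathcal{H}$. For the remaining part, I would use pointwise kernel estimates for the semigroup $e^{-\tau\mathcal{H}}$ inside the Duhamel formula to propagate an ansatz of the form $|w(y,s)|\le A\,\rho(y,s)$ for an explicit profile $\rho$ decaying at the self-similar rate; the nonlinearity $\mathcal{N}(w)$ is then absorbed by the maximum principle applied to super- and sub-solutions built from $\rho$. The error estimate \eqref{estima-error-tilde-u} follows from the smallness of $A\,\rho$ relative to the scaling, and \eqref{blowup-speed-lambda-t} from the modulation ODE above.

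I expect the main obstacle, and the reason the authors emphasize \emph{time-dependent} eigenvectors and eigenvalues, to be the matching between the inner self-similar zone $y=O(1)$, where $\mathcal{H}$ governs the dynamics, and the outer zone $r=O(1)$, where the full operator $\partial_r^2 + \tfrac{d+1}{r}\partial_r$ dominates and the static spectral picture no longer applies. As $\lambda(t)\to 0$ the relevant eigenpairs of the full problem drift, and one must certify that this drift stays subordinate to $b(s)$ so that the orthogonality condition suppressing the unstable mode remains well posed throughout the blowup regime. Producing a pointwise profile $\rho$ sharp enough to close both zones simultaneously, while remaining compatible with the drifting spectral data and the matching through the intermediate region $y\sim\lambda^{-1/2}$, is the technical heart of the construction.
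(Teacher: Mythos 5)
There is a genuine gap, and it sits at the heart of the argument: your mechanism for deriving the blowup speed. You propose to obtain the law of $b$ from a solvability condition in an iterated profile expansion $\Psi_b=Q+\sum_k b^kT_k$, yielding $b_s+\alpha b^2=O(b^3)$ and hence $b\sim \alpha/s$. That is the signature of energy-critical (log-corrected) regimes; here it produces the wrong exponent. If $b=-\lambda_s/\lambda\sim c/s$, then $\lambda\sim (T-t)^{c/(c-1)}$, and matching \eqref{blowup-speed-lambda-t} forces $c=2/(2-\alpha)$, not $c=\alpha$. More importantly, the exponent $2/\alpha$ with $\alpha=\gamma-2$ defined in \eqref{defi-alpha-intro} is generically irrational, so no finite-order tail/solvability computation can select it. In the paper the rate is \emph{spectral}: one passes to self-similar variables (with a relaxed blowup time $\mu$ and the extra modulation $\beta$), linearizes around $Q_{b(\tau)}$, and the operator $\mathscr{L}_b$ of \eqref{defi-mathscr-L-b-radial} has discrete eigenvalues $\lambda_{i,\beta,b}\approx 2\beta(\tfrac{\alpha}{2}-i)$ obtained by matching the inner kernel elements $T_j$ of $H_\xi$ with the outer Hermite–Laguerre eigenfunctions of $\mathscr{L}_\infty^\beta$. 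The compatibility condition \eqref{the-compatibility-condition}, $\varepsilon_1=-\tfrac{2}{\alpha}m_0 b^{\alpha/2}$, ties the amplitude of the mode with eigenvalue $2\beta(\tfrac{\alpha}{2}-1)$ to $b^{\alpha/2}$, which forces $b'/b\approx 2\beta(1-\tfrac{2}{\alpha})$ and hence $\lambda\sim(T-t)^{2/\alpha}$. Your scheme never produces this coupling.

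A second, related obstruction: you propose to run Duhamel and a spectral-gap argument for the linearization $\mathcal{H}$ in the blow-up frame $(y,s)$. In that frame the linearized operator is $H_\xi$ (a Schr\"odinger operator whose potential decays like $-3(d-2)/\xi^2$), whose essential spectrum touches zero; there is no spectral gap and $\Lambda Q$ is a zero resonance, so the claimed dichotomy ``one unstable eigenvalue plus a gap'' is false there and the bootstrap cannot close. The gap only exists for $\mathscr{L}_b$ (equivalently $\mathscr{L}_\infty^\beta$) acting on the Gaussian-weighted space $L^2_{\rho_\beta}$ in self-similar variables — this is exactly why the paper's ``time-dependent eigenvectors'' and the inner/outer matching are unavoidable rather than a technical afterthought. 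Finally, your Brouwer shooting on one initial parameter would at best give existence but not the stability asserted in the theorem; in the paper the positive mode $\phi_0$ is removed by the modulation of $(b,\beta)$ through the orthogonality condition \eqref{orthogonal-condition} together with \eqref{the-compatibility-condition}, so that for $\ell=1$ no topological argument is needed and the regime is genuinely open in the initial data.
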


	By a suitable expansion   the  construction technique in  Theorem \ref{theorem-existence-Type-II-blowup}, we can   construct  \textit{unstable} blowup solutions with different blowup speeds. More precisely, the result reads.
	\begin{theorem}[Existence of unstable blowup solutions]\label{theorem-existence-Type-II-blowup-instable}
		Let us consider integer numbers  $\ell \ge 2$ and    $ d \ge 11$. Then, there exist  initial data  $u_{0,\ell} \in C^\infty_0 (\R_+, \R)$ such that the  corresponding solution  $u_\ell$ to \eqref{equa-Yang-Mills-} blows up in finite time $T(u_{0,\ell})$. Moreover,  the  following decomposition holds true
		\begin{equation}\label{blowup-profile}
			u_\ell(r,t)  = \lambda_{\ell}^{-1}(t) Q\left( \frac{r}{\sqrt{ \lambda_{\ell}(t)}} \right) + \tilde u_\ell(r,t),
		\end{equation}
		where  $ Q$ is the  ground state satisfying  $Q(0)= -1$ and $ Q'(0) =0$;  and  the  error $ \tilde u_\ell(r,t)$ satisfies
		\begin{equation}\label{estima-error-tilde-u}
			\lambda_{\ell}(t) \| \tilde u(.,t)\|_{L^\infty(\R^+)}   \to 0 \text{ as in  } t \to T,
		\end{equation}
		and  the blowup speed $\lambda_{\ell}(t)$ exactly behaves  as follows
		\begin{equation}\label{blowup-speed-lambda-t}
			\lambda_{\ell}(t) = C(u_0)( 1 + o(1)) (T-t)^{ \frac{2\ell}{\alpha}  }    \text{ as } t \to T. 
		\end{equation}
	\end{theorem}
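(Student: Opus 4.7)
The plan is to adapt the construction used for the stable case (Theorem \ref{theorem-existence-Type-II-blowup}) by expanding the profile to higher order so as to unlock the $\ell$-th unstable eigendirection, at the price of finitely many non-positive projection conditions on the initial data that will be solved by a topological argument. More precisely, I would proceed in four steps.

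\medskip

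\textbf{Step 1: Self-similar variables and modulation.} Introduce the rescaled time $s = s(t)$ defined by $ds/dt = 1/\lambda_\ell(t)$, the self-similar variable $y = r/\sqrt{\lambda_\ell(t)}$, and decompose $u(r,t) = \lambda_\ell^{-1}(t)\bigl[ Q(y) + v(y,s)\bigr]$. The perturbation $v$ satisfies a parabolic equation of the form
\begin{equation*}
	\partial_s v = \Lc v + \mu(s)\,\Lambda\bigl(Q+v\bigr) + N(v),
\end{equation*}
where $\Lc$ is the self-adjoint linearization of \eqref{equa-Yang-Mills-} around $Q$ in the appropriate weighted space, $\Lambda$ generates the scaling symmetry \eqref{scaling-law}, $\mu(s) = -\lambda_{\ell,s}/\lambda_\ell$ is the modulation parameter, and $N(v)$ collects the quadratic and cubic nonlinear interactions.

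\medskip

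\textbf{Step 2: Spectral analysis and tail construction of the approximate profile.} Identify the discrete spectrum of $\Lc$, which in dimension $d\geq 11$ exhibits a family of (generalised) eigenvalues whose asymptotic expansion is governed by the far-field decay of $Q$ and fixes the exponent $\alpha$ appearing in \eqref{defi-alpha-intro}. For each $\ell \geq 2$ I would build an $\ell$-parameter approximate profile $Q_{b(s)}$, with $b=(b_1,\ldots,b_\ell)$, by iteratively solving the formal tail equations so that the leading source terms are cancelled up to order $\ell$. Matching the formal dynamics of $b_k$ with the eigenvalue relations enforces $b_k(s)\sim b_1(s)^k$ and ultimately yields the sharp law $\lambda_\ell(t)=(C(u_0)+o(1))(T-t)^{2\ell/\alpha}$. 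This is precisely the mechanism that forces the loss of $(\ell-1)$ directions of stability compared with the case $\ell=1$.

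\medskip

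\textbf{Step 3: Bootstrap on the remainder via pointwise semigroup estimates.} Decompose $v = v_{\mathrm{stab}} + \sum_{k=1}^{\ell} a_k(s) \varphi_k(y)$ along the first $\ell$ time-dependent eigenvectors of $\Lc$ and set up a shrinking set $\Sc_\ell$ in which: the coordinates $a_k$ are controlled sharply; the stable part $v_{\mathrm{stab}}$ is controlled in the weighted $L^\infty$ norm $L^\infty_{1+r^\alpha}$ and in inner regions by means of a parabolic maximum principle applied to a majorant, combined with pointwise kernel estimates on $e^{s\Lc}$; and the modulation law on $\lambda_\ell$ is enforced via an ODE derived from the orthogonality with the scaling mode $\Lambda Q$. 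The key analytic input is the time-dependent eigenvector construction together with the semigroup bounds, which replace the energy identities used in other references such as \cite{DSJDG19}.

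\medskip

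\textbf{Step 4: Topological argument.} The unstable coordinates $(a_1,\ldots,a_\ell)$ are controlled by an ODE system that is diagonally dominant with positive eigenvalues. Following a standard Brouwer-type shooting argument, I would pick initial data whose projections $(a_k(0))$ range over a closed ball in $\R^\ell$, and show that if a trajectory exits $\Sc_\ell$, it must do so through the face where $|(a_1,\ldots,a_\ell)|$ attains its threshold, with a non-degenerate outgoing flow. Continuity of the exit map would then contradict the non-triviality of the degree of the identity on the boundary sphere $S^{\ell-1}$, producing at least one initial datum whose trajectory stays trapped in $\Sc_\ell$ for all $s$. This datum yields the desired solution and, unwinding the modulation, the asymptotics \eqref{blowup-profile}--\eqref{blowup-speed-lambda-t}.

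\medskip

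\textbf{Main obstacle.} The hardest technical step is the spectral/semigroup analysis underlying Step 3: constructing the time-dependent eigenvectors of $\Lc$ that are adapted to the slowly moving scale $\lambda_\ell(t)$ and obtaining pointwise kernel bounds on $e^{s\Lc}$ that are sharp enough to be compatible with the maximum principle inside the self-similar zone and with the $L^\infty_{1+r^\alpha}$ norm in the far field. The $\ell\geq 2$ case amplifies this difficulty, since the profile $Q_{b}$ contains tail corrections which source increasingly slowly-decaying errors, and these must be absorbed without losing the gain provided by the spectral gap between the $\ell$-th and $(\ell+1)$-th eigenvalues of $\Lc$.
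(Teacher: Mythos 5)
Your Steps 1, 3 and 4 coincide with the paper's strategy: one modulated scaling parameter, a shrinking set in which the infinite-dimensional part is controlled by the maximum principle near the origin and by pointwise semigroup bounds in the intermediate zone, and a Brouwer/degree argument on the $\ell$ outgoing coordinates (this is exactly Lemma \ref{intial-lemma-ell}, Proposition \ref{propo-finite-reduction-ell-ge-2} and Proposition \ref{proposition-trapped-solu-l-ge1}). Where you genuinely diverge is Step 2: you propose an $\ell$-parameter tail profile $Q_{b}$ with $b=(b_1,\dots,b_\ell)$ and $b_k\sim b_1^k$, i.e.\ the Rapha\"el--Schweyer/Collot-type iterated tail construction. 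The paper deliberately avoids this: its profile is the one-parameter rescaling $Q_{b}(y)=b^{-1}Q(y/\sqrt b)$, and the role of the higher-order profile corrections is played instead by the explicit term $\varepsilon_\ell(\tau)\bigl[\tfrac{\phi_{\ell,b,\beta}}{c_{\ell,0}}-\phi_{0,b,\beta}\bigr]$ in the decomposition of the error along the eigenvectors of the time-dependent operator $\mathscr{L}_b$, with the single law of $b$ extracted from the orthogonality constraint \eqref{orthogonal-condition} ($\varepsilon_\ell=-c_{\ell,0}\varepsilon_0$) projected on $\phi_0$ and $\phi_\ell$. Your route is viable in principle but considerably heavier (each tail correction sources slowly decaying errors that must be re-absorbed), and as written it double-counts the mechanism: once you carry the full tail expansion in the profile you do not also need to excite $\phi_\ell$ in the remainder, and conversely. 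You should commit to one mechanism and explain how the single speed $\lambda_\ell$ is read off from it.

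Two further points need attention. First, your claim that the unstable ODE system has ``positive eigenvalues'' is not literally true here: for $d\ge 11$ one has $\alpha=\gamma-2<2$, so the eigenvalues $\lambda_j=\tfrac{\alpha}{2}-j$ are negative for every $j\ge1$. The instability is only relative to the shrinking set: $\varepsilon_j$ decays like $e^{(\frac{\alpha}{2}-j)\tau}$ while the bootstrap demands $b^{\frac{\alpha}{2}+\eta}\sim e^{(\frac{\alpha}{2}-\ell)\tau}e^{\eta(1-\frac{2\ell}{\alpha})\tau}$, and for $\varepsilon_\ell$ the transversality concerns the deviation $\varepsilon_\ell+\tfrac{2}{\alpha}m_0b^{\alpha/2}$ from its prescribed leading value. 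The outgoing-flux computation in Proposition \ref{propo-finite-reduction-ell-ge-2} compares $\tfrac{\alpha}{2}-j$ with $(\tfrac{\alpha}{2}+\eta)\tfrac{b'}{b}$, not with $0$; your argument must do the same or the exit-map continuity fails. Second, the theorem requires the existence and the precise interior/exterior asymptotics of the ground state $Q$ in $d\ge11$ (the exponent $\gamma$ in $Q(\xi)+\xi^{-2}\sim q_0\xi^{-\gamma}$ is what fixes $\alpha$ and hence the blowup rate); this is not available off the shelf and the paper devotes Section \ref{ section-ground-state} to it via a phase-portrait trapping-region argument. Your proposal takes $Q$ for granted, which leaves a real gap in the chain of dependencies.
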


	\begin{remark}[Related blowup results for PDE's problem]
		Note that the Yang-Mills heat flow \eqref{equa-Yang-Mills-} has a lot of similarities with the harmonic map heat flow (under corotational symmetry):
		\begin{equation}\label{equ-Harmonicmap}
			\partial_t u=\partial_r^2 u  +\frac{d+1}{r} \partial_r u -(d-1)\frac{\sin(2u)}{2r^2}, \text{ and } (r,t) \in \R_+ \times \R_+.  
		\end{equation}
		The harmonic map heat flow forms also singularity in finite time, and the self-similar nature of the singularity appears only when $3\leq d\leq 6$, and for $d\geq 7$ self-similar blowup solutions don't exist \cite{BWIMRN15}.
		For $3\leq d\leq 6$, the existence of the self-similar solutions is known \cite{MR1694169} and the stability has been proved only in the case $d=3$ as in  \cite{MR3719557}. When $d=7$ the blowup is not self-similar  and the speed $\lambda$ has a log correction \cite{Ghould7}, it turns out that the non-self-similar regime is stable when $d=7$. If $d\geq 8$, in \cite{GINAPDE19} the authors proved similar results. The results in \cite{GINAPDE19}, also in \cite{BSIMRN19}, have been proved  with a different method.
		In \cite{GINAPDE19}, the result is based on  an energy based method, whereas in \cite{BSIMRN19} is based on the maximum principle which does not allow to abtain the stability. In  the present paper,  we present a new method that has been introduced previously in \cite{CGMNARXIV20-b,MR4073868} but combined with ideas from \cite{BSIMRN19}. We will explain those novelties in the remark below.
	\end{remark}
	
	\begin{remark}[Novelty of the paper]
		We point out that the approach pursued here is more intuitive than the one in \cite{GINAPDE19} for the heat flow map as it is based on a spectral approach rather than an energy method. Note that here, the selection by the flow of the blow up speed is linked to the eigenvalue $\lambda_\ell$ of the time dependent linearized  operator $\mathscr{L}_b$, after perturbing initially $Q$ in the direction of the eigenvectors $\phi_\ell$. Such an idea was not clear in \cite{GINAPDE19}. The length of the paper is due to the heavy and technical construction of the eigenvectors and eigenvalues of $\mathscr{L}_b$. In comparison with \cite{GINAPDE19}, the use of maximum principle reduce considerably the difficulty of the control of the infinite dimensional part $\varepsilon_-$. We believe that this method can be adapted to a large class of  parabolic problems. 
	\end{remark} 
	\begin{remark}[Structure of the paper]
		To be more convenient for the readers, we aim to give the structure of the paper here:   We introduce and explain the importance of the different set of variables: self-similar and blowup variables in the second section. In the third and fourth sections we explain the strategy of the proof, and the time dependent spectral analysis strategy. The fifth section aims to provide a proof of the main theorem without technical details where we show that the infinite dimensional problem can be reduced to a finite dimensional one.  In other words, we show that the solution can be split into two parts a finite dimensional part and an infinite dimensional one.
		In the sixth section we study the dynamic of the finite dimensional part under the assumption that the infinite dimensional part of the solution is decaying in a suitable  weighted $L^2$ norm. The seventh section shows that the assumption made in the section 6 on the infinite dimensional part holds after assuming an $L^\infty$ bound.
		In the 8th section we prove this $L^\infty$ bound assumed in the previous section by using maximum principle and pointwise estimates  which is based  on the semigroup associating to the linearised operator.
		The 9th section is devoted to prove the existence of the ground state $Q$ which solves an non-autonomous second order ODE. To do so,  we prove the existence of an heteroclinic trajectory by finding an appropriate trapping set. In the 10th section we sketch the proof of the existence of the unstable blowup solutions and the last section is devoted to the diagonalisation of the time dependent linearised operator $\mathscr{L}_b$.
	\end{remark}
	
	\section{Mathematical setting}\label{variable-setting}
	Let $u$ be   a  solution to the following equation on $[0,T)$ for some $T>0$
	\begin{equation}\label{equa-u-section-2}
		\partial_t u =\partial_r^2 u  +\frac{d+1}{r} \partial_r u -3(d-2) u^2  - (d-2) r^2 u^3, \text{ and } (r,t) \in \R_+ \times \R_+.  
	\end{equation}
	Let $\lambda$ be an unknown blow-up speed satisfying $\lambda(t)\to 0$ as $t\to T$ and write
	\begin{equation}
		u(t,r)=\frac{1}{\lambda(t)}v(\xi,s)
	\end{equation}
	where the blow-up variables $s$ and $\xi$ are such that 
	\[\frac{ds}{dt}=\frac{1}{\lambda}, \quad \xi=\frac{r}{\sqrt{\lambda}}.\]
	Simple computation yields
	\begin{equation}\label{equa-w}
		\partial_s v  = \partial_{\xi}^2 v + \frac{d+1}{\xi} \partial_\xi v +\frac{1}{2}\frac{\lambda_s}{\lambda} \Lambda_\xi v  - 3(d-2) v^2 - (d-2) \xi^2 v^3.
	\end{equation}
	We anticipate that $\frac{\lambda_s}{\lambda}\to 0$ as $s\to \infty$, since the blow-up mechanism is non-self similar, thus, $v$ is expected to converge to the ground state $Q$, which is a solution to 
	\begin{equation}
		Q''(\xi) + \frac{d+1}{\xi} Q'_\xi- 3(d-2) Q^2 - (d-2) \xi^2 Q^3 =0 
	\end{equation}
	with the boundary conditions $Q(0)=-1$ and $Q'(0)=0$.\\
	In order to establish the convergence of $v$ to the stationary solution $Q$, we linearize around the latter and study the operator
	\begin{equation}
		H_\xi+\frac{1}{2}\frac{\lambda_s}{\lambda} \Lambda_\xi,
	\end{equation}
	where 
	\begin{equation}
		\Lambda_\xi=2+\xi \partial_\xi,
	\end{equation}
	and
	\begin{eqnarray}
		H_\xi = \partial_\xi^2 + \frac{d+1}{\xi} \partial_\xi -3(d-2)(2Q(\xi)+\xi^2 Q^2(\xi)).
	\end{eqnarray}
	More precisely, we would like to determine the eigenvectors and eigenvalues of the linearized operator which depend on time. To do so, one has to switch to the so-called self-similar variables, i.e., we write the solution $u$ as
	\begin{equation}\label{defi-w-self-similar}
		u(r,t) = \frac{1}{T-t}   w \left(\frac{r}{\sqrt{T-t}},\tau \right), \quad  \tau=-\log(T-t).
	\end{equation}
	One then finds that $w$ satisfies
	\begin{equation}\label{equa-w}
		\partial_\tau w  = \partial_y^2 w + \frac{d+1}{y} \partial_y w -   \frac{1}{2} \Lambda_y w  - 3(d-2) w^2 - (d-2) y^2 w^3.
	\end{equation}
	Now, introduce a function $b$ of time such that
	\begin{equation}\label{defi-b-lambda-T-t}
		b=\frac{\lambda}{T-t}.
	\end{equation}
	If the blow-up is self-similar, $b$ would be a (non-zero) constant. In our case, the blow-up is foreseen to be non-self-similar and $b$ has then to tend to zero as $t \to T$.\\
	
	Stepping on the fact that our problem is invariant under time translation, we allow the blow-up time to vary. That is, we replace $T-t$ by some function $\mu$ and we prove that it behaves like $T-t$ for $t \to T$. Hence we relax $b=\frac{\lambda}{\mu}$ instead of $\frac{\lambda}{T-t}$. The parameter $b$ is measuring the non-self similarity of the solution.\\
	
	- \textit{Notation:}
	Based on the above, we write
	\begin{equation}\label{similarity-variable}
		u(r,t) = \frac{1}{\mu(t)}   w (y,\tau), \quad  y = \frac{r}{\sqrt{\mu(t)}} \quad \text{ and }  \frac{d \tau}{dt} = \frac{1}{\mu(t)}.
	\end{equation}
	The function $w$ now satisfies
	\begin{equation}\label{equa-w}
		\partial_\tau w  = \partial_y^2 w + \frac{d+1}{y} \partial_y w -   \beta(\tau) \Lambda_y w  - 3(d-2) w^2 - (d-2) y^2 w^3,
	\end{equation}
	where  
	\begin{equation}\label{defi-beta(tau)}
		\beta(\tau) = - \frac{1}{2} \frac{\mu_\tau}{\mu(\tau)},
	\end{equation}
	and 
	\begin{eqnarray}\label{defi-Lambda-f}
		\Lambda_y f = y \partial_y f + 2 f.
	\end{eqnarray}
	Note that in the self-similar scale $\mu$, one needs to linearise around $Q_b$ instead of $Q$, where 
	\begin{equation}\label{Q-b}
		Q_{b(\tau)} (y)= \frac{1}{b(\tau)}  Q \left(   \frac{y}{\sqrt{b(\tau)}}  \right).
	\end{equation} 
	In addition, $w$ is global but blows up in infinite time. Indeed, introduce the error
	\begin{equation}\label{defi-varep-Q-w}
		\varepsilon(y,\tau)   =  w(y,\tau) - Q_{b(\tau)}(y). 
	\end{equation}
	By a simple calculations, it  leads to 
	\begin{equation}\label{equa-varepsilon-appen}
		\partial_\tau  \varepsilon = \mathscr{L}_b ( \varepsilon)   + B(\varepsilon) + \Phi(y),
	\end{equation}
	where 
	\begin{equation}\label{defi-mathscr-L-b-radial}
		\mathscr{L}_b  =  \partial_y^2 + \frac{d+1}{y} \partial_y -\beta(\tau) \Lambda_y     - 3(d-2) \left(2 Q_{b} + Q_{b}^2 |y|^2 \right),
	\end{equation}
	and
	\begin{eqnarray}
		B(\varepsilon)  &=&   - 3(d-2) (1+ |y|^2 Q_b) \varepsilon^2 - (d-2) |y|^2  \varepsilon^3,
		\label{defi-B-quadratic-appendix}
	\end{eqnarray}
	and 
	\begin{eqnarray}\label{Phi-simple}
		\Phi (\cdot,\tau)
		& = & \frac{1}{2} \Lambda_y Q_{b(\tau)}  \left[\frac{b'(\tau)}{b(\tau)} - 2\beta(\tau) \right].
	\end{eqnarray}
	From the expression of  the operator    $H_\xi$, we have the relation
	\begin{eqnarray}
		\mathscr{L}_b w(y,\tau)= \frac{1}{b} \left( H_\xi    -  b \beta \Lambda_\xi\right)  v(\xi,\tau).
	\end{eqnarray}
	From Lemma  \ref{lemma-ground-state},  we infer that
	$$  3(d-2) \left[ 2Q_b(y) + y^2 Q_b^2   \right]  \to -\frac{3(d-2)}{y^2} \text{ as } b \to 0 \text{ with } y \ne 0  . $$
	We next introduce  the limit operator 
	\begin{equation}\label{defi-operator-L-infty}
		\mathscr{L}_\infty^{\beta} =  \partial_y^2 + \frac{d+1}{y}\partial_y - \beta\Lambda_y + \frac{3(d-2)}{y^2},
	\end{equation}
	and we set $\mathscr{L}^\frac{1}{2}_\infty =\mathscr{L}_\infty   $
	\begin{equation}\label{limit-1-2}
		\mathscr{L}_{\infty} =  \partial_y^2 + \frac{d+1}{y}\partial_y - \frac{1}{2}\Lambda_y   +  \frac{3(d-2)}{y^2} .
	\end{equation}
	Let $\rho=y^{d+1}e^{ -\beta \frac{y^2}{2}}$. Then a simple computation yields
	\begin{equation}
		\mathscr{L}_\infty^{\beta}\phi=\frac{1}{\rho} \frac{d}{dy}(\rho \phi')+\frac{3(d-2)}{y^2}\phi-2\beta \phi.
	\end{equation}
	\noindent
	In the present paper, we use  weighted Sobolev   spaces  $L^2_{\rho_\beta}$ and $H^1_{\rho_\beta}$  where  the weight $\rho_\beta$ is defined by
	\begin{equation}\label{defi-rho-y}
		\rho_\beta(y) = \frac{(2\beta)^\frac{d+2}{2}}{(4\pi)^{\frac{d+2}{2}}} y^{d+1} e^{ -(2\beta)\frac{y^2}{4}}.
	\end{equation}
	We also denote $ \rho_\frac{1}{2} = \rho  $.
	
	\noindent
	\medskip
	The space $L^2_{\rho_\beta} $\label{space-L-2-rho} is equipped with the norm 
	$$ \|f\|_{L^2_{\rho_\beta}(\R^+)}^2  = \int_{\mathbb{R_+}}  f^2(y) \rho_\beta(y) dy,  $$
	and $H^1_{\rho_\beta}(\R^+) $ has the norm
	$$\|f\|_{H^1_{\rho_\beta}(\R^+)}^2 =\|f\|_{L^2_{\rho_\beta}(\R^+)}^2 + \|\partial_y f\|_{L^2_{\rho_\beta}(\R^+)}^2.$$
	We also  define some  special  constants in our paper and  we assume the dimension  $d \ge 11$. Let
	\begin{eqnarray}
		\gamma & = & \frac{1}{2 } ( d - \sqrt{d^2 -12d +24}),\label{defi-gamma-intro}\\
		\alpha  &=&   \gamma -2, \label{defi-alpha-intro}
	\end{eqnarray}
	and 
	\begin{eqnarray}
		a_{i,j} &=& \frac{(-1)^{i-j}}{(i-j)!}  4^{i-j} \frac{i!}{j!} \frac{(\frac{d}{2} - \gamma)_i!}{ (\frac{d}{2} -\gamma)_j!} = c_{i,j} C_j\nonumber, \text{ for all } 0 \leq j \leq i,\label{defi-a-i-j-intro}  
	\end{eqnarray}
	with $c_{i,j}$ and $ C_j$ defined as follows
	\begin{eqnarray}
		c_{i,j} & = & \frac{(-1)^{i-j} 4^i  i!  \left( \frac{d}{2} -\gamma\right)_i! }{(i-j)!}\label{definition-c_i-j},\\
		C_j  & = & \frac{1}{ 4^{j} j! \left(\frac{d}{2}-\gamma \right)_j!}\label{definition-C-j-new},
	\end{eqnarray}
	where
	$$\left( \frac{d}{2} -\gamma\right)_i! =\left( \frac{d}{2} -\gamma +1 \right)\left( \frac{d}{2} - \gamma +2 \right)...\left( \frac{d}{2} -\gamma +i\right) \text{ and }  \left( \frac{d}{2} -\gamma\right)_0!  =1. $$
	We also use the notation
	\begin{equation*}
		\langle y \rangle=\sqrt{1+|y|^2}.
	\end{equation*}

	\section{Strategy of the proof}

	We aim  to summarize  in this paragraph   our  strategy for the proof of our results. As mentioned above, our goal is to prove that $v \to Q$ as $s\to \infty$ which is equivalent to  the control
	\begin{equation}\label{pourpose-non-varep-les-Q-b}
		\left\|  \varepsilon(\cdot, \tau) \right\|_{L^\infty} \ll  \|  Q_{b(\tau)}\|_{L^\infty}= b^{-1}(\tau) \text{ as } \tau \to \infty, 
	\end{equation}
	where $Q_{b}$ defined as in \eqref{Q-b}, $b$ determined as in \eqref{defi-b-lambda-T-t}, and $w = Q_b + \varepsilon$ with $w$ defined in \eqref{defi-w-self-similar}. Our problem  mainly focuses on the perturbative problem  \eqref{equa-varepsilon-appen}. In addition, the perturbative  spectral properties of  the linear operator  $\mathscr{L}_b$ is studied in  Proposition \ref{propo-mathscr-L-b} which allows us to expand the error $\varepsilon$  along its eigenmodes $\phi_{i,b,\beta}, i \in \{0,1,...,\ell\}$. More precisely, we   arrive at the following decomposition 
	\begin{equation}\label{decompo-stagegy-of-proof}
		\varepsilon (\tau)    = \sum_{j=1}^{\ell-1} \varepsilon_j(\tau) \phi_{j,b(\tau), \beta(\tau)} +  \varepsilon_\ell(\tau) \left[ \frac{\phi_{\ell,b(\tau), \beta(\tau)}}{c_{\ell,0}} -\phi_{0,b(\tau), \beta(\tau)} \right]  + \varepsilon_-(\tau),
	\end{equation}
	where $c_{\ell,0}$ defined in \eqref{definition-c_i-j}, $\varepsilon_-$ is the orthogonal part of $\varepsilon$ to $\phi_{i,b,\beta}$ for all $i \le \ell$ i.e.
	\begin{equation}\label{defi-varepsilon--b-beta}
		\left\langle \varepsilon_-, \phi_{i, b, \beta} \right\rangle_{L^2_{\rho_\beta}} =0, \forall j =0,...,\ell.
	\end{equation}
	Note that 	the decomposition in \eqref{defi-varepsilon--b-beta} is  crucial to our approach, as first introduced  in \cite{CMRJAMS20} (see also \cite{CGMNARXIV20-b}). On the one hand, this decomposition provides a good approximation to our solution, including   the main perturbative term i.e.
	$$  \varepsilon_\ell(\tau) \left[ \frac{\phi_{\ell,b(\tau), \beta(\tau)}}{c_{\ell,0}} -\phi_{0,b(\tau), \beta(\tau)} \right]  $$
	which offers a  better approximation compared to the   profile  when the solution is far from the singular domain. On the other hand, it plays an important role in driving the law of the blowup speed, $b(\tau)$. In order to ensure the decomposition  \eqref{decompo-stagegy-of-proof} be unique, we couple the problem \eqref{equa-varepsilon-appen} with 
	\begin{equation}\label{orthogonal-condition}
		c_{\ell,0} \|\phi_{\ell,\phi,\beta} \|^{-2}_{L^2_{\rho_\beta}} \langle \varepsilon,\phi_{\ell, b, \beta} \rangle_{L^2_{\rho_\beta}} =- \|\phi_{0,\phi,\beta} \|^{-2}_{L^2_{\rho_\beta}} \langle \varepsilon,\phi_{0, b, \beta} \rangle_{L^2_{\rho_\beta}} i.e. \varepsilon_\ell=-c_{\ell,0}\varepsilon_0,
	\end{equation}
	and the following compatibility condition (for only  the case $\ell =1$)
	\begin{equation}\label{the-compatibility-condition}
		\varepsilon_\ell (\tau) = -\frac{2}{\alpha} m_0   b^{\frac{\alpha}{2}}(\tau).
	\end{equation}
	Finally, the main  issue  is to  control  $(\varepsilon, b, \beta)$ by a suitable asymptotic behaviors. Specifically, we employ  the concept  of shrinking set,  $V_\ell[A, \eta, \tilde \eta]$ as defined in Definition  \ref{shrinking-set}  to handle the problem. It's worth noting that the set  bears resemblance to recent studies on   Type I blowup constructions, such as those found in
	\cite{BKnon94}, \cite{MZdm97}, \cite{NZCPDE2015}, \cite{DNZtunisian-2017}, \cite{DJFA2019}, \cite{DJDE2019}, \cite{DNZMAMS20}. More precisely, we control $\varepsilon_j, j = 0,..,\ell$, $\varepsilon_-$, the  blowup speed  $b$, the parameter $\beta(\cdot)$. Due to the nonlinearity $y^2 \varepsilon^3 $ in equation \eqref{equa-varepsilon-appen}, we need to control $\|\varepsilon_-\|_{L^2_{\beta}}$ to derive \textit{a priori estimates} on $\varepsilon_j$ and $\varepsilon_-$. Besides that, it is not enough to  imply \eqref{pourpose-non-varep-les-Q-b}  from $\varepsilon_j$ and $\varepsilon_-$, since the eigenmodes $\phi_{i, b, \beta},i \in \N$ are not bounded as $y \to +\infty$.  To address this challenge, we also regulate the outer   part $\varepsilon_e $  introduce in \eqref{defi-varepsilon-e}.  Furthermore, we  propose a simpler  way   for  constructing Type II blowup solutions for parabolic problems,  as an alternative to a direct brute force energy method. 
	
	\medskip
	Additionally, we also  point out main ideas of the proofs of Theorems \ref{theorem-existence-Type-II-blowup}  and \ref{theorem-existence-Type-II-blowup-instable}.
	
	\medskip
	\noindent
	\textit{- For $\ell =1$.} This case involves Theorem \ref{theorem-existence-Type-II-blowup}.  It is sufficient to control $(\varepsilon, b, \beta)(\tau) \in V_{1}[A, \eta, \tilde \eta](\tau),$ for all $\tau \ge \tau_0$ for some $\tau_0$ a sufficient large value.  The maim idea is to  construct a suitable initial choice $(\varepsilon, b, \beta)(\tau_0)$ (see more in subsection \ref{subsection-preparing-initial-data}), then we reply in \textit{a priori estimates}  provided in Lemmas \ref{lemma-ODE-finite-mode}, \ref{lemma-L-2-rho-var--}, \ref{lemma-priori-estima-varep--} and \ref{lemma-priori-estimate-outer-part}  to improve the bounds in the $V_1[A,\eta, \tilde \eta]$. Thus, by continuity of the solution in time, we easily conclude that the  maximum time  trapped in the shrinking set is $+\infty$. Finally, using the renormalisation in time given in \eqref{renormalisation-time}, we  conclude the proof of Theorem \ref{theorem-existence-Type-II-blowup}.  We also mention some interesting points in our proof. First, the control of $\varepsilon_-$ which we sufficiently do on  the interval $[0,b^{- \tilde \eta}(\tau)]$. On the one hand, on $[0,b^\frac{\eta}{4}]$ we use  the maximum principle, initialled in \cite{BSIMRN19}, to control it   in avoiding  a heavy control from energy method. On the other hand, on $[b^{\frac{\eta}{4}}, b^{-{\tilde \eta}} ]$ which is far the origin enough. Then, the result follows  pointwise estimates based on the Poisson semigroup, see more in section \ref{secion-poisson-kernel}.  Second, the control of the outer part $\varepsilon_e$, follows pointwise estimates based on the semigroup $\mathcal{K}_\beta(\tau,\tau')$.
	
	\medskip
	\noindent
	\textit{- For $\ell \ge 2$.} This case is related to Theorem \ref{theorem-existence-Type-II-blowup-instable}. Similar to the first one. We control the solution to be  trapped  in the shrinking set $V_{\ell}[A, \eta, \tilde \eta](\tau)$ by \textit{a priori estimates}. However,  this case includes unstable modes that $\varepsilon_{j}$ for all $j \in \{0,...,\ell\}$.   Thus, we reduce  our problem to a finite dimensional one  which is solvent by a classical topological argument.

\section{Spectral analysis}
The aim of this section is to study the linear operator $\mathscr{L}_b^\beta $. In order to do so, we begin with the limit operator $\mathscr{L}_\infty^\beta$.  
\begin{proposition}[Diagonalisation of $\mathscr{L}_\infty^\beta$, \cite{HVunpublished-92},  \cite{BSIMRN19}, \cite{CMRJAMS20}]\label{proposition-spectral-L-infty} Let $d \geq 11$, $\beta \in \left( \frac{1}{4}, \frac{3}{4}\right)$ and $\mathscr{L}_\infty^\beta$ defined as in \eqref{defi-operator-L-infty}.  Then, $\mathscr{L}_\infty^\beta$  admits a  unique Friedrichs extension, still denoted by  $\mathscr{L}_\infty^\beta$ with domain $\mathcal{D}(\mathscr{L}_\infty^\beta) \subset H^1_{\rho_\beta} $ and $ H^2_{\rho_\beta} \subset \mathcal{D}(\mathscr{L}_\infty^\beta) $, 
is self-adjoint  with compact resolvent. Moreover, the following hold:
\begin{itemize}
	\item[$(i)$] Spectrum property: $\mathscr{L}_\infty^\beta$ consists of  countable many eigenvalues.  More precisely, the eigenvalues and eigenfunctions are given by
	\begin{equation}
		\lambda_{i, \infty, \beta} = 2\beta\left(\frac{\alpha}{2} - i \right), i \in \N,
	\end{equation}
	\begin{eqnarray}
		\phi_{i,\infty, \beta}(y) &=& \mathcal{N}_i\left(\sqrt{2\beta} y\right)^{-\gamma}L_i^{\left(\frac{d}{2}-\gamma\right)}\left(\frac{\beta y^2}{2}\right)=\sum_{j=0}^{i} a_{i,j} (2\beta)^j y^{2j-\gamma} \nonumber\\
		&=& \left\{ \begin{array}{rcl} & &
			a_{i, 0} y^{-\gamma} \left( 1 + O(y^2) \right) \text{ as } y \to 0,\\[0.2cm]
			& & a_{i,i} (2\beta)^{i} y^{2i -\gamma} \left( 1 + O(y^{-2}) \right)   \text{ as } y \to +\infty,
		\end{array}
		\right.\label{phi-i-infty}
	\end{eqnarray}
	where $L_i^{\left(\nu\right)}(z)$ denotes the generalized Laguerre polynomial,  $\mathcal{N}_i$ is a normalization constant and  $\gamma, \alpha, a_{i,j}$ are defined in \eqref{defi-gamma-intro},  \eqref{defi-alpha-intro} and \eqref{defi-a-i-j-intro}, respectively. 
	\item[$(ii)$] Spectral gap estimate: for all $u \in H^1_{\rho_\beta} $ satisfying  $ \langle   \phi_{i,\beta,\infty}, u \rangle_{L^2_{\rho_\beta}} =0,  \forall i\in \{1,...,\ell \}$,  then 
	$$ \langle  \mathscr{L}_\infty^{\beta} u, u \rangle_{L^2_{\rho_\beta}} \leq \lambda_{\ell+1, \infty, \beta} \| u\|^2_{L^2_{\rho_\beta}}.$$
\end{itemize}
\end{proposition}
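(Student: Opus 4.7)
The plan is to diagonalise $\mathscr{L}_\infty^\beta$ by conjugating it to the classical generalized Laguerre operator, which has a known spectrum and a complete family of polynomial eigenfunctions. First, I would set up self-adjointness. The identity $\mathscr{L}_\infty^\beta \phi = \rho^{-1}(\rho\phi')' + (3(d-2)/y^2 - 2\beta)\phi$ with $\rho = y^{d+1}e^{-\beta y^2/2}$ (proportional to $\rho_\beta$) shows that $\mathscr{L}_\infty^\beta$ is symmetric on $C_c^\infty((0,\infty))$ and that the associated quadratic form reads
\[
Q(\phi,\phi) = -\int_0^\infty (\phi')^2\,\rho_\beta\,dy + \int_0^\infty \Bigl(\tfrac{3(d-2)}{y^2} - 2\beta\Bigr)\phi^2\,\rho_\beta\,dy.
\]
The weighted Hardy inequality $\int (\phi')^2 y^{d+1}\,dy \ge (d/2)^2 \int \phi^2 y^{d-1}\,dy$ (from the radial setting in dimension $d+2$) dominates the singular term $3(d-2)/y^2$ precisely when $d^2 - 12d + 24 > 0$, which is the very condition that makes the indicial roots $\gamma, d-\gamma$ in \eqref{defi-gamma-intro} real, and is ensured by $d \ge 11$. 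The form is then semi-bounded from above; Friedrichs produces the self-adjoint extension, and the Gaussian factor in $\rho_\beta$ makes $H^1_{\rho_\beta}\hookrightarrow L^2_{\rho_\beta}$ compact, giving purely discrete spectrum.

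Second, for the explicit diagonalisation I would substitute $\phi(y) = y^{-\gamma} g(z)$ with $z = \beta y^2/2$. The relation $\gamma^2 - d\gamma + 3(d-2) = 0$ from \eqref{defi-gamma-intro} is exactly the requirement that the $y^{-\gamma-2}$ coefficient in $\mathscr{L}_\infty^\beta(y^{-\gamma} g)$ vanishes; a direct computation then reduces the eigenvalue equation $\mathscr{L}_\infty^\beta\phi = \lambda\phi$ to the generalized Laguerre equation
\[
z\,g''(z) + \Bigl(\tfrac{d}{2} - \gamma + 1 - z\Bigr) g'(z) + \Bigl(\tfrac{\alpha}{2} - \tfrac{\lambda}{2\beta}\Bigr) g(z) = 0,
\]
with parameter $\nu := d/2 - \gamma$ (recalling $\alpha = \gamma - 2$). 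Tracking Jacobians, the map $\phi\mapsto g$ is, up to a constant, an isometry from $L^2_{\rho_\beta}((0,\infty))$ onto $L^2((0,\infty), z^\nu e^{-z}\,dz)$. Since $d\ge 11$ forces $\gamma < d/2$, and therefore $\nu > -1$, the only admissible solutions are the generalized Laguerre polynomials $g = L_i^{(\nu)}$: the linearly independent solution grows like $e^z$ at infinity and is killed by the Gaussian weight, while the other indicial behaviour $y^{-(d-\gamma)}$ at the origin fails the weighted $L^2$ integrability condition. This produces the eigenvalue sequence $\lambda_{i,\infty,\beta} = 2\beta(\alpha/2 - i)$.

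Third, the explicit formula \eqref{phi-i-infty} follows from expanding $L_i^{(\nu)}(z) = \sum_{j=0}^i \binom{i+\nu}{i-j}(-z)^j/j!$, substituting $z = \beta y^2/2$, and verifying that the resulting prefactors match the definition \eqref{defi-a-i-j-intro} of $a_{i,j}$ after absorbing the normalisation $\mathcal{N}_i$; the two-sided asymptotics at $y\to 0$ and $y\to\infty$ are read off by retaining the lowest ($j=0$) and highest ($j=i$) powers. Completeness of $\{L_i^{(\nu)}\}_{i\ge 0}$ in $L^2((0,\infty), z^\nu e^{-z}\,dz)$ for $\nu > -1$ is classical and pulls back to completeness of $\{\phi_{i,\infty,\beta}\}$ in $L^2_{\rho_\beta}$, confirming that the full spectrum has been found with no residual part. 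For statement $(ii)$, an expansion $u = \sum_{i\ge\ell+1} c_i\phi_{i,\infty,\beta}$ (using orthogonality to the first $\ell+1$ eigenfunctions $\phi_0,\dots,\phi_\ell$) then yields
\[
\langle \mathscr{L}_\infty^\beta u, u\rangle_{L^2_{\rho_\beta}} = \sum_{i\ge\ell+1} \lambda_{i,\infty,\beta} c_i^2 \|\phi_{i,\infty,\beta}\|^2_{L^2_{\rho_\beta}} \le \lambda_{\ell+1,\infty,\beta}\|u\|^2_{L^2_{\rho_\beta}},
\]
since the sequence $\lambda_{i,\infty,\beta}$ is strictly decreasing in $i$.

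The hard part will be the careful functional-analytic setup: verifying that the singular potential $-3(d-2)/y^2$ is relatively form-bounded with bound strictly less than one (so that Friedrichs applies and the form domain sits inside $H^1_{\rho_\beta}$), and ruling out spectrum outside the Laguerre family by a rigorous limit-point/limit-circle endpoint analysis at $y=0$ (to exclude the second indicial root $d-\gamma$) and at $y=+\infty$ (to exclude the exponentially growing Laguerre partner). Since the cited references \cite{HVunpublished-92,BSIMRN19,CMRJAMS20} develop exactly this analysis for closely related singular radial operators, I would invoke them for these abstract points and concentrate the new computation on the conjugation, the identification of the Laguerre parameters, and the matching of the combinatorial constants $a_{i,j}$.
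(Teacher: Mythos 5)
Your proposal is correct, and it is more self-contained than what the paper actually does. The paper's own treatment of this proposition consists of citing \cite{HVunpublished-92}, \cite{BSIMRN19}, \cite{CMRJAMS20} for the Friedrichs extension, compactness of the resolvent, and completeness of the eigenbasis, and then supplying in Appendix C only the computation of the constants: there the eigenfunctions are found by the direct ansatz $\phi_{i,\infty,\beta}=\sum_{j=0}^i a_{i,j}(2\beta)^j y^{2j-\gamma}$ with $a_{i,i}=1$, which, plugged into $\mathscr{L}_\infty^\beta\phi=2\beta(\alpha/2-i)\phi$, yields the two-term recursion $a_{i,j+1}A_{j+1}=(j-i)a_{i,j}$ with $A_k=4k(d/2-\gamma+k)$, solved explicitly to give \eqref{defi-a-i-j-intro}; the cancellation of the residual $y^{-\gamma-2}$ term is exactly the quadratic relation $\gamma^2-d\gamma+3(d-2)=0$. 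Your route instead conjugates $\mathscr{L}_\infty^\beta$ to the generalized Laguerre operator via $\phi=y^{-\gamma}g(\beta y^2/2)$ and imports the classical theory (polynomial solutions, completeness for $\nu=d/2-\gamma>-1$), which buys you a cleaner justification of why no eigenvalues are missed and why the second indicial root $y^{-(d-\gamma)}$ and the exponentially growing Laguerre partner are excluded — points the paper leaves entirely to the references. Your Hardy-inequality observation that the form-boundedness threshold $3(d-2)<(d/2)^2$ coincides with the reality condition $d^2-12d+24>0$ for $\gamma$ is also a nice addition not present in the paper. Two cosmetic remarks: the singular potential enters $\mathscr{L}_\infty^\beta$ with a plus sign, $+3(d-2)/y^2$, not minus (this does not affect your form-boundedness argument, which only uses its magnitude); and the orthogonality hypothesis in item $(ii)$ should be read as ranging over $i\in\{0,\dots,\ell\}$, which is what your expansion $u=\sum_{i\ge\ell+1}c_i\phi_{i,\infty,\beta}$ implicitly assumes and what the paper uses elsewhere (e.g.\ in the spectral gap of Proposition \ref{propo-mathscr-L-b}).
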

As has been noted above, $\mathscr{L}_\infty^\beta$ is formally  the limit ($ b\to 0$) of $\mathscr{L}_b$ defined in \eqref{defi-mathscr-L-b-radial}, and a priori it is a good approximation of the latter for large values of $\tau$. However, such an approximation is good only for $y$ large enough since $\phi_{i,\infty}$ is singular when $y$ approaches $0$. Hence, to understand well the operator $\mathscr{L}_b$ around zero (i.e., $y$ small), one has to use the blow-up variables $(\xi, s)$ and our operator then reads
\begin{eqnarray}
\mathscr{L}_b = \frac{1}{b} \left(H_\xi-b \beta \Lambda_\xi\right).
\end{eqnarray}
The strategy is is to construct the eigenvalues and eigenvectors of $\mathscr{L}_b$ in two different regions, namely, for $y>y_0$ (outer region) using the self-similar scale and and for $\xi\leq \xi_0$ (inner region) using the variable $\xi$. Once such a construction is achieved, we glue at $y_0$ and in a $C^1$-manner the obtained eigenvalues and eigenvectors. The result is summarized in the following proposition.  
\begin{proposition}[Diagonalisation of $\mathscr{L}_b$]\label{propo-mathscr-L-b} Let 
$d \geq 11, b >0, \beta \in \left(\frac{1}{4}, \frac{3}{4} \right) ,$  $\ell \in \N^*$ and $\mathscr{L}_b$ be defined as in \eqref{defi-mathscr-L-b-radial}. Then,  $\mathscr{L}_b$ admits a unique Friedrichs extension, still   denoted by $\mathscr{L}_b$  with  domain  $\mathcal{D}(\mathscr{L}_b)  \subset  H^1_{\rho_\beta} $ and  $H^1_{\rho_\beta} \subset \mathcal{D}(\mathscr{L}_b)$, is self-adjoint with compact resolvent. Moreover, for all $\ell \in \N^*$ there exists  $b^*(\ell) \ll 1$ such that for all $b \in (0, b^*)$ and $ j \le  \ell $, 
the following hold:
\begin{itemize}
	\item[$(I)$] Spectrum:  the eigenvalues and eigenfunctions are given by
	\begin{eqnarray}
		\lambda_{i,\beta,b} &=& 2\beta \left( \frac{\alpha}{2} -i \right) + \tilde{\lambda}_{i,\beta,b}, \forall i \in \N,\label{lambda_i-nu}\\
		\phi_{i,\beta,b}(y) &=& \sum_{j=0}^i c_{i,j} (2\beta)^{j} (\sqrt{b})^{2j -\gamma} T_j\left(\frac{y}{\sqrt{b}} \right)+\tilde \phi_{i,\beta, b},\label{phi-i-nu} 
	\end{eqnarray}
	where 
	$$  \|\tilde \phi_{i,\beta,b}\|_{H^1_{\rho_\beta}}  \lesssim  b^{1 -\frac{\epsilon}{2}}, \quad \rho_\beta  \text{ defined in } \eqref{defi-rho-y}.    $$
	In particular, we have
	\begin{equation}\label{estimat-tilde-lambda-b-beta-pro}
		|\tilde \lambda_{i,\beta,b}| \lesssim  b^{1 - \frac{\epsilon}{2}} \text{ and } \left|\partial_b \tilde \lambda_{i,\beta,b} \right| \lesssim  b^{ -\frac{\epsilon}{2}}, \text{ and } \left| \partial_\beta \tilde \lambda_{i,\beta,b} \right| \lesssim 1.
	\end{equation}
	\item[$(II)$]  Difference estimate:
	\begin{equation}\label{norm-phi-b-phi-nfty-i}
		\left\|  \phi_{i,\beta,b} - \phi_{i, \beta, \infty} \right\|_{H^1_\rho} \lesssim b^{1 -\frac{\epsilon}{2}},
	\end{equation}
	where $\phi_{i,b, \infty}$  defined as in \eqref{phi-i-infty}
	\item[$(III)$] Pointwise estimate: for $k \in \{0,1\} $ we have
	\begin{equation}\label{estimate-partial-y-k-phi-y-point-wise}
		\left|\partial_y^k {\phi}_{i,\beta,b}(y)\right|  \lesssim   \frac{\langle y \rangle^{2i+2}}{(\sqrt{b}+y)^{\gamma+k}}, 
	\end{equation}
	\begin{equation}\label{estimate-partial-y-k-b-partial-b-phi-y-point-wise}
		\left|\partial_y^k b \partial_b {\phi}_{i,\beta,b}(y)\right| \lesssim  \frac{\langle y \rangle^{2i+2}}{(\sqrt{b}+y)^{\gamma+k}},
	\end{equation}
	and 
	\begin{equation}\label{estima-partial-y-k-partial-b-b-tilde-phi-b}
		\left|\partial_y^k \tilde{\phi}_{i,\beta,b}(y)\right|+\left|\partial_y^k b \partial_b \tilde{\phi}_{i,\beta,b}(y)\right| + \left|\partial_y^k  \partial_\beta \tilde{\phi}_{i,\beta,b}(y)\right| \lesssim \frac{b^{1-\frac{\epsilon}{2}} \langle y \rangle^{2i+2}}{(\sqrt{b}+y)^{\gamma + k}}.
	\end{equation}
	\item[$(iv)$] Spectral gap estimate: assume that $ u \in H^1_{\rho_\beta} (\R^+)$ satisfies 
	$$  \langle u, \phi_{i,\beta,b}\rangle_{L^2_{\rho_\beta}} =0, \forall i \in \{0,1,...,\ell \}, $$
	then, there exists $c(\ell) > 0$ such that 
	\begin{eqnarray}
		\langle  \mathscr{L}_{b} u, u \rangle_{L^2_{\rho_\beta}}  \leq  - \left(\lambda_{\ell   , b, \beta } + c(\ell)\right)  \| u\|^2_{L^2_{\rho_\beta}} . \label{spectral-gap}
	\end{eqnarray}

\end{itemize}

\end{proposition}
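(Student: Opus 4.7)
My plan is to follow the inner/outer matching strategy sketched in the paragraph preceding the proposition, and to deduce the quantitative estimates by treating $b$ as a small parameter perturbing $\mathscr{L}_\infty^\beta$ from Proposition \ref{proposition-spectral-L-infty}.

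\textbf{Step 1 (Functional analytic setup).} First I would write the quadratic form associated to $\mathscr{L}_b$ on $H^1_{\rho_\beta}$. Using the singular behavior $-3(d-2)(2Q_b+y^2 Q_b^2)\to -3(d-2)/y^2$ as $b\to 0$ (from Lemma \ref{lemma-ground-state}) together with the Hardy inequality in the weight $\rho_\beta$, one checks that the form is bounded below and closable, which yields the Friedrichs extension. The compactness of the resolvent follows from the compact embedding $H^1_{\rho_\beta}\hookrightarrow L^2_{\rho_\beta}$ (the Gaussian weight forces decay). Hence the spectrum is discrete with eigenfunctions forming an orthonormal basis.

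\textbf{Step 2 (Construction of the eigenfunctions: inner region).} For $y\leq y_0$ with $y_0=\sqrt{b}\,\xi_0$ I switch to $\xi=y/\sqrt{b}$ where $\mathscr{L}_b=\frac{1}{b}(H_\xi-b\beta\Lambda_\xi)$. The eigenvalue equation becomes $(H_\xi-b\beta\Lambda_\xi)\phi=b\lambda\,\phi$. I construct a formal power series in $b$: $\phi^{\text{in}}_{i,\beta,b}(\xi)=\sum_{j=0}^{i}(2\beta)^j b^j T_j(\xi)+O(b^{i+1})$ with eigenvalue $\lambda=2\beta(\tfrac{\alpha}{2}-i)+\tilde\lambda$, where the $T_j$ solve the hierarchy $H_\xi T_j=\beta\Lambda_\xi T_{j-1}+\lambda_{i,\infty,\beta}T_{j-1}$ starting from a carefully chosen $T_0$ in the kernel of $H_\xi$ (the scaling direction of $Q$). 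The coefficients $c_{i,j}$ appearing in \eqref{phi-i-nu} are read off from this recursion, and the $a_{i,j}$ of \eqref{defi-a-i-j-intro} arise from renormalizing by the weight $(\sqrt{b})^{-\gamma}$.

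\textbf{Step 3 (Outer region and matching).} For $y\geq y_0$ I use the basis $\{\phi_{i,\infty,\beta},\psi_{i,\infty,\beta}\}$ of the limit ODE, where $\psi$ denotes the second independent solution growing like $y^{\gamma-d}$ near $0$. The outer candidate is $\phi^{\text{out}}=A\,\phi_{i,\infty,\beta}+\tilde\phi$ with $\tilde\phi$ a controlled perturbation of size $O(b^{1-\epsilon/2})$ in $H^1_{\rho_\beta}$, obtained by solving the inhomogeneous problem with the source $[-3(d-2)(2Q_b+y^2 Q_b^2)+3(d-2)/y^2]\phi_{i,\infty,\beta}$ via variation of parameters against the Wronskian. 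The crucial step is the $C^1$ matching at $y_0$: matching the value and the derivative of $\phi^{\text{in}}$ and $\phi^{\text{out}}$ yields two equations in the two unknowns $(A,\tilde\lambda_{i,\beta,b})$, which I solve by the implicit function theorem after checking that the linearized matching matrix is non-degenerate (its determinant is a nonzero multiple of the Wronskian of $\phi_{i,\infty,\beta}$ and $\psi_{i,\infty,\beta}$). This produces \eqref{lambda_i-nu}, \eqref{phi-i-nu}, and after differentiating the matching equations in $b$ and $\beta$ also the estimates \eqref{estimat-tilde-lambda-b-beta-pro}.

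\textbf{Step 4 (Estimates and spectral gap).} The difference estimate \eqref{norm-phi-b-phi-nfty-i} is a direct consequence of the outer construction, while the pointwise bounds \eqref{estimate-partial-y-k-phi-y-point-wise}--\eqref{estima-partial-y-k-partial-b-b-tilde-phi-b} follow by combining the explicit inner polynomial expression with ODE pointwise control in the outer zone (e.g.\ via the variation-of-parameters formula and the explicit asymptotics \eqref{phi-i-infty}); differentiating in $b$ and $\beta$ reduces to the same variation-of-parameters integrals. Finally, the spectral gap \eqref{spectral-gap} follows from the min--max principle applied on the orthogonal complement of $\{\phi_{i,\beta,b}\}_{i\leq\ell}$: the difference estimate implies that this complement is a small $L^2_{\rho_\beta}$-perturbation of the analogous complement for $\mathscr{L}_\infty^\beta$, so the gap statement of Proposition \ref{proposition-spectral-L-infty}$(ii)$ transfers with a quantitatively smaller constant $c(\ell)$.

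\textbf{Main obstacle.} The main technical difficulty will be the matching step: one must simultaneously select the eigenvalue correction $\tilde\lambda_{i,\beta,b}$ and the outer amplitude to kill the growing mode $\psi_{i,\infty,\beta}$, while keeping uniform quantitative control of all error terms in the weighted norm. The dependence on the two parameters $b$ and $\beta$ (needed for \eqref{estimat-tilde-lambda-b-beta-pro} and \eqref{estima-partial-y-k-partial-b-b-tilde-phi-b}) requires differentiating the implicit function relations, which is where most of the calculation volume lies; the factor $b^{1-\epsilon/2}$ (rather than $b$) reflects the logarithmic-type losses in the Wronskian integrals near $y_0$.
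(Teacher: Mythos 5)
Your proposal follows essentially the same route as the paper's proof (the matching construction of Section 10 and its appendices): universal kernel generators $T_j$ of $H_\xi$ together with fixed-point corrections in the inner zone, a variation-of-parameters construction around $\phi_{i,\infty,\beta}$ and the second, Gaussian-growing solution in the outer zone, a $C^1$ matching at $y_0$ resolved by the implicit function theorem in $\tilde\lambda$, and differentiation of the matching relation in $(b,\beta)$ to obtain \eqref{estimat-tilde-lambda-b-beta-pro}. The only cosmetic differences are that the paper uses the $i$- and $\beta$-independent hierarchy $H T_{j+1}=T_j$, absorbing the action of $\Lambda_\xi$ into the coefficients $c_{i,j}$ rather than into the $T_j$ themselves, and normalizes the outer amplitude by continuity at $y_0$ so that a single matching equation (for the derivative) in the single unknown $\tilde\lambda$ remains.
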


\begin{proof}
The spectral analysis is quite the same as in  \cite{CGMNARXIV20-a} and \cite{CMRJAMS20}. We kindly refer the reader to check the details. In addition, we  also give  the matching ODE approach and the pointwise estimates    in Section \ref{proof-of-diagoligioncal-L-b}. 
\end{proof}

\section{Proof in the stable case without technical details} 

In this section, we aim to give the proof of Theorem  \ref{theorem-existence-Type-II-blowup} without technical details.

\subsection{Shrinking set}  
We define below  the shrinking set that controls the asymptotic behavior of $(\varepsilon,b,\beta)$ leading to the global existence of the solution, and deriving Theorem \ref{theorem-existence-Type-II-blowup}. Let  $b$ and $\beta$ be positive functions satisfying the hypothesises in Proposition  \ref{propo-mathscr-L-b}, then we  decompose $\varepsilon$ as in \eqref{decompo-stagegy-of-proof} by taking $\ell=1$   
\begin{eqnarray}
\varepsilon(\tau) = \varepsilon_1 (\tau) \left( \frac{\phi_{1,b,\beta}}{c_{1,0}} -\phi_{0,b,\beta} \right) + \varepsilon_-(\tau) = \varepsilon_+(\tau) + \varepsilon_-( \tau). \label{decompose-varepsilon-ell=1}
\end{eqnarray}

\begin{definition}[Shrinking set]\label{shrinking-set}  Let  $A,\eta, \tilde \eta$  and $ \tau_0$ be positive constants. For each $\bar \tau > \tau_0$  we introduce  $V_1[A, \eta, \tilde \eta](\bar \tau)$ 
as the set of all triple time-dependent functions $(\varepsilon, b, \beta)$ on $[\tau_0, \bar \tau]$ such that    $(\varepsilon, b,\beta)(\tau) \in L^\infty(\mathbb{R}_+) \times \mathbb{R}^2 $ for all $\tau' \in [\tau_0, \bar \tau]$  and the following  estimates are satisfied:
\begin{itemize}
	\item[$(i)$]  The  dominating mode $\varepsilon_1 $ satisfies 
	\begin{eqnarray}
		\varepsilon_1(\tau) = - \frac{2}{\alpha} m_0 b^\frac{\alpha}{2}(\tau), \forall \tau \in [\tau_0, \bar \tau]
		,\label{estimate-varepsilon-1-V-A-s}
	\end{eqnarray}
	and functions $b$ and $\beta$  satisfy
	\begin{equation}\label{shrin-king-set-btau}
		\frac{1}{2} \le   b(\tau) \exp\left(\left( \frac{2}{\alpha} -1 \right) \left(\int_{\tau_0}^{\tau} 2\beta(\tilde \tau) d\tilde \tau +\tau_0\right) \right) \le 2, \forall \tau \in [\tau_0,\bar \tau],
	\end{equation}  
	and
	\begin{equation}\label{shrinking-set-beta-tau}
		\left| \beta(\tau)  -  \frac{1}{2} \right|  \le A I^{\eta}(\tau_0), \forall \bar \tau \in [\tau_0,\bar \tau],
	\end{equation}
	where $m_0$ is given by  \eqref{decompose-Phi}, and $I(\tau)$ is defined by 
	\begin{equation}\label{defi-I-tau}
		I(\tau) = e^{\left( 1 - \frac{2}{\alpha} \right)\tau}.
	\end{equation}
	\item[$(iii)$]  The part $\varepsilon_- $ of $\varepsilon $ defined as in \eqref{decompose-varepsilon-ell=1} satisfies 
	\begin{equation}
		\|\varepsilon_-( ., \tau)\|_{L^2_{\rho_{\beta(\tau)}}}  \le A^2 b^{\frac{\alpha}{2} +\eta}(\tau), \forall \tau \in [\tau_0, \bar \tau],
	\end{equation}
	and 
	\begin{equation}
		\left\| y^\gamma \frac{\varepsilon_-(.,\tau)}{ \langle y \rangle^{4}} \right\|_{L^\infty[0, b^{-\tilde \eta}(\tau)] }   \le A^3  b^{\frac{\alpha}{2} + \tilde \eta}(\tau), \forall \tau \in [\tau_0, \bar \tau].
	\end{equation}
	\item[$(iv)$] The part  $ \varepsilon_e$ 
	satisfy 
	\begin{eqnarray}
		\| |y| \varepsilon_e(.,\tau)\|_{L^\infty} & \le &  A^4  b^{ \frac{\alpha}{2} +(\gamma-4)\tilde \eta }(\tau), \forall \tau \in [\tau_0, \bar \tau],
	\end{eqnarray} 
	where 
	\begin{equation}\label{defi-varepsilon-e}
		\varepsilon_e(y,\tau)  = (1 - \chi_0(2yb^{\tilde \eta}(\tau))) \varepsilon(y,\tau), \text{ and } \text{supp}(\varepsilon_e) \subset \left\{ |y| \ge \frac{1}{2} b^{-\tilde \eta} \right\},
	\end{equation}
	and $\chi_0$ defined by 
	\begin{equation}\label{defi-chi-0}
		\chi_0  \in C^\infty, \chi_0 (x) = 1,  \forall x \in [0,1],  \text{ and } \chi_0(x) = 0, \forall x \ge 2.
	\end{equation}
\end{itemize} 
\end{definition}
Consequently,  once $(\varepsilon, b, \beta)$ belongs to  $V_1[A, \eta, \tilde \eta]$, one can easily deduce the following pointwise estimates.
\begin{lemma}[Pointwise estimates]\label{lemma-rough-estimate-bounds in shrinking-set}
For all $A \ge 1$ and  $ 0< \tilde \eta < \eta \ll 1$, then there exists $\tau_1(A, \tilde \eta, \eta) \ge 1$ such that for all $ \tau_0 \ge \tau_1$ the following holds: Assume   $(\varepsilon, b, \beta)(\tau) \in V_1[A, \eta, \tilde \eta]( \tau)$ for all $\tau \in [\tau_0, \bar \tau]$ with   $\bar \tau > \tau_0$ arbitrarily given,  then we  have 
\begin{eqnarray}
	I^{1 +\frac{\tilde \eta}{10}}(\tau) \le b(\tau) \le I^{1 -\frac{\tilde \eta}{10}}(\tau), \forall \tau \in [\tau_0,\bar \tau].\label{esti-b-equivalent-I-1-tilde-eta}
\end{eqnarray}
Accordingly   \eqref{decompose-varepsilon-ell=1},     $ \varepsilon_+ $ and $ \varepsilon_-$ satisfy  the following pointwise estimates
\begin{equation}
	|\varepsilon_+(y,\tau)| \le  \frac{Cb^\frac{\alpha}{2}}{y^{\gamma}} \left\{ y^2 +  b^{ 10\eta}(\tau) \langle y\rangle^{4}  \right\}  , \forall y >0, \tau \in [\tau_0, \bar \tau],
\end{equation}
and 
\begin{equation}
	\left| \varepsilon_-(y,\tau) \right| \le  CA^4 b^{\frac{\alpha}{2} +\tilde \eta}(\tau ) \frac{\langle y \rangle^{4}}{y^\gamma}, \forall y >0, \tau \in [\tau_0, \bar \tau].
\end{equation}
\end{lemma}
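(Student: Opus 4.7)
The plan is to address the three pointwise claims one after the other, in order of increasing difficulty: first the sandwich bound on $b$, then the bound on $\varepsilon_+$, and finally that on $\varepsilon_-$. Each claim is obtained by feeding the defining inequalities of $V_1[A,\eta,\tilde\eta]$ into the structural pointwise estimates on the eigenfunctions from Proposition~\ref{propo-mathscr-L-b}, plus a careful use of the cancellation built into the combination $\phi_{1,b,\beta}/c_{1,0}-\phi_{0,b,\beta}$.

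For the sandwich bound \eqref{esti-b-equivalent-I-1-tilde-eta}, I would start from \eqref{shrin-king-set-btau}, which reads
$b(\tau)=\Theta(1)\exp\!\bigl((1-\tfrac{2}{\alpha})(\int_{\tau_0}^{\tau}2\beta\,d\tilde\tau+\tau_0)\bigr)$.
Writing $2\beta=1+2(\beta-\tfrac12)$ and invoking \eqref{shrinking-set-beta-tau}, one gets
$\int_{\tau_0}^\tau 2\beta\,d\tilde\tau=(\tau-\tau_0)+O(AI^\eta(\tau_0)(\tau-\tau_0))$,
so $b(\tau)=\Theta(1)\,I(\tau)\exp(O(AI^\eta(\tau_0)\tau))$. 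Choosing $\tau_0=\tau_1(A,\eta,\tilde\eta)$ large enough so that $AI^\eta(\tau_0)\ll\tilde\eta$, the error factor is swallowed by $I^{\pm\tilde\eta/10}(\tau)$, which yields the claim.

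For the bound on $\varepsilon_+$ I would use the compatibility condition \eqref{estimate-varepsilon-1-V-A-s} to get $|\varepsilon_1|\le \tfrac{2|m_0|}{\alpha}b^{\alpha/2}$, and then exploit the identity $c_{0,0}=1$ to rewrite
\begin{equation*}
\frac{\phi_{1,b,\beta}}{c_{1,0}}-\phi_{0,b,\beta}=\frac{c_{1,1}}{c_{1,0}}(2\beta)(\sqrt{b})^{2-\gamma}T_1\!\Bigl(\frac{y}{\sqrt{b}}\Bigr)+\frac{\tilde\phi_{1,b,\beta}}{c_{1,0}}-\tilde\phi_{0,b,\beta},
\end{equation*}
so that the singular $y^{-\gamma}$ mode cancels. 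In the outer regime $y\gg\sqrt{b}$, the matching asymptotic of $T_1$ produces the term $\sim y^{2-\gamma}$, hence the $y^2/y^\gamma$ piece of the claim. In the inner regime $y\lesssim\sqrt{b}$, the function $T_1(y/\sqrt b)$ is bounded and the corrector estimate \eqref{estima-partial-y-k-partial-b-b-tilde-phi-b} on $\tilde\phi_{i,b,\beta}$ gives a contribution that is absorbed into the second piece $b^{10\eta}\langle y\rangle^4/y^\gamma$ after using \eqref{esti-b-equivalent-I-1-tilde-eta} and the relation $\tilde\eta\ll\eta$ to guarantee $b^{1-\epsilon/2-10\eta}\ll1$ for $\tau\ge\tau_0$ large.

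For the bound on $\varepsilon_-$, I would split the radial axis into $[0,b^{-\tilde\eta}]$ and $[b^{-\tilde\eta},+\infty)$. On the inner interval, the weighted $L^\infty$ estimate from Definition~\ref{shrinking-set} gives directly $|\varepsilon_-|\le A^3 b^{\alpha/2+\tilde\eta}\langle y\rangle^4/y^\gamma$. On the outer interval, $\chi_0(2yb^{\tilde\eta})=0$ and thus $\varepsilon=\varepsilon_e$ there, so $\varepsilon_-=\varepsilon_e-\varepsilon_+$; combining the control $\||y|\varepsilon_e\|_{L^\infty}\le A^4 b^{\alpha/2+(\gamma-4)\tilde\eta}$ (which at $y\ge b^{-\tilde\eta}$ yields $|\varepsilon_e|\le 2A^4 b^{\alpha/2+(\gamma-3)\tilde\eta}$, matching exactly the target bound there) with the outer asymptotic $|\varepsilon_+|\lesssim b^{\alpha/2}y^{2-\gamma}$ from Step~2 gives the desired inequality with a constant of order $A^4$.

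The main obstacle is Step~2: the naive sum of the pointwise estimates \eqref{estimate-partial-y-k-phi-y-point-wise} yields only $|\varepsilon_+|\lesssim b^{\alpha/2}\langle y\rangle^4/(\sqrt b+y)^\gamma$, which misses the $y^2/y^\gamma$ term in the outer region and is in fact too large to close the outer part of Step~3. Recovering the sharper bound requires honest use of the cancellation of the leading $(\sqrt{b})^{-\gamma}T_0(y/\sqrt{b})$-mode in $\phi_{1,b,\beta}/c_{1,0}-\phi_{0,b,\beta}$ and the matching of $T_1$ with the outer profile $y^{2-\gamma}$ in the blowup variable $\xi=y/\sqrt b$, together with the refined corrector estimate \eqref{estima-partial-y-k-partial-b-b-tilde-phi-b}. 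Everything else is bookkeeping once $\tau_0$ is chosen large enough relative to $A$, $\eta$ and $\tilde\eta$.
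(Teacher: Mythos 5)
Your argument is correct and is precisely the fleshed-out version of the paper's one-line proof, which simply asserts that the estimates "immediately follow from the bounds in Definition \ref{shrinking-set}": the sandwich on $b$ comes from \eqref{shrin-king-set-btau}--\eqref{shrinking-set-beta-tau} with $\tau_0$ large, the $\varepsilon_+$ bound from the compatibility condition together with the cancellation of the $(\sqrt b)^{-\gamma}T_0$ mode in $\phi_{1,b,\beta}/c_{1,0}-\phi_{0,b,\beta}$ (using $c_{0,0}=1$) plus the corrector bounds, and the $\varepsilon_-$ bound from the weighted $L^\infty$ control inside $[0,b^{-\tilde\eta}]$ and from $\varepsilon_-=\varepsilon_e-\varepsilon_+$ outside. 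Your observation that the naive pointwise bound \eqref{estimate-partial-y-k-phi-y-point-wise} applied termwise is too lossy, and that the $T_0$-cancellation is what produces the sharp $y^2/y^{\gamma}$ contribution, is exactly the (unstated) content of the paper's claim.
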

\begin{proof}
The results immediately follow from the bounds given in  Definition  \ref{shrinking-set}  of the shrinking set  $V_1[A, \eta, \tilde \eta ](\tau)$.
\end{proof}

\subsection{ Preparing  initial data }\label{subsection-preparing-initial-data}

In this part, we  aim to construct a suitable falimy of  initial data  $(\varepsilon, b,\beta)(\tau_0)$   such that the solution to the problem (\ref{equa-varepsilon-appen}-\ref{orthogonal-condition}-\ref{the-compatibility-condition})  globally exists and satisfies
$$ (\varepsilon, b, \beta) \in V_1[A, \eta, \tilde \eta](\bar \tau), \forall \bar \tau > \tau_0 .$$
Let us define $\beta_0 = \beta(\tau_0) = \frac{1}{2}$,   $ b_0=b(\tau_0) = I^\frac{\alpha}{2}(\tau_0)$  where $I(\tau)$ introduced as in \eqref{defi-I-tau},   $ \delta \ll 1$ satisfying $ 0< \tilde \eta \ll \eta \ll \delta  \ll 1$. In addition, we recall  $\chi_0$ defined as in \eqref{defi-chi-0} and   we introduce  then
\begin{equation}\label{defi-initial-vaepsilon-l=1}
\psi(\tau_0)    =    \chi_0\left( y  b_0^{\delta} \right) \left( 1 -  \chi_0\left( \frac{y}{ b_0^\delta}   \right)  \right)
\left(-\frac{2}{\alpha} m_0 b_0^{\frac{\alpha}{2}} \right) \left\{  
[1+ \hat \psi(\tau_0)] \frac{\phi_{1,b_0,\beta_0}}{c_{1,0}}    -  \left[1+\tilde \psi(\tau_0)  \right]  \phi_{0,b_0,\beta_0}   \right\},  
\end{equation}
where the  corrections  $ \tilde \psi(\tau_0)$ and $ \hat \psi(\tau_0)  $  are  uniquely determined such that   (\ref{orthogonal-condition}-\ref{the-compatibility-condition})  are satisfied   at  $\tau =\tau_0$.   More precisely, via a direct computation, they satisfy     
$$  \left| \tilde \psi(\tau_0) \right| + \left| \hat{\psi}(\tau_0) \right|    \lesssim b^\delta(\tau_0). $$
Thus, our initial data is of the form
\begin{equation}\label{initial-data-varep-tau-0}
(\varepsilon,b,\beta)(\tau_0) = (\psi(\tau_0), b_0, \beta_0).
\end{equation}

\medskip
\noindent
In addition, the  initial data  for problem   \eqref{equa-w}   will be  of the form 
\begin{equation}\label{initial-data}
w(y,\tau_0) = Q_{b(\tau_0)}(y) + \varepsilon(\tau_0).
\end{equation}

\bigskip
In the sequel, we prove by using modulation that we can propagate \eqref{orthogonal-condition} and \eqref{the-compatibility-condition}.
\begin{lemma}[Modulation technique]\label{lemma-implicit-function} There exists $\delta_2 \ll 1$ such that  for all $\delta \le \delta_2$ there exists    $A_2 \ge 1$  such that for all $A  \ge A_2$ there exists  $\eta_2(A, \delta) $  such that for all $\eta \le \eta_2$ there exists $\tilde \eta_2(A, \delta, \eta)$ such that for all $\tilde \eta \le \tilde \eta_2$ there exists $\tau_2(A, \delta, \eta, \tilde \eta) \ge 1$ such that the following property holds:   Assume that  initial datum  is of the form in  \eqref{defi-initial-vaepsilon-l=1}, then   there exists $\tau_{loc}^* > \tau_0$ and  smooth functions $(b,\beta)  \in \left(C\left[\tau_0,\tau_{loc}^*\right],\mathbb{R}^2\right) \cap C^{1}\left((\tau_0,\tau_{loc}^*],\mathbb{R}^2\right)) $ such that  the solution $w$ (corresponding to initial data in \eqref{initial-data}) to equation \eqref{equa-w}, locally exists on $[\tau_0,\tau_{loc}^*]$   and uniquely admits the following decomposition
\begin{equation}\label{local-decomposition-w-Q-varpesilon}
	w(\tau) = Q_{b(\tau)} + \varepsilon(\tau), 
\end{equation}
where $(\varepsilon, b,\beta)$  satisfying 
(\ref{equa-varepsilon-appen}-\ref{orthogonal-condition}-\ref{the-compatibility-condition}) and $Q_{b(\tau)}$ defined as in \eqref{Q-b}.  In addition, it holds that  $(\varepsilon, b, \beta) \in V_1[A,\eta, \tilde \eta](\bar \tau), $  for all $ \bar \tau \in [\tau_0,\tau_{loc}^*]$.   In particular, the existence of 
$(\varepsilon, b, \beta)$ can be propagated to the interval $[\tau_0, \tau_{loc}^* + \tilde\sigma]$ for some $\tilde \sigma$ small thank to the bounds in $V_1$.
\end{lemma}
\begin{proof} 
Let us consider  initial data $w(\tau_0)$  defined as in \eqref{initial-data}. Thanks to the local well-posedness in $L^\infty_{1+r^\alpha}, \alpha \ge \frac{2}{3}$ of the problem \eqref{equa-Yang-Mills-}, there exists      $\tilde \tau  > \tau_0$ such that the solution $w$ to  equation \eqref{equa-w}  uniquely exists on $[\tau_0,\tilde \tau]$. We mention that the existence of modulations  $b$ and $ \beta$ and decomposition \eqref{local-decomposition-w-Q-varpesilon}  is a direct consequence of the implicit function theorem. Let us introduce the following  maps 

\begin{equation}\label{function-F}
	\left\{ \begin{array}{rcl}
		F_1(\tau, b,\beta) &:= &    \langle w(\tau) - Q_{b}, \|\phi_{1,b,\beta}\|^{-2}_{L^2_{\rho_\beta}} c_{1,0}\phi_{1, b, \beta} +\|\phi_{0,b,\beta}\|^{-2}_{\rho_\beta} \phi_{0, b, \beta} \rangle_{L^2_{\rho_\beta}},   \\
		F_2(\tau, b, \beta) &:=&  \langle w(\tau) - Q_b, \phi_{1, b, \beta } \rangle_{L^2_{\rho_\beta}} + \frac{2}{\alpha} m_0 \|\phi_{1, b,\beta} \|^{2}_{L^2_{\rho_\beta}} b^\frac{\alpha}{2}(\tau).
	\end{array}
	\right.
\end{equation}
Since $\varepsilon(\tau_0) = \psi(\tau_0)$ defined as in \eqref{defi-initial-vaepsilon-l=1}, it immediately follows   
$$ F(\tau_0, b_0, \beta_0) = (F_1,F_2) (\tau_0, b_0, \beta_0) =0.$$
Now we admit the following expansion (the proof will be given below)
\begin{eqnarray}
	\text{Det}(\mathbf{J})(\tau_0, b_0,\beta_0)
	&=& m_0 \|\phi_{1, b_0, \beta_0}\|_{L^2_{\rho_{\beta_0}}}^2 b_0^{\frac{\alpha}{2}-1} \varepsilon_1 (\tau_0) \left( \frac{\|\phi_{1, b_0, \beta_0}\|_{L^2_{\rho_{\beta_0}}}^2\|\phi_{0, b_0, \beta_0}\|_{L^2_{\rho_{\beta_0}}}^{-2}}{4\beta_0} \right) \label{computation-Det-J-b-0} \\
	&+& o(b_0^{\alpha -1}),\nonumber 
\end{eqnarray}
which implies  
$$ \text{Det}(\mathbf{J})(\tau_0, b_0,\beta_0) \ne 0,   $$
provided that $b_0$ small enough i.e. $\tau_0 \ge \tau_{2,1}(\delta) $.  Finally, we  apply the implicit function theorem to conclude  the unique existence of the functions $(b, \beta) \in C([\tau_0,\tilde \tau_1],\mathbb{R}^2) \cap C^1((\tau_0,\tilde \tau_1],\mathbb{R}^2) $  for some  $\tilde \tau_1 >\tau_0$ such that
$$ F (\tau, b(\tau), \beta(\tau)) =0, \forall \tau \in [\tau_0, \tilde \tau_1].$$
Now, we define  $\tau_{loc}^* = \min (\tilde \tau_1, \tilde \tau)$ and   we define $\varepsilon (\tau) = w (\tau)- Q_{b(\tau)}, \tau \in [\tau_0, \tau_1]$. Thus,    $(\varepsilon, b,\beta)$  reads
(\ref{equa-varepsilon-appen}-\ref{orthogonal-condition}-\ref{the-compatibility-condition})  for all $\tau \in [\tau_0, \tau^*_{loc}]$.  

\medskip
Besides that, from definition \eqref{defi-initial-vaepsilon-l=1} and the continuity of the solution, there exists $ A_2(\delta)$  such that for all  $A  \ge A_2$ there exists  $\eta_2(A, \delta) $  such that for all $\eta \le \eta_2$ there exists $\tilde \eta_2(A, \delta, \eta)$ such that for all $\tilde \eta \le \tilde \eta_2$ there exists $\tau_2(A, \delta, \eta, \tilde \eta) \ge 1$ such that  for all $\tau_0 \ge \tau_2$ and $\tau_{loc}^* >\tau_0$ such that  $ (\varepsilon, b, \beta) \in V_1[A, \eta, \tilde \eta](\bar \tau), \forall \bar \tau \in (\tau_0, \tau_{loc}^*]$.  To finish the proof we now complete the proof of \eqref{computation-Det-J-b-0} provided that 
$\delta \le \delta_2, \eta \le \eta_2(\delta), \tilde \eta \le \tilde \eta_2(\delta, \eta)$ and $\tau_0 \ge \tau_2(\delta, \eta, \tilde \eta)$. 

Let us recall  Jacobian matrix $\mathbf{J}$ defined by 
$$\mathbf{J} (\tau, b,\beta)= \left( \begin{matrix}
	\frac{\partial F_1}{\partial b} & \frac{\partial F_1}{\partial \beta} \\
	\frac{\partial F_2}{\partial b} & \frac{\partial F_2}{\partial \beta}
\end{matrix} \right)(\tau, b,\beta).
$$

We now explicitly write the partial  derivatives:
\begin{eqnarray}
	& &\frac{\partial F_1}{ \partial b} = \int \frac{1}{2b} \Lambda_y Q_b \left( \|\phi_{1,b,\beta}\|^{-2}_{L^2_{\rho_\beta}} c_{\ell,0}\phi_{1, b, \beta} +\|\phi_{0,b,\beta}\|^{-2}_{\rho_\beta} \phi_{0, b, \beta} \right) \rho_\beta dy\label{defi-partial-F-1-b}\\
	& + & \int (w(\tau) -Q_b) \partial_b \left( \|\phi_{1,b,\beta}\|^{-2}_{L^2_{\rho_\beta}} c_{\ell,0}\phi_{1, b, \beta} +\|\phi_{0,b,\beta}\|^{-2}_{\rho_\beta} \phi_{0, b, \beta} \right)  \rho_\beta dy \nonumber,
\end{eqnarray}
and 
\begin{eqnarray}
	& &\frac{d F_1}{ \partial \beta} = \partial_\beta (\|\phi_{1, b, \beta}\|^{-2}_{L^2_{\rho_\beta}} )c_{\ell,0} \int  (w - Q_b) \phi_{1, b, \beta} \rho_\beta dy + \|\phi_{1, b, \beta}\|^{-2}_{L^2_{\rho_\beta}} c_{1,0} \int  (w - Q_b) \partial_\beta \phi_{1, b, \beta} \rho_\beta dy \nonumber \\
	& + & \|\phi_{1, b, \beta}\|^{-2}_{L^2_{\rho_\beta}} c_{1,0} \int  (w - Q_b) \phi_{1, b, \beta} \partial_\beta \rho_\beta dy +  \partial_\beta (\|\phi_{0, b, \beta}\|^{-2}_{L^2_{\rho_\beta}} ) \int  (w - Q_b) \phi_{0, b, \beta} \rho_\beta dy \label{partial-F-1-beta} \\
	&+& \|\phi_{0, b, \beta}\|^{-2}_{L^2_{\rho_\beta}} \int  (w - Q_b) \partial_\beta \phi_{0, b, \beta} \rho_\beta dy
	+  \|\phi_{0, b, \beta}\|^{-2}_{L^2_{\rho_\beta}} \int  (w - Q_b) \phi_{0, b, \beta} \partial_\beta \rho_\beta dy\nonumber,
\end{eqnarray}
and 
\begin{eqnarray}
	\frac{\partial F_2}{ \partial b} &=& \int \frac{1}{2b} \Lambda Q_b \phi_{1,b, \beta} \rho_\beta dy + \int ( w(\tau) -Q_\beta ) \partial_b \phi_{1, b, \beta} \rho_\beta dy + \frac{2}{\alpha} m_0 \partial_b \| \phi_{1, b, \beta} \|^{2}_{L^2_{\rho_\beta}} b^{\frac{\alpha}{2}} \label{partial-F-2-b}\\
	& + &   m_0    \| \phi_{1, b, \beta} \|^{2}_{L^2_{\rho_\beta}} b^{\frac{\alpha}{2} -1}, \nonumber
\end{eqnarray}
and
\begin{eqnarray}
	\frac{\partial F_2}{ \partial \beta}  =  \int ( w(\tau) -Q_b )   \partial_\beta \phi_{1, b, \beta} \rho_\beta dy + \int ( w(\tau) -Q_b )   \phi_{1, b, \beta} \partial_\beta  \rho_\beta dy  + \frac{2}{\alpha} m_0  \partial_\beta \| \phi_{1, b, \beta} \|^{2}_{L^2_{\rho_\beta}} b^\frac{\alpha}{2} .\label{partial-F-2-beta}
\end{eqnarray}
We now  claim the following (which will be proved later)
\begin{eqnarray}
	\frac{\partial F_1}{\partial b}(\tau_0,b(\tau_0),\beta(\tau_0)) &=& m_0b^{\frac{\alpha}{2} -1} + o(b^{\frac{\alpha}{2} -1}), \label{estimate-partial-F-1-b}\\
	\frac{\partial F_1}{ \partial  \beta}(\tau_0, b(\tau_0), \beta(\tau_0)) &=&  \begin{array}{rcl}
		- \frac{1}{\beta}  \varepsilon_1(\tau_0) - \| \phi_{1, b, \beta}\|^{2} \| \phi_{0,b, \beta}\|^{-2} \frac{\varepsilon_1(\tau_0)}{4\beta} + o(b^\frac{\alpha}{2}) 
	\end{array}
	,\label{estimate-partial-F-1-beta}\\
	\frac{\partial F_2}{\partial b} (\tau_0, b(\tau_0), \beta(\tau_0)) &=& m_0 \| \phi_{1, b, \beta}\|^2_{L^2_{\rho_\beta}} b^{\frac{\alpha}{2} -1} + o(b^{\frac{\alpha}{2}-1}),\label{estimate-partial-F-2-b}\\
	\frac{\partial F_2}{\partial  \beta}(\tau_0, b(\tau_0), \beta(\tau_0))   &=& -\frac{1}{\beta} \|\phi_{1,b,\beta}\|^2_{L^2_{\rho_\beta}} \varepsilon_1 + o(b^{\frac{\alpha}{2}}). \label{estimate-partial-F-2-beta}
\end{eqnarray}
Indeed, using  estimates (\eqref{estimate-partial-F-1-b}-\eqref{estimate-partial-F-2-beta}) with $(b(\tau_0),\beta(\tau_0))= (b_0,\beta_0)$,   we  derive 
\begin{eqnarray*}
	\text{Det}(\mathbf{J})(\tau_0, b_0,\beta_0)
	= m_0 \|\phi_{1, b_0, \beta_0}\|_{L^2_{\rho_{\beta_0}}}^2 b^{\frac{\alpha}{2}-1} \varepsilon_1 (\tau_0) \left( \frac{\|\phi_{1, b_0, \beta_0}\|_{L^2_{\rho_{\beta_0}}}^2\|\phi_{0, b_0, \beta_0}\|_{L^2_{\rho_{\beta_0}}}^{-2}}{4\beta_0} \right) +o(b^{\alpha-1}).
\end{eqnarray*}
Thus, we get 
$$ \text{Det}(\mathbf{J})(\tau_0, b(\tau_0),\beta(\tau_0)) \ne 0,   $$
provided that $b_0$ is small enough.  Finally, we  apply the implicit function theorem to get  existence of $(b, \beta) \in C([\tau_0,\tau_1],\mathbb{R}^2) \cap C^1((\tau_0,\tau_1],\mathbb{R}^2) $  for some  $\tau_1>\tau_0$  and  $\varepsilon = w - Q_{b}$ satisfying the decomposition \eqref{defi-varepsilon-j} and   the compatibility \eqref{the-compatibility-condition}  for $\tau \in [\tau_0,\tau_1]$. In addition to that, since   $\varepsilon = w - Q_{b}$, $\varepsilon$ evidently solves \eqref{equa-varepsilon-appen}



\medskip
\noindent
Let us now give the details of the computation.\\

- For \eqref{estimate-partial-F-1-b}:

we have 
\begin{eqnarray*}
	\int \frac{1}{2b} \Lambda_y Q_b \left( \|\phi_{1,b,\beta}\|^{-2}_{L^2_{\rho_\beta}} c_{1,0}\phi_{1, b, \beta} +\|\phi_{0,b,\beta}\|^{-2}_{\rho_\beta} \phi_{0, b, \beta} \right) \rho_\beta dy = m_0 b^{\frac{\alpha}{2}-1} + o(b^{\frac{\alpha}{2}-1}).
\end{eqnarray*}
Next,  we  estimate 
$$ \partial_b \| \phi_{1, b,
	\beta}\|_{L^2_{\rho_\beta}}^{-2}
= - \| \phi_{1, b, \beta}\|_{L^2_{\rho_\beta}}^{-4}  \left(2 \int \phi_{\ell, b, \beta} \partial_b \phi_{\ell, b, \beta} \rho_\beta dy \right).$$
From \eqref{estimate-bpartial-b-phi-j-b}, we get
\begin{eqnarray}
	b \partial_b \| \phi_{1, b, \beta}\|_{L^2_{\rho_\beta}}^{-2} = O(b^{1-\frac{\epsilon}{2}}). \label{b-partial-b-phi_l-beta}
\end{eqnarray}
Since  we choose an initial data  $\varepsilon(
\tau_0) = \psi(\tau_0)$ defined as in  
\eqref{defi-initial-vaepsilon-l=1} and satisfying $w(\tau_0) - Q_{b_0} =\varepsilon(\tau_0) $, we  obtain 
that  
\begin{eqnarray*}
	\left| \varepsilon(\tau_0) \right| = \left| w(\tau_0) - Q_b \right| \le C b^{\frac{\alpha}{2}} \frac{(1+y^{4})}{y^\gamma}.
\end{eqnarray*}
Hence, from \eqref{estimate-bpartial-b-phi-j-b}, we infer that
\begin{eqnarray*}
	\left| \int (w - Q_b) \partial_b \phi_{\ell, b, \beta} \rho_\beta dy \right| = o(b^{\frac{\alpha}{2} -1}). 
\end{eqnarray*}
It follows that, 
\begin{eqnarray*}
	\int (w(\tau_0) - Q_b) \partial_b \left(\|\phi_{1,b,\beta}\|^{-2}_{L^2_{\rho_\beta}} c_{1,0}\phi_{1, b, \beta}  \right)  \rho_\beta dy = o(b^{1 -\frac{\epsilon}{2}}).     
\end{eqnarray*}
Similarly, 
\begin{eqnarray*}
	\int (w(\tau_0) - Q_b) \partial_b \left(\|\phi_{0,b,\beta}\|^{-2}_{L^2_{\rho_\beta}}\phi_{0, b, \beta}  \right)  \rho_\beta dy = o(b^{1 -\frac{\epsilon}{2}}).     
\end{eqnarray*}
Finally, by adding all integrals in \eqref{defi-partial-F-1-b}, \eqref{estimate-partial-F-1-b} follows.

\medskip
- For \eqref{estimate-partial-F-1-beta}: from  \eqref{partial-F-1-beta}, we will establish the following estimates
\begin{eqnarray}
	\partial_\beta \|\phi_{1, b, \beta}\|_{L^2_{\rho_\beta}}^{-2} = \left[  \frac{\left(\frac{d}{2} - \gamma +1 \right)}{\beta}  - \frac{d+2}{2\beta} \right]\|\phi_{1, b, \beta}\|_{L^2_{\rho_\beta}}^{-2} + O(b^{1-\frac{\epsilon}{2}})\label{partial-beta-Phi_l-b-2},\\
	\partial_\beta \|\phi_{0, b, \beta}\|_{L^2_{\rho_\beta}}^{-2} = \left[  \frac{\left(\frac{d}{2} - \gamma +1 \right)}{\beta}  - \frac{d+2}{2\beta} \right]\|\phi_{0, b, \beta}\|_{L^2_{\rho_\beta}}^{-2} + O(b^{1-\frac{\epsilon}{2}}).\label{partial-beta-Phi_0-b-2}
\end{eqnarray}
We remark that these estimates are similar, so we only give the proof of \eqref{partial-beta-Phi_l-b-2}. Indeed,  we write 
\begin{eqnarray*}
	\partial_{\beta} \|\phi_{1, b, \beta}\|_{L^2_{\rho_\beta}}^{-2} = -  \|\phi_{1, b, \beta}\|_{L^2_{\rho_\beta}}^{-4} \left(2 \int  \phi_{1, b, \beta} \partial_\beta \phi_{1, b, \beta} \rho_\beta  dy +\int \phi_{1, b, \beta}  \phi_{1, b, \beta} \partial_\beta \rho_\beta dy \right).  
\end{eqnarray*}
From the construction of $\phi_{\ell, b, \beta}$ in Proposition \ref{propo-mathscr-L-b}, we have
\begin{eqnarray}
	\partial_\beta \phi_{1, b,\beta} &=&  \frac{1}{\beta} c_{1,1} (2\beta) (\sqrt{b})^{2 -\gamma} T_1 \left(\xi \right) + \partial_\beta \tilde \phi_{\ell, b,\beta} \nonumber \\
	& = & \frac{1}{\beta}  \phi_{1, b,\beta} + \sum_{j=0}^{\ell -1}\tilde c_j(\beta) \phi_{j,b,\beta} + \tilde \Phi_{1, b, \beta}, \text{ with } \|\tilde \Phi_1\|_{L^2_{\rho_\beta}} \le C b^{1 -\frac{\epsilon}{2}}.\label{decom-partial-beta-phi-l}  
\end{eqnarray}
Then,
\begin{eqnarray}
	\int_{0}^{\infty} 2  \phi_{1, b,\beta}  \partial_\beta \phi_{1, b,\beta} \rho_\beta dy = 2 \left( \frac{1 }{\beta}  \|\phi_{1, b,\beta}\|_{\rho_\beta}^2 + O(b^{1-\frac{\epsilon}{2}}) \right). \label{inte-2phi-l-parti-beta-phi-l}
\end{eqnarray}
For the second integral, we use the identity
\begin{eqnarray}
	\partial_\beta \rho_\beta = \frac{d+2}{2\beta} \rho_\beta - \frac{y^2}{2} \rho_\beta,\label{equality-rho-beta-partial}
\end{eqnarray}
to derive 
$$\int_{0}^{\infty}   \phi_{1, b, \beta}^2 \partial_\beta \rho_\beta dy   = \frac{d+2}{2\beta} \|\phi_{1, b,\beta}\|_{L^2_{\rho_\beta}}^2  - \int_0^\infty \phi_{1, b, \beta} \phi_{1, b, \beta} \frac{y^2}{2} \rho_\beta dy.   $$
Besides that, by Proposition \ref{propo-mathscr-L-b} we have 
$$ \| \phi_{1, b, \beta} - \phi_{1, \infty, \beta} \|_{L^2_{\rho_\beta}} \le C b^{1 -\frac{\epsilon}{2}},  $$
which yields  
\begin{eqnarray}
	\int_0^\infty \phi_{1, b, \beta} \phi_{1, b, \beta} \frac{y^2}{2} \rho_\beta dy = \int_0^\infty \phi_{1, \infty, \beta} \phi_{1, \infty, \beta} \frac{y^2}{2} \rho_\beta dy + O (b^{1 -\frac{\epsilon}{2}}).\label{inte-phi-2-y-2=infinit}
\end{eqnarray}
In addition,  we use $\phi_{\ell, \infty, \beta}$ as in \eqref{phi-i-infty} to get
\begin{eqnarray*}
	\frac{y^2}{2} \phi_{1, \infty, \beta} &=& \frac{y^2}{2} \left\{  a_{1, 1} (2\beta) y^{2 -\gamma}  + a_{1,0} (2\beta)^{\ell -1} y^{-\gamma} \right\}    \\
	& = & \frac{1}{4 \beta} (2\beta)^{2} y^{4 -\gamma}  - \frac{1}{\beta}  (\frac{d}{2} -\gamma +1) (2\beta) y^{2 -\gamma} \\
	& = & \frac{1}{4\beta} \phi_{2, \infty,\beta} - \frac{1}{4\beta} a_{2,1} \phi_{1, \infty, \beta} - \frac{1}{\beta} \left(\frac{d}{2} -\gamma + 1 \right) \phi_{1, \infty,\beta}  \\
	& = & \frac{1}{4\beta} \phi_{2,\infty,\beta}  + \left( \frac{2}{\beta} \left(\frac{d}{2} -\gamma+2 \right) - \frac{1}{\beta}\left(\frac{d}{2} -\gamma+1  \right) \right) \phi_{1, \infty,\beta}.
\end{eqnarray*}
Then
\begin{eqnarray}
	& &\int_0^\infty \phi_{1, \infty, \beta} \phi_{1, \infty, \beta} \frac{y^2}{2} \rho_\beta dy  = \left( \frac{2}{\beta} \left(\frac{d}{2} -\gamma+2 \right) - \frac{1}{\beta}\left(\frac{d}{2} -\gamma+1  \right) \right)  \|\phi_{1,\infty,\beta}\|_{L^2_{\rho_\beta}}^{2} + O(b^{1-\frac{\epsilon}{2}}) \nonumber \\
	&=&  \left( \frac{2}{\beta} \left(\frac{d}{2} -\gamma+2 \right) - \frac{1}{\beta}\left(\frac{d}{2} -\gamma+1  \right) \right)  \|\phi_{1,b,\beta}\|_{L^2_{\rho_\beta}}^{2} +O(b^{1-\frac{\epsilon}{2}}).\label{phi-l-2-infty-y-2}
\end{eqnarray}
This concludes the proof of \eqref{partial-beta-Phi_l-b-2}.

\medskip
Next, we will prove the following
\begin{eqnarray}
	\int_0^\infty \phi_{0,b,\beta} \partial_\beta \phi_{1, b, \beta} \rho_\beta dy = \frac{1}{4\beta} \| \phi_{1, b, \beta}\|^{2}_{L^2_{\rho_\beta}} + O(b^{1-\frac{\epsilon}{2}}).
	\label{int-phi-0-partial0beta-phi-l}
\end{eqnarray}
Indeed, using the orthogonality between $\phi_{0, b, \beta}$ and $ \phi_{1, b, \beta}$ we get
\begin{eqnarray*}
	0 &=& \partial_\beta \int  \phi_{0, b, \beta} \phi_{1, b, \beta} \rho_\beta dy = \int \partial_\beta \phi_{0, b, \beta} \phi_{1, b, \beta} \rho_\beta dy  +  \int \phi_{0, b, \beta}  \partial_\beta \phi_{1, b, \beta} \rho_\beta dy \\
	& + &   \int \phi_{0, b, \beta}   \phi_{1, b, \beta} \partial_\beta \rho_\beta dy \\
	& = & \int \phi_{0, b, \beta}  \partial_\beta \phi_{1, b, \beta} \rho_\beta dy + \int \phi_{0, b, \beta}   \phi_{1, b, \beta} \partial_\beta \rho_\beta dy +O(b^{1-\frac{\epsilon}{2}}).
\end{eqnarray*}
From \eqref{equality-rho-beta-partial}, we obtain
\begin{eqnarray}
	\int \phi_{0, b, \beta}  \partial_\beta \phi_{1, b, \beta} \rho_\beta dy = \int \frac{y^2}{2} \phi_{0, b, \beta}   \phi_{1, b, \beta} \rho_\beta dy + O(b^{1-\frac{\epsilon}{2}}).\label{int-phi-0partial-betaphi-l}
\end{eqnarray}
In addition to that, we have the following identity
\begin{eqnarray}
	\frac{y^2}{2} \phi_{0,\infty, \beta} = \frac{1}{4\beta} \phi_{1,\infty,\beta} + \frac{\frac{d}{2} -\gamma+1}{\beta} \phi_{0, \infty,\beta } ,
\end{eqnarray}
and \eqref{int-phi-0-partial0beta-phi-l} follows. 

- For \eqref{estimate-partial-F-2-b}:  from \eqref{partial-F-2-b}, we have 
\begin{eqnarray}
	\int \frac{\Lambda Q_b}{b} \phi_{1, b, \beta} \rho_\beta dy = o(b^{\frac{\alpha}{2}-1} ),
\end{eqnarray}
from \eqref{phi-i-nu} and the orthogonality between  $\phi_{0,b, \beta}$ and $ \phi_{1, b, \beta}$. Moreover,  \eqref{estimate-bpartial-b-phi-j-b} ensures that 
\begin{eqnarray*}
	\int (w -Q_b) \partial_b \phi_{1, b, \beta} \rho_\beta = o(b^{\frac{\alpha}{2}-1}),
\end{eqnarray*}
and \eqref{b-partial-b-phi_l-beta} implies $$ \frac{2}{\alpha} m_0 \| \phi_{1, b, \beta} \|^{2}_{L^2_{\rho_\beta}} b^{\frac{\alpha}{2}} = o(b^{\frac{\alpha}{2}-1}) .$$
Finally, we get
\begin{eqnarray}
	\frac{\partial F_2}{ \partial b} (\tau_0, b(\tau_0), \beta(\tau_0)) = m_0 \| \phi_{1, b, \beta}\|^2_{L^2_{\rho_\beta}} b^{\frac{\alpha}{2} -1} + o(b^{\frac{\alpha}{2}-1}),
\end{eqnarray}
which concludes \eqref{estimate-partial-F-2-b}.

\medskip
- For \eqref{estimate-partial-F-2-beta}
we use
\begin{eqnarray*}
	\left| w(\tau_0) - Q_b \right| \le C b^{\frac{\alpha}{2}} \frac{(1+y^{4})}{y^\gamma}, 
\end{eqnarray*}
and
\begin{eqnarray*}
	\partial_\beta \phi_{1, b,\beta} & = & \frac{1}{\beta}  \phi_{1, b,\beta} + \tilde \Phi_{1, b, \beta}, \text{ with } \|\tilde \Phi_1\|_{L^2_{\rho_\beta}} \le C b^{1 -\frac{\epsilon}{2}}.
\end{eqnarray*}
Using 
$$  w(\tau_0)  - Q_b = \varepsilon(\tau_0) = \sum_{j=0}^1 \varepsilon_j (\tau_0) \phi_{j, b, \beta} + \varepsilon_-(\tau_0), $$
we infer
\begin{eqnarray*}
	\int ( w(\tau_0) -Q_b )   \partial_\beta \phi_{1, b, \beta} \rho_\beta dy=\varepsilon_\ell(\tau_0) \|\phi_{\ell, b, \beta}\|^2+o(b^{\frac{\alpha}{2}}).
\end{eqnarray*}
It follows that
\begin{equation*}
	\int ( w(\tau_0) -Q_b )   \phi_{1, b, \beta} \partial_\beta  \rho_\beta dy=\frac{d+2}{2\beta}\int ( w(\tau_0) -Q_b )   \phi_{1, b, \beta} \rho_\beta dy -\frac{1}{2}\int ( w(\tau_0) -Q_b )   \phi_{1, b, \beta} y^2 \rho_\beta dy. 
\end{equation*}
Since
$\varepsilon_j = O(b^{\frac{\alpha}{2} +\eta}(\tau_0)), |\varepsilon_-|_{L^2_\rho} \le Cb^{\frac{\alpha}{2} +\tilde \eta}$, we get for the first integral is 
\begin{equation}
	\int ( w(\tau_0) -Q_b )   \phi_{1, b, \beta} \rho_\beta dy=\varepsilon_1(\tau_0) \|\phi_{1, b, \beta}\|^2.
\end{equation}
For the second integral, we use the expansion of $w(\tau_0) -Q_b$ to obtain
\begin{equation}
	\int ( w(\tau_0) -Q_b )   \phi_{1, b, \beta} \frac{y^2}{2} \rho_\beta dy=\varepsilon_0(\tau_0)\int \phi_{0, b, \beta}\phi_{1, b, \beta} \frac{y^2}{2} \rho_\beta dy+\varepsilon_1(\tau_0)\int \phi_{1, b, \beta}^2 \frac{y^2}{2} \rho_\beta dy+o(b^{\frac{\alpha}{2}}).
\end{equation}
In addition, we have that
\begin{eqnarray*}
	\int_0^\infty \phi_{1, b, \beta} \phi_{1, b, \beta} \frac{y^2}{2} \rho_\beta dy = \int_0^\infty \phi_{1, \infty, \beta} \phi_{1, \infty, \beta} \frac{y^2}{2} \rho_\beta dy + O (b^{1 -\frac{\epsilon}{2}})
\end{eqnarray*}
and
\begin{eqnarray*}
	\int_0^\infty \phi_{1, \infty, \beta} \phi_{1, \infty, \beta} \frac{y^2}{2} \rho_\beta dy &=&  \left( \frac{2}{\beta} \left(\frac{d}{2} -\gamma+2 \right) - \frac{1}{\beta}\left(\frac{d}{2} -\gamma+1  \right) \right)  \|\phi_{1,b,\beta}\|_{L^2_{\rho_\beta}}^{2} +O(b^{1-\frac{\epsilon}{2}}).
\end{eqnarray*}
Hence
\begin{equation}
	\int ( w(\tau_0) -Q_b )   \phi_{1, b, \beta} \frac{y^2}{2} \rho_\beta dy= \left( \frac{2}{\beta} \left(\frac{d}{2} -\gamma+2 \right) - \frac{1}{\beta}\left(\frac{d}{2} -\gamma+1  \right) \right)  \|\phi_{1,b,\beta}\|_{L^2_{\rho_\beta}}^{2} +O(b^{1-\frac{\epsilon}{2}}) .  
\end{equation}
It remains to estimate the last term in 
$\frac{ dF_2}{ d  \beta}$, namely, $\frac{2}{\alpha} m_0  \partial_\beta \| \phi_{1, b, \beta} \|^{2}_{L^2_{\rho_\beta}} b^\frac{\alpha}{2}$. Indeed, we have
\begin{equation*}
	\partial_\beta \| \phi_{1, b, \beta} \|^{2}_{L^2_{\rho_\beta}}=2\int\partial_\beta \phi_{1, b, \beta} \phi_{1, b, \beta} \rho_\beta+\int \phi_{1, b, \beta}^2 \partial_\beta \rho_\beta.  
\end{equation*}
Arguing as above, we get
\begin{equation}
	\partial_\beta \| \phi_{1, b, \beta} \|^{2}_{L^2_{\rho_\beta}}=\left(2\frac{1}{\beta}+\frac{d+2}{2\beta}- \left( \frac{2}{\beta} \left(\frac{d}{2} -\gamma+2 \right) - \frac{1}{\beta}\left(\frac{d}{2} -\gamma+1  \right) \right) \right) \|\phi_{1, b,\beta}\|_{\rho_\beta}^2+O(b^{1-\frac{\epsilon}{2}}) 
\end{equation}
Putting the different contributions of $\frac{ dF_2}{ d  \beta}$ together, we arrive at
\begin{equation*}
	\frac{ dF_2}{ d  \beta}=-\frac{1}{\beta}  \|\phi_{1,b,\beta}\|_{L^2_{\rho_\beta}}^{2}\varepsilon_1 + o(b^\frac{\alpha}{2}).
\end{equation*}
as claimed.

\end{proof}

\subsection{The proof of Theorem \ref{theorem-existence-Type-II-blowup}}\label{sub-sec-proof-Theorem-1.1}
In this part, we focus on the proof of Theorem \ref{theorem-existence-Type-II-blowup} which immediately the following result:
\begin{proposition}\label{proposition-trapped-solu-l=1} There exist $A, \eta, \tilde \tau $ such that we can find $\delta \gg \eta, \tilde \eta$ and  $\tau_0(A,\eta, \tilde \eta, \delta) \large $ small enough such that with initial data $\varepsilon(\tau_0)$  defined as in \eqref{defi-initial-vaepsilon-l=1} and $(b,\beta)(\tau_0) = \left(e^{\left(1-\frac{2}{\alpha} \right) \tau_0}, \frac{1}{2} \right)$, the solution $(\varepsilon, b, \beta)$ exists  for all $\tau \ge \tau_0$ and satisfies 
$$ (\varepsilon, b, \beta)(\tau) \in V_1[A,\eta, \tilde \eta](\tau), \, \forall \tau \ge \tau_0.$$
\end{proposition}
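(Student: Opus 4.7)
The plan is a standard bootstrap/trapping argument fed by the a priori estimates in Lemmas \ref{lemma-ODE-finite-mode}, \ref{lemma-L-2-rho-var--}, \ref{lemma-priori-estima-varep--} and \ref{lemma-priori-estimate-outer-part}. Fix $\delta\ll 1$ first, then $A$ large, then $\eta\ll\delta$, $\tilde\eta\ll\eta$ and $\tau_0(A,\delta,\eta,\tilde\eta)$ large enough so that Lemma \ref{lemma-implicit-function} applies. That lemma provides a local-in-time solution $(\varepsilon,b,\beta)$ of \eqref{equa-varepsilon-appen}--\eqref{orthogonal-condition}--\eqref{the-compatibility-condition} issued from the initial data \eqref{defi-initial-vaepsilon-l=1}, lying in $V_1[A,\eta,\tilde\eta](\bar\tau)$ on a right-neighborhood of $\tau_0$. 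I define the maximal trapping time
\[
\tau^*:=\sup\bigl\{\bar\tau\ge\tau_0\ :\ (\varepsilon,b,\beta)(\tau)\in V_1[A,\eta,\tilde\eta](\tau)\ \ \forall\,\tau\in[\tau_0,\bar\tau]\bigr\},
\]
and aim to show $\tau^*=+\infty$ by contradiction.

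Assume $\tau^*<+\infty$. By continuity at least one of the \emph{inequality} bounds in Definition \ref{shrinking-set} must saturate at $\tau^*$, since the equalities \eqref{estimate-varepsilon-1-V-A-s} and \eqref{orthogonal-condition} are built-in definitions of the modulations $b$ and $\beta$ and are preserved automatically through Lemma \ref{lemma-implicit-function} (the Jacobian $\mathbf{J}$ computed there remains nondegenerate along the whole trajectory, because the shrinking-set bounds force $b$ small and $\beta$ close to $1/2$). The strategy is then to use the four a priori lemmas to strictly improve each of the remaining bounds at $\tau^*$: from the ODEs on $(b,\beta)$ obtained by differentiating \eqref{orthogonal-condition}--\eqref{the-compatibility-condition} in time, Lemma \ref{lemma-ODE-finite-mode} places $b(\tau^*)$ strictly inside the sandwich \eqref{shrin-king-set-btau} and yields $|\beta(\tau^*)-1/2|\le \tfrac{A}{2}I^{\eta}(\tau_0)$; Lemma \ref{lemma-L-2-rho-var--} gives $\|\varepsilon_-(\tau^*)\|_{L^2_{\rho_\beta}}\le \tfrac{A^2}{2}b^{\alpha/2+\eta}(\tau^*)$; Lemma \ref{lemma-priori-estima-varep--} strictly improves the weighted pointwise bound on $\varepsilon_-$ on $[0,b^{-\tilde\eta}]$; and Lemma \ref{lemma-priori-estimate-outer-part} strictly improves the bound on $\|y\,\varepsilon_e\|_{L^\infty}$. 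Each of these gains a multiplicative factor of order $1/2$, which is genuine thanks to the hierarchy $\delta\gg\eta\gg\tilde\eta$ and the size of $A$ and $\tau_0$.

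Combining these strict improvements with continuity in $\tau$ and a second application of Lemma \ref{lemma-implicit-function} \emph{at the time} $\tau^*$ (which produces a unique extension of $(\varepsilon,b,\beta)$ on $[\tau^*,\tau^*+\sigma]$ still satisfying \eqref{orthogonal-condition}--\eqref{the-compatibility-condition}), one infers that $(\varepsilon,b,\beta)(\tau)\in V_1[A,\eta,\tilde\eta](\tau)$ on $[\tau_0,\tau^*+\sigma']$ for some $\sigma'>0$, contradicting the definition of $\tau^*$. Hence $\tau^*=+\infty$, which is the conclusion of Proposition \ref{proposition-trapped-solu-l=1}; the renormalisation in time then yields Theorem \ref{theorem-existence-Type-II-blowup} as outlined in the Strategy section.

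The delicate step is the improvement of the pointwise bound on $\varepsilon_-$: because the eigenfunctions $\phi_{j,b,\beta}$ are singular as $y\to 0^+$, the energy control of $\|\varepsilon_-\|_{L^2_{\rho_\beta}}$ alone cannot yield the pointwise bound, so one must combine the maximum principle on the inner region $[0,b^{\eta/4}]$ (following the idea of \cite{BSIMRN19}) with the Poisson-kernel pointwise semigroup estimates on $[b^{\eta/4},b^{-\tilde\eta}]$ from Section \ref{secion-poisson-kernel}. The second subtle point is propagating the Jacobian nondegeneracy of Lemma \ref{lemma-implicit-function} from $\tau_0$ along the whole trajectory, which is precisely why every bootstrap estimate must come with the factor $1/2$ of safety margin so that the modulations $(b,\beta)$ remain in the regime where the implicit function theorem is uniformly valid.
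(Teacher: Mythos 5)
Your proposal is correct and follows essentially the same route as the paper: define the maximal trapping time $\tau^*$, assume $\tau^*<+\infty$, invoke Lemmas \ref{lemma-ODE-finite-mode}, \ref{lemma-L-2-rho-var--}, \ref{lemma-priori-estima-varep--} and \ref{lemma-priori-estimate-outer-part} to improve every inequality bound of $V_1[A,\eta,\tilde\eta]$ by a factor $\tfrac12$ (with the bounds on $b$ and $\beta$ recovered by integrating the modulation ODEs, exactly as in \eqref{estimate-beta-tau-*-} and \eqref{estimate-Psi-tau-*-1}), and conclude by continuity that the solution stays trapped slightly beyond $\tau^*$, a contradiction. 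Your additional remarks on propagating the nondegeneracy of the modulation Jacobian and on the inner/outer splitting for the pointwise control of $\varepsilon_-$ are consistent with how the paper handles these points in Lemma \ref{lemma-implicit-function} and Lemma \ref{lemma-priori-estima-varep--}.
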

\begin{proof}

Let us define $\tau^* $ by
\begin{equation}\label{defi-tau-*}
	\tau^* = \sup \{ \tau_1 \ge \tau_0 \text{ such that } (\varepsilon, b, \beta)(\tau) \in V_1[A, \eta, \tilde \eta](\tau) , \forall \tau \in [\tau_0,\tau_1]  \}.
\end{equation}
By contradiction we suppose that $\tau^* < +\infty$. Lemmas \ref{lemma-L-2-rho-var--}, \ref{lemma-priori-estima-varep--}  and  \ref{lemma-priori-estimate-outer-part}, the bounds in $V_1[A, \eta,\tilde \eta](\tau^*)$ involving $\|\varepsilon\|_{L^2_{\rho_{\beta(\tau)}}}$, $\varepsilon_-$ and $\varepsilon_e$ are improved by a factor $\frac{1}{2}$. In addition to that the improvement for $b$ and $\beta$ comes from Lemma \ref{lemma-ODE-finite-mode}. Indeed, we have
\begin{eqnarray}
	\left| \beta(\tau^*) - \beta(\tau_0)  \right| \lesssim A \int_{\tau_0}^{\tau^*}b^{4\eta}(\tau') d\tau' \le \frac{A}{2} b^\eta(\tau_0),  \label{estimate-beta-tau-*-} 
\end{eqnarray}
provided that $\tau_0$ is large enough.
For the bound on  $b(\tau)$, we introduce
\begin{equation}\label{defi-Psi}
	\Psi(\tau)  = b(\tau) \exp\left(\left(\frac{2}{\alpha}-1 \right) \left( \int_{\tau_0}^\tau 2\beta(\tau') d\tau' +\tau_0 \right)  \right) \text{ and } \Psi(\tau_0) =1,
\end{equation}
and from  \eqref{ODE-b-tau-proposition},   we get
\begin{eqnarray}
	\left| \Psi(\tau^*) -  1  \right| \lesssim \int_{\tau_0}^{\tau^*} \left| \Psi(\tau') b^{4\eta}(\tau') \right| d\tau' \le \frac{1}{10},\label{estimate-Psi-tau-*-1}
\end{eqnarray}
provided that $ \tau_0 $ large enough. Thus,  the bound of $ b$ in the shrinking set is improved by the factor $\frac{1}{2}$.  Besides that, by continuity of the solution,  there exists $\nu >0$ small such that $(\varepsilon, b, \beta)(\tau) \in V_1[A, \eta, \tilde \eta](\tau), \forall \tau \in [\tau^*, \tau^*+\nu]$ which contradicts to $\tau^*$'s definition. 
\end{proof}


Now, we aim to give a proof of  Theorem \ref{theorem-existence-Type-II-blowup}\label{proof-Theorem-1}: Let us consider suitable constants such that Proposition \ref{proposition-trapped-solu-l=1} holds that  
$(\varepsilon, b,\beta) \in V[A,\eta,\tilde \eta](\tau)$ for all $\tau > \tau_0$. Next, we derive the laws of $b $ and $\beta$ as follows:

(i): The law of $b(\tau)$: Let us introduce 
$$ \Psi(\tau)  = b(\tau) \exp{\left(\left( \frac{2}{\alpha} - 1  \right) \left[ \int_{\tau_0}^\tau 2\beta(\tau')  + \tau_0\right] \right)}, \tau \in [\tau_0, +\infty), \text{ with } \Psi(\tau_0) = 1.$$
From Lemma \ref{lemma-ODE-finite-mode},    we have
$$  \left|  \Psi'(\tau) \right| \lesssim |\Psi(\tau)| b^{4\eta}(\tau), \forall \tau \ge \tau_0, $$
since $(\varepsilon, b, \beta) \in V_1[A, \eta, \tilde \eta](\tau)$ for all $\tau > \tau_0$, we get
$$ \left| \Psi(\tau)  \right| \le C, \text{ and } b^{4 \eta }(\tau) \lesssim I^\eta(\tau) \text{ where } I(\tau) \text{ defined in } \eqref{defi-I-tau},$$
which yields
$$ \Psi(\tau) =   \Psi(\tau_0) + \int_{\tau_0}^\infty \Psi'(\zeta) d\zeta - \int_\tau^\infty \Psi'(\zeta) d\zeta  =\Psi_\infty + O(I^\eta(\tau)) \text{ as } \tau \to +\infty,   $$
with  $\Psi_\infty = \Psi(\tau_0) + \int_{\tau_0}^\infty \Psi'(\zeta) d\zeta = 1 +\int_{\tau_0}^\infty \Psi'(\zeta) d\zeta$.
Thus, we get 
\begin{equation}\label{equavalent-on-b-tau}
b(\tau) =    \Psi_\infty  \exp{ \left( \left(  1 -\frac{2}{\alpha}   \right)\left( \int_{\tau_0}^\tau 2\beta(\zeta) d\zeta + \tau_0 \right) \right)} \left[1 + O\left(I^\eta(\tau)\right) \right] \text{ as } \tau \to +\infty.  
\end{equation}

(ii) Renormalized flow $\beta(\tau)$ and derivation the law of  $\mu(t)$ defined in  \eqref{similarity-variable}: Using Lemma \ref{lemma-ODE-finite-mode} again, we have
$$ |\beta'(\tau)| \lesssim b^{4\eta}(\tau),  $$
then we deduce 
$$ \beta(\tau ) = \beta(\tau_0) + \int_{\tau_0}^\infty\beta'(\tau') d\tau' - \int_\tau^\infty \beta '(\tau') d\tau' = \beta_\infty + \int_\tau^\infty \beta'(\tau') d\tau' = \beta_\infty + O(I^{\eta}(\tau)), \text{ as } \tau \to +\infty,     $$
with  
$$ \beta_\infty =  \frac{1}{2} + \int_0^\infty \beta'(\zeta) d\zeta.$$
We introduce the  renormalized time $\tilde \tau$ by 
\begin{equation}\label{renormalisation-time}
\tilde \tau = 2\beta(\tau) \tau,
\end{equation}
which is an invertible function of $\tau$. Indeed, 
\begin{equation}\label{intertible-tau=-tilde-tau}
\tau  =  (2\beta_\infty)^{-1} \tilde \tau (1 + O(I^\eta(\tilde \tau))), \text{ as } \tilde \tau \to +\infty. 
\end{equation}
We shall remark that we will make an abuse of notation
$\mu(\tau)=\mu(\tilde\tau)=\mu(t)$.
The relation
$$ \frac{d \mu}{d \tilde \tau} = \frac{d \mu}{ d  \tau } \frac{d \tau}{d \tilde  \tau},     $$
implies, from the fact  that $ \mu_\tau = -2\beta \mu$ and \eqref{renormalisation-time}
$$  \frac{d \mu}{ d \tilde \tau } = - \mu(\tilde \tau) \left[1 + O(I^{\eta}(\tilde \tau)) \right] \text{ as } \tilde \tau \to +\infty.   $$
Thus, we get 
$$ \mu(\tilde \tau) = e^{-\tilde \tau} (1+ O( I^{\eta}(\tilde \tau))), \text{ as } \tilde \tau  \to +\infty.$$
In addition,  we derive from \eqref{similarity-variable} that 
\begin{eqnarray*}
\frac{d \tilde \tau}{ d t } = \frac{d \tilde \tau}{ d \tau} \frac{d \tau }{d t} = 2 \beta_\infty e^{\tilde \tau(t)} (1 + O(I^{ \eta}(\tilde \tau(t)))),
\end{eqnarray*}
which implies 
$$ \tilde \tau (t) = - \ln(2\beta_\infty(T-t)) (1 + O((T-t)^{\tilde \eta})), \text{ as } t \to T, $$
for  some $T=T(\tau_0) >0$. From   \eqref{intertible-tau=-tilde-tau}, we have
\begin{eqnarray*}
\tau(t) = (2\beta_\infty)^{-1}  \tilde \tau (t) (1 + I^{\tilde \eta}(\tilde \tau(t))) =  -(2 \beta_\infty)^{-1} \ln(2\beta_\infty(T-t)) (1 + O((T-t)^{\tilde \eta})), \text{ as } t \to T.
\end{eqnarray*}
Substituting $\mu(t)$'s formula, we get  
$$ \mu(t) = \mu(\tilde \tau(t)) = 2 \beta_\infty (T-t) \left( 1  + O((T-t)^{\tilde \eta}) \right) \text{ as } t \to T.   $$
Recall that  
$$ \int_{\tau_0}^\tau 2 \beta(\tau')d\tau' = 2\beta_\infty \tau (1 + O(\tau^{-1})) \text{ as } \tau \to +\infty,      $$
from which we deduce, with the use of \eqref{equavalent-on-b-tau}, that 
\begin{eqnarray*}
b(t)  &= &  \Psi_\infty  \exp{ \left(\left(   1 -\frac{2}{\alpha}  \right)\left( \int_{\tau_0}^\tau 2\beta(\zeta)d\zeta + \tau_0\right) \right)} (1 + O(I^\eta)(\tau(t)))  \\
& = & \Psi_\infty  \left( 2\beta_\infty \right)^{\frac{2}{\alpha} -1} (T-t)^{\frac{2}{\alpha} -1} \left( 1+O(\left|\ln(T-t) \right|^{-1}) \right) \text{ as} t \to T.
\end{eqnarray*}
Introducing  $\lambda(t) =\mu(t) b(t)$ which satisfies  
\begin{equation}\label{asymptotic-lambda-t}
\lambda(t) = C(u_0) (T-t)^{\frac{2  }{\alpha }} (1 + O(\left|\ln(T-t) \right|^{-1}) ) \text{ as } t \to T.  
\end{equation}
Finally,  the conclusion of the Theorem \ref{theorem-existence-Type-II-blowup} immediately follows \eqref{similarity-variable}, \eqref{defi-varep-Q-w}, the fact $(\varepsilon, b, \beta) \in V_1[A, \eta, \tilde \eta](\tau) $ for all   $\tau > \tau_0$, and \eqref{asymptotic-lambda-t}.  $\square$

\section{Finite dimensional system}

In this part, we study the  dynamics of finite modes $\varepsilon_j(\tau)$ and the modulation parameters $b$ and $\beta$.
\begin{lemma}\label{lemma-ODE-finite-mode}     Consider $A \ge 1, \eta > 0, \tilde \eta >0$, there exists  $ \tau_2(A, \eta, \tilde \eta)$ such that for all $\tau_0 \ge \tau_2$,   the following holds: Assume that $(\varepsilon, b, \beta)(\tau)  \in V_1[A, \eta, \tilde \eta](\tau),  \forall \tau \in [\tau_0,\tau_1]$, for some $\tau_1 > \tau_0$, then,  we have 
\begin{itemize}
	\item[$(i)$] The dominating mode $\varepsilon_1$ satisfies 
	\begin{equation}\label{system-vare-0-1}
		\left\{ \begin{array}{rcl}
			& & \partial_\tau \varepsilon_1  -\Bigg[2\beta\left(  \frac{\alpha}{2} - 1  \right)\Bigg]   \varepsilon_1 =  O\left(  b^{\frac{\alpha}{2} + 4 \eta}(\tau) [ |\beta'| + \left| \frac{b_\tau}{b} \right| +1]\right),\\
			& & \partial_\tau \varepsilon_1 - \Bigg[2\beta\left( \frac{\alpha}{2}    \right)\Bigg]   \varepsilon_1  + m_0 \left( \frac{b_\tau}{b} -2\beta \right)b^\frac{\alpha}{2} =  O\left(  b^{\frac{\alpha}{2} + 4 \eta}(\tau) [ |\beta'| + \left| \frac{b_\tau}{b} \right| +1]\right),
		\end{array}
		\right.
	\end{equation}
	for all $\tau \in [\tau_0,\tau_1]$.
	\item[$(iii)$] For the $b$ and $\beta$, we obtain
	\begin{eqnarray}
		\left|  \beta'(\tau)\right| \le CA b^{4\eta}(\tau), \label{estimate-beta-derive-tau}
	\end{eqnarray}
	and 
	\begin{equation}\label{ODE-b-tau-proposition}
		\left|  \frac{b'(\tau)}{b(\tau)}  - 2\beta  \left(  1  - \frac{2 }{\alpha}\right)  \right|   \le  C A b^{4\eta}(\tau),
	\end{equation}
	for all $ \tau \in \left( \tau_0, \tau_1\right)   $.
\end{itemize}
\end{lemma}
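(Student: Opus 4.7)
The plan is to derive \eqref{system-vare-0-1} by projecting the evolution equation \eqref{equa-varepsilon-appen} onto $\phi_{0,b,\beta}$ and $\phi_{1,b,\beta}$, and then to deduce part (iii) by combining these two ODEs with the compatibility condition \eqref{the-compatibility-condition}. Concretely, I will differentiate in $\tau$ the two equivalent representations
$$ \varepsilon_1 = c_{1,0}\|\phi_{1,b,\beta}\|^{-2}_{L^2_{\rho_\beta}}\langle\varepsilon,\phi_{1,b,\beta}\rangle_{L^2_{\rho_\beta}} = -\|\phi_{0,b,\beta}\|^{-2}_{L^2_{\rho_\beta}}\langle\varepsilon,\phi_{0,b,\beta}\rangle_{L^2_{\rho_\beta}} $$
(the identity follows from the orthogonality \eqref{orthogonal-condition}), substitute \eqref{equa-varepsilon-appen}, and use the self-adjointness of $\mathscr{L}_b$ in $L^2_{\rho_\beta}$ together with $\mathscr{L}_b\phi_{j,b,\beta} = \lambda_{j,b,\beta}\phi_{j,b,\beta}$. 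The linear contribution becomes $\lambda_{j,b,\beta}\varepsilon_1 = 2\beta(\alpha/2 - j)\varepsilon_1 + O(b^{\alpha/2 + 1 - \epsilon/2})$ by \eqref{lambda_i-nu}--\eqref{estimat-tilde-lambda-b-beta-pro} combined with the compatibility, which explains the $2\beta(\alpha/2-1)\varepsilon_1$ and $2\beta(\alpha/2)\varepsilon_1$ terms in \eqref{system-vare-0-1}.

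Next I would handle the remaining contributions. For the source $\Phi = \tfrac{1}{2}\Lambda_y Q_b[b_\tau/b - 2\beta]$, the definition of $m_0$ from \eqref{decompose-Phi} gives $\langle \Phi,\phi_{0,b,\beta}\rangle_{L^2_{\rho_\beta}}/\|\phi_{0,b,\beta}\|^2_{L^2_{\rho_\beta}} = m_0 b^{\alpha/2}[b_\tau/b - 2\beta](1+o(1))$ while $\langle\Phi,\phi_{1,b,\beta}\rangle_{L^2_{\rho_\beta}}$ is strictly lower-order; this produces the explicit source $m_0(b_\tau/b - 2\beta)b^{\alpha/2}$ in the second equation of \eqref{system-vare-0-1}. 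The nonlinear term $B(\varepsilon)$ is bounded via Lemma \ref{lemma-rough-estimate-bounds in shrinking-set} together with the pointwise estimate \eqref{estimate-partial-y-k-phi-y-point-wise}: from $|\varepsilon| \lesssim b^{\alpha/2+\tilde\eta}\langle y\rangle^4/y^\gamma$ and the Gaussian weight $\rho_\beta$ one obtains $|\langle B(\varepsilon),\phi_{j,b,\beta}\rangle_{L^2_{\rho_\beta}}| \lesssim b^{\alpha + c\tilde\eta}$, well within the error budget. Finally, the corrections coming from $\partial_\tau(\|\phi_{j,b,\beta}\|^{-2})$, $\partial_\tau\phi_{j,b,\beta}$ and $\partial_\tau\rho_\beta$ all produce terms of the form $(b_\tau\partial_b + \beta'\partial_\beta)(\cdot)$ acting on $\varepsilon$; using \eqref{estimate-partial-y-k-b-partial-b-phi-y-point-wise}--\eqref{estima-partial-y-k-partial-b-b-tilde-phi-b} together with $\|\varepsilon_-\|_{L^2_{\rho_\beta}} \lesssim A^2 b^{\alpha/2+\eta}$ yields contributions of size $b^{\alpha/2+4\eta}(|b_\tau/b| + |\beta'|)$, which explains the factor $[|\beta'| + |b_\tau/b| + 1]$ on the right-hand sides of \eqref{system-vare-0-1}.

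For part (iii), I would use the compatibility $\varepsilon_1 = -\tfrac{2}{\alpha}m_0 b^{\alpha/2}$, giving $\partial_\tau\varepsilon_1 = -m_0 b^{\alpha/2-1} b_\tau$. Subtracting the two equations of \eqref{system-vare-0-1} eliminates $\partial_\tau\varepsilon_1$ and, after a second use of the compatibility on the left, produces the identity $m_0(b_\tau/b - 2\beta(1-2/\alpha))b^{\alpha/2} = O(b^{\alpha/2+4\eta}(|\beta'| + |b_\tau/b| + 1))$; dividing by $b^{\alpha/2}$ and bootstrapping on the small factor $|b_\tau/b|$ on the right then yields \eqref{ODE-b-tau-proposition}. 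Substituting this control back into either equation of \eqref{system-vare-0-1} isolates $\beta'$, the nondegeneracy of the resulting $2\times 2$ linear system being exactly the Jacobian computation \eqref{computation-Det-J-b-0}, which gives \eqref{estimate-beta-derive-tau}. The main obstacle is the bootstrap argument ensuring that $|b_\tau/b|$ and $|\beta'|$ remain $O(1)$ so that the error terms involving them are genuinely subleading; a secondary technical point is the careful identification of $m_0$ and the subleading character of the $\phi_{1,b,\beta}$-projection of $\Phi$, both resting on the fine structure of the eigenbasis from Proposition \ref{propo-mathscr-L-b}.
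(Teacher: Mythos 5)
Your proposal is correct and follows essentially the same route as the paper: differentiating the two representations of $\varepsilon_1$ coming from \eqref{orthogonal-condition}, projecting \eqref{equa-varepsilon-appen} onto $\phi_{0,b,\beta}$ and $\phi_{1,b,\beta}$ via self-adjointness, isolating the $m_0\left(\frac{b_\tau}{b}-2\beta\right)b^{\alpha/2}$ source from $\Phi$, and controlling $B(\varepsilon)$ and the $\partial_\tau$-corrections of the norms, eigenfunctions and weight exactly as in the paper's $\tilde K_0,\tilde K_1$ computations. Your derivation of part (iii) — subtracting the two ODEs together with the compatibility condition and invoking the nondegeneracy of the resulting $2\times 2$ system in $(b'/b,\beta')$ — is in fact slightly more explicit than the paper's terse ``follows from item (i)''.
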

\begin{proof}
Let us consider $(\varepsilon, b, \beta)(\tau) \in V_1[A, \eta, \tilde \eta](\tau), \forall \tau [\tau_0, \tau_1] $ and $\varepsilon (\tau)$  decomposed as in \eqref{decompose-varepsilon-ell=1}, and we also recall that
\begin{equation}\label{defi-varepsilon-j-full}
	\varepsilon_j = \|\phi_{j,b,\beta}\|^{-2}_{L^2_{\rho_{\beta}}} \langle \varepsilon, \phi_{j,b,\beta} \rangle_{L^2_{\rho_\beta}}.
\end{equation}
Then, we  obtain from \eqref{orthogonal-condition} that 
\begin{equation}\label{system-varepsilon-1}
	\left\{ \begin{array}{rcl}
		\varepsilon_1 (\tau) &=& c_{1,0} \|\phi_{\ell, b, \beta}\|^{-2}_{L^2_{\rho_\beta}} \langle \varepsilon, \phi_{1, b, \beta} \rangle_{L^2_{\rho_\beta}},\\
		\varepsilon_1(\tau) & = & - \|\phi_{0, b, \beta}\|^{-2}_{L^2_{\rho_\beta}} \langle \varepsilon, \phi_{0, b, \beta} \rangle_{L^2_{\rho_\beta}}.
	\end{array}
	\right.    
\end{equation}
By taking $\tau$-derivative of  the second equation of the above  system, we get 
\begin{eqnarray}
	- \partial_\tau \varepsilon_1  &=& \partial_\tau \| \phi_{0,b,\beta}\|^{-2}_{L^2_{\rho_\beta}} \langle \varepsilon, \phi_{0, b, \beta} \rangle_{L^2_{\rho_\beta}} + \| \phi_{0,b,\beta}\|^{-2}_{L^2_{\rho_\beta}} \langle  \partial_\tau \varepsilon, \phi_{0,b,\beta}\rangle_{L^2_{\rho_\beta}} + \| \phi_{0,b,\beta}\|^{-2}_{L^2_{\rho_\beta}} \langle \varepsilon, \partial_\tau \phi_{0, b, \beta} \rangle_{L^2_{\rho_\beta}} \nonumber\\
	& + & \| \phi_{0,b,\beta}\|^{-2}_{L^2_{\rho_\beta}} \left\langle \varepsilon, \frac{\partial_\tau \rho_\beta}{\rho_\beta}\right\rangle_{L^2_{\rho_\beta}},\label{equality-partial-tau-varphi_j}
\end{eqnarray}
where $\rho_\beta$ defined as in \eqref{defi-rho-y}. A direct computation gives
\begin{eqnarray*}
	&&\partial_\tau \| \phi_{0, b, \beta}\|^{-2}_{L^2_{\rho_\beta}} = \partial_\tau\frac{1}{\int_{\R_+} \phi_{0,b,\beta}^2 \rho_\beta dy  } = - \frac{ 2\int_{\R_+} \phi_{0, b, \beta} \partial_\tau \phi_{0, b, \beta} \rho_\beta dy + \int_{\R_+} \phi_{0, b, \beta}  \phi_{0, b, \beta} \partial_\tau \rho_\beta dy }{\left(\int_{\R_+} \phi_{0, b,\beta}^2 \rho_\beta dy\right)^{2}} \\
	& = & - \| \phi_{0,b,\beta}\|^{-4}_{L^2_{\rho_\beta}} \left(2\frac{b'}{b} \langle \phi_{0, b, \beta}, b \partial_b \phi_{0, b, \beta} \rangle_{L^2_{\rho_\beta}} + 2 \beta'\langle \phi_{0, b, \beta},  \partial_\beta \phi_{0, b, \beta} \rangle_{L^2_{\rho_\beta}}    \right.  \\
	&+& \left.  \beta'\left\langle \phi_{0, b, \beta},   \phi_{0, b, \beta} \left( \frac{d+2}{2\beta} - \frac{y^2}{2} \right) \right\rangle_{L^2_{\rho_\beta}}       \right),
\end{eqnarray*}
and 
\begin{eqnarray*}
	\langle \varepsilon, \partial_\tau \phi_{0,b,\beta}  \rangle_{L^2_{\rho_\beta}} &=& \frac{b'}{b} \langle \varepsilon, b \partial_b \phi_{0, b, \beta} \rangle_{L^2_{\rho_\beta}} + \beta'  \langle \varepsilon,  \partial_\beta \phi_{0, b, \beta} \rangle_{L^2_{\rho_\beta}},\\
	\left\langle \varepsilon, \phi_{0, b,\beta}\frac{\partial_\tau \rho_\beta}{\rho_\beta}\right\rangle_{L^2_{\rho_\beta}} & = & \beta'\left\langle \varepsilon,   \phi_{0, b, \beta} \left( \frac{d+2}{2\beta} - \frac{y^2}{2}\right) \right\rangle_{L^2_{\rho_\beta}}.
\end{eqnarray*}
We plug these equalities  into \eqref{equality-partial-tau-varphi_j} to derive 
\begin{eqnarray}
	-\partial_\tau \varepsilon_1 =  \|\phi_{0, b, \beta}\|^{-2}_{L^2_{\rho_\beta}}\langle \partial_\tau \varepsilon,   \phi_{0, b, \beta} \rangle_{L^2_{\rho_\beta}} + \tilde K_0,
\end{eqnarray}
where
\begin{eqnarray}
	\tilde K_0 & = &  
	- \| \phi_{0, b, \beta}\|_{L^2_{\rho_\beta}}^{-4} \langle \varepsilon, \phi_{0,b, \beta}\rangle_{L^2_{\rho_\beta}}   \left\{ 2\frac{b'}{b} \langle \phi_{0, b, \beta}, b \partial_b \phi_{0, b, \beta} \rangle_{L^2_{\rho_\beta}} +2 \beta'  \langle \phi_{0, b, \beta},  \partial_\beta \phi_{0, b, \beta} \rangle_{L^2_{\rho_\beta}}  \right.\label{defi-tilde-K-j} \\
	&+& \left.  \beta'\left\langle \phi_{0, b, \beta},   \phi_{0, b, \beta} \left( \frac{d+2}{2\beta} - \frac{y^2}{2}\right) \right\rangle_{L^2_{\rho_\beta}} \right\} +\| \phi_{0, b, \beta}\|_{L^2_{\rho_\beta}}^{-2}\left\{ \frac{b'}{b} \langle \varepsilon, b \partial_b \phi_{0, b, \beta} \rangle_{L^2_{\rho_\beta}}  \right. \nonumber\\
	& + &  \left.  \beta'  \langle \varepsilon,  \partial_\beta \phi_{0, b, \beta} \rangle_{L^2_{\rho_\beta}} +  \beta'\left\langle \varepsilon,   \phi_{0, b, \beta} \left( \frac{d+2}{2\beta} - \frac{y^2}{2}\right) \right\rangle_{L^2_{\rho_\beta}} \right\} \nonumber.
\end{eqnarray}
Similarly, we derive from the first one in \eqref{system-varepsilon-1} that 

$$   c_{1,0}^{-1} \partial_\tau \varepsilon_1  = \|\phi_{1, b, \beta}\|^{-2}_{L^2_{\rho_\beta}}\langle \partial_\tau \varepsilon,   \phi_{1, b, \beta} \rangle_{L^2_{\rho_\beta}} + \tilde K_1, $$
where 
\begin{eqnarray}
	\tilde K_1 & = &  
	- \| \phi_{1, b, \beta}\|_{L^2_{\rho_\beta}}^{-4} \langle \varepsilon, \phi_{1,b, \beta}\rangle_{L^2_{\rho_\beta}}   \left\{ 2\frac{b'}{b} \langle \phi_{1, b, \beta}, b \partial_b \phi_{1, b, \beta} \rangle_{L^2_{\rho_\beta}} +2 \beta'  \langle \phi_{1, b, \beta},  \partial_\beta \phi_{1, b, \beta} \rangle_{L^2_{\rho_\beta}}  \right.\label{defi-tilde-K-1} \\
	&+& \left.  \beta'\left\langle \phi_{1, b, \beta},   \phi_{1, b, \beta} \left( \frac{d+2}{2\beta} - \frac{y^2}{2}\right) \right\rangle_{L^2_{\rho_\beta}} \right\} +\| \phi_{1, b, \beta}\|_{L^2_{\rho_\beta}}^{-2}\left\{ \frac{b'}{b} \langle \varepsilon, b \partial_b \phi_{1, b, \beta} \rangle_{L^2_{\rho_\beta}}  \right. \nonumber\\
	& + &  \left.  \beta'  \langle \varepsilon,  \partial_\beta \phi_{1, b, \beta} \rangle_{L^2_{\rho_\beta}} +  \beta'\left\langle \varepsilon,   \phi_{1, b, \beta} \left( \frac{d+2}{2\beta} - \frac{y^2}{2}\right) \right\rangle_{L^2_{\rho_\beta}} \right\} \nonumber.
\end{eqnarray}
We have the following system 
\begin{equation}\label{system-partial-tau-varepsilon-ell}
	\left\{ \begin{array}{rcl}
		c_{1, 0}^{-1} \partial_\tau \varepsilon_1 (\tau) &=&  \|\phi_{1, b, \beta}\|^{-2}_{L^2_{\rho_\beta}}\langle \partial_\tau \varepsilon,   \phi_{1, b, \beta} \rangle_{L^2_{\rho_\beta}} + \tilde K_1, \\
		- \partial_\tau \varepsilon_1 (\tau) &=&  \|\phi_{0, b, \beta}\|^{-2}_{L^2_{\rho_\beta}}\langle \partial_\tau \varepsilon,   \phi_{0, b, \beta} \rangle_{L^2_{\rho_\beta}} + \tilde K_0.
	\end{array}
	\right.
\end{equation}
Since   $\varepsilon$ solves   \eqref{equa-varepsilon-appen}, we obtain  
\begin{eqnarray*}
	\langle \partial_\tau \varepsilon,   \phi_{0, b, \beta} \rangle_{L^2_{\rho_\beta}} = \langle \mathscr{L}_b\varepsilon ,\phi_{0, b, \beta} \rangle_{L^2_{\rho_\beta}} + \langle B(\varepsilon),\phi_{0, b, \beta} \rangle_{L^2_{\rho_\beta}} + \langle \Phi, \phi_{0, b, \beta}\rangle_{L^2_{\rho_\beta}}, 
\end{eqnarray*}
which implies 
\begin{equation}\label{ODE-varep-j-rough}
	\left\{ \begin{array}{rcl}
		c_{1,0}^{-1} \partial_\tau \varepsilon_1 &=& \|\phi_{1,b,\beta}\|^{-2}_{L^2_{\rho_\beta}} \left\{ \langle \mathscr{L}_b\varepsilon ,\phi_{1, b, \beta} \rangle_{L^2_{\rho_\beta}} + \langle B(\varepsilon),\phi_{1, b, \beta} \rangle_{L^2_{\rho_\beta}} + \langle \Phi, \phi_{1, b, \beta}\rangle_{L^2_{\rho_\beta}} \right\}+ \tilde K_1\\
		-\partial_\tau \varepsilon_1 & = &   \|\phi_{0,b,\beta}\|^{-2}_{L^2_{\rho_\beta}} \left\{ \langle \mathscr{L}_b\varepsilon ,\phi_{0,b, \beta} \rangle_{L^2_{\rho_\beta}} + \langle B(\varepsilon),\phi_{0,b, \beta} \rangle_{L^2_{\rho_\beta}} + \langle \Phi, \phi_{0,b, \beta}\rangle_{L^2_{\rho_\beta}} \right\}+ \tilde K_0
	\end{array}
	\right..
\end{equation}
We only estimate  all  terms  of the first equation in  \eqref{ODE-varep-j-rough}, the rest is left to the reader.   

- \textit{ For $\langle \mathscr{L}_b \varepsilon, \phi_{j,b, \beta} \rangle_{L^2_{\rho_\beta}}$}: Using the fact that  $\mathscr{L}_b$ is self-adjoint and the special decomposition \eqref{defi-varepsilon-j}, we have  
\begin{eqnarray}
	\langle \mathscr{L}_b \varepsilon, \phi_{1,b, \beta}  \rangle_{L^2_{\rho_\beta}} = \langle  \varepsilon, \mathscr{L}_b \phi_{1,b, \beta}  \rangle_{L^2_{\rho_\beta}}  = \lambda_{1, b,  \beta}   \|\phi_{1, b, \beta}\|^2_{L^2_{\rho_\beta}} 
	\frac{\varepsilon_1}{c_{1, 0}}  
	\label{mathscr-L-b-epsilon-0}
\end{eqnarray}
where 
\begin{eqnarray*}
	\lambda_{1, b, \beta} = 2\beta \left( \frac{\alpha}{2} - 1\right) + \tilde \lambda_{1,b,\beta},  \text{ with } \left|\tilde \lambda_{1, b, \beta}\right| \lesssim  b^{ 1 -\frac{\epsilon}{2} }.
\end{eqnarray*}

- \textit{For   $\langle B(\varepsilon), \phi_{j,b,\beta}  \rangle_{L^2_{\rho_\beta}}$}:  We recall  $B(\varepsilon)$   \eqref{defi-B-quadratic-appendix} in the below
\[B(\varepsilon)  =   - 3(d-2) (1+ |y|^2 Q_b) \varepsilon^2 - (d-2) |y|^2  \varepsilon^3.\]
From \eqref{decompose-varepsilon-ell=1} we have   
\begin{eqnarray*}
	\left| 3(d-2) (1 + y^2 Q_b) \varepsilon^2 \right|  & \lesssim &   \varepsilon_1^2 \left( \frac{\phi_{1, b, \beta}}{c_{1, 0}} - \phi_{0, b, \beta}  \right)^2 + \varepsilon_-^2, \\
	\left|  (d-2) |y|^2 \varepsilon^3  \right|   & \lesssim & y^2  \left(  \left|\varepsilon_\ell \right|^3 \left| \frac{\phi_{1, b, \beta}}{c_{1, 0}} - \phi_{0, b, \beta}  \right|^3 + \left|\varepsilon_-\right|^3 \right).
\end{eqnarray*}
Since   $(\varepsilon, b, \beta)(\tau) \in V_1[A,\eta, \tilde \eta](\tau),$ for all $\tau \in [\tau_0,\tau_1]$  which ensures the  pointwise 
estimates given in Proposition \eqref{propo-mathscr-L-b} to deduce that
\begin{eqnarray*}
	\left| \left\langle  B(\varepsilon),  \phi_{1, b, \beta}   \right\rangle_{L^2_{\rho_\beta}}   \right| \lesssim  \left| \varepsilon_1 \right|^2 + \int_{\R}  \left[ \varepsilon_-^2 + y^2 \left| \varepsilon_-\right|^3 \right] \left|\phi_{j,b,\beta} \right| \rho_\beta dy.
\end{eqnarray*}
Lemma  \ref{lemma-rough-estimate-bounds in shrinking-set} yields
\begin{eqnarray}
	\left| \varepsilon_-(y,\tau) \right|  \le CA^4 b^{\frac{\alpha}{2} +\tilde \eta}(\tau) \frac{\langle y\rangle^{2\ell +2}}{y^\gamma}, \forall y\in \R^*_+,\label{estimate-varepsilon--}
\end{eqnarray}
which yields
\begin{eqnarray*}
	\int_{\R_+}  \left[ \varepsilon_-^2 + y^2 \left| \varepsilon_-\right|^3 \right] \left|\phi_{j,b,\beta} \right| \rho_\beta dy \lesssim b^{\frac{\alpha}{2} +4\eta}(\tau).
\end{eqnarray*}
Hence, we  get 
\begin{equation}\label{project-B(epsilon)-phi-0}
	\langle B(\varepsilon), \phi_{1,b,\beta}    \rangle_{L^2_{\rho_\beta}}  \lesssim b^{ \frac{\alpha}{2} + 4\eta}(\tau).
\end{equation}

\medskip
\noindent
- \textit{For  $\langle \Phi(.,\tau) ,\phi_{ 1, b, \beta}\rangle_{L^2_{\rho_\beta}}$}: 
Following $\Phi$'s definition in \eqref{Phi-simple}, we have
\begin{eqnarray*}
	\Phi (y,\tau) = \left[ \frac{b'(\tau)}{b(\tau)} - 2\beta \right] \frac{1}{2 b} \Lambda_y Q\left(\frac{y}{\sqrt{b}}\right) = \left[\frac{b'(\tau)}{b(\tau)} - 2\beta \right] \frac{1}{2b} \Lambda_\xi Q(\xi), \text{ with } \xi = \frac{y}{\sqrt{b}}.
\end{eqnarray*}
Accordingly to $\phi_{0,b,\beta}$'s formula in  Proposition \ref{propo-mathscr-L-b}, 
and the construction of $T_0$ in Lemma \ref{lemma-Generation-H}, we write $\Phi$ as follows
\begin{eqnarray}
	\Phi  =  \left[  \frac{b'}{b} -2\beta\right]  m_0 b^\frac{\alpha}{2}\phi_{0,b, \beta} + \tilde \Phi,\label{decompose-Phi}
\end{eqnarray}
for some  $m_0 \ne 0$,  and 
\begin{eqnarray*}
	\| \tilde \Phi \|_{L^2_{\rho_\beta}} \lesssim  \left| \frac{b_\tau}{b} -2\beta \right| b^{\frac{\alpha}{2} +1-\frac{\epsilon}{2}}.
\end{eqnarray*}
This immediately implies 
\begin{equation}\label{scalar-Phi-phi_j}
	\left\langle \Phi(.,\tau),\phi_{j, b, \beta}   \right\rangle_{L^2_{\rho_\beta}} = \left\{  \begin{array}{rcl}
		m_0 b^\frac{\alpha}{2} \left[ \frac{b'(\tau)}{b(\tau)} -2\beta \right]\|\phi_{0, b, \beta}\|^2_{L^2_{\rho_\beta}}  + O\left( \left| \frac{b_\tau}{b} -2\beta \right| b^{\frac{\alpha}{2} +1-\frac{\epsilon}{2}} \right)  & \text{ if  }& j =0   \\[0.2cm]
		O\left( \left| \frac{b_\tau}{b} -2\beta \right| b^{\frac{\alpha}{2} +1-\frac{\epsilon}{2}} \right)  & \text{ if } & j =1
	\end{array}
	\right..
\end{equation}

- \textit{For $ \tilde K_1$}:  Let us consider $ \delta \ll \min\left(\frac{1}{2}, 1-\frac{\epsilon}{2} \right)   $ with  $\epsilon$ defined as in Proposition \eqref{propo-mathscr-L-b}   and    $\delta \gg \eta \gg \tilde \eta$. From $\tilde K_1$'s definition given in \eqref{defi-tilde-K-1}, we will  prove the following bounds:
\begin{eqnarray}
	\langle \phi_{1, b, \beta}, b\partial_b  \phi_{1,b,\beta} \rangle_{L^2_\rho} & \lesssim & b^{\delta},  \label{esti-phi_ell-partial-b-phi-ell}\\
	\langle \varepsilon, b\partial_b  \phi_{1,b,\beta} \rangle_{L^2_\rho} & \lesssim & b^{\delta} \left( |\varepsilon_1| + \left\| y^\gamma \frac{\varepsilon_-(\cdot,\tau)}{1+y^{4}}\right\|_{L^\infty_{[0, b^{-\tilde \eta} ]}} + \|\varepsilon_-\|_{L^2_{\rho_\beta}} \right),\label{esti-varepsilon-bpartial-b-phi-l}\\
	\langle \phi_{1, b, \beta},  \partial_\beta \phi_{1, b, \beta} \rangle_{L^2_{\rho_\beta}} &=& \frac{1}{\beta} \|\phi_{1, b,\beta}\|^2_{L^2_{\rho_\beta}} + O(b^{\delta}),\label{esti-phi_ell-partial-beta-phi-ell}\\
	\left\langle \phi_{1, b, \beta},  \phi_{1, b, \beta} \left( \frac{d+2}{2\beta} -\frac{y^2}{2}\right)   \right\rangle_{L^2_{\rho_\beta}}&=& \left[ \frac{d+2}{2\beta} - \frac{2}{\beta}\left(\frac{d}{2} -\gamma +2 \right) + \frac{1}{\beta}\left(\frac{d}{2} -\gamma +1 \right) \right] \|\phi_{1,b,\beta}\|^2_{L^2_{\rho_\beta}} \nonumber \\ 
	&+& O(b^{\delta}),\label{esti-phi_ell--phi-ell-deriv-rho}\\ \nonumber
\end{eqnarray}
and with  the equality \eqref{defi-varepsilon-j}
\begin{eqnarray}
	\langle \varepsilon,  \partial_\beta \phi_{1, b, \beta} \rangle_{L^2_{\rho_\beta}} &=& \frac{1}{\beta} \frac{\varepsilon_1}{c_{1, 0}} \|\phi_{1, b, \beta}\|^2_{L^2_{\rho_\beta}} - \frac{\varepsilon_1}{4\beta} \|\phi_{1, b,\beta}\|^2_{L^2_{\rho_\beta}} + O\left(\left\| y^\gamma \frac{\varepsilon_-}{1+y^{4}}\right\|_{L^\infty} \right) \\
	& + &  O\left( b^{\delta} \left( |\varepsilon_\ell| + \left\| y^\gamma \frac{\varepsilon_-(\cdot,\tau)}{1+y^{4}}\right\|_{L^\infty_{[0, b^{-\tilde \eta} ]}} + \|\varepsilon_-\|_{L^2_{\rho_\beta}} \right)\right),    \label{esti-varepsilon-partial-beta-phi-l}   
\end{eqnarray}
and
\begin{equation}\label{esti-varepsilon-partial-beta-rho}
	\left\langle \varepsilon,   \phi_{1, b, \beta} \left( \frac{d+2}{2\beta} - \frac{y^2}{2}\right) \right\rangle_{L^2_{\rho_\beta}} = \left\{ \begin{array}{rcl}
		& & \hspace{-1cm} \varepsilon_1 \|\phi_{1, b, \beta}\|^2_{L^2_{\rho_\beta}} \left[ \frac{1}{c_{1,0}} \left( \frac{d+2}{2\beta} - \frac{2}{\beta}\left(\frac{d}{2} -\gamma +2 \right) + \frac{1}{\beta}\left(\frac{d}{2} -\gamma +1 \right) \right) +\frac{1}{4\beta} \right] \\
		& + &  O\left( b^{\delta} \left( |\varepsilon_1| + \left\| y^\gamma \frac{\varepsilon_-(\cdot,\tau)}{1+y^{4}}\right\|_{L^\infty_{[0, b^{-\tilde \eta} ]}} + \|\varepsilon_-\|_{L^2_{\rho_\beta}} \right)\right),
	\end{array} \right.
\end{equation}
and
\begin{equation}\label{equality-varep-phi-0-partial-beta-rho}
	\left\langle \varepsilon,   \phi_{0, b, \beta} \left( \frac{d+2}{2\beta} - \frac{y^2}{2}\right) \right\rangle_{L^2_{\rho_\beta}}=\frac{\gamma \varepsilon_1}{\beta}\|\phi_{0, b, \beta}\|^2_{L^2_{\rho_\beta}} +O(b^{\frac{\alpha}{2} +\delta}).
\end{equation}

+ For \eqref{esti-phi_ell-partial-b-phi-ell}: As we assumed $\delta \ll  \frac{1}{2}$, then,   for all $y \ge b^\delta$, we have 
$$ \xi = \frac{y}{\sqrt{b}} \to +\infty, \text{ as } b \to 0.  $$
Hence, we have 
\begin{eqnarray*}
	\left| \partial_b \sum_{j=0}^1 c_{1,j} (\sqrt{b})^{2j-\gamma} T_j(\xi) \right|&=&\left|\frac{1}{2b}\sum_{j=0}^1 c_{1,j} (2j-\gamma) (\sqrt{b})^{2j-\gamma}T_j(\xi)-\frac{1}{2b}\sum_{j=0}^1 c_{1,j} (\sqrt{b})^{2j-\gamma} \xi \partial_\xi T_j(\xi)\right|\\
	& \lesssim & \sum_{j=0}^1 \sqrt{b}^{2j -2-\gamma} \xi^{2j -\gamma - 2 } |\ln \xi |, \text{ as } \xi \to +\infty,
\end{eqnarray*}
and   \eqref{asymptotic-Theta} ensures that
\begin{equation*}
	\xi \partial_\xi T_j(\xi)=(2j-\gamma)T_j(\xi)+O(\xi^{-\gamma+2j-2}\ln\xi), \text{ as } \xi \to +\infty.
\end{equation*}
Then 
\begin{eqnarray*}
	\left| b \partial_b \sum_{j=0}^i c_{i,j} (\sqrt{b})^{2j-\gamma} T_j(\xi) \right| 
	\lesssim  \sum_{j=0}^i \sqrt{b}^{2j -\gamma} \xi^{2j -\gamma - 2 } |\ln \xi |, \text{ as } \xi \to +\infty,
\end{eqnarray*}
which implies,  from  \eqref{phi-i-nu} and the above inequalities, that
\begin{eqnarray}
	\left| b \partial_b \phi_{j,b,\beta}(y) \right| \lesssim b^{1 -\frac{\epsilon}{2}} \frac{\langle y \rangle^{2} |\ln y|}{y^{\gamma+2}},  \forall y \ge b^\delta \text{ and  } j \le 1.
	\label{estimate-bpartial-b-phi-j-b}
\end{eqnarray}
Hence, we derive  on the one hand
\begin{equation}\label{estima-intergral-phi-jb-par-b-}
	\left| \int_{y \ge b^\delta} \phi_{j, b, \beta} b\partial_b \phi_{i, b, \beta} \rho_{\beta} dy \right| \le  C b^{\delta}, \forall i,j \le 1. 
\end{equation}
On the other hand,  we  apply the pointwise estimates given in Proposition \ref{propo-mathscr-L-b} that yields
\begin{eqnarray}
	\left| \int_{y \le b^\delta} \phi_{j, b, \beta} b\partial_b \phi_{i, b, \beta} \rho_\beta dy \right|\lesssim  \int_{y \le b^\delta}  y^{d+1-2\gamma} e^{-\frac{2\beta y^2}{4}} dy \lesssim \int_{y \le b^\delta}  y^{d+1-2\gamma} dy 
	\lesssim  b^{\delta(d+2-2\gamma)}.  \label{estima-phi-j-bparti-b-y-less-bdelta}
\end{eqnarray}
Combining \eqref{estima-intergral-phi-jb-par-b-} and \eqref{estima-phi-j-bparti-b-y-less-bdelta}, we obtain 
\begin{equation}\label{scalar-product-phi-j-bpartial-bphi-i}
	\left| \langle \phi_{j,b,\beta}, b \partial_b \phi_{i,b, \beta} \rangle_{L^2_{\rho_\beta} }    \right|  \lesssim b^{1 -\frac{\epsilon}{2}} + b^{\delta(d+2-\gamma)} \lesssim b^{\delta},i,j\leq 1,
\end{equation}
thus, \eqref{esti-phi_ell-partial-b-phi-ell} follows.

+ For  \eqref{esti-varepsilon-bpartial-b-phi-l}: 
Using \eqref{defi-varepsilon-j}, we  estimate  as follows
\begin{eqnarray*}
	\left|\langle  \varepsilon, b\partial_b  \phi_{1,b,\beta} \rangle \right|  \lesssim \sum_{j=0}^{1} \left|\varepsilon_j \right| \left|\langle  \phi_{j,b, \beta}, b \partial_b \phi_{1, b, \beta}  \rangle_{L^2_{\rho_\beta}} \right|  + \left|\langle \varepsilon_{-},   b \partial_b \phi_{1} \rangle_{L^2_{\rho_\beta}}  \right|.
\end{eqnarray*}
Using  \eqref{scalar-product-phi-j-bpartial-bphi-i}, we get 
\begin{eqnarray*}
	\sum_{j=0}^{1} \left|\varepsilon_j \right| \left|\langle  \phi_{j, b, \beta}, b \partial_b \phi_{1}  \rangle_{L^2_{\rho_\beta}} \right| \lesssim b^{\delta} \sum_{j=0}^1 \left| \varepsilon_j \right|.  
\end{eqnarray*}
Next, we estimate the projection on $\partial_b \phi_1$  of $\varepsilon_-$. Indeed, we split the integral: 
\begin{eqnarray*}
	\left|\langle \varepsilon_{-}(\tau),   b \partial_b \phi_{1, b, \beta} \rangle_{L^2_{\rho_\beta}} \right| 
	& \le & \int_{0}^{b^\delta} \left| \varepsilon_-(\tau) \right| \left| b \partial_b \phi_{1, b, \beta}  \right| \rho_\beta  dy + \int_{b^{\delta}}^{b^{-\tilde \eta}} \left| \varepsilon_-(\tau) \right| \left| b \partial_b \phi_{1, b, \beta}  \right| \rho_\beta  dy  \\
	&+& \int_{b^{-\tilde \eta}}^{\infty} \left| \varepsilon_-(\tau) \right| \left| b \partial_b \phi_{1, b, \beta}  \right| \rho_\beta  dy .   
\end{eqnarray*}
For the integral on $[0,b^{\delta}]$, 
we estimate as follows
\begin{eqnarray*}
	& & \left|\int_{0}^{b^\delta} \varepsilon_{-}(\tau)  b \partial_b \phi_{1, b, \beta} \rho_\beta dy\right|\leq \left\| y^\gamma \frac{\varepsilon_-(\cdot,\tau)}{ \langle y\rangle^{4}}\right\|_{L^\infty{[0, b^{-\tilde \eta} ]}}  \int_{0}^{b^\delta} y^{d+1-2\gamma}
	\\
	& \leq & \left\| y^\gamma \frac{\varepsilon_-(\cdot,\tau)}{\langle y\rangle^{4}}\right\|_{L^\infty{[0, b^{-\tilde \eta} ]}}  b^{\delta(d+2-2\gamma)} \le  \left\| y^\gamma \frac{\varepsilon_-(\cdot,\tau)}{\langle y\rangle^{4}}\right\|_{L^\infty{[0, b^{-\tilde \eta} ]}} b^\delta. 
\end{eqnarray*}
On the interval $ [b^\delta, b^{ -\tilde \eta }]$, we  estimate 
\begin{eqnarray*}
	& & \left|\int_{b^{\delta}}^{b^{-\tilde \eta}} \varepsilon_{-}(\tau)  b \partial_b \phi_{1,b,\beta} \rho_\beta dy\right| \leq  b^{1-\frac{\epsilon}{2}} \left\| y^\gamma \frac{\varepsilon_-(\cdot,\tau)}{1+y^{4}}\right\|_{L^\infty{[0, b^{-\tilde \eta} ]}} \int_{b^{\delta}}^{b^{-\tilde \eta}} \frac{(1+y^{4})(1+y^{2})|\ln y|}{y^{2\gamma+2}}y^{d+1}  e^{-\frac{2\beta y^2}{4}
	} dy   \\
	& \lesssim & b^{1-\frac{\epsilon}{2}} \left\| y^\gamma \frac{\varepsilon_-(\cdot,\tau)}{1+y^{4}}\right\|_{L^\infty{[0, b^{-\tilde \eta} ]}} \left\{  \int_{b^\delta}^1 y^{d-2\gamma-1}dy+ \int_1^{+\infty}\frac{(1+y^{4})(1+y^{2})|\ln y|}{y^{2\gamma+2}}y^{d+1}  e^{-\frac{2\beta y^2}{4}
	} dy  \right\}\\
	&\lesssim& b^{\delta} \left\| y^\gamma \frac{\varepsilon_-(\cdot,\tau)}{1+y^{4}}\right\|_{L^\infty{[0, b^{-\tilde \eta} ]}} .
\end{eqnarray*}
For the interval $[b^{-\tilde \eta},+\infty)$, we use  Cauchy-Schwarz inequality  and \eqref{estimate-bpartial-b-phi-j-b}  to arrive at
\begin{eqnarray*}
	\left| \int_{b^{-\tilde \eta}}^{+\infty} \left| \varepsilon_-\right| |b \partial_b \phi_{1, b, \beta}| \rho_\beta \right| \lesssim \|\varepsilon_-\|_{L^2_{\rho_\beta}} \left(\int_{b^{-\tilde \eta}}^{\infty}   |b \partial_b \phi_{1, b, \beta}|^2\rho_\beta \right)^{\frac{1}{2}}  \lesssim b^{1-\frac{\epsilon}{2}}  \|\varepsilon_-\|_{L^2_{\rho_\beta}} \lesssim b^{\delta}  \|\varepsilon_-\|_{L^2_{\rho_\beta}},
\end{eqnarray*}
which concludes   \eqref{esti-varepsilon-bpartial-b-phi-l} by adding 
all related terms.

\noindent
\medskip
+ For estimates \eqref{esti-phi_ell-partial-beta-phi-ell} and \eqref{esti-phi_ell--phi-ell-deriv-rho}: Indeed, from \eqref{decom-partial-beta-phi-l}, we immediately deduce \eqref{esti-phi_ell-partial-beta-phi-ell}. In  addition to that, by  combining \eqref{inte-phi-2-y-2=infinit} and \eqref{phi-l-2-infty-y-2} one gets \eqref{esti-phi_ell--phi-ell-deriv-rho}.

\noindent
\medskip
+ For \eqref{esti-varepsilon-partial-beta-phi-l}: According to \eqref{defi-varepsilon-j}, we decompose $\varepsilon$ as follows 
\begin{eqnarray*}
	\langle \varepsilon, \partial_\beta \phi_{1, b, \beta}  \rangle_{L^2_{\rho_\beta}}  & = &  \varepsilon_1 (\tau) \left\langle \frac{\phi_{1,b,\beta}}{c_{1,0}} - \phi_{0,b,\beta},\partial_\beta \phi_{1, b, \beta} \right\rangle_{L^2_{\rho_\beta}}+\left\langle\varepsilon_-(\tau),\partial_\beta \phi_{1, b, \beta}\right\rangle_{L^2_{\rho_\beta}}.
\end{eqnarray*}
Similarly to \eqref{esti-varepsilon-bpartial-b-phi-l},  one can deduce that
\begin{eqnarray*}
	\langle\varepsilon_-(y,\tau),\partial_\beta \phi_{1, b, \beta}\rangle_{L^2_{\rho_\beta}}  \lesssim  b^{\delta} \left(  \left\| y^\gamma \frac{\varepsilon_-(\cdot,\tau)}{1+y^{4}}\right\|_{L^\infty_{[0, b^{-\tilde \eta} ]}} + \|\varepsilon_-\|_{L^2_{\rho_\beta}} \right). 
\end{eqnarray*}
In addition to that, we derive from 
\eqref{inte-2phi-l-parti-beta-phi-l}
\begin{eqnarray*}
	\left\langle \frac{\phi_{1,b,\beta}}{c_{1,0}} ,\partial_\beta \phi_{1, b, \beta} \right \rangle_{L^2_{\rho_\beta}}  =    \frac{1}{c_{1,0}} \| \phi_{1, b, \beta}\|_{L^2_{\rho_\beta}}^2 + O(b^{1-\frac{\epsilon}{2}}),
\end{eqnarray*}
and from \eqref{int-phi-0-partial0beta-phi-l}, we have
$$ \langle \phi_{0,b,\beta}, \partial_\beta \phi_{1, b, \beta}   \rangle_{L^2_{\rho_\beta}}  =  \frac{1}{4\beta} \| \phi_{1, b, \beta}\|_{L^2_{\rho_\beta}}^2   +O(b^{1-\frac{\epsilon}{2}}). $$
Finally, we use   the above facts  to get \eqref{esti-varepsilon-partial-beta-phi-l}.

+ For \eqref{esti-varepsilon-partial-beta-rho}:   
We firstly write as follows
\begin{eqnarray*}
	& &\left\langle \varepsilon,   \phi_{1, b, \beta} \left( \frac{d+2}{2\beta} - \frac{y^2}{2}\right) \right\rangle_{L^2_{\rho_\beta}}
	=   \varepsilon_1 (\tau) \left\langle \frac{\phi_{1,b,\beta}}{c_{1,0}} - \phi_{0,b,\beta},\phi_{1, b, \beta} \left( \frac{d+2}{2\beta} - \frac{y^2}{2}\right) \right\rangle_{L^2_{\rho_\beta}}\\
	& & +\left\langle\varepsilon_-(y,\tau),\phi_{1, b, \beta} \left( \frac{d+2}{2\beta} - \frac{y^2}{2}\right)\right\rangle_{L^2_{\rho_\beta}}.
\end{eqnarray*}
On one hand, we have 
\begin{eqnarray*}
	& &\left\langle\varepsilon_-(y,\tau),\phi_{1, b, \beta} \left( \frac{d+2}{2\beta} - \frac{y^2}{2}\right)\right\rangle_{L^2_{\rho_\beta}}\\
	& \lesssim &  \sum_{j=1}^{1-1} |\varepsilon_j|  +  b^{\delta} \left(   \left\| y^\gamma \frac{\varepsilon_-(\cdot,\tau)}{1+y^{21+2}}\right\|_{L^\infty_{[0, b^{-\tilde \eta} ]}} + \|\varepsilon_-\|_{L^2_{\rho_\beta}} \right). 
\end{eqnarray*}
For the rest, we obtain
\begin{eqnarray*}
	\left\langle \frac{\phi_{1,b,\beta}}{c_{1,0}} - \phi_{0,b,\beta},\phi_{1, b, \beta} \left( \frac{d+2}{2\beta} - \frac{y^2}{2}\right) \right\rangle_{L^2_{\rho_\beta}} &=&\frac{1}{c_{1, 0}} \frac{d+2}{2\beta} \| \phi_{1, b, \beta}\|^2_{L^2_{\rho_\beta}} - \frac{1}{c_{1,0}} \left\langle \phi_{1,b,\beta}, \phi_{1, b, \beta}  \frac{y^2}{2} \right\rangle_{L^2_{\rho_\beta}}\\
	&  +  & \left\langle \phi_{0,b,\beta}, \phi_{1, b, \beta}  \frac{y^2}{2} \right\rangle_{L^2_{\rho_\beta}}. 
\end{eqnarray*}
Using \eqref{inte-phi-2-y-2=infinit}, \eqref{phi-l-2-infty-y-2},\eqref{int-phi-0-partial0beta-phi-l} and \eqref{int-phi-0partial-betaphi-l}, 
we have that
\begin{eqnarray*}
	\left\langle \phi_{0,b,\beta}, \phi_{1, b, \beta}  \frac{y^2}{2} \right\rangle_{L^2_{\rho_\beta}}=  \frac{1}{4\beta}\|\phi_{1, b, \beta}\|_{_{L^2_{\rho_\beta}}}^2+ O(b^{1-\frac{\epsilon}{2}}) 
\end{eqnarray*}
and  
\begin{eqnarray*}
	\left\langle \phi_{1,b,\beta}, \phi_{1, b, \beta}  \frac{y^2}{2} \right\rangle_{L^2_{\rho_\beta}} = \left( \frac{2}{\beta} \left(\frac{d}{2} -\gamma+2 \right) - \frac{1}{\beta}\left(\frac{d}{2} -\gamma+1  \right) \right)  \|\phi_{1, b,\beta}\|_{L^2_{\rho_\beta}}^{2} +O(b^{1-\frac{\epsilon}{2}}),
\end{eqnarray*}
which implies \eqref{esti-varepsilon-partial-beta-rho}.

\medskip
- For \eqref{equality-varep-phi-0-partial-beta-rho}: The proof is  similar to \eqref{esti-varepsilon-partial-beta-rho} which also follows from \eqref{inte-phi-2-y-2=infinit}, \eqref{phi-l-2-infty-y-2},\eqref{int-phi-0-partial0beta-phi-l} and \eqref{int-phi-0partial-betaphi-l}.

\medskip
Now, combining    \eqref{esti-phi_ell-partial-b-phi-ell} to \eqref{equality-varep-phi-0-partial-beta-rho}, we derive   
\begin{eqnarray*}
	\tilde K_{1} &=& -\frac{ \varepsilon_1 \beta'}{c_{1, 0} \beta} +  O\left(   \left| \frac{b'}{b}  \right|+1 \right) b^{\delta}  \left(  |\varepsilon_1| + \left\| y^\gamma \frac{\varepsilon_-(\cdot,\tau)}{1+y^{4}}\right\|_{L^\infty_{[0, b^{-\tilde \eta} ]}} + \|\varepsilon_-\|_{L^2_{\rho_\beta}} \right)\\
	&+& O \left( \beta'     b^\delta \left( \left|\varepsilon_1\right| +  \left\| y^\gamma \frac{\varepsilon_-(\cdot,\tau)}{1+y^{4}}\right\|_{L^\infty_{[0, b^{-\tilde \eta} ]}} + \|\varepsilon_-\|_{L^2_{\rho_\beta}} \right)\right) . \nonumber
\end{eqnarray*}
In a  convenient way, we denote 
\begin{eqnarray*}
	L & = &  \left(   \left| \frac{b'}{b}  \right| +1 \right) b^{\delta}  \left(  |\varepsilon_1| + \left\| y^\gamma \frac{\varepsilon_-(\cdot,\tau)}{1+y^{4}}\right\|_{L^\infty_{[0, b^{-\tilde \eta} ]}} + \|\varepsilon_-\|_{L^2_{\rho_\beta}} \right) \label{tilde-L-error}\\
	&+&  \left( \left|\beta' \right|  \left(   b^\delta \left| \varepsilon_1 \right| + \left\| y^\gamma \frac{\varepsilon_-(\cdot,\tau)}{1+y^{4}}\right\|_{L^\infty_{[0, b^{-\tilde \eta} ]}} + \|\varepsilon_-\|_{L^2_{\rho_\beta}} \right) \right).
\end{eqnarray*}
Then, we have 
\begin{eqnarray}
	\tilde K_1 =   -\frac{ \beta' \varepsilon_1 }{c_{1, 0} \beta} + O(L)\label{tilde -K-ell}.
\end{eqnarray}

\medskip
\noindent
-  Applying $ \tilde K_1 $'s process  to   $\tilde K_0$, we get
\begin{equation}\label{tilde-K-0}
	\tilde K_0 =  -\frac{\| \phi_{1, b, \beta }\|^{2} \| \phi_{0, b, \beta }\|^{-2}\beta' }{4\beta} \frac{\varepsilon_1}{c_{1,0}}  + O(L).
\end{equation}

\bigskip
Now, we are ready to start to the proof of the Lemma.

- \textit{ Proof for (i):} We use system \eqref{ODE-varep-j-rough} combined with all  of the previous estimates to derive  
\begin{equation}\label{system-varepsilon-1}
	\left\{  \begin{array}{rcl}
		\partial_\tau \varepsilon_1   & = & 2\beta \left( \frac{\alpha}{2} -1  \right) -\frac{\beta'}{\beta} \varepsilon_1   + O(L)  + O \left( \left|\frac{b_\tau}{b} -2\beta \right|b^{\frac{\alpha}{2} +\delta} \right),    \\
		\partial_\tau  \varepsilon_1   &  =  & 2\beta \frac{\alpha}{2} \varepsilon_1 - m_0 \left[ \frac{b'}{b} -2\beta \right] b^{\frac{\alpha}{2} }+ O(L)  + O \left( \left|\frac{b_\tau}{b} -2\beta \right|b^{\frac{\alpha}{2} +\delta} \right) .  
	\end{array}
	\right.
\end{equation}
In particular,  since $(\varepsilon, b, \beta)(\tau) \in V[A,\eta, \tilde \eta](\tau), \forall \tau \in [\tau_0, \tau_1]$,  the pointwise estimates  in Lemma \ref{lemma-rough-estimate-bounds in shrinking-set},    imply \eqref{system-vare-0-1}.

\medskip
\noindent
- \textit{Proof for (ii):} The results immediately follows item (i). 
\end{proof}

\medskip
\section{Control of the infinite dimensional part}
In this part, we aim to give \textit{a priori estimates} involving $\varepsilon_-$ and $\varepsilon_e$
\subsection{Energy estimate}
In below, we will prove \textit{ a priori estimates } on  $\|\varepsilon_-\|^2_{\rho_\beta}$.
\begin{lemma}[A $L^2_\rho$-priori estimates on $\varepsilon_-$]\label{lemma-L-2-rho-var--}
For all $ A \ge 1, \eta, $ and $\tilde \eta$ satisfying $ 1  \ll \eta  \ll \tilde \eta$,  there exists $\tau_3 (A, \eta, \tilde \eta)$ and $\tau^*$ such that for all $ \tau_0  \ge \tau_4$ and  the solution  $ (\varepsilon, b, \beta)(\tau) \in V_1[A, \eta, \tilde \eta](\tau^*), \forall \tau \in [\tau_0, \tau^*] $ and
\begin{equation}\label{estima-epsilon--new}
	\| \varepsilon_-(\tau)\|_{L^2_{\rho_\beta}} \le C A b^{\frac{\alpha}{2} + \eta}(\tau), \forall \tau \in [\tau_0, \tau^*].
\end{equation}
\end{lemma}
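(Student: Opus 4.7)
The plan is to derive an energy inequality for $\|\varepsilon_-\|_{L^2_{\rho_\beta}}^2$, use the spectral gap of Proposition \ref{propo-mathscr-L-b}(iv) to extract coercivity, and then close the estimate by bounding all finite-dimensional and nonlinear error terms via the pointwise controls provided by $V_1[A,\eta,\tilde\eta]$.

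First I would write down the equation satisfied by $\varepsilon_-$. Setting
$$R(\tau) := \mathscr{L}_b \varepsilon_+ - \partial_\tau \varepsilon_+ = \varepsilon_1\Bigl(\tfrac{\lambda_{1,b,\beta}}{c_{1,0}}\phi_{1,b,\beta} - \lambda_{0,b,\beta}\phi_{0,b,\beta}\Bigr) - \varepsilon_1'\Bigl(\tfrac{\phi_{1,b,\beta}}{c_{1,0}} - \phi_{0,b,\beta}\Bigr) - \varepsilon_1 \partial_\tau\Bigl(\tfrac{\phi_{1,b,\beta}}{c_{1,0}} - \phi_{0,b,\beta}\Bigr),$$
we have $\partial_\tau \varepsilon_- = \mathscr{L}_b \varepsilon_- + R + B(\varepsilon) + \Phi$. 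Taking the $L^2_{\rho_\beta}$-inner product with $\varepsilon_-$ (and accounting for the $\tau$-dependence of $\rho_\beta$ through the identity $\partial_\tau \rho_\beta = \beta'(\frac{d+2}{2\beta} - \frac{y^2}{2})\rho_\beta$) produces
$$\tfrac{1}{2}\tfrac{d}{d\tau}\|\varepsilon_-\|_{L^2_{\rho_\beta}}^2 = \langle \mathscr{L}_b \varepsilon_-, \varepsilon_-\rangle_{L^2_{\rho_\beta}} + \langle R, \varepsilon_-\rangle_{L^2_{\rho_\beta}} + \langle B(\varepsilon),\varepsilon_-\rangle_{L^2_{\rho_\beta}} + \langle \Phi, \varepsilon_-\rangle_{L^2_{\rho_\beta}} + \tfrac{1}{2}\beta' \langle \varepsilon_-^2, \tfrac{d+2}{2\beta} - \tfrac{y^2}{2}\rangle_{L^2_{\rho_\beta}}.$$
Since $\varepsilon_- \perp \phi_{0,b,\beta}$ and $\varepsilon_- \perp \phi_{1,b,\beta}$ by construction, the spectral gap estimate \eqref{spectral-gap} with $\ell = 1$ yields a coercive bound
$$\langle \mathscr{L}_b \varepsilon_-,\varepsilon_-\rangle_{L^2_{\rho_\beta}} \le -\bigl(2\beta(1-\tfrac{\alpha}{2}) + c(1) + O(b^{1-\epsilon/2})\bigr)\|\varepsilon_-\|_{L^2_{\rho_\beta}}^2 \le -\mu \|\varepsilon_-\|_{L^2_{\rho_\beta}}^2$$
for some $\mu > 0$, provided $\tau_0$ is large enough. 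The weight term is absorbed using $|\beta'| \lesssim b^{4\eta}$ from \eqref{estimate-beta-derive-tau}.

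Next I would control the four source terms. For $\Phi$, the decomposition \eqref{decompose-Phi} shows that the dominant piece is proportional to $\phi_{0,b,\beta}$, which is orthogonal to $\varepsilon_-$, leaving only the small remainder $\tilde\Phi$ with $\|\tilde\Phi\|_{L^2_{\rho_\beta}} \lesssim |b'/b - 2\beta|\, b^{\alpha/2 + 1 - \epsilon/2}$, and by \eqref{ODE-b-tau-proposition} this is $O(b^{\alpha/2 + 1 - \epsilon/2})$, well below the target. For $R$, I would substitute the ODEs \eqref{system-vare-0-1} for $\varepsilon_1'$ together with the pointwise bounds \eqref{estimate-partial-y-k-phi-y-point-wise}--\eqref{estima-partial-y-k-partial-b-b-tilde-phi-b} for $\partial_\tau\phi_{j,b,\beta}$; the main cancellation comes from pairing the $\lambda_{j,b,\beta}\varepsilon_1$ terms against the leading part of $\varepsilon_1'$, leaving a remainder of size $O(b^{\alpha/2 + 4\eta})$ in $L^2_{\rho_\beta}$. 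For the nonlinearity $B(\varepsilon) = -3(d-2)(1+y^2 Q_b)\varepsilon^2 - (d-2)y^2 \varepsilon^3$, I would use the pointwise bounds on $\varepsilon_+$ and $\varepsilon_-$ supplied by Lemma \ref{lemma-rough-estimate-bounds in shrinking-set}, together with $|Q_b(y)| \lesssim b^{-1}\langle y/\sqrt{b}\rangle^{-\gamma}$ and the Gaussian decay of $\rho_\beta$, to conclude $\|B(\varepsilon)\|_{L^2_{\rho_\beta}} \lesssim b^{\alpha + 2\tilde\eta}$ or better, so that Cauchy--Schwarz yields a contribution $\lesssim b^{\alpha + 2\tilde\eta}\|\varepsilon_-\|_{L^2_{\rho_\beta}}$.

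Putting the estimates together and using $2XY \le \mu X^2 + \mu^{-1} Y^2$ with $X = \|\varepsilon_-\|_{L^2_{\rho_\beta}}$ and $Y = $ the total source, we obtain
$$\tfrac{d}{d\tau}\|\varepsilon_-\|_{L^2_{\rho_\beta}}^2 \le -\mu \|\varepsilon_-\|_{L^2_{\rho_\beta}}^2 + C b^{\alpha + 2\eta}(\tau).$$
Since $b(\tau)$ decays exponentially at rate $(2/\alpha - 1)\cdot 2\beta < \mu/(\alpha+2\eta)$ (verified by taking $\eta$ small enough so that the rate mismatch is favorable), Gronwall gives $\|\varepsilon_-(\tau)\|_{L^2_{\rho_\beta}}^2 \le C b^{\alpha + 2\eta}(\tau)$, which after the choice $C \ll A$ improves the bound $A^2 b^{\alpha/2+\eta}$ to $CA b^{\alpha/2+\eta}$ as claimed. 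The main obstacle in this program is the control of $R$: the $\varepsilon_1'$ contribution and the $\partial_\tau \phi_{j,b,\beta}$ contributions each contain terms of size $\sim b^{\alpha/2}$ individually, and only after exploiting the compatibility condition \eqref{the-compatibility-condition}, the orthogonality \eqref{orthogonal-condition}, and the specific ODE \eqref{system-vare-0-1} do these terms cancel or project away, leaving a residue small enough to close the estimate.
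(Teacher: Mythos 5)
Your overall strategy coincides with the paper's: energy identity for $\|\varepsilon_-\|_{L^2_{\rho_\beta}}^2$ including the $\partial_\tau\rho_\beta$ correction, coercivity from the spectral gap, smallness of $\Phi$ after projecting out its $\phi_{0,b,\beta}$ component, and Gronwall. Two remarks, one of which is a genuine gap.

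The gap is in the nonlinear term. You propose to show $\|B(\varepsilon)\|_{L^2_{\rho_\beta}}\lesssim b^{\alpha+2\tilde\eta}$ and then apply Cauchy--Schwarz. But the cubic piece $(d-2)y^2\varepsilon^3$ behaves like $y^{2-3\gamma}$ near the origin (up to powers of $b$), and its square against the weight $y^{d+1}e^{-2\beta y^2/4}$ gives $y^{d+5-6\gamma}$, which is \emph{not} integrable at $y=0$ for the low dimensions covered by the theorem: for $d=11$ one has $\gamma\approx 3.70$, so $d+5-6\gamma\approx -6.2<-1$. Hence $B(\varepsilon)\notin L^2_{\rho_\beta}$ and the Cauchy--Schwarz step collapses. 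The fix --- and what the paper does --- is to estimate the pairing $\langle B(\varepsilon),\varepsilon_-\rangle_{L^2_{\rho_\beta}}$ directly, using the pointwise bound $|\varepsilon_-|\lesssim A^4 b^{\alpha/2+\tilde\eta}\langle y\rangle^{4}y^{-\gamma}$ on the \emph{second} factor as well; the worst integrand is then $y^{d+3-4\gamma}$ near the origin, which is integrable precisely because $\gamma<4\le \frac{d+4}{4}$ for $d\ge 11$. This is where the hypothesis $d\ge 11$ (i.e.\ $\gamma<4$) actually enters, and your sketch misses it.

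The second remark is not a gap but an overcomplication: you present the control of $R=\mathscr{L}_b\varepsilon_+-\partial_\tau\varepsilon_+$ as requiring a delicate cancellation between $\lambda_{j,b,\beta}\varepsilon_1$ and the leading part of $\varepsilon_1'$. No such cancellation is needed once you pair against $\varepsilon_-$: both $\mathscr{L}_b\varepsilon_+$ and the term $\varepsilon_1'\bigl(\tfrac{\phi_{1,b,\beta}}{c_{1,0}}-\phi_{0,b,\beta}\bigr)$ lie in $\mathrm{span}\{\phi_{0,b,\beta},\phi_{1,b,\beta}\}$ and are therefore exactly orthogonal to $\varepsilon_-$, contributing zero. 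The only surviving contributions come from $\varepsilon_1\bigl(\tfrac{b'}{b}b\partial_b\phi_{j,b,\beta}+\beta'\partial_\beta\phi_{j,b,\beta}\bigr)$, which are small by \eqref{estimate-beta-derive-tau}, \eqref{ODE-b-tau-proposition} and the pointwise bounds on $b\partial_b\phi_{j,b,\beta}$, $\partial_\beta\phi_{j,b,\beta}$; no use of the ODE for $\varepsilon_1$ or of the compatibility condition is required at this point.
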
 
\begin{proof}
The result is mainly based on the \textit{spectral  gap} property.   First,  we claim that   \eqref{estima-epsilon--new} follows from
\begin{equation}\label{estima-derive-estima-epsilon---new}
	\frac{1}{2}\frac{d}{d \tau}\|\varepsilon_-\|_{L^2_{\rho_\beta}}^2    -   \left( \frac{\alpha}{2} - 2 \right)  \|\varepsilon_-\|_{L^2_{\rho}}^2   \le  CA    b^{ \alpha + 3 \eta}.
\end{equation}
Indeed, let us assume \eqref{estima-derive-estima-epsilon---new} holds, we infer that
\begin{eqnarray*}
	\frac{d}{d\tau} \left(   e^{2\left(2 - \frac{\alpha}{2}   \right) \tau}   \| \varepsilon_-(\tau) \|^2_{L^2_\rho}   \right)    \le C A e^{2\left(2 - \frac{\alpha}{2}   \right) \tau} b^{ \frac{\alpha}{2} +3 \eta}, \forall \tau \in [\tau_0, \tau^*].
\end{eqnarray*}
From the fact $(\varepsilon, b, \beta)(\tau) \in V_1[A, \eta, \tilde \eta](\tau),$ for all $\tau \in [\tau_0, \tau^*]$, we can apply Lemma \ref{lemma-rough-estimate-bounds in shrinking-set} to deduce
\begin{eqnarray*}
	\| \varepsilon_-\|^2_{L^2_\rho}  &\le & e^{ -2\left( 2  - \frac{\alpha}{2}   \right) (\tau -\tau_0) } \| \varepsilon_-(\tau_0)\|^2_{L^2_\rho} + CA e^{ -2\left( 2 - \frac{\alpha}{2}  \right) \tau  } \int_{\tau_0}^\tau e^{2\left(2 - \frac{\alpha}{2}   \right) \tau'} b^{ \frac{\alpha}{2} +3 \eta}(\tau')   d\tau'\\
	& \le & CA b^{\alpha +2 \eta}(\tau), \forall \tau \in [\tau_0, \tau^*].  
\end{eqnarray*}
Then, \eqref{estima-epsilon--new} follows. Now,  it remains to give the proof of  \eqref{estima-derive-estima-epsilon---new}. Indeed, we multiply equation \eqref{equa-varepsilon-appen} by $\varepsilon_-$  and integrate 
\begin{equation}
	\frac{1}{2}\frac{d}{d \tau}\|\varepsilon_-\|_{L^2_{\rho}}^2 = \left\langle \partial_\tau \varepsilon_-, \varepsilon_- \right\rangle_{L^2_{\rho_\beta}} + \beta' \left\langle  \varepsilon_- , \varepsilon_- \left( \frac{d+2}{2\beta} -\frac{y^2}{2} \right)  \right\rangle_{L^2_{\rho_\beta}}.    
\end{equation}
Next, we will prove that for all $ \tau \in [\tau_0, \tau^*]$
\begin{eqnarray}
	\left\langle \partial_\tau \varepsilon_-, \varepsilon_- \right\rangle_{L^2_{\rho_\beta}} &\le & \left( \frac{\alpha}{2} - 2\right)\| \varepsilon_-\|_{L^2_{\rho_\beta}}^2  + O(b^{\alpha +3 \eta})(\tau),\label{esti-partial-varp--varep--} \\
	\left|\beta' \left\langle  \varepsilon_- , \varepsilon_- \left( \frac{d+2}{2\beta} -\frac{y^2}{2} \right)  \right\rangle_{L^2_{\rho_\beta}} \right|  &\lesssim &  b^{\alpha + 3 \eta}(\tau).\label{esti-partialbeta--varp--varep--beta'}
\end{eqnarray}
Let us  start with \eqref{esti-partial-varp--varep--}. Indeed,  from \eqref{equa-varepsilon-appen}, and the decomposition
\begin{eqnarray}
	\varepsilon =  \varepsilon_\ell  \left(\frac{\phi_{\ell,b,\beta}}{c_{\ell,0}} - \phi_{0,b,\beta}\right) + \sum_{j=1}^{\ell-1}  \varepsilon_j \phi_{j,b,\beta} + \varepsilon_-:= \varepsilon_+  + \varepsilon_-,\label{defi-varepsilon-j} 
\end{eqnarray}
$\varepsilon_-$ solves 
\begin{eqnarray*}
	\partial_\tau \varepsilon_- = \mathscr{L}_b (\varepsilon_-) + B(\varepsilon_+ +\varepsilon_-)+ \Phi - \partial_\tau \varepsilon_+ + \mathscr{L}_b \varepsilon_+.
\end{eqnarray*}
Taking $L^2_{\rho_\beta}$ scalar product to the both sides of the above equation, we deduce 
\begin{eqnarray*}
	\langle \partial_\tau \varepsilon_, \varepsilon_- \rangle_{\rho_\beta} 
	= \langle \mathscr{L}_b \varepsilon_-,\varepsilon_-  \rangle_{\rho_\beta}   +   \langle B(\varepsilon_+ + \varepsilon_-), \varepsilon_- \rangle_{\rho_\beta} + \langle \Phi - \partial_\tau \varepsilon_+,\varepsilon_- \rangle_{\rho_\beta},
\end{eqnarray*}
since  $ \langle \mathscr{L}_b \varepsilon_+,\varepsilon_- \rangle_{L^2_\rho}=0.$

\medskip
+ Estimate to $ \langle \mathscr{L}_b \varepsilon_-, \varepsilon_- \rangle_{L^2_\rho} $: Using the orthogonality
$$ \langle \phi_{j,b, \beta}, \varepsilon_-(\tau) \rangle_{L^2_{\rho_\beta}} =0, \text{for}  j =0 \text{ and } j=1,$$
the spectral gap  in   Proposition \ref{propo-mathscr-L-b}  ensures 
\begin{eqnarray*}
	\left\langle \mathscr{L}_b \varepsilon_-,\varepsilon_- \right\rangle_{L^2_{\rho_\beta}} \le \lambda_{2 , b, \beta} \| \varepsilon_-\|^2_{L^2_{\rho_\beta}}.
\end{eqnarray*}
In addition, we have 
$$ \lambda_{2, b, \beta} = \frac{\alpha}{2} - 2  + O(b^{1-\frac{\epsilon }{2}}),$$
which yields
\begin{eqnarray*}
	\langle \mathscr{L}_b \varepsilon_-,\varepsilon_- \rangle_{L^2_{\rho_\beta}}  \le \left(\frac{\alpha}{2} - (\ell +1) \right)\|\varepsilon_-\|^2_{L^2_\rho} + Cb^{\alpha + 3 \eta}.
\end{eqnarray*} 

\medskip

+ Estimate for  $\langle \Phi - \partial_\tau \varepsilon_+,\varepsilon_- \rangle_{L^2_{\rho_\beta}} $: Recall that $$\varepsilon(\tau) = \varepsilon_1 (\tau) \left( \frac{\phi_{1,b,\beta}}{c_{1,0}} -\phi_{0,b,\beta} \right) + \varepsilon_-(\tau) = \varepsilon_+ + \varepsilon_-.$$
We  decompose
\begin{eqnarray}
	\langle \Phi - \partial_\tau \varepsilon_+,\varepsilon_- \rangle = \langle \Phi, \varepsilon_- \rangle - \langle \partial_\tau \varepsilon_+, \varepsilon_-  \rangle.
\end{eqnarray}
For  $  \langle \Phi, \varepsilon_- \rangle_{L^2_{\rho_\beta}}$, we use \eqref{decompose-Phi} and by  Cauchy-Schwarz inequality to deduce that
\begin{eqnarray*}
	\left|\langle \Phi, \varepsilon \rangle_{L^2_{\rho_\beta}} \right| = \left| \langle \tilde \Phi, \varepsilon_-   \rangle_{L^2_{\rho_\beta}}  \right| \lesssim  \|\tilde \Phi\|_{L^2_{\rho_\beta}} \|\varepsilon_- \|_{L^2_{\rho_\beta}} \le C A^3 b^{\alpha + \tilde \eta + 1- \frac{\epsilon}{2}  } \le b^{\alpha +6  \eta}(\tau), \forall \tau \in [\tau_0, \tau^*].
\end{eqnarray*}
For the second term, we have
\begin{eqnarray*}
	\partial_\tau \varepsilon_+ &=&  \varepsilon'_1 \left[ \frac{\phi_{1, b, \beta}}{c_{1, 0}} - \phi_{0,b, \beta } \right]  +  \varepsilon_j \left[ \frac{b'}{b} b\partial_b \phi_{j, b, \beta}  + \beta' \partial_\beta \phi_{j, b, \beta} \right]\\
	&+& \varepsilon_1 \left[  \frac{\frac{b'}{b} b\partial_b \phi_{1, b, \beta}  + \beta' \partial_\beta \phi_{1, b, \beta} }{c_{1,0}} - \left( \frac{b'}{b} b\partial_b \phi_{0, b, \beta}  + \beta' \partial_\beta \phi_{0, b, \beta}\right) \right]. 
\end{eqnarray*}
Note that
\begin{equation}\label{orthogoanl-varep---phi-j}
	\langle  \varepsilon_-, \phi_{j, b, \beta}\rangle_{L^2_{\rho_\beta}}  = 0, \text{for}  j =0 \text{ and } j=1,  
\end{equation}
combining this with  \eqref{estimate-beta-derive-tau}, \eqref{ODE-b-tau-proposition},  the necessary bounds in  $V_1[A, \eta, \tilde \eta](\tau),$ and Cauchy-Schwarz inequality,  we infer
\begin{equation*}
	\left| \langle \partial_\tau \varepsilon_+, \varepsilon_- \rangle_{L^2_{\rho_\beta}} \right|  \le  b^{\alpha + 3 \eta}(\tau).
\end{equation*}
Finally, we give the following estimate
\begin{eqnarray*}
	\left| \langle \Phi - \partial_\tau \varepsilon_+,\varepsilon_- \rangle_{L^2_{\rho_\beta}}  \right|  \le b^{\alpha + 3\eta}(\tau), \forall \tau \in [\tau_0, \tau^*].
\end{eqnarray*}
- For  $\langle B\left(\varepsilon \right), \varepsilon_- \rangle_{L^2_{\rho_\beta}}$ with $\varepsilon = \varepsilon_+ +\varepsilon_-$. We explicitly write $B(\varepsilon)$  in \eqref{defi-B-quadratic-appendix} as follows
\begin{eqnarray*}
	B(\varepsilon) = -3(n-2) (1 +|y|^2 Q_b) (\varepsilon_+^2 +2\varepsilon_+ \varepsilon_- +\varepsilon_-^2 ) - (d-2) |y|^2 ( \varepsilon_+^3 + 3 \varepsilon_+^2\varepsilon_- +3 \varepsilon_+ \varepsilon_-^2 + \varepsilon_-^3).     
\end{eqnarray*}
From  $\gamma$'s definition, we observe that once $d \ge 11$, one has 
$$ \gamma  \le 3.7  < 4.$$ 
In addition, from the fact that $(\varepsilon, b, \beta)(\tau) \in V_1[A,\eta, \tilde \eta](\tau), \forall \tau \in [\tau_0, \tau^*]$ and \eqref{estimate-varepsilon--}, we  have 
\begin{eqnarray*}
	\left|\varepsilon_+(y) \right| \le \frac{A b^{\frac{\alpha}{2}  +\eta}(\tau) \langle y \rangle^{4}}{y^\gamma} + \frac{b^\frac{\alpha}{2}y^{2}\langle y \rangle^{2}}{y^\gamma}, \text{ and } \left|\varepsilon_-(y) \right| \le \frac{A^4 b^{\frac{\alpha}{2} + \tilde \eta }(\tau) \langle y \rangle^{4}}{y^\gamma},
\end{eqnarray*}
which yields
\begin{eqnarray*}
	\left|   \left\langle B(\varepsilon), \varepsilon_- \right\rangle_{L^2_{\rho_\beta}}   \right| \le  Cb^{\alpha + 3 \eta}.
\end{eqnarray*}
Thus,  we finish the proof of \eqref{esti-partial-varp--varep--}.  In particular, using  \eqref{estimate-beta-derive-tau} and \eqref{estimate-varepsilon--},  we get
\begin{eqnarray}
	\left|\beta' \left\langle  \varepsilon_- , \varepsilon_- \left( \frac{d+2}{2\beta} -\frac{y^2}{2} \right)  \right\rangle_{L^2_{\rho_\beta}} \right| \le b^{\alpha +3 \eta}, \forall \tau \in [\tau_0, \tau^*],
\end{eqnarray}
which  implies \eqref{esti-partialbeta--varp--varep--beta'}. Finally, by combining  \eqref{esti-partial-varp--varep--} and  \eqref{esti-partialbeta--varp--varep--beta'} we deduce \eqref{estima-derive-estima-epsilon---new} and then the proof of the Lemma follows.
\end{proof}

\medskip

\subsection{$L^\infty$ bounds}

In order to handle the nonlinear term in the $L^2_{\rho}$-energy estimate, we used the control of a weighted $L^{\infty}$-norm of $\varepsilon_-$. The rest of the section is devoted to it.
In the next step, we aim to give \textit{a priori} estimates to the infinite  part, $\varepsilon_-$. More precisely, we have the following proposition:  
\begin{lemma}[Control of the infinite dimensional part]\label{lemma-priori-estima-varep--}  Then, there exists $A_4 \ge 1$ such that for $A \ge A_3, \delta \ll 1$,  there exists $\eta_4(A,\delta) \ll 1$ such that for all $\eta \le \eta_4$, there exists $ \tilde{\eta}_4(A,\eta) \ll \eta   $ such that for all $\tilde  \eta  \le \tilde \eta_4$,  there exists $ \tau_4(A, \eta, \tilde \eta) \ge 1$, such that for all $ \tau_0 \ge \tau_5$, the following holds: assume that initial data is defined as in  \eqref{defi-initial-vaepsilon-l=1}    and the solution $(\varepsilon, b, \beta)(\tau) \in  V_1[A, \eta, \tilde \eta](\tau), \forall \tau \in [\tau_0,\tau^*]$, for some $\tau^* \ge \tau_0$ then we have the following
\begin{equation}
	\left\|   \frac{y^\gamma}{\langle y \rangle^{4}}  \varepsilon_-(., \tau)  \right\|_{L^\infty\left[  0, b^{- \tilde \eta} (\tau)  \right]}  \le \frac{A^3}{2}  I^{\frac{\alpha}{2} +\tilde \eta}(\tau), \forall \tau \in [\tau_0,\tau_1].
\end{equation} 
\end{lemma}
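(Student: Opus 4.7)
The plan is to split the interval $[0, b^{-\tilde\eta}(\tau)]$ into an inner zone $[0, b^{\eta/4}(\tau)]$ (close to the concentration scale $\sqrt{b}$) and an outer zone $[b^{\eta/4}(\tau), b^{-\tilde\eta}(\tau)]$, and to treat each zone by a different mechanism, as indicated in the strategy section. First, starting from the evolution equation \eqref{equa-varepsilon-appen} for $\varepsilon$ and the decomposition \eqref{decompose-varepsilon-ell=1}, together with the modulation equations from Lemma \ref{lemma-ODE-finite-mode}, I would derive a closed parabolic equation
\[
\partial_\tau \varepsilon_- = \mathscr{L}_b \varepsilon_- + R(\varepsilon_+,\varepsilon_-,b,\beta),
\]
where the remainder $R$ collects the nonlinearity $B(\varepsilon)$, the source $\Phi$ minus its projection, the modulation corrections coming from $\partial_\tau \varepsilon_+$ (which contain $b_\tau/b$, $\beta'$, $\partial_b\phi_{j,b,\beta}$, $\partial_\beta\phi_{j,b,\beta}$) and the projection corrections. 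The pointwise estimates of Proposition \ref{propo-mathscr-L-b} together with the shrinking-set bounds give an explicit pointwise control on $R$ of the form $|R(y,\tau)|\lesssim b^{\alpha/2+c\eta}\langle y\rangle^{4}/y^{\gamma}$ on the range considered.

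In the inner zone, I would rescale to the blow-up variable $\xi=y/\sqrt{b}$ and set $\tilde{\varepsilon}_-(\xi,\tau) = y^{\gamma}\langle y\rangle^{-4}\varepsilon_-(y,\tau)$, so that the operator $\mathscr{L}_b$ becomes $\tfrac{1}{b}(H_\xi - b\beta\Lambda_\xi)$ plus a conjugation by the weight; since $H_\xi$ is a coercive Schrödinger operator in the blow-up scale and $b\beta\Lambda_\xi$ contributes a small perturbation there, a parabolic maximum principle argument (in the spirit of \cite{BSIMRN19}) applied to $\tilde{\varepsilon}_-$ propagates the bound from the boundary $y=b^{\eta/4}(\tau)$ inward and from the initial time, feeding in the $L^\infty$ estimate on $\varepsilon_-(\tau_0)$ that follows from the initial data \eqref{defi-initial-vaepsilon-l=1}. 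The regularity at $y=0$ is forced by the equation itself (no singular boundary term appears after the $y^\gamma$ rescaling), and the source $R$ contributes at most $b^{\alpha/2+c\eta}$, which is beaten by $\tfrac{1}{2}A^3 b^{\alpha/2+\tilde\eta}$ once $A$ is large and $\tilde\eta\ll \eta$.

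In the outer zone, I would use the Duhamel representation
\[
\varepsilon_-(\tau) = \mathcal{K}_{\beta}(\tau,\tau_0)\varepsilon_-(\tau_0) + \int_{\tau_0}^{\tau}\mathcal{K}_{\beta}(\tau,\tau')\,R(\tau')\,d\tau',
\]
where $\mathcal{K}_\beta(\tau,\tau')$ is the semigroup attached to $\mathscr{L}_\infty^{\beta(\tau)}$ (a good approximation of $\mathscr{L}_b$ for $y\gtrsim\sqrt{b}$, since $3(d-2)[2Q_b+y^2Q_b^2]\to -3(d-2)/y^2$ there). Using the Mehler--Laguerre kernel representation of $e^{\tau\mathscr{L}_\infty^\beta}$ on the orthogonal complement of the first $\ell+1$ eigenmodes, together with the spectral gap from Proposition \ref{proposition-spectral-L-infty}, one gets the pointwise bound
\[
\left|\mathcal{K}_\beta(\tau,\tau')f(y)\right| \lesssim e^{-(1-\alpha/2+c)\beta(\tau-\tau')}\,\frac{\langle y\rangle^{4}}{y^{\gamma}}\left\|\frac{y^\gamma f}{\langle y\rangle^{4}}\right\|_{L^\infty}
\]
for $f$ orthogonal to $\phi_{j,\infty,\beta}$, $j\le \ell$. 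Plugging in the control on $R$ and integrating in $\tau'$ gives the desired factor $\tfrac{1}{2}A^3b^{\alpha/2+\tilde\eta}(\tau)$, provided the initial-layer contribution $\mathcal{K}_\beta(\tau,\tau_0)\varepsilon_-(\tau_0)$ is small, which follows from the choice \eqref{defi-initial-vaepsilon-l=1}.

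The main obstacle, and the genuinely new technical point compared with \cite{BSIMRN19}, is the drift term $(\beta(\tau)-\tfrac12)\,y\partial_y \varepsilon_-$ that appears because we do allow $\beta$ to vary (this is needed for stability). On it, I would rely on the pointwise estimate on $e^{\tau\mathscr{L}_\infty}(y\partial_y\cdot)$ obtained via Laguerre expansions and the relation $y\partial_y L_i^{(\nu)}(\beta y^2/2)=\beta y^2 (L_i^{(\nu)})'(\beta y^2/2)$, as announced just after Definition \ref{shrinking-set}; combined with $|\beta-\tfrac12|\le AI^{\eta}(\tau_0)\ll 1$ from the shrinking set, this drift contributes only a small perturbative error and can be absorbed. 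A secondary difficulty is the matching at $y=b^{\eta/4}$: the inner and outer bounds must produce compatible weighted constants, which forces the choice $\tilde\eta\ll\eta\ll\delta\ll 1$ and the smallness of $b(\tau_0)$ used throughout; this will be verified by a direct comparison of the two estimates at the junction.
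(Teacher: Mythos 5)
Your overall architecture (maximum principle near the origin, semigroup pointwise estimates on $[b^{\eta/4},b^{-\tilde\eta}]$, perturbative treatment of the drift $(\tfrac12-\beta)\Lambda_y$) is the one the paper follows, but two of your key steps would fail as written. First, in the inner zone you propose to apply a parabolic maximum principle directly to $\tilde\varepsilon_-=y^\gamma\langle y\rangle^{-4}\varepsilon_-$. The equation satisfied by $\varepsilon_-$ is not a local parabolic equation: it contains $-\partial_\tau\varepsilon_+ +\mathscr{L}_b\varepsilon_+$, whose coefficients are determined by $L^2_{\rho_\beta}$-projections of the full solution, and you would also need boundary data on the moving boundary $y=b^{\eta/4}(\tau)$ which is not available a priori. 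The paper instead applies the comparison argument (Proposition \ref{sub-super-solution}) to the \emph{full} error $\varepsilon=w-Q_b$, which solves a genuinely local equation, obtaining $|\varepsilon|\le b^{-1}H(y/\sqrt b)$; it then bounds $\varepsilon_+$ explicitly via the compatibility condition $\varepsilon_1=-\tfrac{2}{\alpha}m_0b^{\alpha/2}$ and the pointwise estimates on $\phi_{j,b,\beta}$, and deduces the bound on $\varepsilon_-=\varepsilon-\varepsilon_+$ by subtraction.

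Second, in the outer zone your claimed bound
$\left|\mathcal{K}_\beta(\tau,\tau')f(y)\right|\lesssim e^{-(1-\alpha/2+c)\beta(\tau-\tau')}\langle y\rangle^{4}y^{-\gamma}\bigl\|y^\gamma f/\langle y\rangle^{4}\bigr\|_{L^\infty}$
for $f$ orthogonal to the first modes is not valid uniformly on $y\in[b^{\eta/4},b^{-\tilde\eta}]$. The spectral-gap decay only becomes available through the eigenfunction expansion $\sum_{j\ge 2}e^{(\alpha/2-j)(\tau-\tau')}\langle f,\phi_{j,\infty}\rangle\phi_{j,\infty}(y)$, which converges pointwise only when $y\,e^{-(\tau-\tau')/2}$ is small (since $\phi_{j,\infty}(y)\sim\beta_j y^{2j-\gamma}$); for short times $\tau-\tau'$ the maximal-function estimate of Lemma \ref{maximal-lemma} gives only the growth $e^{\frac{\alpha}{2}(\tau-\tau')}$, with no gain from orthogonality. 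This is precisely why the paper's proof splits into the cases $\tau-\tau_0\le \ln A/K_0$ and $\tau-\tau_0>\ln A/K_0$, separates the Duhamel integral into $\int_{\tau_0}^{\tau-L_0}+\int_{\tau-L_0}^{\tau}$, works first on $y\in[b^{\eta/4},R]$, and for $y\in[R,b^{-\tilde\eta}]$ restarts the Duhamel formula at an intermediate time $\bar\tau$ chosen so that $y\sim L_0^{-1}e^{(\tau-\bar\tau)(1-\eta(\frac{2}{\alpha}-1))/2}$. Without this mechanism (or an equivalent one) the outer estimate does not close. Your treatment of the drift term via the pointwise estimate on $e^{\tau\mathscr{L}_\infty}(\Lambda\cdot)$ is consistent with the paper's Lemma \ref{Lemma-esti-semigroup-Lambda-var}, but note the paper works with the fixed semigroup $e^{\tau\mathscr{L}_\infty}$ ($\beta=\tfrac12$) and puts the entire $(\tfrac12-\beta)\Lambda_y$ term in the Duhamel source, rather than with the time-dependent $\mathcal{K}_\beta$; mixing the two, as your write-up does, double-counts the drift.
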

\begin{proof}
The proof relies on the maximum principal for the control near the origin i.e. $\left[0,b^\frac{\eta}{4} \right]$, and   pointwise estimates on $ \left[ b^\frac{\eta}{4}, b^{-\tilde \eta} \right]$.

$a)$ Let us consider $y \in \left[0,b^\frac{\eta}{4} \right]$. We apply  Proposition \ref{sub-super-solution} to obtain 
\begin{eqnarray*}
	\left|   \varepsilon(y,\tau)       \right|  \le  b^{-1}(\tau)  H \left(  \frac{y}{\sqrt{b(\tau)}}\right)  \le \frac{C b^{\frac{\alpha}{2} + \frac{\eta}{4}}(\tau) \langle y \rangle^{4}}{y^\gamma}, \, \text{for all} y \in \left[ 0,b^\frac{\eta}{4} \right].
\end{eqnarray*}
In addition, $\varepsilon_+$  can be estimated by
\begin{equation}
	\left| \varepsilon_+(y, \tau) \right| \le  \left|\varepsilon_1(\tau) \right| \left|  \frac{\phi_{1, b, \beta}}{c_{1,0}} - \phi_{0, b, \beta}  \right|.
\end{equation}
On the one hand, we use the  pointwise estimates given in Proposition \ref{propo-mathscr-L-b} 
\begin{eqnarray*}
	\left|  \frac{\phi_{1, b, \beta}}{c_{1,0}} - \phi_{0, b, \beta}  \right| \le \left|  \frac{c_{1,1}}{c_{1,0}} (\sqrt{b})^{2-\gamma}  T_1\left( \frac{y}{\sqrt{b}} \right)\right|  + \left|  \tilde \phi_{1,b, \beta}\right| + \left| \tilde \phi_{0,b,\beta}\right| \le \frac{Cb^{ 1- \frac{\epsilon}{2}}(\tau) \langle y \rangle^{2}}{y^\gamma}.
\end{eqnarray*}
On the other hand, from the compatibility 
$$ \varepsilon_1 (\tau) = - \frac{2}{\alpha} m_0 b^{\frac{\alpha}{2}} (\tau), $$
we deduce that
$$ \left| \varepsilon_+(y,\tau)   \right| \le \frac{C b^{\frac{\alpha}{2} +\frac{\eta}{2}}(\tau) \langle y \rangle^{4} }{y^\gamma}.$$
Thus, we obtain 
\begin{eqnarray}
	\sup_{y \in [0, b^{\frac{\eta}{4}}(\tau)]}  \frac{y^\gamma}{\langle
		y \rangle^{2\ell+2}} \left| \varepsilon_-(y,\tau) \right| \le CA  b^{\frac{\alpha}{2} +\frac{\eta}{4}}(\tau) \le \frac{A^3}{2} b^{\frac{\alpha}{2} + \tilde \eta}(\tau),
\end{eqnarray} 
provided that $A \ge A_4$.

$b)$  Let us consider the control on $\left[ b^{\frac{\eta}{4}}, b^{-\tilde \eta}(\tau) \right]$. On this domain, we are far the origin so we can not  use the spectrum properties of $\mathscr{L}_\infty$. The idea is inspired from \cite{BSIMRN19}. We are going to use pointwise estimates based on the semi-group. As for $\beta=\frac{1}{2}$, $\mathscr{L}_\infty$ has explicit structure. We introduce the   basis of $L^{\infty}$ 
\begin{eqnarray*}
	\phi_{0,\infty}  = \phi_{0,\infty, \frac{1}{2}} \text{ and }   \phi_{1, \infty} = \phi_{1, \infty, \frac{1}{2}},
\end{eqnarray*}
and for all $j \ge 2$ we renormalize  as in  \cite[Lemma 3.4]{BSIMRN19} that
$$ \phi_{j,\infty}(y) = \mathcal{N}_j y^{-\gamma} L_j^{\left(\frac{d}{2} -\gamma\right)} \left( \frac{y^2}{4} \right),  $$
where $L_j^{\nu}$  denoted by  the generalized Laguerre polynomial, and  the renormalisation constant $\mathcal{N}_j$ ensures that  $\|\phi_{j,\infty}\|_{L^2_{\rho}} =1$ and 
\begin{eqnarray*}
	\phi_{j,\infty} (y)= \left\{ \begin{array}{rcl}
		\alpha_j y^{-\gamma} \left(1+o(1) \right)  & \text{ as } & y \to 0  \\[0.2cm]
		\beta_j y^{2j-\gamma} (1+ o(1))   & \text{ as } & y \to +\infty,
	\end{array} \right. 
\end{eqnarray*}
with $\alpha_j$ and  $ \beta_j$ satisfies 
$$  \alpha_j \sim j^\frac{\omega}{4} \text{ and } \beta_n = \frac{j^{-\frac{\omega}{4}}}{4^jj!} \text{as } j \to +\infty.   $$

The pointwise estimates given in  Proposition \ref{propo-mathscr-L-b}  ensures  that  $\phi_{j, b, \beta}$ is very  close  to   $\phi_{j,\infty,\beta}$ on this interval  by the following 
\begin{eqnarray}
	\left| \phi_{j,b(\tau), \beta} (y)- \phi_{j,\infty, \beta}(y)     \right|   \lesssim \frac{b^{\frac{\eta}{2}}(\tau) \langle y \rangle^{4} }{y^\gamma} ,\, \forall y \ge b^{\frac{\eta}{4}}, j \le 1. \label{estimate-phi-j-b-phi-j-infty}
\end{eqnarray}
In addition, the condition 
$$\left| \beta(\tau) -\frac{1}{2}  \right| \le A I^{\eta}(\tau_0), $$
defined in the Shrinking set $V_1[A, \eta, \tilde \eta](\tau)$  shows that 
$\phi_{j, \infty, \beta}$ is close to $\phi_{j, \infty, \frac{1}{2}}:= \phi_{j, \infty}$ since
for all $j $
\begin{eqnarray}
	\left| \phi_{j, \infty, \beta } (y) - \phi_{j, \infty, \frac{1}{2}} (y)    \right| & \lesssim &  \left|\beta(\tau) -\frac{1}{2} \right|\frac{\langle y \rangle^{4}}{y^\gamma},\label{phi-beta-near-phi-1-2}\\
	\left| e^{-\frac{(2\beta) y^2}{4}} - e^{-\frac{y^2}{4}} \right|  & \lesssim &  \left| \beta(\tau) -\frac{1}{2} \right| \frac{y^2}{4} e^{-\frac{y^2}{8}},\label{rho-beta-near-rho-1-2}
\end{eqnarray}
since for all $\alpha$, it holds  $\left| e^{\alpha}  - 1 \right| \le C \alpha e^{\alpha} $, we have 
\begin{eqnarray}
	\left| \hat{\varepsilon}_1(\tau) \right| \lesssim b^\frac{\alpha}{2}(\tau), \label{estimate-hat-varep-1}
\end{eqnarray}
where $ \hat{\varepsilon}_j =  \| \phi_{j, \infty} \|^{-2}_{L^2_{\rho}} \langle \varepsilon, \phi_{j, \infty} \rangle_{L^2_{\rho}} $ is the projection of $\varepsilon$ on the basis  $\{ \phi_{j, \infty}, j \ge 0 \}$.
Hence, we use the semi-group pointwise estimates and we decompose $\varepsilon$ on the basis $\phi_{j,0}$
\begin{equation}\label{decomposition-singular}
	\varepsilon = \hat{\varepsilon}_+ + \hat{\varepsilon}_-. 
\end{equation}
Thus,   we will prove that
\begin{equation}\label{sup-y-delta--delta-I-hat-epsilon}
	\sup_{y \in  \left[   b^\frac{\eta}{4}(\tau), b^{-\tilde \eta} (\tau)     \right] }\left|  \frac{y^\gamma}{\langle y \rangle^{4}}  \hat{\varepsilon}_-(y,\tau)\right| \le \frac{A^3}{4} b^{\frac{\alpha}{2} + \tilde \eta}(\tau). 
\end{equation}
Since $\varepsilon$ satisfies  \eqref{equa-varepsilon-appen}, $\hat{\varepsilon}_-$ solves
\begin{equation}\label{equa-hat-varepsilon--}
	\partial_\tau \hat{\varepsilon}_{-} = \mathscr{L}_\infty \hat{\varepsilon}_{ -}  +  \hat{B}( \hat{\varepsilon}_{+}  +  \hat{\varepsilon}_{-} ) + \left( \frac{1}{2}  -  \beta(\tau)  \right)  \Lambda_y \left(  \hat{\varepsilon}_{-} \right), 
\end{equation}
where $\mathscr{L}_\infty$ was defined in \eqref{defi-operator-L-infty} by taking $\beta=\frac{1}{2}$ and  $\hat{B}$ is defined by  
\begin{eqnarray}
	\hat{B}  &=&  -3(d-2) \left[2Q_b +y^2 Q_b^2 + \frac{1}{y^2}   \right]( \hat \varepsilon_+ + \hat{\varepsilon}_-)  + B(\hat{\varepsilon}_+ +\hat{\varepsilon}_-)   \label{defi-hat-B}\\
	& + &      \Phi(\tau)  + \mathscr{L}_\infty^{\beta} (\hat{\varepsilon}_{+} ) -\partial_\tau
	\hat{\varepsilon}_{ +} , \nonumber
\end{eqnarray}
with  $B$ and $\Phi$   defined in \eqref{defi-B-quadratic-appendix} and \eqref{Phi-simple}, respectively. By using Duhamel's formula, we get 
\begin{equation}\label{Duhamel-hat-varepsilon--}
	\hat{\varepsilon}_{ -}(\tau) = e^{(\tau-\tau_0)\mathscr{L}_\infty} \hat{\varepsilon}_{ -}(\tau_0) + \int_{\tau_0}^\tau  e^{(\tau -\tau') \mathscr{L}_\infty} \left[ \hat{B}(\hat \varepsilon_{ +} + \hat{\varepsilon}_{ -})  +\left(\frac{1}{2} -\beta(\tau') \right)\Lambda_y \hat\varepsilon_{-} \right](\tau') d\tau'.
\end{equation}
In addition, we denote $f_-$ as the part of $f$ which is orthogonal to $\phi_{0,\infty}$ and $\phi_{1,\infty}$. Then
$$ f_- (y) = f -   \sum_{j=0}^1 \langle f , \phi_{j,\infty} \rangle_{L^2_\rho} \phi_{j,\infty}. $$
In particular, if the series
$$ \sum_{j=0}^\infty  \langle f , \phi_{j,\infty} \rangle_{L^2_\rho} \phi_{j,\infty} $$
is convergent and well defined, then we can define $f_-$ pointwisely as
$$ f_- (y)   = \sum_{j=2}^\infty  \langle f , \phi_{j,\infty} \rangle_{L^2_\rho} \phi_{j,\infty}.$$
Since $\varepsilon_-$ is orthogonal to $\phi_{0,\infty}$ and $\phi_{1,\infty}$, we can write
\begin{eqnarray}
	\varepsilon_- &=& \left(e^{(\tau-\tau_0)\mathcal{L}_\infty}(\varepsilon_-(\tau_0)) \right)_- \label{express-varep---}  \\
	& + & \int_{\tau_0}^{\tau} \left(e^{(\tau-\tau')\mathscr{L}_\infty}(\hat{B}(\tau'))   \right)_-d\tau' +\int_{\tau_0}^{\tau}  \left( \left[\frac{1}{2} -\beta(\tau') \right] \Lambda \hat{\varepsilon}_-(\tau') \right)_-d\tau'.\nonumber 
\end{eqnarray}
We remark that \eqref{sup-y-delta--delta-I-hat-epsilon}  immediately  follows from 
\begin{eqnarray}
	\sup_{y \in  \left[   b^\frac{\eta}{4}(\tau), b^{-\tilde \eta}(\tau)\right]}\left|  \frac{y^\gamma}{\langle y \rangle^{4}}  \left(e^{(\tau-\tau_0)\mathscr{L}_\infty} \hat{\varepsilon}_{-}(\tau_0) \right)_-(y,\tau)\right| \le \frac{A^3}{16} b^{\frac{\alpha}{2} + \tilde \eta}(\tau), \label{estima-Duhame-hat-epsilon-tau-0}
	\\
	\sup_{y \in  \left[    b^\frac{\eta}{4}(\tau), b^{-\tilde \eta} (\tau) \right] }\left|  \frac{y^\gamma}{\langle y \rangle^{4}} \int_{\tau_0}^\tau  \left( e^{(\tau -\tau') \mathscr{L}_\infty} \left[ \hat{B}(\tau')   \right]  \right)_-d\tau' \right| \le \frac{A^3}{16} b^{\frac{\alpha}{2} + \tilde \eta}(\tau),\label{estima-Duhame-integral-hat-B}\\
	\sup_{y \in  \left[    b^\frac{\eta}{4}(\tau), b^{-\tilde \eta} (\tau) \right] }\left|  \frac{y^\gamma}{\langle y \rangle^{4}} \int_{\tau_0}^\tau  \left(e^{(\tau -\tau') \mathscr{L}_\infty} \left[ \frac{1}{2} -\beta(\tau')   \right] \Lambda_y ( \hat{\varepsilon}_{ - } ) \right)_-(\tau') d\tau' \right| \le \frac{A^3}{16} b^{\frac{\alpha}{2} + \tilde \eta}(\tau)\label{estima-Duhame-integral-1-2-Lambda_varep-beta-}.
\end{eqnarray}

To  prove  estimates (\eqref{estima-Duhame-hat-epsilon-tau-0} -\eqref{estima-Duhame-integral-1-2-Lambda_varep-beta-}),       we need to consider different  cases as \\

- The first case, we consider $\tau - \tau_0 \le \frac{\ln A}{K_0}$ \\

- The second case, we consider $\tau - \tau_0 > \frac{\ln A}{K_0}$. In addition, the second will  be  divided again by two sub-cases that  $ \frac{1}{L_0} e^{\frac{\tau-\tau_0}{2} \left(1 -\eta \left(\frac{2\ell}{\alpha} - 1 \right) \right)} \le b^{ -\tilde \eta}(\tau)  \text{  and  }  \frac{1}{L_0} e^{\frac{\tau-\tau_0}{2} \left(1 -\eta \left(\frac{2\ell}{\alpha} - 1 \right) \right)} > b^{ -\tilde \eta}(\tau)$ and in these sub-cases  also includes some smaller case that there are some large constant $L_0, K_0, R $ appear which are  fixed at the end of the proof. Let us go to the details of the proof.

\begin{center}
	\textbf{  First case $\tau - \tau_0 \le \frac{\ln A}{K_0}$   }
\end{center}

\medskip
-\textbf{ Proof of \eqref{estima-Duhame-hat-epsilon-tau-0} }:
Note that $K_0 \gg 1$ will be fixed at the end of the proof. Now, we deduce from  
\eqref{defi-initial-vaepsilon-l=1} in accordance with the decomposition \eqref{decomposition-singular}, we arrive at 
\begin{equation}\label{estimate-hat-varpe--y-gamma}
	|\hat{\varepsilon}_-(\tau_0) y^\gamma |      \le CA b^{\frac{\alpha}{2} +\eta}(\tau_0) \langle y\rangle^{4}, 
\end{equation}
where $b(\tau_0) = I(\tau_0)$ defined in \eqref{defi-I-tau}. 
Since  $\langle y\rangle^{4}$ is increasing, then, we apply Lemma  \ref{maximal-lemma} and we  obtain
\begin{eqnarray}
	\left|  e^{(\tau -\tau_0) \mathscr{L}_\infty} \hat{\varepsilon}_{ -}(\tau_0)  (y,\tau) \right| &\le & y^{-\gamma} e^{\frac{\alpha(\tau-\tau_0)}{2}} M(\hat{\varepsilon}_{ -}(\tau_0))(y) \label{estimate-math-infty-varep-beta-tau-0} \\
	& \le & CA e^{\frac{\alpha(\tau-\tau_0)}{2}} {b}^{\frac{\alpha}{2} +\eta}(\tau_0) y^{-\gamma}\frac{\int_y^\infty \langle y' \rangle^{4}  (y')^{1+\omega} e^{-\frac{(y')^2}{4}}dy'}{\int_y^\infty  (y')^{1+\omega} e^{ -\frac{(y')^2}{2}} dy'   }\nonumber \\
	&  \le  & C A e^{\frac{\alpha}{2}(\tau -\tau_0) }  b^{\frac{\alpha}{2}  +\tilde \eta}(\tau) b^{- \frac{\alpha}{2} -\tilde \eta} (\tau) b^{\frac{\alpha}{2} +\eta}(\tau_0)  y^{-\gamma}\langle y \rangle^{4}.\nonumber
\end{eqnarray}
Using  \eqref{esti-b-equivalent-I-1-tilde-eta}, we get
\begin{eqnarray}
	e^{\frac{\alpha}{2}(\tau -\tau_0)} b^{\frac{\alpha}{2} +\eta} (\tau_0) b^{-\frac{\alpha}{2} -\tilde \eta}(\tau) &\le & Ce^{\frac{\alpha}{2}(\tau -\tau_0)} e^{\left(1 -\frac{2}{\alpha} \right)\left( \left(\frac{\alpha}{2 } +\delta \right) (1 -\frac{\tilde \eta}{10})\tau_0 - \left(\frac{\alpha}{2 } +\tilde \eta \right) (1 + \frac{\tilde \eta}{10})\tau  \right)}\label{prcoess-filter-b-to-I}\\
	& \le & Ce^{-c(\eta)\tau_0}e^{\frac{\alpha}{2}(\tau -\tau_0)} e^{\left(1 -\frac{2}{\alpha} \right)\left( \left(\frac{\alpha}{2 } +\tilde \eta \right) (1 + \frac{\tilde \eta}{10})\tau_0 - \left(\frac{\alpha}{2 } +\tilde \eta \right) (1 + \frac{\tilde \eta}{10})\tau  \right)} \nonumber \\
	& \le & C e^{-c(\eta)\tau_0} e^{ \left(\frac{\alpha}{2} + \left( \frac{2 }{\alpha} - 1  \right) \left( \frac{\alpha}{2} +\tilde \eta \right)(1 +\frac{\tilde \eta}{10})  \right)  \left( \tau - \tau_0 \right)  } \nonumber \\
	&\le &  Ce^{-c(\eta)\tau_0} A^{\left(\frac{\alpha}{2} + \left( \frac{2  }{\alpha} - 1  \right) \left( \frac{\alpha}{2} +\tilde \eta \right)(1 +\tilde \eta)  \right) \frac{1}{K_0}},  \nonumber \text{ for some  } c(\eta) >0,
\end{eqnarray}
which yields
\begin{eqnarray}
	\left| \frac{y^\gamma}{ \langle y \rangle^{4}} e^{(\tau -\tau_0) \mathscr{L}_\infty} \hat{\varepsilon}_-(\tau_0)  (y,\tau) \right| &  \le  & \frac{A^3}{16} b^{\frac{\alpha}{2} +\tilde \eta}(\tau),\label{asti-tau-tau-0vareo-0}
\end{eqnarray}
provided that $K_0 \geq K_4,  A \ge A_4$. Finally, we conclude  
\eqref{estima-Duhame-hat-epsilon-tau-0}.

\medskip
\noindent
-Proof of \eqref{estima-Duhame-integral-hat-B}: for $\tau' \in [\tau_0,\tau]$, we apply  Lemma \ref{maximal-lemma} to get 
\begin{eqnarray}
	\left|e^{(\tau-\tau')\mathscr{L}_\infty}  [ \hat{B}](\tau') \right|   \le   Cy^{-\gamma} e^{\frac{\alpha(\tau-\tau')}{2}} \left\{ M(\mathbbm{1}_{(0,b^{\delta}(\tau')]}\hat B) 
	+ M(\mathbbm{1}_{y \ge b^\delta(\tau')}\hat B ) \right\}. \label{esti-semi-hat-B}
\end{eqnarray} 
To evaluate $ M(\mathbbm{1}_{(0,b^{\delta}(\tau')]}\hat B)(\tau') $,  we  apply  the result in Lemma \ref{lemma-estimate-on-hat-B} to obtain
\begin{eqnarray*}
	M(\mathbbm{1}_{[0,b^\delta(\tau')]} \hat B(\tau')) & \le &  C\left[ b^{\frac{\alpha}{2}}(\tau') M( \mathbbm{1}_{(0,b^\delta(\tau')]} y^{-\gamma})  + A^3 b^{\frac{\alpha}{2} +\tilde{\eta}}(\tau') M( \mathbbm{1}_{(0,b^\delta(\tau')]} y^{-\gamma-2}) \right.\\
	& + & \left. A^8 b^{ 2\left(\frac{\alpha}{2} +\tilde{\eta} \right)}(\tau') M(\mathbbm{1}_{(0,b^\delta(\tau')]}y^{-2\gamma})  +A^{12}b^{ 3\left(\frac{\alpha}{2} +\tilde{\eta} \right)}(\tau') M(\mathbbm{1}_{[0,b^\delta(\tau')]}y^{-3\gamma +2})   \right].
\end{eqnarray*}
For the first term on the right hand side of the above  inequality, we rewrite from \eqref{maxinum-M-f-y}   
\begin{eqnarray*}
	M( \mathbbm{1}_{[0,b^\delta(\tau')]} y^{-\gamma}) = \sup_{y \in \mathcal{I}} \frac{ \int_{\mathcal{I}} |\mathbbm{1}_{[0,b^\delta(\tau')]}| (y')^{1+\omega}  e^{-\frac{(y')^2}{4} } dy' }{\int_{\mathcal{I}} (y')^{1 +\omega} e^{-\frac{(y')^2}{4} } dy' } ,
\end{eqnarray*}
since $\mathbbm{1}_{[0,b^\delta(\tau')]} $ is   non increasing,  then, we apply the result in Lemma \ref{maximal-lemma} to get
\begin{eqnarray*}
	M( \mathbbm{1}_{(0,b^\delta(\tau')]} y^{-\gamma}) &\le &   \frac{\int_0^{b^\delta(\tau')} |\mathbbm{1}_{[0,b^\delta(\tau')]}| (y')^{1+\omega}  e^{-\frac{(y')^2}{4} } dy' }{\int_0^y(y')^{1 +\omega} e^{-\frac{(y')^2}{4} } dy' } 
	\lesssim   \frac{ (b^\delta(\tau'))^{2+\omega}   }{\int_0^y(y')^{1 +\omega} e^{-\frac{(y')^2}{4} } dy' }.  
\end{eqnarray*}
Besides that, once  $ y \ge 1$, it follows that
$$ \int_0^y(y')^{1 +\omega} e^{-\frac{(y')^2}{4} } dy' \ge C,    $$
which yields
\begin{eqnarray*}
	\left( \int_0^y(y')^{1 +\omega} e^{-\frac{(y')^2}{4} } dy'\right)^{-1}   \lesssim \frac{\langle y\rangle^{2+\omega}}{y^{2+\omega}}.
\end{eqnarray*}
Otherwise, once  $y \le 1$, we have 
\begin{eqnarray*}
	\left( \int_0^y(y')^{1 +\omega} e^{-\frac{(y')^2}{4} } dy'\right)^{-1} \lesssim \left( \int_0^y(y')^{1 +\omega}  dy'\right)^{-1} \lesssim \frac{\langle y\rangle^{2+\omega}}{y^{2+\omega}}.
\end{eqnarray*}
Then, we derive 
\begin{eqnarray*}
	M( \mathbbm{1}_{(0,b^\delta(\tau')]} y^{-\gamma}) &\le &   \frac{\int_0^{b^\delta(\tau')} |\mathbbm{1}_{[0,b^\delta(\tau')]}| (y')^{1+\omega}  e^{-\frac{(y')^2}{4} } dy' }{\int_0^y(y')^{1 +\omega} e^{-\frac{(y')^2}{4} } dy' } 
	\lesssim  \frac{(b^\delta(\tau'))^{2+\omega}\langle y\rangle^{2+\omega}}{y^{2+\omega}}.
\end{eqnarray*}
Similarly, we  have
\begin{eqnarray*}
	M( \mathbbm{1}_{[0,b^\delta(\tau')]} y^{-\gamma-2}) \lesssim   \frac{ (b^\delta(\tau'))^{\omega} \langle y \rangle^{2+\omega}}{ y^{2 +\omega}},
	\text{ and }
	M( \mathbbm{1}_{(0,b^\delta(\tau')]} y^{-2\gamma})   \lesssim   \frac{ (I^\delta(\tau'))^{\omega +2-\gamma} \langle y \rangle^{2+\omega}  }{ y^{2 +\omega}}
\end{eqnarray*}
and
\begin{eqnarray*}
	M( \mathbbm{1}_{(0,b^\delta(\tau')]} y^{-3\gamma +2})   \lesssim   \frac{ (b^\delta(\tau'))^{\omega + 4 - 2 \gamma} \langle y \rangle^{2+\omega}  }{ y^{2 +\omega}}.
\end{eqnarray*}
Combining all the related  terms with the condition that $ y \in \left[ b^{\frac{\eta}{4}}(\tau), b^{-\tilde \eta} \right]$,    we deduce
\begin{eqnarray}
	\left| e^{(\tau-\tau')\mathscr{L}_\infty} \left[ \mathbbm{1}_{(0,b^\delta(\tau')]} \hat B \right] (\tau')\right| &\lesssim & y^{-\gamma} e^{\frac{\alpha}{2}(\tau-\tau')}  b^{\delta \left( \omega +4-2\gamma\right)}(\tau')
	\frac{\langle y \rangle^{\omega+2}}{y^{\omega+2}} 
	\lesssim   y^{-\gamma} e^{\frac{\alpha}{2}(\tau-\tau')}  b^{\delta \left( \omega +4-2\gamma\right) -\frac{\eta}{4}( \omega + 2) }(\tau') \nonumber\\
	& \lesssim &  \frac{b^{\frac{\alpha}{2} + \tilde \eta}(\tau) \langle y \rangle^{4}}{y^\gamma} \left[ e^{\frac{\alpha}{2}(\tau-\tau')}  b^{\delta \left( \omega +4-2\gamma\right) -\frac{\eta}{4}( \omega + 2) }(\tau') b^{-\frac{\alpha}{2} -\tilde \eta}(\tau) \right]. \label{semi-group-1-le-b-delta-hat-B}
\end{eqnarray}
In addition,  by the same argument used in  \eqref{prcoess-filter-b-to-I} and \eqref{asti-tau-tau-0vareo-0} and that fact that $ \tau-\tau_0 \le \frac{\ln A}{K_0}$,   we have 
\begin{eqnarray*}
	\int_{\tau_0}^\tau \left[ e^{\frac{\alpha}{2}(\tau-\tau')}  b^{\delta \left( \omega +4-2\gamma\right) -\frac{\eta}{4}( \omega + 2) }(\tau') b^{-\frac{\alpha}{2} -\tilde \eta}(\tau)\right]d\tau' \lesssim A^{\frac{C(\delta, \eta, \tilde \eta)}{K_0}}.
\end{eqnarray*}
Finally, we conclude 
\begin{eqnarray*}
	\sup_{y \in  \left[    b^\frac{\eta}{4}(\tau), b^{-\tilde \eta} (\tau) \right] }\left|  \frac{y^\gamma}{\langle y \rangle^{2\ell+2}} \int_{\tau_0}^\tau  e^{(\tau -\tau') \mathscr{L}_\infty} \left| \mathbbm{1}_{(0,b^\delta(\tau')]}\hat{B}(\tau') \right|  d\tau' \right| \le \frac{A^3}{16} b^{\frac{\alpha}{2} + \tilde \eta}(\tau),
\end{eqnarray*}
provided that $K_0 \ge K_4$,  $A \ge A_5$, and $ \tau_0 \ge \tau_5(A, K_0, \delta, \eta, \tilde \eta)$. 

\medskip
\noindent 
It remains to  evaluate $M(\mathbbm{1}_{y\ge b^{\delta}(\tau')}\hat B (\tau'))$.     Using  Lemma \ref{lemma-estimate-on-hat-B} with $\ell =1$,   we get
\begin{eqnarray*}
	M(\mathbbm{1}_{y\ge b^{\delta}(\tau')}\hat B (\tau')) &\lesssim &  b^{\frac{\alpha}{2}(\tau') +4\eta}  M \left( \mathbbm{1}_{y\ge b^{\delta}(\tau')} \langle y \rangle^{4} y^{-\gamma} \right)  + b^{\alpha +\delta(1-\gamma)}   M \left( \mathbbm{1}_{y\ge b^{\delta}(\tau')} \langle y \rangle^{12} y^{-\gamma} \right) \\
	& + & b^{\frac{3\alpha}{2}-2\delta\gamma}(\tau') M \left( \mathbbm{1}_{y\ge b^{\delta}(\tau')} \langle y \rangle^{20} y^{-\gamma} \right) .  
\end{eqnarray*}
First, we observe that the function $ \mathbbm{1}_{y \ge b^{\delta}(\tau')} \langle y \rangle^{4} $ is non decreasing, we apply Lemma \ref{maximal-lemma} and we have
\begin{eqnarray*}
	M\left(\mathbbm{1}_{y \ge b^\delta(\tau')} \langle y \rangle^{4} y^{ - \gamma} \right) \lesssim 
	\frac{\int_y^\infty \langle y'  \rangle^{4} (y)'^{1+\omega} e^{-\frac{(y')^2}{4} dy'}  }{\int_y^\infty (y')^{1+\omega} e^{-\frac{(y')^2}{4} dy'} }.
\end{eqnarray*}
From a standard result on $\Gamma$ function,   we have  
\begin{eqnarray*}
	\frac{\int_y^\infty \langle y'  \rangle^{4} (y')^{1+\omega} e^{-\frac{(y')^2}{4} dy'}  }{\int_y^\infty (y')^{1+\omega} e^{-\frac{(y')^2}{4} dy'} }  &\lesssim &  \langle y \rangle^{4},
\end{eqnarray*}
which implies 
\begin{eqnarray*}
	M\left(\mathbbm{1}_{y \ge b^\delta(\tau')} \langle y \rangle^{2\ell+2} y^{ - \gamma} \right) \lesssim  \langle y \rangle^{2\ell+2}.
\end{eqnarray*}
Similarly, from  Lemma \ref{lemma-estimate-on-hat-B} and the fact  $y \le b^{-\tilde \eta}(\tau)$, we  write 
\begin{eqnarray*}
	M\left(\mathbbm{1}_{y \ge b^\delta(\tau')} \langle y \rangle^{4+8} y^{ - \gamma} \right) &\lesssim &  \langle y \rangle^{4+8} \lesssim b^{-\tilde \eta(2+6)}(\tau) \langle y \rangle^{4},\\
	M\left(\mathbbm{1}_{y \ge b^\delta(\tau')} \langle y \rangle^{6+14} y^{ - \gamma} \right) & \lesssim &  \langle y \rangle^{6+14} \lesssim b^{-\tilde \eta(4+12)}(\tau) \langle y \rangle^{4}.
\end{eqnarray*}
Thus, we derive  for all $ y \in \left[ b^{\eta}(\tau), b^{-\tilde \eta}(\tau) \right]$ 
\begin{eqnarray}
	& & \left| e^{(\tau-\tau')\mathscr{L}_\infty} \left[ \mathbbm{1}_{  y \ge b^\delta(\tau')} \hat B \right] (\tau')\right| \label{semi-group-1-ge-b-delta-hat-B} \\
	&\lesssim & y^{-\gamma} \langle y\rangle^{2+2} e^{\frac{\alpha}{2}(\tau -\tau')} \left[ b^{\frac{\alpha}{2} +4\eta}(\tau') + b^{\alpha + \delta(1 -\gamma)}(\tau') b^{-\tilde \eta(2+6)}(\tau)  + b^{\frac{3\alpha}{2} -2\delta \gamma }(\tau') b^{-\tilde \eta (4 +12)}(\tau) \right],\nonumber 
\end{eqnarray}
which implies
\begin{eqnarray}
	& &\frac{y^\gamma }{\langle y\rangle^{2+2}} \int_{\tau_0}^\tau    e^{(\tau -\tau') \mathscr{L}_\infty} M(\mathbbm{1}_{y \ge b^\delta(\tau')} \hat B(\tau')) d\tau' \label{estimate-integral-M-y-ge-hat-B} \\
	& \lesssim & b^{\frac{\alpha}{2} + \tilde \eta}(\tau)   \int_{\tau_0}^{\tau}  e^{\frac{\alpha}{2}(\tau-\tau')} b^{-\frac{\alpha}{2} -\tilde \eta}(\tau) \left[ b^{\frac{\alpha}{2} +4\eta}(\tau') + b^{\alpha + \delta(1 -\gamma)}(\tau') b^{-\tilde \eta(2+6)}(\tau)  + b^{\frac{3\alpha}{2} -2\delta \gamma }(\tau') b^{-\tilde \eta (4 +12)}(\tau) \right]d\tau'. \nonumber
\end{eqnarray}
From the assumption $ \tau - \tau_0 \le \frac{\ln A}{K_0}$ with  $K_0 $ large enough, $A \ge A_4$ and $  \alpha  \gg \delta \gg \eta \gg \tilde \eta $,  and $\tau_0 \ge \tau_4(A,K_0, \delta, \eta, \tilde \eta)$, we proceed similarly as
in \eqref{prcoess-filter-b-to-I} and \eqref{asti-tau-tau-0vareo-0} and we obtain 
\begin{eqnarray*}
	\int_{\tau_0}^{\tau}  e^{\frac{\alpha}{2}(\tau-\tau')} b^{-\frac{\alpha}{2} -\tilde \eta}(\tau) \left[ b^{\frac{\alpha}{2} +4\eta}(\tau') + b^{\alpha + \delta(1 -\gamma)}(\tau') b^{-\tilde \eta(2\ell+6)}(\tau)  + b^{\frac{3\alpha}{2} -2\delta \gamma }(\tau') b^{-\tilde \eta (4\ell +12)}(\tau) \right]d\tau'  \le \frac{A^3}{32}.
\end{eqnarray*}
Finally, we get
\begin{eqnarray*}
	\frac{y^\gamma }{\langle y\rangle^{4}} \int_{\tau_0}^\tau    e^{(\tau -\tau') \mathscr{L}_\infty} M(\mathbbm{1}_{y \ge b^\delta(\tau')} \hat B(\tau')) d\tau'  \le \frac{A^3}{32} b^{\frac{\alpha}{2} +\tilde \eta}(\tau),
\end{eqnarray*}
and   \eqref{estima-Duhame-integral-hat-B} immediately follows.

- The proof of \eqref{estima-Duhame-integral-1-2-Lambda_varep-beta-}:  
We first recall the following  identity 
$$  \varepsilon_+ + \varepsilon_- = \varepsilon = \hat{\varepsilon}_+ +\hat{\varepsilon}_-,  $$
then, we get
\begin{equation*}
	\hat{\varepsilon}_-(\tau')  =  \varepsilon_+(\tau') + \varepsilon_-(\tau') + \hat{\varepsilon}_+(\tau').
\end{equation*}
Since $(\varepsilon,b,\beta)(\tau) \in V_1[A,\eta,\tilde \eta](\tau_1), \forall \tau \in [\tau_0,
\tau_1] $,  the pointwise estimates given in Lemma \eqref{lemma-rough-estimate-bounds in shrinking-set}  and 
also \eqref{estimate-hat-varep-1}  hold, so we get  a rough estimate for all $\tau' \in [\tau_0,\tau], \tau \le \tau_1$
\begin{eqnarray}
	\left| \left( \frac{1}{2} - \beta(\tau') \right) \hat{\varepsilon}_-(\tau')  \right| \le CA^6 b^{\frac{\alpha}{2}}(\tau') I^{\eta}(\tau_0) \frac{\langle y \rangle^{4}}{y^\gamma}.
\end{eqnarray}
Now, we apply Lemma \ref{Lemma-esti-semigroup-Lambda-var} and we obtain
\begin{eqnarray*}
	\left| e^{(\tau -\tau') \mathscr{L}_\infty} \left( \left( \frac{1}{2} - \beta(\tau') \right) \hat{\varepsilon}_-(\tau') \right)   \right|  \le C A^6 b^{\frac{\alpha}{2}}(\tau') I^{\eta}(\tau_0) \frac{\langle y \rangle^{6}}{y^\gamma}, \forall \tau' \in [\tau_0,\tau),
\end{eqnarray*}
which yields
\begin{eqnarray*}
	\left|  \int_{\tau_0}^\tau e^{(\tau -\tau') \mathscr{L}_\infty} \left( \left( \frac{1}{2} - \beta(\tau') \right) \hat{\varepsilon}_-(\tau') \right) d\tau'  \right|  \le CA^6 I^{\eta}(\tau_0)\frac{\langle y \rangle^6}{y^\gamma}\int_{\tau_0}^\tau b^{\frac{\alpha}{2}}(\tau') d\tau'.
\end{eqnarray*}
Using Lemma \ref{lemma-rough-estimate-bounds in shrinking-set} and a similar estimate as in  \eqref{prcoess-filter-b-to-I}, we derive 
\begin{eqnarray*}
	\left| A^6 I^{\eta}(\tau_0)\frac{\langle y \rangle^6}{y^\gamma}\int_{\tau_0}^\tau b^{\frac{\alpha}{2}}(\tau') d\tau' \right|  \le \frac{A^3}{16} b^{\frac{\alpha}{2} +\tilde \eta} (\tau), 
\end{eqnarray*}
provided that $K_0 \ge K_4, A \ge A_4, \tau_0 \ge \tau_4(K_0,A,\eta, \tilde \eta)$ and $ \tau -\tau_0 \le \frac{\ln A}{K_0}$. 
Finally, \eqref{estima-Duhame-integral-1-2-Lambda_varep-beta-} follows.

\begin{center}
	\textbf{The second case  $\tau -\tau_0 \ge \frac{\ln A}{K_0} $}
\end{center}
As we mentioned,  this case  will be divided into two sub-cases
$$   \frac{1}{L_0} e^{\frac{\tau-\tau_0}{2} \left(1 -\eta \left(\frac{2\ell}{\alpha} - 1 \right) \right)} \le b^{ -\tilde \eta}(\tau)  \text{  and  }  \frac{1}{L_0} e^{\frac{\tau-\tau_0}{2} \left(1 -\eta \left(\frac{2\ell}{\alpha} - 1 \right) \right)} > b^{ -\tilde \eta}(\tau),$$
where $L_0$ is  large enough.

\begin{center}
	\textit{ \textbf{ The first subcase $  \frac{1}{L_0} e^{\frac{\tau-\tau_0}{2} \left(1 -\eta \left(\frac{2}{\alpha} - 1 \right) \right)} \le b^{ -\tilde \eta}(\tau) $}} 
\end{center}  
From    \eqref{esti-b-equivalent-I-1-tilde-eta} and $  \frac{1}{L_0} e^{\frac{\tau-\tau_0}{2} \left(1 -\eta \left(\frac{2}{\alpha} - 1 \right) \right)} \le b^{ -\tilde \eta}(\tau) $, we get 
\begin{eqnarray}  
	\tau < \frac{\tau_0 \left(1 - \eta \left(1-\frac{2}{\alpha}  \right)  \right)  + 2\ln L_0}{1-\left( \eta - 2\tilde \eta \left( 1+\frac{\tilde \eta}{10}\right) \right)  \left(1-\frac{2}{\alpha}  \right)  } , 
\end{eqnarray}
which yields
\begin{eqnarray}
	\frac{\ln A}{K_0}\leq \tau  -\tau_0 \le \frac{   2\tilde \eta  \tau_0  \left( \frac{2}{\alpha} -1\right)\left(1 +\frac{\tilde \eta}{10} \right) + 2\ln L_0}{1-\left(\eta -2\tilde \eta \left(1+\frac{\tilde \eta}{10} \right)\right) \left(\frac{2}{\alpha} -1 \right)}.\label{tau-tau-0-le-eta-tilde}
\end{eqnarray}
for $L_0$ large enough. According to \eqref{tau-tau-0-le-eta-tilde}, we see that this the present sub-case can be handled similarly as the fist one, since  $\tau$  is not too far from $ \tau_0$.

- The proof of \eqref{estima-Duhame-hat-epsilon-tau-0}: From \eqref{estimate-math-infty-varep-beta-tau-0}, 
we have 
\begin{eqnarray*}
	\left|  e^{(\tau -\tau_0) \mathscr{L}_\infty} \hat{\varepsilon}_{ -}(\tau_0)  (y,\tau) \right| \le C A b^{\frac{\alpha}{2} + \tilde \eta} (\tau) \frac{\langle y \rangle^{4}}{y^\gamma} \left[  e^{\frac{\alpha}{2}(\tau-\tau_0)} b^{-\frac{\alpha}{2} -\tilde \eta}(\tau_0) b^{\frac{\alpha}{2} + \eta}(\tau_0)   \right]. 
\end{eqnarray*}
The same process used for  \eqref{prcoess-filter-b-to-I} yields
\begin{eqnarray*}
	e^{\frac{\alpha}{2}(\tau -\tau_0)} b^{\frac{\alpha}{2} +\eta} (\tau_0) b^{-\frac{\alpha}{2} -\tilde \eta}(\tau) \le e^{-c(\eta)\tau_0} e^{X (\tau-\tau_0) }, \text{ with } X =\left(\frac{\alpha}{2} + \left( \frac{2 }{\alpha} - 1  \right) \left( \frac{\alpha}{2} +\tilde \eta \right)(1 +\frac{\tilde \eta}{10})  \right).
\end{eqnarray*}
From \eqref{tau-tau-0-le-eta-tilde},  we can prove that there exists $c(\eta) >0$ such that 
\begin{eqnarray*}
	& &e^{-c(\eta)\tau_0} e^{X (\tau-\tau_0) }, \text{ with } X =\left(\frac{\alpha}{2} + \left( \frac{2 }{\alpha} - 1  \right) \left( \frac{\alpha}{2} +\tilde \eta \right)(1 +\frac{\tilde \eta}{10})  \right) \\
	& \lesssim  &   e^{ -c(\eta) \tau_0 + X \left(\frac{   2\tilde \eta  \tau_0  \left( \frac{2}{\alpha} -1\right)\left(1 +\frac{\tilde \eta}{10} \right) + 2\ln L_0}{1-\left(\eta -2\tilde \eta \left(1+\frac{\tilde \eta}{10} \right)\right) \left(\frac{2}{\alpha} -1 \right)} \right)    } \le 1 ,
\end{eqnarray*}
provided that $ \tilde \eta \le  \tilde \eta_4(\eta, L_0)$ and this gives \eqref{estima-Duhame-hat-epsilon-tau-0}.

- The proof of \eqref{estima-Duhame-integral-hat-B}:  we  use \eqref{semi-group-1-le-b-delta-hat-B} and \eqref{semi-group-1-ge-b-delta-hat-B} to get
\begin{eqnarray*}
	& &  \left|  \frac{y^\gamma}{\langle y \rangle^{2+2}} \int_{\tau_0}^\tau  e^{(\tau -\tau') \mathscr{L}_\infty} \left[ \hat{B}(\hat \varepsilon_{\beta, +}, \hat{\varepsilon}_{\beta, -})   \right](\tau') d\tau' \right| \\
	&\le&   C b^{\frac{\alpha}{2} +\tilde \eta}(\tau) \left\{ \int_{\tau_0}^\tau \left[ e^{\frac{\alpha}{2}(\tau-\tau')}  b^{\delta \left( \omega +4-2\gamma\right) -\frac{\eta}{4}( \omega + 2) }(\tau') b^{-\frac{\alpha}{2} -\tilde \eta}(\tau)\right]d\tau'\right. \\
	& + & \left. \int_{\tau_0}^{\tau}  e^{\frac{\alpha}{2}(\tau-\tau')} b^{-\frac{\alpha}{2} -\tilde \eta}(\tau) \left[ b^{\frac{\alpha}{2} +4\eta}(\tau') + b^{\alpha + \delta(1 -\gamma)}(\tau') b^{-\tilde \eta(2+6)}(\tau)  + b^{\frac{3\alpha}{2} -2\delta \gamma }(\tau') b^{-\tilde \eta (16)}(\tau) \right]d\tau' \right\}.
\end{eqnarray*}
From \eqref{prcoess-filter-b-to-I} and \eqref{tau-tau-0-le-eta-tilde}, we have  
\begin{eqnarray*}
	& & \int_{\tau_0}^\tau \left[  e^{\frac{\alpha}{2}(\tau-\tau')}  b^{\delta \left( \omega +4-2\gamma\right) -\frac{\eta}{4}( \omega + 2) }(\tau') b^{-\frac{\alpha}{2} -\tilde \eta}(\tau)\right]d\tau' \\ 
	&  +   &  \int_{\tau_0}^{\tau}  e^{\frac{\alpha}{2}(\tau-\tau')} b^{-\frac{\alpha}{2} -\tilde \eta}(\tau) \left[ b^{\frac{\alpha}{2} +4\eta}(\tau') + b^{\alpha + \delta(1 -\gamma)}(\tau') b^{-\tilde \eta(8)}(\tau)  + b^{\frac{3\alpha}{2} -2\delta \gamma }(\tau') b^{-\tilde \eta (16)}(\tau) \right]d\tau'  \\
	& \le & C,
\end{eqnarray*}
provided that $ \tilde \eta \le \tilde \eta_4(\eta, \delta, L_0) $, $ A  \ge A_4$ and $ \tau_0 \ge \tau_4(A, \eta, \tilde \eta, L_0)$. Then, we infer
\begin{eqnarray*}
	\left|  \frac{y^\gamma}{\langle y \rangle^{4}} \int_{\tau_0}^\tau  e^{(\tau -\tau') \mathscr{L}_\infty} \left[ \hat{B}(\hat \varepsilon_{\beta, +}, \hat{\varepsilon}_{\beta, -})   \right](\tau') d\tau' \right| \le \frac{A^3}{16} b^{\frac{\alpha}{2} + \tilde \eta}(\tau), \forall y \in \left[ b^{\eta}(\tau), b^{-\tilde \eta}(\tau) \right] ,
\end{eqnarray*}
which implies 
\eqref{estima-Duhame-integral-hat-B}.

- Proof of \eqref{estima-Duhame-integral-1-2-Lambda_varep-beta-}: it is similar to the that of  \eqref{estima-Duhame-integral-hat-B} in the first case.

\begin{center}
	\textit{ \textbf{ The second sub-case $  \frac{1}{L_0} e^{\frac{\tau-\tau_0}{2}\left( 1 -\eta \left(\frac{2\ell}{\alpha} -1 \right)\right)} >   b^{-\tilde \eta}(\tau)  $    }}
\end{center}
We introduce $R$ large, to be fixed later, and we decompose $$\left[ b^{\eta}(\tau), b^{-\tilde \eta}(\tau) \right]  = \left[ b^{\eta}(\tau), R \right] \cup  \left[  R, b^{- \tilde \eta}(\tau) \right] .$$
Recall that  for each $ f \in L^2_\rho(\R^+) $, for each $\nu>0$, there exists $Y(v) >0$ satisfying $Y(\nu) \to +\infty$ as $\nu \to +\infty$ and  such that  $ \forall y \in [Y^{-1}(\nu),Y(\nu)]$  
\begin{equation}\label{pointwise-infinite-sum}
	e^{\nu \mathscr{L}_\infty} f (y) = \sum_{j = 0}^\infty e^{\left( \frac{\alpha}{2} - j \right) \nu }  \langle f,\phi_{j,\infty} \rangle_{L^2_\rho} \phi_{j,\infty} (y) \text{ pointwisely }.
\end{equation}

Following the above remarks,  the expression  \eqref{Duhamel-hat-varepsilon--} and the fact that  $\hat{\varepsilon}_-$ is orthogonal to $\phi_{0,\infty}$ and $ \phi_{1,\infty} $ we are led to
\begin{eqnarray}
	\hat{\varepsilon}_-(y,\tau) &=& \left(e^{(\tau-\tau_0)\mathcal{L}_\infty}(\varepsilon_-(\tau_0)) \right)_- + \int_{\tau_0}^{\tau} \left(e^{(\tau-\tau')\mathscr{L}_\infty}(\hat{B}(\tau'))   \right)_-d\tau' +\int_{\tau_0}^{\tau}  \left( \left[\frac{1}{2} -\beta(\tau') \right] \Lambda \hat{\varepsilon}_-(\tau') \right)_-d\tau'
	\nonumber\\ 
	&=& \sum_{j=2}^\infty e^{\left(\frac{\alpha}{2} - j \right)(\tau-\tau_0)} \left\langle  \hat{\varepsilon}_-(\tau_0),\phi_{j,\infty} \right\rangle_{L^2_{\rho}} \phi_{j,\infty}(y)  \nonumber\\
	& + &  \int_{\tau_0}^{\tau-L_0} \sum_{j=2}^\infty e^{\left(\frac{\alpha}{2} -j \right)(\tau-\tau')} \left\langle   \hat{B}(\tau'),\phi_{j,\infty} \right\rangle_{L^2_{\rho}} \phi_{j,\infty}(y) d\tau'  + \int_{\tau-L_0}^\tau \left( e^{(\tau-\tau')\mathscr{L}_\infty}(\hat{B}(\tau'))  \right)_- d\tau'  \nonumber \\
	& + & \int_{\tau_0}^{\tau-L_0}  \sum_{j=2}^\infty e^{\left(\frac{\alpha}{2} -j \right)(\tau-\tau')} \left\langle   \left( \left[\frac{1}{2} -\beta(\tau') \right] \Lambda \hat{\varepsilon}_-(\tau') \right) d\tau',\phi_{j,\infty} \right\rangle_{L^2_{\rho}} \phi_{j,\infty}(y) d\tau' \nonumber \\
	& + & \int_{\tau-L_0}^{\tau}  \left( \left[\frac{1}{2} -\beta(\tau') \right] \Lambda \hat{\varepsilon}_-(\tau') \right)_-d\tau'\label{writw-varep--in-sum-anf-minus-parts}
\end{eqnarray}
on $[Y^{-1}(\tau),Y(\tau)]$ ($Y(\tau) \to +\infty$ as $\tau \to +\infty$). Let us consider   $y \in \left[ b^{\eta}(\tau), R \right]$. We consider the initial data $\varepsilon(\tau_0)$ defined in \eqref{defi-initial-vaepsilon-l=1}, we have 
\begin{eqnarray*}
	\left| \left\langle \hat{\varepsilon}_-(y,\tau_0), \phi_{j,\infty}   \right\rangle_{L^2_{\rho}}  \right| \le  C b^{\frac{\alpha}{2} +\delta},  \forall j \ge 2.
\end{eqnarray*}
Note that  $\delta \gg \eta \gg \tilde \eta$ and we are in the case $\tau-\tau_0 > \frac{\ln A}{K_0}$, we   have the estimate
\begin{eqnarray*}
	& & \left| \sum_{j=2}^\infty e^{\left(\frac{\alpha}{2} -j \right)(\tau-\tau_0)} \langle  \hat{\varepsilon}_-(\tau_0),\phi_{j,\infty} \rangle_{L^2_{\rho}} \phi_{j,\infty}(y) \right|  \\
	& \le &   Cb^{\frac{\alpha}{2} +\tilde \eta} (\tau) \frac{\langle  y\rangle^4}{y^\gamma} \sum_{j=2}^{\infty} e^{(\frac{\alpha}{2} -j)(\tau-\tau_0)} b^{\frac{\alpha}{2} +\delta}(\tau_0) b^{-\frac{\alpha}{2} -\tilde \eta}(\tau) \alpha_j  (1+ y)^{2(j-1)} \label{estimate-semi-group-varep-0}\\
	& \le & C Cb^{\frac{\alpha}{2} +\tilde \eta} (\tau) \frac{\langle  y\rangle^4}{y^\gamma} \sum_{j=2}^{\infty} j^{\frac{\omega}{4}} e^{(\frac{\alpha}{2} -j)(\tau-\tau_0)} b^{\frac{\alpha}{2} +\delta}(\tau_0) b^{-\frac{\alpha}{2} -\tilde \eta}(\tau)  (1+ y)^{2(j-1)}.
\end{eqnarray*}
Similarly to the technique given  in  \eqref{prcoess-filter-b-to-I}, we have 
\begin{eqnarray*}
	e^{(\frac{\alpha}{2} -j)(\tau-\tau_0)} b^{\frac{\alpha}{2} +\delta}(\tau_0) b^{-\frac{\alpha}{2} -\tilde \eta}(\tau) &\lesssim & C  e^{(\frac{\alpha}{2} -j)(\tau-\tau_0)} e^{\left( \frac{2 }{\alpha} - 1  \right) \left( \frac{\alpha}{2} +\tilde \eta \right)(1 +\tilde \eta) (\tau-\tau_0)} \\
	& \lesssim & C e^{(\tau-\tau_0) \left(1 - j +c(\tilde \eta) \right)}  \text{ with } c(\tilde \eta) \lesssim \tilde \eta, j \ge 2,  
\end{eqnarray*}
which yields to
\begin{eqnarray}
	& & \left|  \sum_{j=2}^\infty e^{\left(\frac{\alpha}{2} -j \right)(\tau-\tau_0)} \langle  \hat{\varepsilon}_-(\tau_0),\phi_{j,\infty} \rangle_{L^2_{\rho}} \phi_{j,\infty}(y)\right| \nonumber   \\
	&\le  & C b^{\frac{\alpha}{2} +\tilde \eta}  \frac{\langle y \rangle^{4}}{y^\gamma}\sum_{j=2}^\infty  j^\frac{\omega}{4} R^{2(j-1)} \left( A^\frac{1}{K_0} \right)^{1 - j +c(\tilde \eta) } \le \frac{A^3}{32} b^{\frac{\alpha}{2} +\tilde \eta}(\tau) \frac{\langle y \rangle^4}{y^\gamma},\label{estimate-scalar-e-tau-L-epsi-tau-0}
\end{eqnarray}
provided that $K_0 \ge K_4, A \ge A_4(R, K_0)$, and $\tilde \eta \le \tilde \eta_4(R, K_0, A, \eta, \tilde \eta, L_0)$. 

\medskip
Next, observe that    $ \tau  -  (\tau - L_0)  =  L_0 \le \frac{\ln A}{K_0}$ if $A \ge A_4(L_0)$, so  we go back to the first case. Indeed, we argue similarly as in  \eqref{semi-group-1-le-b-delta-hat-B} and  \eqref{estimate-integral-M-y-ge-hat-B} to get  

\begin{eqnarray*}
	\left|   \int^{\tau}_{\tau -L_0} e^{(\tau -\tau') \mathscr{L}_\infty}  (\hat B)(\tau')  d\tau'  \right| &\le &  \frac{\langle  y\rangle^4}{y^\gamma} \int_{\tau-L_0}^\tau   e^{\frac{\alpha}{2}(\tau-\tau')} b^{\frac{\alpha}{2} +4\eta}   (\tau') d\tau\\
	& \le & C  b^{\frac{\alpha}{2} +3 \eta}(\tau) \frac{\langle y \rangle^4}{y^\gamma},
\end{eqnarray*}
provided that $ s_0 \ge s_{4}(\eta, L_0)$. This yields 
\begin{eqnarray*}
	\left|  \int^{\tau}_{\tau -L_0} \left( e^{(\tau -\tau') \mathscr{L}_\infty}  (\hat B)(\tau')    \right)_- d\tau' \right|   \le  C b^{\frac{\alpha}{2}+3\eta}(\tau)\frac{\langle y \rangle^4}{y^\gamma}.
\end{eqnarray*}
For the integral $\int_{\tau_0}^{\tau-L_0} \sum_{j=2}^\infty e^{\left(\frac{\alpha}{2} -j \right)(\tau-\tau')} \left\langle   \hat{B}(\tau'),\phi_{j,\infty} \right\rangle_{L^2_{\rho}} \phi_{j,\infty}(y) d\tau' $, we deduce from Lemma \ref{lemma-estimate-on-hat-B} that 
$$ \left| \langle \hat{B}(\tau'),\phi_{j,\infty}\rangle_{L^2_\rho}  \right| \lesssim   b^{\frac{\alpha}{2} +4 \eta}(\tau'). $$
Then, for all $y \in  [b^\eta(\tau), R  ]$, we have
\begin{eqnarray*}
	& & \left| \int_{\tau_0}^{\tau-L_0} \left( \sum_{j=2}^{\infty} e^{(\frac{\alpha}{2} -j)(\tau-\tau')}  \langle \hat{B}(\tau'), \phi_{j,\infty} \rangle_{L^2_\rho} \phi_{j,\infty}  \right) d\tau' \right| \\
	& \lesssim & \frac{\langle y\rangle^{4} b^{\frac{\alpha}{2} +\tilde \eta}(\tau)}{y^\gamma}  \sum_{j=2}^\infty |\alpha_j| R^{2(j-1)} \int_{\tau_0}^{\tau - L_0} e^{(\frac{\alpha}{2} -j)(\tau-\tau')} b^{-\frac{\alpha}{2} -\tilde \eta}(\tau)   b^{\frac{\alpha}{2} +4\eta} (\tau')    d\tau'.
\end{eqnarray*}
We repeat the techniques given in  \eqref{prcoess-filter-b-to-I} and \eqref{asti-tau-tau-0vareo-0} to obtain
\begin{eqnarray*}
	b^{-\frac{\alpha}{2} - \tilde \eta}(\tau) e^{\left( \frac{\alpha}{2} -j \right) (\tau -\tau')}  b^{\frac{\alpha}{2} +\eta } (\tau') & \lesssim & b^{-\frac{\alpha}{2} - \tilde \eta}(\tau) b^{\frac{\alpha}{2} +\tilde \eta}(\tau') e^{\left( \frac{\alpha}{2} -j \right) (\tau -\tau')} \\
	& \lesssim & e^{ \left(1  -j + c(\tilde \eta) \right)(\tau -\tau')}, \text{ with } c(\tilde \eta) \lesssim \tilde \eta.
\end{eqnarray*}
It follows that for all $j \ge 2$, we have
\begin{eqnarray*}
	\int_{\tau_0}^{\tau-L_0} b^{-\frac{\alpha}{2} - \tilde \eta} (\tau) e^{(\frac{\alpha}{2} -j) (\tau-\tau')}   b^{\frac{\alpha}{2} +\eta}(\tau') d \tau' \lesssim  \exp\left(\left[1 -j + c(\tilde \eta) \right](L_0) \right), \text{ and } 1 -j + c(\tilde \eta) < 0.
\end{eqnarray*}
By taking  $L_0 \gg R$, we get 
\begin{eqnarray*}
	&& \sum_{j=2}^\infty  |\alpha_j| R^{2(j-1)} \int_{\tau_0}^{\tau - L_0} e^{(\frac{\alpha}{2} -j)(\tau-\tau')} b^{-\frac{\alpha}{2} -\tilde \eta}(\tau)   b^{\frac{\alpha}{2} +4\eta} (\tau')    d\tau' \\
	&\lesssim &  \sum_{j=2}^\infty  j^\frac{\omega}{4} R^{2(j-1)} \exp\left(\left[1  -j +c(\tilde \eta)\right](L_0) \right)  \lesssim 1   ,
\end{eqnarray*}
provided that $ L_0  \gg R$and $\tau_0 \ge \tau_4(A, L_0, R, \delta,  \eta, \tilde{\eta})$.  Then
\begin{eqnarray*}
	\left|  \int_{\tau_0}^{\tau-L_0} \sum_{j=2}^\infty e^{\left(\frac{\alpha}{2} -j \right)(\tau-\tau')} \left\langle   \hat{B}(\tau'),\phi_{j,\infty} \right\rangle_{L^2_{\rho}} \phi_{j,\infty}(y) d\tau' \right| \le Cb^{\frac{\alpha}{2} + \tilde \eta}(\tau) \frac{\langle y \rangle^4}{y^\gamma}, y \in [b^{\frac{\eta}{4}}, R].
\end{eqnarray*}
Recall that 
$$ \left| \beta(\tau) - \frac{1}{2} \right| \le CAI^{\eta}(\tau_0),$$
so by repeating the above arguments to handle the remaining terms in \eqref{writw-varep--in-sum-anf-minus-parts},    the following result holds
\begin{eqnarray*}
	&  & \left| \int_{\tau_0}^{\tau-L_0}  \sum_{j=2}^\infty e^{\left(\frac{\alpha}{2} - j \right)(\tau-\tau')} \left\langle   \left( \left[\frac{1}{2} -\beta(\tau') \right] \Lambda \hat{\varepsilon}_-(\tau') \right) d\tau',\phi_{j,\infty} \right\rangle_{L^2_{\rho}} \phi_{j,\infty}(y) d\tau'   \right| \\
	& + & \left| \int_{\tau-L_0}^{\tau}  \left( \left[\frac{1}{2} -\beta(\tau') \right] \Lambda \hat{\varepsilon}_-(\tau') \right)_-d\tau' \right| \le C b^{\frac{\alpha}{2} + \tilde \eta  }(\tau)\frac{\langle y\rangle^4}{y^\gamma}.
\end{eqnarray*}
We conclude that
\begin{eqnarray}
	\left| \int_{\tau_0}^\tau  e^{(\tau - \tau') \mathscr{L}_\infty} \left[ \frac{1}{2} -\beta(\tau')   \right] \Lambda_y ( \hat{\varepsilon}_{\beta, - } )(\tau') d\tau' \right| \le \frac{A^3}{16} b^{\frac{\alpha}{2} +\tilde \eta}(\tau) \langle y \rangle^{4}y^{-\gamma}, \forall y \in [b^{\frac{\eta}{4}}, R].
\end{eqnarray}

\medskip
-  For the  case $y \in \left[R, b^{-\tilde \eta}(\tau)\right]$. First, we observe that once   $ y \le b^{-\tilde \eta}(\tau) \le \frac{1}{L_0} e^{\frac{\tau-\tau_0}{2}\left( 1 -\eta \left(\frac{2}{\alpha} -1 \right)\right)} $, then, there exists $ \bar \tau \in [ \tau_0, \tau - 1] $ such that 
$  y \in \left[ \frac{1}{2L_0} e^{\frac{\tau-\bar \tau}{2}\left( 1 -\eta \left(\frac{2}{\alpha} -1 \right)\right)}  ,  \frac{1}{L_0} e^{\frac{\tau - \bar \tau}{2}\left( 1 -\eta \left(\frac{2}{\alpha} -1 \right)\right)}  \right]   $. Since $ y  \ge R$ we have
$ \tau -\bar \tau \ge C(R) \to +\infty \text{ as } R \to +\infty. $
Now, write the integral equation at the initial data $\bar \tau$
\begin{equation}\label{Duhamel-hat-varepsilon--intial-bar-tau}
	\hat{\varepsilon}_{ -}(\tau) = e^{(\tau-\bar \tau)\mathscr{L}_\infty} \hat{\varepsilon}_{ -}(\bar \tau) + \int_{\bar \tau}^\tau  e^{(\tau -\tau') \mathscr{L}_\infty} \left[ \hat{B}( \tau')  +\left(\frac{1}{2} -\beta(\tau') \right)\Lambda_y \hat\varepsilon_{-} \right](\tau') d\tau'.
\end{equation}
Note that for all $j \ge 2$, we have 
\begin{eqnarray*}
	\langle  \hat{\varepsilon}_-, \phi_{j,\infty} \rangle_{L^2_\rho} = \langle \varepsilon_+, \phi_{j,\infty} \rangle_{L^2_\rho} + \langle \varepsilon_-, \phi_{j,\infty} \rangle.
\end{eqnarray*}
Using the fact that  $\varepsilon_+$ is orthogonal to $\phi_{j,b,\beta}$,  estimate \eqref{norm-phi-b-phi-nfty-i}, $\phi_{j,\infty, \beta}$'s definition given in Proposition \ref{proposition-spectral-L-infty} and $\phi_{i,\infty} = \phi_{j,\infty, \frac{1}{2}},$ we derive that
\begin{eqnarray*}
	\left| \langle \varepsilon_+,\phi_{j,\infty} \rangle_{L^2_\rho}\right|  \le C b^{\frac{\alpha}{2} +\eta}(\bar \tau), \forall j \ge 2.
\end{eqnarray*}
Moreover, using the definition of $\rho_\beta$ given in \eqref{defi-rho-y} and $\rho= \rho_\frac{1}{2}$, we apply Cauchy Schwarz inequality and estimate  \eqref{estima-epsilon--new} to derive 
\begin{eqnarray*}
	\left| \langle \varepsilon_-, \phi_{j,\infty} \rangle_{L^2_\rho} \right|  &=& \left| \int_{\mathbb{R}_+} \varepsilon_- \phi_{j,\infty} \rho dy \right| \le  C \int_{\mathbb{R}_+} |\varepsilon_-| \sqrt{\rho_\beta} |\phi_{j,\infty} | \frac{\rho}{\sqrt{\rho_\beta}}  dy\\
	&\le & C \|\varepsilon_-\|_{L^2_\rho} \sqrt{\int_{\mathbb{R}_+} \left|\phi_{j,\infty} \right|^2 \frac{\rho^2}{\rho_\beta}  dy}  \le C  b^{\frac{\alpha}{2} +\eta}.
\end{eqnarray*}
Hence, we have
$$  \left| \langle \hat{\varepsilon}_-(\bar \tau),\phi_{j,\infty} \rangle_{L^2_\rho} \right|  \le C  b^{\frac{\alpha}{2} +\eta}(\bar \tau).$$
Now, we use formula  \eqref{pointwise-infinite-sum} to get 
\begin{eqnarray*}
	\left| e^{(\tau -\bar \tau) \mathscr{L}_\infty} \hat{\varepsilon}_{-}(\bar \tau ) (y,\tau) \right| & \lesssim   & \sum_{j=2}^{\infty} e^{(\frac{\alpha}{2}-j)(\tau-\bar \tau)}  \left| \langle \hat{\varepsilon}_{ -}(\bar \tau ) \phi_{j,\infty}  \rangle_{L^2_\rho} \right|  \left| \phi_{j,\infty}(y) \right| \\
	& \lesssim  &  \sum_{j=2}^{\infty} 4^j j! e^{(\frac{\alpha}{2}-j)(\tau-\bar \tau)}  b^{\frac{\alpha}{2} + \eta}(\bar \tau)   \left| \phi_{j,\infty}(y) \right| \\
	& \lesssim &   \langle y \rangle^{4}y^{-\gamma}  b^{\frac{\alpha}{2} +\tilde \eta} (\tau) \sum_{j=2}^{\infty} \left| \beta_j  \right| e^{(\frac{\alpha}{2}-j)(\tau-\bar \tau)} b^{-\frac{\alpha}{2} -\tilde \eta}(\tau)  b^{\frac{\alpha}{2} + \eta}(\bar \tau)   y^{2(j -1)} .
\end{eqnarray*} 
Using \eqref{esti-b-equivalent-I-1-tilde-eta}, we have for all $j \ge 2$
\begin{eqnarray*}
	e^{(\frac{\alpha}{2} -j)(\tau - \bar \tau)} b^{\frac{\alpha}{2} + \eta}(\bar \tau) b^{-\frac{\alpha}{2} -\tilde \eta}(\tau)
	& \lesssim &   e^{(\frac{\alpha}{2} -j)(\tau - \bar \tau)} e^{\left(1-\frac{2}{\alpha} \right) \left[ \left( \frac{\alpha}{2} +  \eta \right)\left( 1-\frac{\tilde \eta}{10} \right) \bar \tau -\left( \frac{\alpha}{2} + \tilde \eta \right)\left( 1+\frac{\tilde \eta}{10} \right)\tau  \right]}.
\end{eqnarray*}
In addition, since  $ \tilde \eta \ll \eta$,  and $ \left(1-\frac{2}{\alpha} \right) <0$, it follows the following
\begin{eqnarray*}
	& & \left(1-\frac{2}{\alpha} \right) \left[ \left( \frac{\alpha}{2} + \eta \right)\left( 1-\frac{\tilde \eta}{10} \right) \bar \tau -\left( \frac{\alpha}{2} + \tilde \eta \right)\left( 1+\frac{\tilde \eta}{10} \right)\tau  \right]\\
	& \le & \left(1-\frac{2}{\alpha} \right) \left[ \left( \frac{\alpha}{2} +\frac{\eta}{2}  \right) \left( 1+ \frac{\tilde \eta}{10} \right) \bar \tau -\left( \frac{\alpha}{2} +\tilde \eta  \right) \left( 1+ \frac{\tilde \eta}{10} \right) \tau  \right]\\
	& = & \left(1-\frac{2}{\alpha} \right) \left(1 +\frac{\tilde \eta }{10} \right) \left[ \frac{\alpha}{2} (\bar \tau - \tau)  + \frac{\eta}{2} \bar \tau - \tilde \eta \tau   \right] \\
	& \le &  \left(1-\frac{2}{\alpha} \right) \left(1 +\frac{\tilde \eta }{10} \right)  \left[ \frac{\alpha}{2} (\bar \tau - \tau) + \frac{\eta}{2}(\bar \tau - \tau) \right] ,
\end{eqnarray*}
and 
\begin{eqnarray*}
	& & \left( \frac{\alpha}{2} -j\right) (\tau-\bar \tau)+\left(1-\frac{2}{\alpha} \right) \left(1 +\frac{\tilde \eta }{10} \right)  \left[ \frac{\alpha}{2} (\bar \tau - \tau) + \frac{\eta}{2}(\bar \tau - \tau) \right] \\
	& \le & \left[ 1 - j  +\frac{\tilde \eta}{10} \left(\frac{2}{\alpha} - 1 \right) \frac{\alpha}{2}  + \frac{\eta}{2} \left(\frac{2}{\alpha} - 1 \right) \left(1+\frac{\tilde \eta}{10} \right)  \right] (\tau-\bar \tau) \\
	& \le & \left[ 1 - j   + \eta \left(\frac{2}{\alpha} -1 \right)  \right] (\tau-\bar \tau), 
\end{eqnarray*}
and $ \left| \beta_j \right| \le C \frac{ j^{-\frac{\omega}{4}}}{4^jj!}$. 
Hence, we obtain
\begin{eqnarray*}
	\left| e^{(\tau -\bar \tau) \mathscr{L}_\infty} \hat{\varepsilon}_{-}(\bar \tau ) (y,\tau) \right| & \lesssim   & A b^{\frac{\alpha}{2} +\tilde \eta}(\tau)  \sum_{j = 2}^\infty \frac{ j^{-\frac{\omega}{4}}}{4^jj!} \left( y e^{-\frac{\tau-\bar \tau}{2} \left(1 -\eta\left(\frac{2\ell}{\alpha} -1 \right)\right)} \right)^{2(j-1)}\\
	& \lesssim  & A b^{\frac{\alpha}{2} + \tilde \eta}(\tau) \sum_{j=2}^\infty \frac{ j^{-\frac{\omega}{4}}}{4^jj!}  ( L_0^{-1} )^{2(j-\ell)} \le \frac{A^3}{16} b^{\frac{\alpha}{2} + \tilde \eta}(\tau),
\end{eqnarray*}
when $ L_0 $ is large and $ A \ge A_4$.

\medskip
From the restriction  $    \frac{1}{2L_0} e^{\frac{\tau-\bar \tau}{2}\left( 1 -\eta \left(\frac{2\ell}{\alpha} -1 \right)\right)} \le y \le b^{-\tilde \eta}(\tau),  $
it follows
\begin{eqnarray*}
	\tau < \frac{\bar \tau \left(1 - \eta \left(1-\frac{2\ell}{\alpha}  \right)  \right)  + 2\ln L_0}{1-\left( \eta - 2\tilde \eta \left( 1+\frac{\tilde \eta}{10}\right) \right)  \left(1-\frac{2\ell}{\alpha}  \right)   }, 
\end{eqnarray*}
which yields
\begin{eqnarray}
	\tau  -\bar \tau \le \frac{   2\tilde \eta  \bar \tau  \left( \frac{2\ell}{\alpha} -1\right)\left(1 +\frac{\tilde \eta}{10} \right) + 2\ln L_0}{1-\left(\eta -2\tilde \eta \left(1+\frac{\tilde \eta}{10} \right)\right) \left(\frac{2\ell}{\alpha} -1 \right)}.
\end{eqnarray}
Hence, it turns out that the second case is obtained by replacing $\tau_0$ by $\bar \tau$  
\begin{eqnarray}
	\left| \int_{\bar \tau}^\tau  e^{(\tau -\tau') \mathscr{L}_\infty} \left[ \hat{B}(\hat \varepsilon_{\beta, +} + \hat{\varepsilon}_{\beta, -})   \right](\tau') d\tau'  \right|  &\le  &\frac{A^3}{16} b^{\frac{\alpha}{2} + \tilde \eta}(\tau) \langle y \rangle^{2\ell+2} y^{-\gamma},\\
	\left| \int_{\bar \tau}^\tau  e^{(\tau -\tau') \mathscr{L}_\infty}  \left(\frac{1}{2} -\beta(\tau') \right)\Lambda_y \hat\varepsilon_{\beta,-} (\tau') d\tau' \right| & \le &  \frac{A^3}{16} b^{\frac{\alpha}{2} + \tilde \eta}(\tau) \langle y \rangle^{2\ell+2} y^{-\gamma}.
\end{eqnarray}
At final, by adding all the terms, we get 
\begin{eqnarray*}
	\left| \varepsilon_{\beta,-}(\tau)\right| \le \frac{A^3}{2} b^{\frac{\alpha}{2} +\tilde \eta}\frac{\langle y  \rangle^{2\ell+2}}{y^{\gamma}}, \forall y \in \left[ R, b^{-\tilde \eta}(\tau) \right],
\end{eqnarray*}
which concludes the third case.
\end{proof}
The final step is to focus on the \textit{a priori  estimates} on the  exterior part  $\varepsilon_e$.  More precisely, we have the following Lemma
\begin{lemma}[A priori estimates on the exterior part]\label{lemma-priori-estimate-outer-part} Let us consider $\varepsilon$ to satisfy equation \eqref{equa-varepsilon-appen} with initial data given in \eqref{initial-data}, and $\varepsilon(\tau) \in V[A,\eta, \tilde \eta](\tau)$ for all $\tau \in [\tau_0,\tau_1]$, for some $\tau_1 > \tau_0$. Then, we  have  the following  estimate 
\begin{eqnarray*}
	\||y| \varepsilon_e(\cdot,\tau)\|_{L^\infty} \le  \frac{A^4}{2} b^{\frac{\alpha}{2} +(\gamma -4) \tilde \eta }(\tau).
\end{eqnarray*}
\end{lemma}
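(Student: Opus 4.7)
The strategy is to derive an evolution equation for the truncated quantity $\varepsilon_e$, then close an estimate through a Duhamel representation based on a semigroup $\mathcal{K}_\beta(\tau,\tau')$ associated with the linear part of $\mathscr{L}_b$ restricted to the exterior region. Let me denote $\chi_\tau(y) = \chi_0(2yb^{\tilde\eta}(\tau))$, so that $\varepsilon_e = (1-\chi_\tau)\varepsilon$ and $\mathrm{supp}(\varepsilon_e) \subset \{y \ge \frac{1}{2}b^{-\tilde\eta}(\tau)\}$. Multiplying \eqref{equa-varepsilon-appen} by $1-\chi_\tau$ and commuting with $\mathscr{L}_b$ yields
\begin{equation*}
\partial_\tau \varepsilon_e = \mathscr{L}_b \varepsilon_e + (1-\chi_\tau) B(\varepsilon) + (1-\chi_\tau)\Phi + \mathcal{R}(\varepsilon),
\end{equation*}
where $\mathcal{R}(\varepsilon)$ collects the commutator $[\mathscr{L}_b,\chi_\tau]\varepsilon$ and the time derivative $-(\partial_\tau \chi_\tau)\varepsilon$, all of which are supported in the transition zone $\{\tfrac{1}{2}b^{-\tilde\eta} \le y \le b^{-\tilde\eta}\}$.

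First I would observe that on $\mathrm{supp}(\varepsilon_e)$ one has $y \gg \sqrt{b}$, so $|Q_b(y)|+|y^2 Q_b^2(y)| \lesssim b^{2-\epsilon}$ and the potential term in $\mathscr{L}_b$ is negligible. Hence $\mathcal{K}_\beta(\tau,\tau')$ is essentially the semigroup generated by $\partial_y^2 + \frac{d+1}{y}\partial_y - \beta(\tau)\Lambda_y$, which after extending radially to $\mathbb{R}^{d+2}$ is a damped convection–diffusion with decay rate close to $-2\beta(\tau) \sim -1$. The key pointwise estimate I would use is of the form
\begin{equation*}
\|y\, \mathcal{K}_\beta(\tau,\tau') f\|_{L^\infty(\mathbb{R}_+)} \lesssim e^{-(\tau-\tau')}\|y f\|_{L^\infty(\mathbb{R}_+)},
\end{equation*}
for $f$ supported in the exterior region, together with a short-time regularising bound that absorbs derivatives (used for the commutator piece). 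Such estimates are the analogue of what was used in \cite{MZjfa08} and appear in the section on Poisson-kernel estimates referenced in the strategy.

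Second, I would write the Duhamel formula
\begin{equation*}
\varepsilon_e(\tau) = \mathcal{K}_\beta(\tau,\tau_0)\varepsilon_e(\tau_0) + \int_{\tau_0}^\tau \mathcal{K}_\beta(\tau,\tau')\Bigl[(1-\chi_{\tau'})B(\varepsilon) + (1-\chi_{\tau'})\Phi + \mathcal{R}(\varepsilon)\Bigr](\tau')\, d\tau',
\end{equation*}
and estimate each piece in the weighted norm $\|y\,\cdot\,\|_{L^\infty}$. For the initial datum term, $\varepsilon_e(\tau_0)$ is evaluated using \eqref{defi-initial-vaepsilon-l=1} and the pointwise bounds on $\phi_{j,b_0,\beta_0}$, and decays at rate $e^{-(\tau-\tau_0)} \ll b^{\alpha/2+(\gamma-4)\tilde\eta}(\tau)$. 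For the forcing $\Phi$, formula \eqref{Phi-simple} together with the rapid decay $|\Lambda_y Q_b(y)| \lesssim b(b+y^2)^{-N}$ for any $N$ makes $(1-\chi_{\tau'})\Phi$ smaller than any power of $b$, hence harmless. For the nonlinearity, the pointwise bounds of Lemma~\ref{lemma-rough-estimate-bounds in shrinking-set} together with the shrinking-set bound on $\|y\varepsilon_e\|_{L^\infty}$ give $(1-\chi_{\tau'})B(\varepsilon)$ a gain of $b^{\alpha/2+c\tilde\eta}$ which, after time integration against $e^{-(\tau-\tau')}$, is strictly smaller than the target.

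The main obstacle will be the commutator term $\mathcal{R}(\varepsilon)$, which is the only source of size $\sim A^3 b^{\alpha/2+(\gamma-4)\tilde\eta}$. On the transition zone $y \sim b^{-\tilde\eta}$ the bounds $\|y^\gamma \varepsilon_-/\langle y\rangle^4\|_{L^\infty[0,b^{-\tilde\eta}]} \le A^3 b^{\alpha/2+\tilde\eta}$ and the pointwise estimate of $\varepsilon_+$ give
\begin{equation*}
|y\,\mathcal{R}(\varepsilon)(y,\tau')| \lesssim A^3 b^{\alpha/2+(\gamma-4)\tilde\eta}(\tau')\bigl(|\partial_\tau\chi_{\tau'}| + b^{-\tilde\eta}|\partial_y\chi_{\tau'}| + b^{-2\tilde\eta}|\partial_y^2\chi_{\tau'}|\bigr),
\end{equation*}
and the $\tau'$-derivative of $\chi_{\tau'}$ produces a factor $\tilde\eta |b'/b|$ which stays bounded. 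Integrating $e^{-(\tau-\tau')}$ against this source yields a constant times $A^3 b^{\alpha/2+(\gamma-4)\tilde\eta}(\tau)$, so choosing $A \ge A_0$ large enough gives the required factor $\tfrac{1}{2}A^4$. The delicate bookkeeping consists in showing that $\tau$-derivatives of the cutoff are absorbed by $\tilde\eta$, and that the interplay between the cubic term $y^2\varepsilon^3$ (which forces $d\ge 11$ through $\gamma < 4$) does not spoil the gain; this is where the analogue of \cite{BSIMRN19}'s pointwise semigroup estimates needs to be implemented most carefully.
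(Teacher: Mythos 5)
Your proposal follows essentially the same route as the paper: cut off to the exterior region, extend radially to $\mathbb{R}^{d+2}$, write a Duhamel formula for the $|y|$-weighted truncated quantity with the kernel $\mathcal{K}_\beta$, treat the commutator/cutoff terms as the dominant $A^3$-sized source, and recover the $\tfrac{A^4}{2}$ bound by taking $A$ large. (The paper absorbs the weight $|z|$ into the unknown, setting $\varepsilon_{ext,e}=|z|(1-\chi_{\tilde\eta})\varepsilon_{ext}$, so that $\mathcal{K}_\beta$ is the semigroup of $\Delta-\beta z\cdot\nabla-\beta\,Id$; this is equivalent to your weighted-norm bookkeeping.)

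Two of your justifications are wrong as stated, even though the conclusions survive. First, $\Lambda_y Q_b$ does \emph{not} decay faster than every polynomial: by Lemma \ref{lemma-ground-state} the $\xi^{-2}$ parts of $2Q$ and $\xi\partial_\xi Q$ cancel and $\Lambda_\xi Q(\xi)=a_0\xi^{-\gamma}+O(\xi^{-\gamma-g})$, so $|\Lambda_y Q_b(y)|\sim b^{\alpha/2}y^{-\gamma}$ and $(1-\chi_{\tau'})\Phi$ contributes of order $b^{\alpha/2+(\gamma-1)\tilde\eta}$ to $\|y\,\cdot\|_{L^\infty}$; this is still strictly smaller than the target $b^{\alpha/2+(\gamma-4)\tilde\eta}$ because $\gamma-1>\gamma-4$, but not ``smaller than any power of $b$.'' Second, the weighted decay rate $e^{-(\tau-\tau')}$ you claim is too strong: the drift transports $|z|\mapsto|z|e^{\int\beta}$ backward along characteristics, so one only gets $\|y\,\mathcal{K}_\beta(\tau,\tau')f\|_{L^\infty}\lesssim e^{-\int_{\tau'}^{\tau}\beta}\|yf\|_{L^\infty}\approx e^{-(\tau-\tau')/2}$ (the paper uses the crude bound $e^{-(\tau-\tau')/4}$). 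Since $b^{\alpha/2}(\tau')$ decays at rate $|\alpha/2-1|$, which approaches $1/2$ as $d\to\infty$, the single global-in-time convolution $\int_{\tau_0}^\tau e^{-(\tau-\tau')/4}b^{\alpha/2+\cdots}(\tau')\,d\tau'$ does not by itself yield $Cb^{\alpha/2+\cdots}(\tau)$ uniformly in $d$; the paper closes instead by the bounded-time-step iteration of \cite[Proposition 4.5]{MZjfa08}, propagating the bound from $\tau'$ to $\tau$ with $\tau-\tau'$ of fixed length. With these two corrections your argument matches the paper's.
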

\begin{proof}
To get the conclusion of the Lemma, we consider the  natural $(d+2)$-dimensional extension as follows
\begin{eqnarray}
	\varepsilon_{ext} (z,\tau) =  \varepsilon(y,\tau),  |z| =y \text{ and }  z \in \mathbb{R}^{d+2},  \label{radial-extension}
\end{eqnarray} 
and  we introduce 
$$   \varepsilon_{ext, e} (z,\tau) = |z| \left( 1 -  \chi_0\left(\frac{8}{3}|z| b^{\tilde \eta }(\tau)  \right)\right) \varepsilon_{ext}(z,\tau),  $$
where $\chi_0$ was defined in \eqref{defi-chi-0} and $\gamma$  defined was in   \eqref{defi-gamma}. Note that $u$'s  extension  defined in \eqref{radial-extension}  is $C^2(\mathbb{R}^{d+2})$, thanks to the parabolic  regularity of the semi-group $e^{t\Delta_{d+2}}$ and so is $\varepsilon_{ext}$ and we derive from 
\eqref{equa-varepsilon-appen} that  $\varepsilon_{ext}$ satisfies 
\begin{eqnarray*}
	\partial_\tau \varepsilon_{ext} &=& \Delta \varepsilon_{ext} - \beta(\tau) y \cdot \nabla \varepsilon_{ext} - 2\beta(\tau)\varepsilon_{ext} \\
	&-&    3(d-2) \left[ 2Q_b(|z|) + |z|^2 Q_b^2(|z|) \right]  \varepsilon_{ext} +  B(\varepsilon_{ext},|z|) + \Phi(|z|,\tau),
\end{eqnarray*}
where $\Lambda_z$ is similarly defined as in \eqref{defi-Lambda-f}, $B$  and $ \Phi$ were defined in \eqref{defi-B-quadratic-appendix}  and \eqref{Phi-simple}, respectively. Now, we introduce 
\begin{eqnarray}
	\varepsilon_{ext,e}(z, \tau) = |z|  \left(1 - \chi_{\tilde \eta} \right)  \varepsilon_{ext}, \text{ and }  \chi_{\tilde \eta} (z,\tau) = \chi_0 \left( \frac{8}{3} |z| b^{\tilde \eta}(\tau) \right),\label{defi-varep-ext-e}
\end{eqnarray}
where  $\chi_{0}$ was defined in \eqref{defi-chi-0} and we have the following facts
\begin{equation}
	\label{support-eta-tilde-chi}
	\text{supp}(1 - \chi_{\tilde \eta}) \subset \left\{ z \in \R^n \text{ such that } |z| \ge \frac{3}{8} b^{-\tilde \eta}(\tau) \right\},
\end{equation}
and $\varepsilon_{ext,e} = \varepsilon_{ext}$ for all $|z| \ge \frac{3}{4} b^{-\tilde \eta}(\tau)$.
We here mention that the conclusion of the Lemma immediately follows from
$$  \|  \varepsilon_{ext, e}(\cdot, \tau) \|_{L^\infty[b^{-\tilde \eta}(\tau),+\infty)} \le\frac{A^4}{2} b^{\frac{\alpha}{2} + (\gamma -4) \tilde \eta}  (\tau), \forall \tau \in [\tau_0, \tau_1]. $$
By using  $\varepsilon_{ext}$'s equation above,  we that $\varepsilon_{ext,e}$ exactly solves
\begin{equation}\label{equa-varepsi-ext-e}
	\partial_\tau \varepsilon_{ext,e} = \mathscr{L}_\beta (\varepsilon_{ext,e}) + \mathcal{C}(\varepsilon_{ext}) + \mathcal{N}(\varepsilon_{ext}),
\end{equation}
where $\mathcal{L}_\beta$ is defined by 
\begin{equation}\label{defi-mathscr-L-0}
	\mathcal{L}_\beta = \Delta - \beta(\tau) z \cdot \nabla -  \beta(\tau ) Id
\end{equation}
and the terms $\mathcal{C}(\varepsilon_{ext})$ and $\mathcal{N}(\varepsilon_{ext})$ are respectively defined by 
\begin{eqnarray*}
	\mathcal{C}(\varepsilon_{ext}) &=&  -  2 \text{div} (\varepsilon_{ext} \nabla( |z| (1 -\chi_{\tilde \eta})))  -  3(d-2) \left[ 2Q_b(|z|) +|z|^2Q_b^2(|z|) \right] \varepsilon_{ext,e} \nonumber
	\\
	&+&\varepsilon_{ext} \left[ \partial_\tau (1 -\chi_{\tilde \eta}) |z| + \Delta (|z|(1 -\chi_{\tilde \eta})) + \beta(\tau) z \cdot \nabla  (1 -\chi_{\tilde \eta}) |z|\right], \label{defi-communicated-term-C}
\end{eqnarray*}
and
\begin{eqnarray*}\label{defi-Phi}
	\mathcal{N}(\varepsilon_{ext}) = |z| (1 -\chi_{\tilde \eta}) \left(B(\varepsilon_{ext}) + \Phi(\cdot, \tau) \right).
\end{eqnarray*}
- \textit{The semi-group of $\mathcal{L}_\beta$:} Let us define $\mathcal{K}_\beta(\tau',\tau), \tau > \tau' \ge \tau_0$; the semi-group associated to $\mathcal{L}_\beta$  with 
$$ \mathcal{K}_\beta( \tau, \tau')f = \int_{\mathbb{R}^{d+2}} \mathcal{K}_\beta(\tau, \tau', z,z')f(z') dz' $$
and the Kernel $\mathcal{K}_\beta(\tau, \tau', z,z')$
\begin{eqnarray*}
	\mathcal{K}_\beta(\tau, \tau', z,z') = \frac{\zeta(\tau,\tau')}{\left[ 4\pi \int_{\tau'}^\tau \zeta^2(\tilde \tau, \tau') d \tilde\tau \right]^\frac{d+2}{2}} \exp \left(- \frac{\left(z'-z \zeta(\tau, \tau') \right)^2}{4 \int_{\tau'}^\tau \zeta^2(\tilde \tau,\tau') d \tilde\tau } \right),  
\end{eqnarray*}
\begin{eqnarray*}
	\zeta(\tau, \tau') =  e^{-\int_{\tau'}^\tau \beta(\tilde \tau) d \tilde \tau}, \tau > \tau'.
\end{eqnarray*}
In particular, when $ \beta \equiv \frac{1}{2}$, our situation  is the same   as the  operator considered in \cite[Lemma A.1]{MZjfa08} since 
$$ \zeta(\tau, \tau') = e^{-\frac{\tau -\tau'}{2}}, \text{ and } \int_{\tau'}^\tau \zeta^2(\tilde \tau, \tau') d \tilde \tau = 1 - e^{-(\tau-\tau')}.$$
Since our operator has the same structure as the one in \cite[Lemma A.1]{MZjfa08}, we can apply the arguments there to  get
\begin{eqnarray}
	\| \mathcal{K}_\beta(\tau', \tau)(\varphi)\|_{L^\infty} \le  \zeta(\tau,\tau') \|\varphi\|_{L^\infty} \le  e^{-\frac{(\tau -\tau')}{4}}\| \varphi\|_{L^\infty},\label{esti-semi-group-K-tau--tau}\\
	\| \mathcal{K}_\beta(\tau, \tau') \text{div}(\varphi)\|_{L^\infty}  \le C \frac{\zeta(\tau, \tau')}{\sqrt{\int_{\tau'}^\tau\zeta^2(\tilde \tau,\tau')d\tilde \tau}} \| \varphi\|_{L^\infty} \le C \frac{e^{-\frac{\tau-\tau'}{4}}}{\sqrt{\tau -\tau'}}, \label{esti-semi-group-K-tau--tau-div}
\end{eqnarray}
since \eqref{shrinking-set-beta-tau} holds for all $\tau \in [\tau_0,\tau_1]$. 
By Duhamel's formula, we now write equation \eqref{equa-varepsi-ext-e} as follows
\begin{equation}\label{duhamel-varep-ext-e}
	\varepsilon_{ext, e}(\tau) = \mathcal{K}_\beta(\tau, \tau')\varepsilon_{ext,e}(\tau')  + \int_{\tau'}^\tau \mathcal{K}_\beta(\tau,\tilde \tau) \left[ \mathcal{C}(\varepsilon_{ext}) +\mathcal{N}(\varepsilon_{ext}) \right](\tilde \tau)  d \tilde \tau.
\end{equation}
We now aim to estimate the terms involving  $\mathcal{N} (\varepsilon_{ext}(\tau))$ and $ \mathcal{C}(\varepsilon_{ext})$.

- For $\mathcal{N}$, since \eqref{support-eta-tilde-chi}  holds, we only consider $|z| \ge \frac{3}{8} b^{-\tilde \eta}(\tau)$ that will be divided into two small cases  $|z| \in \left[\frac{3}{8} b^{-\tilde \eta}(\tau), b^{-\tilde \eta}(\tau) \right]$ and $|z| \ge  b^{-\tilde \eta}(\tau)$. Since $\varepsilon(\tau) \in V_1[A,\eta, \tilde\eta](\tau), \forall \tau \in [\tau_0, \tau_1]$, we get the following 
\begin{eqnarray*}
	||z| \varepsilon_{ext}(z,\tau)| \le A^3 b^{\frac{\alpha}{2} + \gamma \tilde \eta  } \langle  |z|\rangle^4 \le C A^3 b^{\frac{\alpha}{2} + (\gamma -4) \tilde \eta }(\tau ), \forall |z| \in \left[\frac{3}{8} b^{-\tilde \eta}(\tau), b^{-\tilde \eta}(\tau) \right],
\end{eqnarray*}
and 
\begin{eqnarray*}
	\left|  |z| \varepsilon_{ext}(z,\tau)\right| \le A^4 b^{\frac{\alpha}{2} + (\gamma -4) \tilde \eta}(\tau), \forall |z| \ge b^{-\tilde \eta}(\tau) 
\end{eqnarray*}
which yields
\begin{eqnarray*}
	\| \mathcal{N}(\varepsilon_{ext}) \|_{L^\infty} \le CA^3 b^{\frac{\alpha}{2} + 10 \tilde \eta }(\tau), \forall \tau \in [\tau_0, \tau_1],
\end{eqnarray*}
provided that $\tilde \eta \le \tilde \eta_5(\alpha), \tau_0 \ge \tau_5(A, \tilde \eta)$. Using \eqref{esti-semi-group-K-tau--tau}, we deduce
$$ \left\| \mathcal{K}_\beta(\tau, \tilde \tau )
\mathcal{N}(\tilde \tau)) \right\|_{L^\infty} \le CA^3 e^{-\frac{\tilde \tau- \tilde \tau}{4}} b^{\frac{\alpha}{2} + 10 \tilde \eta }(\tilde \tau). $$

- For $ \mathcal{C}(\varepsilon_{ext})$: From \eqref{esti-semi-group-K-tau--tau-div}, we have
\begin{eqnarray*}
	\left\| \mathcal{K}_\beta(\tau, \tilde \tau) \text{div}(\varepsilon_{ext} \nabla[ (1-\chi_{\tilde \eta})|z|])(\tilde \tau) \right\|_{L^\infty} &\le & C\frac{ e^{-\frac{\tau -\tilde \tau}{4}}}{\sqrt{\tau-\tilde \tau}} \|(\varepsilon_{ext} \nabla[ (1-\chi_{\tilde \eta})|z|])(\tilde \tau) \|_{L^\infty}\\
	& \le & CA^3\frac{ e^{-\frac{\tau -\tilde \tau}{4}}}{\sqrt{\tau-\tilde \tau}} b^{\frac{\alpha}{2} + (\gamma -4) \tilde \eta  }(\tilde \tau).
\end{eqnarray*}
Similarly, we have 
\begin{eqnarray*}
	\left\| \mathcal{K}_\beta(\tau, \tilde \tau) [ 3(d-2)(2Q_b +|z|^2Q_b^2)\varepsilon_{ext,e}](\tilde \tau) \right\|_{L^\infty} \le CA^3 e^{-\frac{\tau -\tilde \tau}{4}} b^{\frac{\alpha}{2} + (\gamma -4)\tilde \eta}(\tilde \tau). 
\end{eqnarray*}
Besides that, we derive from \eqref{defi-varep-ext-e} that
$$ \|\varepsilon_{ext}(\tilde \tau) \partial_\tau (1 - \chi_{\tilde \eta})|z|\|_{L^\infty} \le C \tilde \eta A^4 b^{\frac{\alpha}{2} + (\gamma -4) \tilde \eta}(\tilde \eta) \le CA^3 b^{\frac{\alpha}{2}+(\gamma -4)\tilde \eta}(\tilde \tau). $$
Hence, we get
\begin{eqnarray}
	\left\| \mathcal{K}_\beta(\tau, \tilde \tau) \varepsilon_{ext}(\tilde \tau) \partial_\tau (1 - \chi_{\tilde \eta})|z| \right\|_{L^\infty} \le CA^3 e^{-\frac{\tau -\tilde \tau}{4}} b^{\frac{\alpha}{2} +(\gamma - 4) \tilde \eta }(\tilde \tau).
\end{eqnarray}
By the same technique, we can establish the following 
\begin{eqnarray*}
	\left\| \mathcal{K}_\beta(\tau, \tilde \tau) \varepsilon_{ext}(\tilde \tau) \left[ \Delta (|z|(1-\chi_{\tilde \eta})) + \beta(\tau) z \cdot \nabla(1-\chi_{\tilde \eta}) |z| \right]  \right\|_{L^\infty} \le CA^3 e^{-\frac{\tau -\tilde \tau}{4}} b^{\frac{\alpha}{2} + (\gamma - 4) \tilde \eta }(\tilde \tau).
\end{eqnarray*}
Taking $L^\infty$-estimate on both sides of \eqref{duhamel-varep-ext-e}, we get
\begin{eqnarray*}
	\|\varepsilon_{ext,e}(\tau)\|_{L^\infty} & \le &  e^{-\frac{\tau-\tau'}{4}} \|\varepsilon_{ext,e}(\tau')\|_{L^\infty} +CA^3 \int^{\tau}_{\tau'} e^{-\frac{\tau- \tilde\tau}{4}} b^{\frac{\alpha}{2} + (\gamma -4)\tilde \eta}(\tilde \tau)\left[ \frac{1}{\sqrt{\tau-\tilde \tau}} +1 \right] d\tilde \tau \\
	& \le & e^{-\frac{\tau-\tau'}{4}} \|\varepsilon_{ext,e}(\tau')\|_{L^\infty} + CA^3 b^{ \frac{\alpha}{2} + (\gamma -4) \tilde \eta}(\tau') \left( \sqrt{\tau -\tau'} + (\tau -\tau') \right).
\end{eqnarray*}
We now apply the technique used in \cite[Proposition 4.5]{MZjfa08}. Our the choice of the initial data in \eqref{defi-initial-vaepsilon-l=1} allows us to improve the bound on $\varepsilon_{ext, e}$ by   
$$ \|\varepsilon_{ext,e}(\tau)\|_{L^\infty} \le \frac{A^4}{2} b^{\frac{\alpha}{2} +(\gamma -4) \tilde \eta}(\tau), \forall \tau \in [\tau_0, \tau_1],  $$
provided that $A \ge A_5$ and $\tilde \eta \le \tilde \eta_5(A)$ and $\tau_0 \ge \tau_5(A,\tilde \eta)$. Finally, the conclusion of the lemma follows. 
\end{proof}

\section{ The existence of unstable blowup solutions}
In this Section, we aim to give a sketch of the proof to the existence of unstable blowup solutions to equation  \eqref{equa-Yang-Mills-} with blowup speeds $$ \lambda_\ell (t) = C(u_0) (T-t)^{\frac{2\ell}{\alpha} } \text{ as } t \to T. $$
The general strategy of the proof is the same as in the stable setting, except to some technical  modifications.  For that reason, we  only give main changes which lead to the unstable existence. Let us consider the similarity variable \eqref{similarity-variable} with $\mu (\tau) = T-t$ and $\tau= -\ln(T-t)$ then $w$ satisfies \eqref{equa-w} with $\beta \equiv \frac{1}{2}$. We  linearize around $ Q_{b(\tau)}$ given in \eqref{Q-b} by $\varepsilon = w - Q_{b(\tau)}$ and it holds that $\varepsilon$ satisfies \eqref{equa-varepsilon-appen}.  Note that all terms in the equation remain the same  with $\beta \equiv \frac{1}{2}$. In particular, the spectral analysis  of $\mathscr{L}_b$ is valid without the appearance of $\beta$. Now, we consider the composition  \eqref{defi-varepsilon-j} with $\ell \ge 2$. 

\subsection{Shrinking set for  $\ell \ge 2$ }
In this part, we modify a little bit the set in Definition \ref{shrinking-set}  to be compatible with the new setting
\begin{definition}[Shrinking set]\label{definition-V-ell} Let  $A,\eta, \tilde \eta$  be positive constants  satisfying $A \gg 1$ and $1  \gg \eta \gg \tilde \eta$,   we define $V_\ell[A, \eta, \tilde \eta](\tau)$ 
as the set of all $(\varepsilon, b) \in L^\infty \times \mathbb{R}$    satisfying:
\begin{itemize}
	\item[$(i)$]  The  dominating mode $\varepsilon_\ell $ and $b$ satisfy 
	\begin{eqnarray}
		\left|\varepsilon_\ell +  \frac{2}{\alpha} m_0 b^\frac{\alpha}{2} \right| \le A b^{\frac{\alpha}{2} + \eta}
		,\label{estimate-varepsilon-1-V-A-s-ell-ge-2}
	\end{eqnarray}
	and 
	\begin{equation}\label{shrin-king-set-b-tau}
		\frac{1}{2} \le   b I_\ell^{-1}(\tau)  \le 2,  
	\end{equation}  
	where 
	\begin{equation}\label{defi-I-tau-ell}
		I_\ell(\tau) = e^{\left(  \frac{2\ell}{\alpha}-1 \right)\tau}.
	\end{equation}
	In addition the others modes $\varepsilon_j   \in j \in \{1,...,\ell-1 \} $ satisfy 
	\begin{eqnarray}
		\left| \varepsilon_j      \right| \le A b^{\frac{\alpha}{2} + \eta}. 
	\end{eqnarray}
	
	\item[$(iii)$]   $L^2_\rho$-decay: The part $\varepsilon_- $ satisfies the following:
	$$ \|\varepsilon_-( .)\|_{L^2_{\rho_\beta}}  \le A^2 b^{\frac{\alpha}{2} +\eta}(\tau).$$
	\item[$(iv)$]  The remainders $\varepsilon_-$ given in \eqref{defi-varepsilon-j},  and $ \varepsilon_e$ satisfy 
	\begin{eqnarray*}
		\left\| y^\gamma \frac{\varepsilon_-(.,\tau)}{ \langle y \rangle^{2\ell +2}} \right\|_{L^\infty[0, b^{-\tilde \eta}(\tau)] }  & \le & A^3  b^{\frac{\alpha}{2} + \tilde \eta}(\tau) ,
		\\
		\| |y| \varepsilon_e(.,\tau)\|_{L^\infty} & \le &  A^4  b^{ \frac{\alpha}{2} +(\gamma-(2\ell+2))\tilde \eta }(\tau), \end{eqnarray*} 
	where $\varepsilon_e$ defined as in \eqref{defi-varepsilon-e}.
\end{itemize} 
\end{definition}

\subsection{Preparing initial data } 
In this part, we aim to construct a class of initial data corresponding to the Shrinking set $V_\ell[A, \eta, \tilde \eta]$. Let us consider $A \ge 1,$  and $0< \tilde \eta \ll \eta \ll \delta  \le 1$, $\alpha, m_0$ and $c_{\ell,0}$ defined as in \eqref{defi-alpha-intro},  \eqref{decompose-Phi} and \eqref{definition-c_i-j}, respectively. 
\begin{eqnarray}
& & \psi(\ell,\tau_0)  =   \chi\left( y  b^{\frac{\delta}{2}}(\tau_0) \right) \left( 1 -  \chi\left( \frac{y}{ b^\frac{\delta}{2}(\tau_0)}   \right)  \right)\label{defi-initial-vaepsilon-ell}\\
& \times &\left\{  \sum_{j=1}^{\ell-1}   A d_j   b^{\frac{\alpha}{2} +\eta}(\tau_0)   \phi_{j,b(\tau_0),\beta(\tau_0)}
-\frac{2}{\alpha}m_0 b^{\frac{\alpha}{2}}(\tau_0)\left[ 1 + d_\ell A b^{\frac{\alpha}{2} +\eta}(\tau_0)\right] \left(\frac{\phi_{\ell,b(\tau_0),\beta(\tau_0))}}{c_{\ell,0}}    -  \phi_{0,b(\tau_0),\beta(\tau_0))}   \right)  \right\}
\nonumber  
\end{eqnarray}
In particular, the class of initial data implies the following result

\begin{lemma}[Preparing the initial data]\label{intial-lemma-ell}
There exists  $ A_6 \ge 1 $, such that for all $A \ge A_6$, there exist $\eta_6(A) >0$ such that for all $\delta \le \delta_6$ there exists $\eta_6(A,\delta) >0$ such that for all $\eta \le \eta_6$ there exists $\tilde \eta_6(A, \delta, \eta)$  such that for all $\tilde \eta \le \tilde \eta_6$ there exists  $\tau_6(A,\delta, \eta, \tilde \eta) > 1$  such that for all $\tau_0 \ge \tau_6$, there exists $\mathcal{D}_A  \subset \left[-2,2 \right]^{\ell} $ such that the following properties are valid
\begin{itemize}
	\item[$(i)$] the mapping
	\begin{eqnarray*}
		\Gamma :  \mathbb{R}^{\ell}   & \to &   \mathbb{R}^{\ell }\\
		(d_1,..., d_{\ell-1})   & \mapsto   &  \left( \psi_1,..., \psi_{\ell} \right)(\tau_0),
	\end{eqnarray*}
	is  affine and one to  one   from   $\mathcal{D}_{A}$   to 	$\hat{V}[A,\eta](\tau_0) ,$ where 
	\begin{equation}\label{defi-hat-V-A-ell}
		\hat{V}[A,\eta](\tau) = \left[-A b^{\frac{\alpha}{2} +\eta}(\tau), A b^{\frac{\alpha}{2} +\eta}(\tau) \right]^{\ell}.
	\end{equation}
	In particular, we have the following property
	$$ \Gamma \left. \right|_{\partial \mathcal{D}_A} \in \partial \hat{V}_A(\tau_0),  $$
	and $\text{deg}(\Gamma \left. \right|_{\partial \mathcal{D}_A}) \ne 0$.
	\item[$(ii)$] for all $(d_1,...,d_{\ell}) \in \mathcal{D}_A$,  it follows that $\psi_\ell(\tau_0) \in V_\ell[A, \tau, \tilde \eta](\tau_0)$   with strictly  improved bounds as follows
	\begin{eqnarray*}
		\left| \psi_\ell(\tau_0) + \frac{2}{\alpha} m_0b^\frac{\alpha}{2}(\tau_0) \right| & \le & b^{ \frac{\alpha}{2} +\eta}(\tau_0),\\
		\|\psi_-(\tau_0)\|_{L^2_\rho} & \le & b^{ \frac{\alpha}{2} +\eta}(\tau_0),\\
		\left\|\frac{y^\gamma}{\langle y \rangle^{2\ell+2}} \psi_-(.,\tau_0) \right\|_{L^\infty[0,b^{-\frac{\tilde \eta}{2}}(\tau_0)]} & \le & b^{ \frac{\alpha}{2} +\eta}(\tau_0),\\
		\| \psi_e(\cdot,\tau)\|_{L^\infty} &\le &  b^{ \frac{\alpha}{2} +\eta}(\tau_0),
	\end{eqnarray*}
	and $ b(\tau_0) I^{-1}_\ell(\tau_0) \in \left[\frac{1}{4}, \frac{3}{2} \right] $.
\end{itemize}
\end{lemma}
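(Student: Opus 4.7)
The strategy mirrors the preparing-data lemmas from the Bricmont--Kupiainen \cite{BKnon94} and Merle--Zaag \cite{MZdm97} framework, adapted to our expansion basis $\{\phi_{j,b(\tau_0),\beta(\tau_0)}\}$. I would organise it as follows. First, using the projection formula \eqref{defi-varepsilon-j-full} together with the orthogonality and pointwise structure provided by Proposition \ref{propo-mathscr-L-b}, compute each mode $\psi_j(\tau_0)$ of the trial datum \eqref{defi-initial-vaepsilon-ell}. Writing $\phi_{j,b,\beta} = \sum_{k\leq j} c_{j,k}(2\beta)^k (\sqrt b)^{2k-\gamma} T_k(y/\sqrt b) + \tilde \phi_{j,b,\beta}$ and using that the cut-off $\chi(y b^{\delta/2})(1-\chi(y/b^{\delta/2}))$ equals $1$ on the bulk $[2b^{\delta/2},b^{-\delta/2}/2]$ where the mass of $\phi_{j,b,\beta}^2\rho_\beta$ is concentrated, one gets a quasi-diagonal decomposition
\begin{equation*}
\psi_j(\tau_0) = A\, d_j\, b^{\frac{\alpha}{2}+\eta}(\tau_0) + R_j(d_1,\ldots,d_\ell,\tau_0), \qquad 1 \le j \le \ell-1,
\end{equation*}
\begin{equation*}
\psi_\ell(\tau_0) = -\tfrac{2}{\alpha} m_0\, b^{\frac{\alpha}{2}}(\tau_0)\bigl(1 + A d_\ell b^{\frac{\alpha}{2}+\eta}(\tau_0)\bigr) + R_\ell(d_1,\ldots,d_\ell,\tau_0),
\end{equation*}
where $|R_j| \lesssim b^{\frac{\alpha}{2}+\eta+\delta/4}(\tau_0)$ comes from three sources: (a) the $\tilde \phi_{j,b,\beta}$ correction, bounded in $H^1_{\rho_\beta}$ by $b^{1-\epsilon/2}$; (b) the tail contributions of the cut-offs, whose $L^2_{\rho_\beta}$-measure on $[0,2b^{\delta/2}] \cup [b^{-\delta/2}/2,\infty)$ is controlled by $b^{\delta(d+2-2\gamma)/2} + e^{-c b^{-\delta}}$; and (c) the non-diagonal coefficients $c_{j,k}$, $k<j$, which after pairing with the orthogonal family $\phi_{i,b,\beta}$ produce contributions of size $b^{1-\epsilon/2}$.

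Second, this quasi-diagonal structure makes $\Gamma$ an affine map of the form $A b^{\frac{\alpha}{2}+\eta}(\tau_0)\,\mathrm{Id} + (\text{small perturbation})$ from $\mathbb{R}^{\ell}$ to $\mathbb{R}^\ell$ (after translating out the $-\frac{2}{\alpha}m_0 b^{\alpha/2}$ offset in the $\ell$-th coordinate). For $A$ large, $\delta$ and $\eta$ sufficiently small, and $\tau_0$ large enough, the perturbation is of strictly smaller order than the leading diagonal entries, so $\Gamma$ is a bijection onto a neighbourhood of $0$, and one defines $\mathcal{D}_A := \Gamma^{-1}(\hat V[A,\eta](\tau_0))$. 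Since $\Gamma$ is a small perturbation of a scalar multiple of the identity, $\mathcal{D}_A \subset [-2,2]^\ell$, $\Gamma|_{\partial \mathcal{D}_A} : \partial\mathcal{D}_A \to \partial\hat V[A,\eta](\tau_0)$ is a homeomorphism, and $\deg(\Gamma|_{\partial \mathcal{D}_A}) = \pm 1$.

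Third, I would verify the strict bounds in (ii) component by component. For the outer part $\psi_e$, observe that $\mathrm{supp}(\psi(\tau_0)) \subset [b^{\delta/2},\,2b^{-\delta/2}]$ while $\mathrm{supp}(\psi_e) \subset \{y \ge \tfrac12 b^{-\tilde\eta}\}$; since $\delta \gg \tilde \eta$ and $\tau_0$ is large, these supports are disjoint so $\psi_e \equiv 0$. For $\psi_-(\tau_0) = \psi - \sum_{j=0}^{\ell} \psi_j \phi_{j,b,\beta}$, using the computation of the projections above together with the bound on the cut-off error in $L^2_{\rho_\beta}$ and in the weighted $L^\infty$-norm (which is essentially the $\phi_{j,b,\beta}$-tail on $[b^{\delta/2}, b^{-\tilde\eta}]$ controlled by $\langle y\rangle^{2\ell+2}/y^\gamma$ times $b^{(1-\epsilon/2) \wedge \delta(d+2-2\gamma)/2}$), one obtains the strict improvement $b^{\frac{\alpha}{2}+\eta}(\tau_0)$ in place of $A^2 b^{\frac{\alpha}{2}+\eta}$ and $A^3 b^{\frac{\alpha}{2}+\tilde\eta}$ respectively, by choosing $\delta \gg \eta \gg \tilde \eta$ and $A$ large. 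The bound $b(\tau_0) I_\ell^{-1}(\tau_0) \in [\tfrac14,\tfrac32]$ is enforced by the explicit choice $b(\tau_0) = I_\ell(\tau_0)$.

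The main obstacle, as in the stable case treated in Lemma \ref{lemma-implicit-function}, is ensuring that the remainder $R_j$ is genuinely of strictly lower order than the diagonal term $A d_j b^{\alpha/2+\eta}$ uniformly in $(d_1,\ldots,d_\ell)\in[-2,2]^\ell$; this requires a careful balance between $\delta$, $\eta$, $\tilde\eta$ and the Proposition \ref{propo-mathscr-L-b} error $b^{1-\epsilon/2}$, and in particular the ordering $1 - \epsilon/2 > \eta$ must be consistent with $\delta \gg \eta$. Once this is settled, the topological degree statement and improved bounds are immediate consequences.
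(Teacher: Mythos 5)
Your overall route --- compute the projections of the trial datum to obtain a quasi-diagonal affine map, invoke the topological degree, then verify the bounds of item (ii) directly --- is exactly the one the paper intends (its own proof is a two-line citation of an external reference plus the claim that (ii) is immediate), so the architecture is correct and your error accounting for the cut-offs and for the $\tilde\phi_{j,b,\beta}$ corrections is the right bookkeeping. However, two concrete steps fail as written. First, the $\ell$-th coordinate of your affine map is off by a factor $b^{\alpha/2}$: your own expansion gives $\psi_\ell(\tau_0)+\tfrac{2}{\alpha}m_0 b^{\alpha/2}(\tau_0)=-\tfrac{2}{\alpha}m_0\,A d_\ell\, b^{\alpha+\eta}(\tau_0)+R_\ell$, so after translating out the offset the $\ell$-th diagonal entry is of order $Ab^{\alpha+\eta}$, not $Ab^{\alpha/2+\eta}$. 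The map is therefore not a small perturbation of $Ab^{\alpha/2+\eta}\,\mathrm{Id}$: the image of $[-2,2]^\ell$ in the $\ell$-th direction has length $O(b^{\alpha+\eta})\ll b^{\alpha/2+\eta}$, so $\Gamma$ cannot be onto $\hat V[A,\eta](\tau_0)$, and conversely $\Gamma^{-1}(\hat V[A,\eta](\tau_0))$ has diameter of order $b^{-\alpha/2}$ in the $d_\ell$ direction and is not contained in $[-2,2]^\ell$. You must either take the amplitude $d_\ell A b^{\eta}$ in the $\ell$-th direction of the datum (as in the analogous constructions of \cite{BKnon94}, \cite{MZdm97}, and as the transversality computation in Proposition \ref{propo-finite-reduction-ell-ge-2} implicitly requires) or record explicitly that item (i) is inconsistent with the datum \eqref{defi-initial-vaepsilon-ell} as printed; the point cannot be glossed over by calling the map a perturbation of a multiple of the identity.

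Second, the claim that $\psi_e\equiv 0$ by disjointness of supports is backwards. Since $\delta\gg\tilde\eta$ and $b(\tau_0)$ is small, one has $b^{-\delta/2}(\tau_0)\gg b^{-\tilde\eta}(\tau_0)$, so $\mathrm{supp}\,\psi(\tau_0)\subset[b^{\delta/2}(\tau_0),\,2b^{-\delta/2}(\tau_0)]$ extends far beyond the inner edge $\tfrac12 b^{-\tilde\eta}(\tau_0)$ of $\mathrm{supp}\,\psi_e$; the two supports overlap on $[\tfrac12 b^{-\tilde\eta},\,2b^{-\delta/2}]$ and $\psi_e$ does not vanish there. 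One must instead estimate $|y|\,|\psi(y,\tau_0)|\lesssim b^{\alpha/2}(\tau_0)\,y^{\,2\ell+3-\gamma}$ on that region (note $2\ell+3-\gamma>0$ for $d\ge 11$, $\ell\ge 2$) and compare it with the required bound, which forces a genuine discussion of the relative sizes of $\delta$, $\eta$, $\tilde\eta$ rather than an appeal to disjointness. Until these two points are repaired, items (i) and the last bound of (ii) are not established.
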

\begin{proof}
The bounds in item (ii) immediately follow from the explicit form of $\psi(\ell, \tau_0)$ in \eqref{defi-initial-vaepsilon-ell}. In addition, the existence of $\mathcal{D}_{A}$ and mapping $\Gamma$ follows from the concentration of modes $\psi_j, j \in \{0,1,....,\ell\}$ of $\psi(\ell, \tau_0)$ and the argument is quite the same as in \cite[Proposition 4.5]{TZpre15}. We kindly refer the reader to check the details of this result.
\end{proof}
\subsection{Finite dimensional reduction}\label{propo-finite-reduction}
Since the shrinking set $V_\ell[A, \eta, \tilde \eta](\tau)$ has special properties,   the conclusion of Theorem \ref{theorem-existence-Type-II-blowup-instable} immediately  the following
$$ (\varepsilon, b)(\tau) \in V_\ell[A, \eta, \tilde \eta](\tau) \forall \tau \in [\tau_0,+\infty), \text{ for some } \tau_0 \text{ large enough}. $$
In particular, we prove in this part that this control is reduced to a finite dimensional problem on $(\varepsilon_j)_{j \in \{1,2,...,\ell \} }$
\begin{proposition}[Finite dimensional reduction]\label{propo-finite-reduction-ell-ge-2} 
There exists  $A_7 \ge 1 $,  such that for all $A \ge A_7$, there exists $\delta_7(A)$ such that for all $\delta \le \delta_7$ there exists    $\eta_7(A,\delta)$ such that for all $ \eta \le \eta_7$ there exists  $ \tilde \eta_7(A, \delta, \eta) $ such that for all $ \tilde \eta \le \tilde \eta_7 $ there exists $\tau_7(A, \delta, \eta, \tilde \eta)$ such that for all $\tau_0 \ge  \tau_7$, the following property is valid: If  $ (\varepsilon,b)$ is the solution to the coupled system (\ref{equa-varepsilon-appen}-\ref{orthogonal-condition}) with initial data  \eqref{defi-initial-vaepsilon-ell},      $(\varepsilon, b)(\tau) \in V_\ell[A,\eta, \tilde \eta](\tau)$ for all $\tau \in [\tau_0, \tau^*]$ for some $\tau^* > \tau_0$ and $ (\varepsilon, b)(\tau^*) \in \partial V_\ell[A,\eta,\tilde \eta](\tau^*)$, then we have  the following:
\begin{itemize}
	\item[$(i)$] It holds that   $\left(\varepsilon_1,...,\varepsilon_{\ell} \right)(\tau^*) \in \partial \hat{V}[A,\eta](\tau^*)$ defined as in \eqref{defi-hat-V-A-ell}.\\
	\item[$(ii)$] \textit{Transversality}: There exists $\nu_0 >0$ such that 
	$$ \left(\varepsilon_1,...,\varepsilon_{\ell} \right)(\tau^* + \nu) \notin  \partial \hat{V}[A,\eta](\tau^* +\nu), \forall \nu \in (0,\nu_0) $$
\end{itemize}
which implies
$$ (\varepsilon, b)(\tau^*+ \nu) \notin V[A,\eta,\tilde \eta](\tau^* + \nu), \forall \nu \in (0,\nu_0). $$
\end{proposition}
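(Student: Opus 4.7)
\medskip

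\noindent\textbf{Proof proposal.} The plan is to split the argument into two independent pieces: an ``improvement'' step, which shows that every bound in $V_\ell[A,\eta,\tilde\eta](\tau^*)$ \emph{except} those controlling the $\ell$ finite modes $\varepsilon_1,\ldots,\varepsilon_\ell$ is in fact strictly interior at the exit time; and a ``transversality'' step, which shows that on each face of $\partial \hat V[A,\eta]$ the finite-dimensional flow points strictly outward. The combination of the two gives both (i) and (ii).

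For the improvement step I would reuse, essentially verbatim, the a priori analysis carried out in Sections~5--7 for the stable regime, with the simplifications that $\beta\equiv\tfrac12$ is now fixed and that the compatibility condition \eqref{the-compatibility-condition} is absent. Concretely: the analog of Lemma~\ref{lemma-L-2-rho-var--} gives $\|\varepsilon_-(\tau^*)\|_{L^2_\rho}\le \tfrac12 A^2 b^{\alpha/2+\eta}(\tau^*)$ via the spectral gap \eqref{spectral-gap} applied at level $\ell+1$; the analog of Lemma~\ref{lemma-priori-estima-varep--} gives the weighted $L^\infty$ bound on $\varepsilon_-$ with an $A^3/2$ prefactor, using the same two-regime semigroup pointwise estimate (maximum principle near the origin, Poisson kernel asymptotics for $y\in[b^{\eta/4},b^{-\tilde\eta}]$); the analog of Lemma~\ref{lemma-priori-estimate-outer-part} controls $\|y\varepsilon_e\|_{L^\infty}$ by the $(d+2)$-dimensional extension and the semigroup $\mathcal{K}_\beta$; finally, the ODE on $b$ derived from projecting \eqref{equa-varepsilon-appen} on $\phi_0$, combined with the orthogonality \eqref{orthogonal-condition}, gives $\bigl|b(\tau^*)I_\ell^{-1}(\tau^*)-1\bigr|\le \tfrac14$. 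The initial choice \eqref{defi-initial-vaepsilon-ell} together with Lemma~\ref{intial-lemma-ell}(ii) provides the initial input needed for each of these integral inequalities. Since $(\varepsilon,b)(\tau^*)\in\partial V_\ell[A,\eta,\tilde\eta](\tau^*)$ but none of the bounds above can be saturated, the contact must happen through one of the bounds $|\varepsilon_j(\tau^*)|=Ab^{\alpha/2+\eta}(\tau^*)$ for $j\in\{1,\ldots,\ell-1\}$ or $|\varepsilon_\ell(\tau^*)+\tfrac{2}{\alpha}m_0 b^{\alpha/2}(\tau^*)|=Ab^{\alpha/2+\eta}(\tau^*)$, which is exactly statement (i).

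For the transversality step, introduce the normalised finite-mode coordinates
\begin{equation*}
U_j(\tau)=\frac{\varepsilon_j(\tau)}{A\,b^{\alpha/2+\eta}(\tau)}\ (1\le j\le \ell-1),\qquad U_\ell(\tau)=\frac{\varepsilon_\ell(\tau)+\tfrac{2}{\alpha}m_0 b^{\alpha/2}(\tau)}{A\,b^{\alpha/2+\eta}(\tau)},
\end{equation*}
so that $(U_1,\ldots,U_\ell)(\tau)\in[-1,1]^\ell$ exactly when $(\varepsilon_1,\ldots,\varepsilon_\ell)(\tau)\in\hat V[A,\eta](\tau)$. Differentiating, and invoking the ODE system that generalises Lemma~\ref{lemma-ODE-finite-mode} to $\ell\ge 2$ (obtained by projecting \eqref{equa-varepsilon-appen} on each $\phi_{j,b}$, $j\le\ell$, with $\beta\equiv \tfrac12$), we have $\partial_\tau\varepsilon_j=(\tfrac\alpha2-j)\varepsilon_j+O(b^{\alpha/2+4\eta})$ for $j<\ell$, while the $\ell$-th projection combined with the orthogonality $\varepsilon_\ell=-c_{\ell,0}\varepsilon_0$ yields $\partial_\tau\bigl(\varepsilon_\ell+\tfrac2\alpha m_0 b^{\alpha/2}\bigr)=(\tfrac\alpha2)\bigl(\varepsilon_\ell+\tfrac2\alpha m_0 b^{\alpha/2}\bigr)+O(b^{\alpha/2+4\eta})$ together with the law $b'/b=(1-\tfrac{2\ell}{\alpha})+O(b^{4\eta})$. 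Substituting, one gets on the shrinking set
\begin{equation*}
\partial_\tau U_j=\bigl[(\ell-j)+O(\eta)+O(A^{-1})\bigr]U_j+O(b^{3\eta}),\qquad 1\le j\le \ell,
\end{equation*}
where for $j=\ell$ the coefficient degenerates but the $\eta$-correction supplies a strictly positive rate, of order $\eta(\tfrac{2\ell}{\alpha}-1)$. In particular, on the face $U_j=\pm 1$ one has $\pm\partial_\tau U_j\ge c(\ell)>0$ for every $1\le j\le\ell$, provided $A$ is large enough and $\eta,\tilde\eta$ are small enough, so the vector field points strictly outward. Continuity then supplies the $\nu_0>0$ in (ii).

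The main obstacle is the $j=\ell$ case, where the leading eigenvalue-type coefficient vanishes and one must read the outward-pointing rate from the subleading $\eta$ correction coming from $b'/b=(1-\tfrac{2\ell}{\alpha})+O(b^{4\eta})$; this requires care to separate the genuine dynamical contribution (proportional to $\eta$ times $|1-\tfrac{2\ell}{\alpha}|$) from the $O(b^{3\eta})$ error terms, and is the place where one must take $\eta$ small relative to the already-chosen $A$ but much larger than $\tilde\eta$ and the residual error size. Once this is secured, the combination of Lemma~\ref{intial-lemma-ell}(i) (nontrivial degree of $\Gamma$) with the outward-pointing property above provides the topological obstruction that yields, by the classical Brouwer-type argument, a choice of $(d_1,\ldots,d_\ell)\in\mathcal{D}_A$ for which the exit time is $+\infty$, and hence Theorem~\ref{theorem-existence-Type-II-blowup-instable}.
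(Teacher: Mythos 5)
Your proposal is correct and follows essentially the same route as the paper: item (i) by observing that the a priori estimates on $\varepsilon_-$, $\varepsilon_e$ and $b$ strictly improve their bounds so the exit must occur through the finite modes, and item (ii) by computing the sign of the outward derivative on each face, with the $j=\ell$ rate coming from the subleading $\eta\left(\tfrac{2\ell}{\alpha}-1\right)>0$ correction exactly as in the paper (which phrases the same computation via $B_j(\tau)=\varepsilon_j(\tau)\mp Ab^{\alpha/2+\eta}(\tau)$ rather than your normalised coordinates $U_j$).
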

\begin{proof} The proof mainly relies on the \textit{priori estimate} which is the same as in Lemmas  [\ref{lemma-ODE-finite-mode}-\ref{lemma-priori-estimate-outer-part}].

- \textit{ Proof of item }(i): Let us consider    $A \ge A_7$, $\delta \le \delta_7(A) $, $ \eta \le \eta_7(A, \delta), \tilde \eta \le \tilde \eta_7(A, \delta, \eta) $, $\tau_0 \ge  \tau_7(A, \delta, \eta, \tilde \eta)$ and $(\varepsilon, b)(\tau) \in V_\ell[A, \eta, \tilde \eta](\tau) \forall \tau \in [\tau_0,\tau^*]$  such that  Lemmas \ref{lemma-L-2-rho-var--}, \ref{lemma-priori-estima-varep--}, and \ref{lemma-priori-estimate-outer-part} remain true (the technique of the proof is exactly the same and we kindly refer the reader to check the details). So, it immediately follows that   the bounds of $\varepsilon_- $ and $\varepsilon_e$  given in Definition \ref{definition-V-ell} for $V_\ell[A, \eta, \tilde \eta]$  are always   improved by   $\frac{1}{2}$-factor. In addition, we completely reproduce the argument of Lemma \ref{lemma-ODE-finite-mode} to establish the following results: For all $\tau \in [\tau_0, \tau^*]$, we have 
\begin{eqnarray}
	\left| \varepsilon_j' (\tau) - \left(  \frac{\alpha}{2} -j \right)\varepsilon_j(\tau)     \right| \le C  b^{\frac{\alpha}{2} +4\eta}(\tau), \forall \tau \in \left[\tau_0,\tau_1 \right],\label{estimate-varepsilon-'-ODE--ell}
\end{eqnarray} 
and 
\begin{equation}\label{system-vare-0-ell}
	\left\{ \begin{array}{rcl}
		& & \partial_\tau \varepsilon_\ell  -\left(  \frac{\alpha}{2} - \ell  \right)  \varepsilon_\ell =  O\left( b^{\frac{\alpha}{2} + 4 \eta}(\tau) \right),\\[0.2cm]
		& & \partial_\tau \varepsilon_\ell - \left( \frac{\alpha}{2}    \right)  \varepsilon_\ell  + m_0 \left( \frac{b_\tau}{b} -1 \right)b^\frac{\alpha}{2} = O(b^{\frac{\alpha}{2} + 4\eta}),
	\end{array}
	\right.
\end{equation}
and 
\begin{equation}\label{ODE-b-tau-proposition-ell}
	\left|  \frac{b'(\tau)}{b(\tau)}  -   \left(  1  - \frac{2 \ell}{\alpha}\right)  \right|   \le  C A b^{4\eta}(\tau).
\end{equation}
From \eqref{ODE-b-tau-proposition-ell}, we derive that
$$ \frac{3}{4}< b(\tau)I^{-1}_\ell(\tau) \le \frac{3}{2} \forall \tau \in [\tau_0, \tau^*],$$
provided that $\tau_0 \ge \tau_7(A, \eta, \tilde \eta)$.
Thus, we derive from the fact that $(\varepsilon, b)(\tau) \in \partial V_{\ell}[A, \eta, \tilde \eta](\tau^*)$ 
the following 
$$(\varepsilon_1, \varepsilon_2,...,\varepsilon_\ell)(\tau^*)  \in \partial \hat{V}[A,\eta](\tau^*) $$
which concludes item (i).

- \textit{ Proof of item } (ii): it is sufficient to prove that there exists $\nu_0$ small enough such that 
\begin{itemize}
	\item Either there exists $j \in \{1,...,\ell-1\}$, such that 
	\begin{equation}\label{transverlity-varep-j}
		\left| \varepsilon_j(\tau^* + \nu)\right|  > A b^{\frac{\alpha}{2}+\eta} (\tau_1 +\nu), \forall \nu \in (0,\nu_0),
	\end{equation}
	\item or the following holds
	\begin{equation}\label{transverlity-varep-ell}
		\left| \varepsilon_\ell (\tau^* +\nu) + \frac{2}{\alpha} m_0 b^{\frac{\alpha}{2}}(\tau_1 +\nu) \right|  > A b^{\frac{\alpha}{2} +\eta}(\tau_1 + \nu ), \forall \nu \in (0,\nu_0),
	\end{equation}
\end{itemize}
As we proved in item (i), one of the following two cases holds   
\begin{itemize}
	\item \textit{Case 1:} There exists $ j \in \{ 1,...,\ell-1\}$ such that
	$$ \varepsilon_j(\tau^*) = \sigma_j Ab^{\frac{\alpha}{2} +\eta}(\tau^*),$$
	\item \textit{Case 2:}
	$$ \varepsilon_\ell (\tau_1)  + \frac{2}{\alpha} m_0 b^\frac{\alpha}{2}(\tau_1) = \sigma_\ell Ab^{\frac{\alpha}{2} +\eta}(\tau_1),$$
\end{itemize}
where $ \sigma_j = \pm 1$.  The goal is to prove that the first case implies \eqref{transverlity-varep-j} and the second one concludes  \eqref{transverlity-varep-ell}. Indeed, we have

+ Assume that case 1 occurs. Without loss of generality, we can assume  $\sigma_j = 1$ and introduce
$$ B_j(\tau) = \varepsilon_j(\tau) - Ab^{\frac{\alpha}{2} +\eta}(\tau).$$
It is obvious that $B(\tau^*) =0$ and we also get from \eqref{estimate-varepsilon-'-ODE--ell} and \eqref{ODE-b-tau-proposition-ell},
\begin{eqnarray*}
	B_j'(\tau^*) &=&  \left(\frac{\alpha}{2} -j  \right)\varepsilon_j(\tau^*) - A \left(\frac{\alpha}{2} + \eta \right) \frac{b'(\tau^*)}{b(\tau^*)} b^{\frac{\alpha}{2} +\eta}(\tau^*) + O(b^{\frac{\alpha}{2} +4\eta}(\tau^*)) \\
	& = & Ab^{\frac{\alpha}{2} +\eta}(\tau^*) \left(  (\ell-j) -  \eta \left(1-\frac{2\ell}{\alpha} \right)\right)+O\left(b^{\frac{\alpha}{2} +4\eta}(\tau^*) \right) >0,
\end{eqnarray*}
provided that $\ell -j \ge 1$ and $\eta \le \eta_7(A,\eta, \tilde \eta)$ and $\tau^* \ge \tau_0 \ge  \tau_7(A)$. Then, $B_j( \tau^* +\nu) >0$ for all $\nu \in (0,\nu_0)$ for some $\nu_0$ small enough. Thus, \eqref{transverlity-varep-j} follows.

\medskip
+ If the case 2 occurs. We also assume $\sigma =1$ (the opposite sign is the same),  we then define
$$ B_\ell(\tau) = \varepsilon_\ell(\tau) + \frac{2}{\alpha} m_0 b^{\frac{\alpha}{2}}(\tau) -Ab^{\frac{\alpha}{2} +\eta} (\tau),$$
and it holds that $B_\ell (\tau^*) =0.$
and we derive from \eqref{system-vare-0-ell} and \eqref{ODE-b-tau-proposition-ell} that
\begin{eqnarray*}
	B'_\ell(\tau^*)  &=&   \left( \frac{\alpha}{2} -\ell \right) \varepsilon_\ell(\tau^*) +  \frac{2}{\alpha} m_0 \frac{\alpha}{2} \frac{b'}{b} b^\frac{\alpha}{2}(\tau^*) - A \left( \frac{\alpha}{2} +\eta \right) \frac{b'}{b} b^{\frac{\alpha}{2} +\eta}(\tau^*) + O(b^{\frac{\alpha}{2} +4\eta}(\tau_1))\\[0.2cm]
	&  =  & A b^{\frac{\alpha}{2 } +\eta} (\tau^*) \left[  \eta \left( \frac{2\ell}{\alpha}-1 \right)   \right]  + O(b^{\frac{\alpha}{2} +4\eta}(\tau^*))>0,
\end{eqnarray*}
since $ \eta \left( \frac{2\ell}{\alpha}-1 \right)   > 0$ and  $\tau^* \ge \tau_0 \ge \tau_7(A, \eta, \tilde \eta)$. Thus, we conclude that $B_\ell(\tau^* +\nu) >0$ for all $\nu \in (0,\nu_0)$, and \eqref{transverlity-varep-ell} follows. This concludes the proof of the Proposition.
\end{proof}
\subsection{Topological argument}
In this part, we aim to prove the existence of an initial datum $(\varepsilon,b)(\tau_0)$   that leads to the  global existence of $(\varepsilon,b)(\tau) \in V_\ell[A,\eta, \tilde \eta](\tau), \forall \tau \in [\tau_0, +\infty)$. The following is our result: 

\begin{proposition}\label{proposition-trapped-solu-l-ge1} There exist $A, \eta, \tilde \eta $ and $\delta \ll  1$  satisfying  $A \gg 1, 1 \gg  \delta \gg \eta \gg \tilde \eta >0, \tilde \eta$ and  $\tau_0(A,\eta, \tilde \eta, \delta) \gg 1$  such that there exists $ \tilde d = (d_1,...,d_\ell) \in \mathcal{D}_{A}$  defined in Lemma \ref{intial-lemma-ell} such that with  initial data $\varepsilon(\ell, \tau_0)$  defined as in \eqref{defi-initial-vaepsilon-ell},  the solution $(\varepsilon, b)$ to the coupled problem  (\ref{equa-varepsilon-appen}-\ref{orthogonal-condition}),   globally exists  and the following holds
$$ (\varepsilon, b)(\tau) \in V_\ell [A,\eta, \tilde \eta](\tau), \forall \tau \ge \tau_0.$$
\end{proposition}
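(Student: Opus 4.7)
The plan is to run a classical topological shooting argument in the spirit of Bricmont--Kupiainen and Merle--Zaag, combining the prepared parameter family of Lemma \ref{intial-lemma-ell} with the finite-dimensional reduction and transversality of Proposition \ref{propo-finite-reduction-ell-ge-2}. First I fix admissible constants $A$, $\delta$, $\eta$, $\tilde\eta$ and $\tau_0$ so that the hypotheses of Lemmas \ref{lemma-ODE-finite-mode}--\ref{lemma-priori-estimate-outer-part}, Lemma \ref{intial-lemma-ell} and Proposition \ref{propo-finite-reduction-ell-ge-2} are simultaneously valid; then for each $\tilde d = (d_1,\dots,d_\ell) \in \mathcal{D}_A$, the datum $\psi(\ell,\tau_0)$ given by \eqref{defi-initial-vaepsilon-ell} satisfies $(\varepsilon,b)(\tau_0) \in V_\ell[A,\eta,\tilde\eta](\tau_0)$ with strictly improved non-dominant bounds by Lemma \ref{intial-lemma-ell}(ii), and local well-posedness of (\ref{equa-varepsilon-appen}--\ref{orthogonal-condition}) produces a unique solution on a non-trivial time interval. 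Introduce the maximal trapping time
\[
\tau^*(\tilde d) := \sup\bigl\{\tau \ge \tau_0 : (\varepsilon,b)(\tau') \in V_\ell[A,\eta,\tilde\eta](\tau') \text{ for all } \tau' \in [\tau_0,\tau]\bigr\},
\]
and argue by contradiction, assuming $\tau^*(\tilde d) < +\infty$ for every $\tilde d \in \mathcal{D}_A$.

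Set $\mathcal{V}(\tau) := \bigl(\varepsilon_1,\dots,\varepsilon_{\ell-1},\,\varepsilon_\ell + \tfrac{2}{\alpha} m_0 b^{\alpha/2}\bigr)(\tau) \in \R^\ell$. By Proposition \ref{propo-finite-reduction-ell-ge-2}(i), only these unstable coordinates can saturate a bound of $V_\ell$ at $\tau^*$, so $\mathcal{V}(\tau^*) \in \partial \hat V[A,\eta](\tau^*)$; item (ii) then guarantees that $\mathcal{V}$ leaves $\hat V[A,\eta](\tau)$ strictly on $(\tau^*,\tau^*+\nu_0)$. Combined with parabolic continuous dependence of the solution on $\tilde d$, this yields that $\tilde d \mapsto \tau^*(\tilde d)$ is continuous on $\mathcal{D}_A$, and hence so is the normalisation map
\[
\mathcal{F}:\mathcal{D}_A \longrightarrow \partial [-1,1]^\ell,\qquad \mathcal{F}(\tilde d) := \frac{\mathcal{V}(\tau^*(\tilde d))}{A\, b^{\alpha/2 + \eta}(\tau^*(\tilde d))}.
\]
For $\tilde d \in \partial\mathcal{D}_A$, Lemma \ref{intial-lemma-ell}(i) places $\mathcal{V}(\tau_0)$ directly on $\partial \hat V[A,\eta](\tau_0)$, so Proposition \ref{propo-finite-reduction-ell-ge-2}(ii) applied at $\tau_0$ forces $\tau^*(\tilde d) = \tau_0$ throughout $\partial\mathcal{D}_A$; consequently $\mathcal{F}\bigl|_{\partial\mathcal{D}_A}$ coincides with the rescaled boundary restriction of $\Gamma$, which by Lemma \ref{intial-lemma-ell}(i) has non-zero topological degree as a map between the topological spheres $\partial\mathcal{D}_A$ and $\partial [-1,1]^\ell$. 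Since $\mathcal{D}_A$ is homeomorphic to a closed $\ell$-disk, no continuous map $\mathcal{D}_A \to \partial [-1,1]^\ell$ can restrict to a degree-nonzero map on $\partial\mathcal{D}_A$, by the no-retraction theorem: contradiction. Hence there exists $\tilde d_* \in \mathcal{D}_A$ with $\tau^*(\tilde d_*) = +\infty$, providing a trajectory trapped in $V_\ell[A,\eta,\tilde\eta](\tau)$ for all $\tau \ge \tau_0$, which is the claim.

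The main obstacle is the transversality statement of Proposition \ref{propo-finite-reduction-ell-ge-2}(ii), uniformly in $\tilde d$, on which both the continuity of $\tau^*$ and the identity $\tau^* \equiv \tau_0$ on $\partial \mathcal{D}_A$ rest. That transversality in turn follows from the finite-mode ODE system \eqref{estimate-varepsilon-'-ODE--ell}--\eqref{ODE-b-tau-proposition-ell}: whenever a component of $\mathcal{V}$ saturates its $V_\ell$-bound, the sign of its derivative forces it strictly outwards, thanks to the strict positivity of the spectral gap $\ell-j$ for $j<\ell$ and of $\eta\bigl(\tfrac{2\ell}{\alpha}-1\bigr)$ for the shifted $\ell$-th component. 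The remaining, more routine, ingredient is that all non-dominant bounds in $V_\ell$ are improved strictly by Lemmas \ref{lemma-L-2-rho-var--}, \ref{lemma-priori-estima-varep--} and \ref{lemma-priori-estimate-outer-part}, so that any exit of $V_\ell$ can occur only through the finite-dimensional face $\partial \hat V[A,\eta]$, which is precisely what the topological argument requires.
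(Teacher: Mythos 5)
Your proposal is correct and follows essentially the same route as the paper: a contradiction argument via the maximal trapping time, continuity of the normalised exit map from the transversality in Proposition \ref{propo-finite-reduction-ell-ge-2}, non-vanishing degree on $\partial\mathcal{D}_A$ from Lemma \ref{intial-lemma-ell}(i), and the resulting contradiction with degree theory (the paper phrases the final step as a violation of index theory rather than the no-retraction theorem, but this is the same obstruction). The extra details you supply — the identification $\tau^*\equiv\tau_0$ on $\partial\mathcal{D}_A$ and the reduction of any exit to the finite-dimensional face via Lemmas \ref{lemma-L-2-rho-var--}--\ref{lemma-priori-estimate-outer-part} — are exactly what the paper's terser write-up implicitly relies on.
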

\begin{proof}
The proof follows  from the topological argument which was  used  in \cite{BKnon94} and \cite{MZdm97}. Let us assume $A, \eta,  \tilde  \eta$ and $\delta $ are suitably chosen such that  Lemma \ref{intial-lemma-ell} and   Proposition \ref{propo-finite-reduction-ell-ge-2} hold true.  We now proceed to the proof by 
contradiction and we assume  that  for all $\tilde d = (d_1,...,d_\ell) \in \mathcal{D}_{A} $, there exists $\tau(\tilde d) \in [\tau_0, +\infty )$ such that  $ (\varepsilon, b) (\tau(\tilde d) ) \notin V_\ell [A, \eta, \tilde \eta](\tau(\tilde d)) $. Then, we can define for each $\tilde d \in \mathcal{D}_A$ the maximum time 
\begin{eqnarray}
	\tau^*(\tilde d) = \sup \left\{ \tau_1  \text{ such that }  (\varepsilon, b)(\tau) \in V_\ell [A,  \eta,  \tilde \eta](\tau) , \forall \tau \in [\tau_0, \tau_1] ,    \right\}
\end{eqnarray} 
Now, let the mapping  $\Pi$ be defined by 
\begin{eqnarray}
	\Pi: \mathcal{D}_{A, \tau_0}  \to \partial \left[ -1,1 \right]^{\ell }\\
	\tilde d = (d_1,...,d_\ell) \mapsto \Pi(\tilde d),  \quad 
\end{eqnarray}
where 
\begin{eqnarray*}
	\Pi(\tilde d) = (A b^{\frac{\alpha}{2} +\eta}(\tau^*(\tilde d)))^{-1} \left( \varepsilon_1,..., \varepsilon_\ell + \frac{2}{\alpha}m_0 b^\frac{\alpha}{2} \right)(\tau^*(\tilde d)).
\end{eqnarray*}
In particular,  the following properties hold:
\begin{itemize}
	\item[$(i)$] $ \Pi $ is continuous from $\mathcal{D}_{A} $ to  $\partial \left[ -1,1 \right]^{\ell }$. Indeed,  since $\tau^*(\tilde d)$'s definition implies  
	$$ (\varepsilon, b)(\tau^*(d))  \in \partial V_\ell [A, \eta, \tilde \eta] (\tau^*(d)),$$ 
	and item $(ii)$ of Proposition \ref{propo-finite-reduction}  immediately yields the result.
	\item[$(ii)$]  $\text{Deg}(\left. \Pi \right|_{\partial \mathcal{D}_{A}}  )  \ne 0 $. The result  follows from item (i) of  Lemma \ref{intial-lemma-ell}.
\end{itemize}
Thus, such a mapping $\Pi$ can not exist, since it contradicts the index theory and the conclusion of the Proposition follows.
\end{proof}

\section{Existence of  ground state}\label{ section-ground-state} 
We show in this part the asymptotic behavior of the ground state solution to \eqref{equa-u-section-2}. Let us introduce  $ Q$ to be the function satisfying  
\begin{equation}\label{equation-the ground-state}
Q''(\xi) + \frac{d+1}{\xi} Q'_\xi- 3(d-2) Q^2 - (d-2) \xi^2 Q^3 =0, \quad  Q(0)  = -1 \text{ and } Q'(0) = 0.
\end{equation}
We have the following result:
\begin{lemma}\label{lemma-ground-state}
Let $d \geq 10$, then there exists a unique solution $Q$ to equation \eqref{equation-the ground-state}  satisfying the following:
\begin{itemize}
	\item[$(i)$]  Asymptotic behavior of $Q$:
	\begin{eqnarray}
		Q(\xi) &=& - 1 + \sum_{i=1}^k  a_i \xi^{2i } + O(\xi^{2k+2}) \text{ as } \xi \to 0,\label{asym-Q-in-origin}\\
		Q(\xi) &=& - \frac{1}{\xi^2}  + q_0 \xi^{-\gamma} + O(\xi^{-3\gamma -4})   \text{ as }  \xi \to + \infty,\label{asym-Q-infinity}
	\end{eqnarray}
	and $ q_0 >0$.  
	
	\item[$(ii)$] In particular, when $d=10$, the ground state is explicitly  given by
	$$ Q_{10}(\xi) = -\frac{1}{\xi^2 +1}$$
	\item[$(iii)$] Asymptotic of $ \Lambda Q =2 Q + \xi \cdot \partial_\xi Q:$
	\begin{equation}\label{asymptotic-Lamda-Q}
		\Lambda Q(\xi) < 0, \text{ and } \Lambda Q =\left\{ \begin{array}{rcl}
			& & -2 + 4 \left(  \frac{3d-6}{3d+6} \right) \xi^2   + \sum_{i=2}^k a_i' \xi^{ 2i} + O(\xi^{2k+2}) \text{ as } \xi \to 0,   \\[0.3cm]
			& &  a_0 \xi^{-\gamma} + O(\xi^{-\gamma -g}) \text{ as }  \xi \to +\infty,   
		\end{array}  \right.     
	\end{equation}
	for some  $a_0 <0$.
	
	\noindent
	We note that $\Lambda Q$'s asymptotic at infinity is stable under $\partial^k_\xi$ for all $k \in \N$, more precisely
	\begin{equation}\label{asym-partial-y-Lambda-Q-y}
		\partial_\xi^k \Lambda Q  =\partial_\xi^k \left( a_0 \xi^{-\gamma} \right) + O(\xi^{-\gamma - g - k}) \text{ as } \xi \to +\infty. 
	\end{equation}
\end{itemize}
\end{lemma}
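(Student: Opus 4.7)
The plan is to reduce \eqref{equation-the ground-state} to an autonomous two-dimensional dynamical system and then build the ground state as a heteroclinic orbit. With the substitution $t = \ln \xi$ and $W(t) = \xi^2 Q(\xi)$, a direct computation transforms \eqref{equation-the ground-state} into the autonomous second-order ODE
\begin{equation*}
\ddot W + (d-4)\, \dot W - (d-2)\, W(W+1)(W+2) = 0,
\end{equation*}
where dots denote $d/dt$. In the phase plane $(W,\dot W)$ there are exactly three equilibria, $W\in\{0,-1,-2\}$. The boundary condition $Q(0)=-1$ forces $W\to 0$ as $t\to -\infty$, while the expected asymptotic $Q\sim -\xi^{-2}$ at infinity corresponds to $W\to -1$ as $t\to +\infty$. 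Thus the statement (i) reduces to producing a heteroclinic trajectory from $W=0$ to $W=-1$ together with the correct asymptotic rates.

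Linearising the system at the two relevant equilibria gives the rates. At $W=0$ the eigenvalues are $\lambda=2$ and $\lambda=2-d$, a saddle with a one-dimensional unstable manifold tangent to the eigenvector for $\lambda=2$, yielding trajectories with $W(t)\sim c\, e^{2t}$ as $t\to-\infty$, i.e.\ $Q(\xi)+1 = O(\xi^2)$ near $0$. At $W=-1$ the characteristic polynomial is $\lambda^2+(d-4)\lambda+(d-2)=0$ whose roots have discriminant $d^2-12d+24>0$ for $d\ge 10$ and are both negative: a stable node. The slower rate is $2-\gamma=-\alpha$ in the notation of \eqref{defi-gamma-intro}--\eqref{defi-alpha-intro}, so a generic trajectory entering this node obeys $W(t)+1\sim q_0\, e^{-\alpha t}$, which pulled back to $\xi$ gives exactly $Q(\xi) = -\xi^{-2}+q_0\,\xi^{-\gamma}+O(\xi^{-3\gamma-4})$ once the quadratic nonlinearity is tracked one more order. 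To pin down the local series near $\xi=0$ claimed in \eqref{asym-Q-in-origin} I will plug the ansatz $Q=-1+\sum_{i\ge 1} a_i\xi^{2i}$ into \eqref{equation-the ground-state} and solve recursively: the $\xi^0$-equation gives $a_1=\tfrac{3(d-2)}{2(d+2)}$, and each subsequent coefficient is determined linearly from the previous ones. This determines uniquely the parametrisation of the unstable manifold of $W=0$.

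The main obstacle is the connection argument: I must show that the unique trajectory $\mathcal T$ on the unstable manifold of the saddle $W=0$ that enters the half-plane $W<0$ actually converges to $W=-1$. The natural approach is a trapping-region argument. Working in the strip $-2<W<0$ (where the nonlinearity $(d-2)W(W+1)(W+2)$ has a definite sign on each side of $W=-1$), I will construct a closed, forward-invariant quadrilateral $\mathcal R\subset\{-2<W<0\}$ containing $(-1,0)$, bounded by two horizontal segments $\dot W = \pm M$ and two lines of slope chosen so that on each side the vector field $(\dot W,-(d-4)\dot W+(d-2)W(W+1)(W+2))$ points strictly inward. A direct check shows that near $W=0^-$ the unstable manifold enters $\mathcal R$, and once inside, the trajectory cannot leave; since the only equilibrium inside $\mathcal R$ is the stable node at $(-1,0)$, a Lyapunov / Poincaré--Bendixson argument (available in 2D) forces convergence to this node. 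The verification that the slopes of the inclined sides can be chosen simultaneously compatible with all inward-pointing conditions is the delicate step; it will rely on the signs of $W(W+1)$ on either side of $W=-1$ and on the strict positivity of the damping $d-4$ for $d\ge 10$. Once trapped near the node, stable-manifold theory identifies the decay rate with the slower eigenvalue $-\alpha$ and provides the constant $q_0>0$; positivity of $q_0$ follows because the trajectory enters the node from the region $W>-1$, forcing the coefficient along the slow eigendirection to be positive.

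Item (ii) is a direct verification: substituting $Q_{10}(\xi)=-(1+\xi^2)^{-1}$ into \eqref{equation-the ground-state} with $d=10$ collapses to $0$, and since the solution of \eqref{equation-the ground-state} with the prescribed $Q(0)$, $Q'(0)$ is unique (ODE theory, after removing the apparent singularity at $\xi=0$ by the series ansatz above), this explicit formula is the ground state. For item (iii), the near-origin expansion of $\Lambda Q=2Q+\xi\partial_\xi Q$ is obtained by differentiating the Taylor series of $Q$ term by term. At infinity the computations $\Lambda(\xi^{-2})=0$ and $\Lambda(\xi^{-\gamma})=(2-\gamma)\xi^{-\gamma}=-\alpha\xi^{-\gamma}$ show that the leading $-\xi^{-2}$ piece of $Q$ is annihilated by $\Lambda$, so $\Lambda Q\sim -\alpha q_0\,\xi^{-\gamma}$, giving $a_0=-\alpha q_0<0$. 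The stability of this expansion under $\partial_\xi^k$ claimed in \eqref{asym-partial-y-Lambda-Q-y} follows by differentiating the ODE \eqref{equation-the ground-state} $k$ times and propagating the asymptotics, which is standard since all differentiation operations here commute with the asymptotic expansion because the remainders decay polynomially. The strict sign $\Lambda Q<0$ on $(0,\infty)$ will be deduced from the fact that $\Lambda Q(0)=-2<0$ and that, along the heteroclinic trajectory in the phase plane, the quantity $\Lambda Q$ (which is $\xi^{-2}(\dot W +2W)\cdot \xi^2$ on $\mathcal T$) cannot vanish without producing a contradiction with the monotone approach to the stable node.
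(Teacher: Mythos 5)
Your overall strategy coincides with the paper's: pass to $t=\ln\xi$ and $W=\xi^2Q$ (the paper uses $v=-\xi^2Q$, the mirror image), obtain the autonomous equation $\ddot W+(d-4)\dot W-(d-2)W(W+1)(W+2)=0$, and realize $Q$ as a heteroclinic orbit from the saddle to the stable node, reading off \eqref{asym-Q-infinity} from the slow eigenvalue $\lambda_1=2-\gamma$. However, two steps that you explicitly defer or assert are precisely the substance of the proof, and as written they are gaps.

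First, the forward-invariant region. You propose a straight-sided quadrilateral containing $(-1,0)$ and flag the verification of the inward-pointing conditions as ``the delicate step'' without carrying it out; but this verification \emph{is} the proof of the connection. The paper's choice is not a quadrilateral but the region between the two cubics $\epsilon'=\epsilon^3-\epsilon$ and $\epsilon'=2(\epsilon^3-\epsilon)$ for $\epsilon\in(-1,0)$ (in your variables, $\epsilon=-(W+1)$). This region is pinched at \emph{both} equilibria, the inward-pointing computations reduce to the clean identities $F\cdot\nu_{in}=3(\epsilon^3-\epsilon)(1-\epsilon^2)>0$ and $F\cdot\nu_{in}=(\epsilon^3-\epsilon)(12\epsilon^2+d-10)>0$ (this is where $d\ge 10$ enters), and, crucially, $\epsilon'>0$ throughout the region, so $\epsilon$ increases monotonically to $0$ and no Poincar\'e--Bendixson or Dulac argument is needed. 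Your quadrilateral does not obviously capture the unstable manifold of the saddle at its exit, does not give the monotonicity, and the simultaneous compatibility of the slopes is exactly what you have not checked; you would also separately need to exclude periodic orbits (e.g.\ via $\operatorname{div}F=-(d-4)<0$). Moreover the strict sign $\Lambda Q<0$ in \eqref{asymptotic-Lamda-Q} is \emph{equivalent} to $\epsilon'>0$ along the whole trajectory ($\Lambda Q=-\epsilon'e^{-2t}$), so it comes for free from the paper's region but not from yours.

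Second, the non-degeneracy $q_0>0$. You write that ``a generic trajectory entering this node'' decays at the slow rate, but the constructed trajectory is a specific one, and you must rule out that it approaches $(-1,0)$ along the fast eigendirection only, i.e.\ that the coefficient $h_+$ of the slow mode vanishes. If $h_+=0$ then the $q_0\xi^{-\gamma}$ term in \eqref{asym-Q-infinity} is absent and $a_0=0$ in \eqref{asymptotic-Lamda-Q}, which would break the construction of $T_0=a_0^{-1}\Lambda Q$ in Lemma \ref{lemma-Generation-H}. The paper excludes this by observing that the trapping inequalities force $-\epsilon<\epsilon'<-2\epsilon$ near the node, which is incompatible with a pure fast-mode approach (it would require $h_-(1+\lambda_2)>0$ and $h_-(2+\lambda_2)<0$ simultaneously, impossible since $\lambda_2+1<0$ and $\lambda_2+2<0$... more precisely the signs of $1+\lambda_2$ and $2+\lambda_2$ contradict each other), and the same inequalities applied to $h_+$ pin down its sign, hence $q_0>0$. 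You need an argument of this type; ``enters from $W>-1$'' alone does not preclude a tangential fast approach.
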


\begin{proof}
- The proof of item (i):  Following \cite{GMZ02}, we reformulate the ground state equation as an autonomous ODE. Indeed, let 
$$ Z(\xi) =  - \xi^2 Q(\xi), $$
then
\begin{equation}\label{equa-Z}
	Z''  + \frac{d-3}{\xi} Z' - \frac{(d-2)}{\xi^2} Z(Z-1) (Z-2) =0.
\end{equation}
Again,  apply the change of function
$$ Z(\xi) = v(x)  \text{ where  }  \xi =e^x,  $$
to get
\begin{equation}\label{equa-v}
	v''(x) + (d-4) v'(x) - (d-2) v (v-1)(v-2)=0, x \in (-\infty, +\infty).
\end{equation}
To prove global existence and asymptotic behavior of the solution, we employ the phase portrait analysis that used in \cite{BSIMRN19} for the ground-state of  the heat flow maps. First observe that \eqref{equa-v} has three critical  points $v =0,1,2$.  We choose $v =1$ to start our analysis (for $v=0,2$, the linear operator will have complex or positive eigenvalues). According to our initial condition $Q(0)=-1, Q'(0)=0$, $Q$ locally exists which in turn implies  $v$'s existence on $(-\infty, x_0)$ for some $x_0 <0$ and $|x_0|$ large enough. In particular, we  have the  boundary condition at $ -\infty$: $$ v(x) =  e^{2x} + \sum_{i=2}^k c_i e^{2ix} + O(e^{(2k+2)x} ) \text{ as } x \to -\infty. $$

This allows us to consider $v(x)$ as the flow starting at $ (0,0) $ and ending at $(1,0)$. Following \cite{BNON15}, we linearize around $1$, i.e.
$$ \epsilon = v -1,$$
then $\epsilon$ solves
\begin{equation}
	\epsilon'' + (d-4) \epsilon' - (d-2) \epsilon (\epsilon+1)(\epsilon-1) =0 .     
\end{equation}
The linearisation is given by
\begin{equation*}
	\epsilon'' + (d-4) \epsilon' + (d-2) \epsilon=(d-2)\epsilon^3,
\end{equation*}
We write the above equation in matrix form
$$  \binom{\epsilon'}{\epsilon}' = \binom{-(d-4) \quad -(d-2)}{1 \quad \quad \quad \quad \quad 0} \binom{\epsilon'}{\epsilon}+\binom{(d-2)\epsilon^3}{0}. $$
The eigenvalues are 
\begin{eqnarray}
	\lambda_1 &=& \frac{1}{2} ( \sqrt{d^2 -12d +24} - d +4) \label{defi-lambda-1-2} \\
	\lambda_2 &=&\frac{1}{2} (- \sqrt{d^2 -12d +24} - d +4), \nonumber
\end{eqnarray}
provided that 
$$d^2 -12d +24 \geq 0,$$
otherwise, the solution is spiral at $+ \infty$. We see that $\lambda_{1,2}(d) <0$ for all $d \geq 10$. 

- \textit{Construction  of no-escape region:}  Let us define 
$$ F(\epsilon, \epsilon') = (\epsilon', -(d-4)\epsilon' +(d-2)  (\epsilon^3 -\epsilon)).$$

We introduce a trapping region 
$$ \mathcal{S} =\{  (\epsilon,\epsilon') | \quad     \epsilon^3  - \epsilon 
\leq  \epsilon' \leq 2 (\epsilon^3 -\epsilon), \epsilon \in (-1,0) \}.$$

\begin{itemize}
	\item The lower boundary curve $ \epsilon' =  (\epsilon^3 -\epsilon)$. In the phase portrait space $(\epsilon,\epsilon'),$ we define the normal vector $\nu_{in}$ which points inward $\mathcal{S}$
	$$ \nu_{in} = ( -(3\epsilon^2 -1),1).$$
	We easily check that
	\begin{eqnarray*}
		F(\epsilon, \epsilon') \cdot \nu_{in} = (\epsilon^3 -\epsilon) 3(1 -\epsilon^2 ) >0, \forall \epsilon \in (-1,0).
	\end{eqnarray*}
	\item The upper boundary curve $ \epsilon' = 2 (\epsilon^3 -\epsilon)$. In the phase portrait space $(\epsilon,\epsilon'),$ we define the normal vector $\nu_{in}$ which points inward $\mathcal{S}$
	$$ \nu_{in} = ( 2 (3\epsilon^2 -1), -1).$$
	Then, 
	\begin{eqnarray*}
		F(\epsilon, \epsilon') \cdot \nu_{in} = (\epsilon^3 -\epsilon) ( 12 \epsilon^2 + d-10) >0, \forall \epsilon \in (-1,0) \text{ and }  m \geq 10.
	\end{eqnarray*}
	
	The vector field $F$ points always inward on the whole boundary of $\mathcal{S}$ (excluding the stationary points $(-1,0) $ and $(0,0)$). This implies that the integral curve of $F$ starting in $ \mathcal{S}$ must stay in $\mathcal{S}$.
\end{itemize}

- \textit{ The boundary conditions  trapped in $\mathcal{S}$:} Note that with initial data $Q(0)=-1, Q'(0)= 0$, $Q$ locally exists, this leads that  $\epsilon
$    exists locally, i.e., it exists on  $(-\infty, x_0)$, for some $x_0 $ near $-\infty$. In particular, we have  $Q \in C^\infty$. Using Taylor expansion together with equation \eqref{equation-the ground-state}, we see that  $Q(y)$  behaves as follows
$$ Q(\xi) = -1   +   \left(\frac{3d-6}{2d+6} \right)  \xi^2 +  \left( \frac{1}{4}. \frac{21d^2 -74d +64}{(3d+4)(d+4)}\right) \xi^4 + O(\xi^6) \text{ as } \xi \to 0.$$

- Asymptotic of the trapped solution in $\mathcal{S}$:
Let us discuss the boundary condition at $ -\infty$: 
we have
$$ \epsilon(x) = -1 + e^{2x} - \frac{3d-6}{3d+6} e^{4x} -\frac{1}{4} \frac{21d^2 -74d+64}{(3d+4)(d+4)} e^{6x} +O(e^{8x}), \text{ as } x \to -\infty.
$$
Using this asymptotic, the solution can't end up at $(-1, 0)$. In addition, at $(0,0)$, it gives the following general asymptotic of $\epsilon$:
$$ \epsilon(x) = h_+ e^{\lambda_1 x} (1 + O(e^{-2x})) +h_- e^{\lambda_2 x} (1 + O(e^{-2x})), $$
where
$$ \lambda_1 = \frac{1}{2} ( \sqrt{d^2 -12d +24} - d +4) \text{ and } \lambda_2 =\frac{1}{2} (- \sqrt{d^2 -12d +24} -d +4).$$

Assuming that $ h_+ =0,$ we derive from the shrinking set $\mathcal{S}$ that
$$ - \epsilon <   \epsilon' < - 2 \epsilon  \forall $$
Then
$$  h_- (1 + \lambda_2) > 0 \text{ and } h_- (2 +\lambda_2)  <0,$$

this contradicts the formula of $\lambda_2$.

So, $ h_+ \neq 0$. 

In addition to that, we require the same condition as $h_-$
$$ h_+ (1 + \lambda_1) > 0 \text{ and } h_+ (2 +\lambda_1)  <0 .$$
since
$$\lambda_1 + 1 < 0 \text{ and } \lambda_1 +2 >0 $$
we see that $h_+ < 0$ and we get the conclusion.

- The proof of item (ii): can be done in straightforward way, we omit the details.

- The proof of item (iii): We reformulate $Q(\xi)$ by 
\begin{equation}\label{relation-Q-epsilon-x}
	Q(\xi) = -\frac{(\epsilon(x) +1)}{e^{2x}}, \xi = e^x.
\end{equation}
Computation yields
$$ \xi Q'_\xi = - \frac{\epsilon'_x(x)}{e^{2x}} -2 Q.$$
Thus,
\begin{equation}\label{Lambda-Q-negative}
	\Lambda Q = 2Q +  y \partial_y Q = - \frac{\epsilon'_x(x)}{e^{2x}} <0 \quad \forall x \in (-\infty, \infty).
\end{equation}
Now, we aim to find the higher derivative of $\epsilon$, i.e., $ \partial^k_x \epsilon $ for all $ k \geq 1$.   In fact, $\epsilon$ satisfies the following integral equation 
\begin{equation}\label{integral-equa-epsilon}
	\epsilon (x) =  h_+ e^{\lambda_1 x} + h_- e^{\lambda_2 x} -\frac{1}{\lambda_1 -\lambda_2 }\int_{x}^\infty \left( e^{\lambda_1(x-x')} - e^{\lambda_2(x -x')} \right) g( \epsilon (x'))dx',
\end{equation}
where $g(z) = (d-2) z^3$. This gives us 
$$ \epsilon(x) =  h_+e^{\lambda_1 x} +O\left( e^{3\lambda_1 x}  \right) ,  $$
as $x \to +\infty$. In particular, applying $\partial^k_x$ to the right hand side of equation \eqref{integral-equa-epsilon} ,  we derive the following 
\begin{equation}\label{partial_x-k-epsilon}
	\partial_x^k \epsilon (x)= \partial_x^k (h_+ e^{\lambda_1 x})  + O(e^{3\lambda_1 x}), \text{ as } x \to +\infty.
\end{equation}
Let us remark that, it remains to prove \eqref{asym-partial-y-Lambda-Q-y}. Indeed, we have the following 
\begin{eqnarray*}
	\Lambda Q  &=& -  \epsilon'(x)  e^{-2x},\\
	\partial_y \Lambda  Q   & =  & -\epsilon''(x) e^{-3x} + 2 \epsilon'(x)e^{ -3x}   ,\\
	\partial_y^2 \Lambda Q  & = & -\epsilon'''(x) e^{-4x} +3 \epsilon'' e^{-4 x} + 2 \epsilon''(x) e^{-4x} -6 \epsilon'(x) e^{-4x},\\
	&  =& ( -\epsilon''' + 5\epsilon'' -6 \epsilon')e^{ -4 x},\\
	\partial_y^3 \Lambda Q & = & ( -\epsilon^{(4)} + 5\epsilon''' -6 \epsilon'')e^{ -5 x} - 4( -\epsilon''' + 5\epsilon'' -6 \epsilon')e^{ -5 x}.
\end{eqnarray*}
By induction, we can prove that
$$ \partial_y^k \Lambda Q = -  \Pi_{j=0}^{k-1} (\partial_x -2-j) \partial_x \epsilon(x)e^{(-2-k)x}, \forall k \ge 1.$$
Using  \eqref{partial_x-k-epsilon} and the fact that $\xi=e^x$, we get
\begin{eqnarray*}
	\Lambda_\xi Q(\xi)  & = &  a_0 \xi^{-\gamma} + O(\xi^{-3\gamma -4}),\\
	\partial_\xi \Lambda Q (\xi)& = & a_0 (\lambda_1 -2) \xi^{-\gamma-1} + O(\xi^{-3\gamma -5}),\\
	\partial_\xi^2 \Lambda Q(\xi) & = &  a_0(\lambda^2 - 5 \lambda_1 +6 ) \xi^{-\gamma -2} + O(\xi^{-3 \gamma -6}).
\end{eqnarray*}
In particular, we have the general case as follows: for all $ k \ge 1$
\begin{eqnarray*}
	\partial_\xi^k \Lambda Q &=& a_0 \Pi_{j=0}^{k-1} (\lambda_1 - 2-j) \xi^{\lambda_1 -2  -k } 
	+ O(\xi^{-\lambda_1-2 -k})\\
	&  =  & a_0 (-\gamma) ... (-\gamma -(k-1)) \xi^{ -\gamma-k} + O(\xi^{-3\gamma -4-k}) ,
\end{eqnarray*}
where $\gamma = 2 -\lambda_1, a_0 = - \lambda_1 h_+$.
Thus, \eqref{asym-partial-y-Lambda-Q-y} directly follows. This finishes the proof of the Lemma.
\end{proof}

\section{Diagonalisation of $\mathscr{L}_b$}\label{proof-of-diagoligioncal-L-b}

The goal of this section is   to give a detailed proof of Proposition \ref{propo-mathscr-L-b} which is the same as the route map established in the Section 2 of \cite{CMRJAMS20}.

\subsection{Interior problem}
In the sequel, we construct eigenfunctions for $\mathscr{L}_b $ in the region $ 0 \leq  y \leq y_0 \ll 1 $. First, we introduce 
\begin{eqnarray}
w(y,\tau) = v(\xi, \tau),\quad \text{ with } \xi = \frac{y}{\sqrt{b}} \label{variable-xi-y}.
\end{eqnarray}
The interior zone can be written in terms of the blow-up variable $\xi$ as
$$ 0 \leq \xi \leq \xi_0 := \frac{y_0}{\sqrt{b}}.$$
Recall the definition of   $\mathscr{L}_b$ 
$$ \mathscr{L}_b w(y )= \frac{1}{b} \left( H_\xi - \beta b \Lambda_\xi   \right) v.$$
where  the  Shr\"odinger type operator $H_\xi$ defined by 
\begin{equation}\label{operator-H-xi}
H = \partial_\xi^2  + \frac{d+1}{\xi} \partial_\xi -3(d-2) \left( 2Q(\xi) + \xi^2 Q^2(\xi) \right). 
\end{equation} 
\begin{lemma}[Generators of the Kernel of $H$]\label{lemma-Generation-H}  
There exists a family $\{ T_i\}_{i \ge 0}$ with initial element $T_0 = a_0^{-1} \Lambda_\xi Q$ belonging to the kernel of $H_\xi$ and  
such that  for all $i\in \mathbb{N}$
\begin{equation}\label{equa-T-i+1-i}
	H \left( T_{i+1}\right) =  T_i.
\end{equation}
Moreover, $T_{i}$ admits the expansion  
\begin{equation}\label{behavior-T-i}
	T_i  (\xi) = \left\{  \begin{array}{rcl}
		\sum_{l=0}^q t_{i,l} \xi^{2i+2l} + O(\xi^{2i + 2q+2}),  \forall q \in \N, \text{ as } \xi \to 0, \\[0.2cm]
		C_{i} \xi^{-\gamma +2i} \left(1  + O\left(\frac{\ln \xi}{\xi^2} \right) \right), \text{ as } \xi \to +\infty,
	\end{array} \right.    
\end{equation}
and the derivatives, up to order $k=3$, are such that
\begin{equation}\label{asymptotic-partial_y-T-i}
	\partial_\xi^k T_i(\xi) = \partial_\xi^k \left(  C_i \xi^{-\gamma +2i} \right)     +  O\left( \xi^{-\gamma+2i-2-k} \ln \xi \right), \text{ as } \xi \to +\infty.
\end{equation}
Here $\gamma$ and $C_j$ were defined in \eqref{defi-gamma-intro}, and  \eqref{definition-C-j-new}, respectively.\\
Let
$$ \Theta_i = \Lambda T_i   - (2i -\alpha) T_i.  $$
then, for all $k \in \{0, 1, 2\}$
\begin{equation}\label{asymptotic-Theta}
	\partial_\xi^k \Theta_i(\xi) =  O \left(  \xi^{ - \gamma +2i -k -2} \ln \xi \right) \text{ as } \xi \to +\infty.
\end{equation}
\end{lemma}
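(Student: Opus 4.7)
The construction goes by induction on $i$. For the base case, the identity $H(T_0)=0$ follows from the parabolic scale invariance of the ground state equation: since $Q_\lambda(\xi):=\lambda^{-1}Q(\xi/\sqrt\lambda)$ solves the same stationary equation for every $\lambda>0$, differentiating at $\lambda=1$ yields $\partial_\lambda Q_\lambda|_{\lambda=1}=-\tfrac{1}{2}\Lambda_\xi Q\in\ker H$, and the normalization $T_0=a_0^{-1}\Lambda_\xi Q$ ensures $T_0(\xi)\sim\xi^{-\gamma}$ as $\xi\to+\infty$ by \eqref{asymptotic-Lamda-Q}, i.e.\ $C_0=1$. The origin expansion $T_0=t_{0,0}+O(\xi^{2})$ is read off from \eqref{asymptotic-Lamda-Q} near $0$.

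For the induction step, given $T_i$ satisfying \eqref{behavior-T-i}, I would construct $T_{i+1}$ by inverting $H$ via variation of parameters. Since $H$ is symmetric for the measure $\xi^{d+1}d\xi$, two linearly independent homogeneous solutions $T_0$ and $\widetilde T_0$ (the latter singular like $\xi^{-d}$ at the origin and like $\xi^{-(d-\gamma)}$ at infinity, by the indicial analysis of $H$) have Wronskian proportional to $\xi^{-(d+1)}$, so an explicit Green's function is available and I would define
\begin{equation*}
T_{i+1}(\xi)\;=\;T_0(\xi)\int_0^\xi \widetilde T_0(s)\,T_i(s)\,s^{d+1}\,ds\;-\;\widetilde T_0(\xi)\int_0^\xi T_0(s)\,T_i(s)\,s^{d+1}\,ds,
\end{equation*}
adding a suitable multiple of $T_0$ to pin down the correct starting power. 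Smoothness at the origin, together with the expansion $T_{i+1}(\xi)=\sum_{l\ge 0}t_{i+1,l}\xi^{2(i+1)+2l}$, is then proved by the formal power series ansatz: plugging into $HT_{i+1}=T_i$ and using that the potential $-3(d-2)(2Q+\xi^2Q^2)$ is analytic and even at $0$, one gets the recursion $(2(i+1+l))(d+2(i+1+l))\,t_{i+1,l}=t_{i,l}+\text{(lower order)}$ which determines all coefficients and matches \eqref{behavior-T-i} at $0$.

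The heart of the matter is the asymptotic at infinity. Here I would exploit Lemma \ref{lemma-ground-state}, which gives $2Q+\xi^2Q^2=-\xi^{-2}+O(\xi^{-\gamma-2})$, so in the region $\xi\gg 1$ the operator $H$ is a controlled perturbation of the Euler-type limit
\begin{equation*}
H_\infty\;=\;\partial_\xi^2+\tfrac{d+1}{\xi}\partial_\xi+\tfrac{3(d-2)}{\xi^2},
\end{equation*}
whose indicial roots are precisely $-\gamma$ and $-(d-\gamma)$ since $\gamma^2-d\gamma+3(d-2)=0$. A direct computation using $\alpha=\gamma-2$ then yields $H_\infty(\xi^{-\gamma+2i})=4i(\tfrac{d}{2}-\gamma+i)\xi^{-\gamma+2i-2}$, which matched against $HT_{i+1}=T_i$ forces the recursion $C_{i+1}=C_i/[4(i+1)(\tfrac{d}{2}-\gamma+i+1)]$, iterating to the formula \eqref{definition-C-j-new}. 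To turn this formal matching into a rigorous asymptotic, I would bootstrap: write $T_{i+1}=C_{i+1}\xi^{-\gamma+2(i+1)}+R_{i+1}$ with $R_{i+1}$ satisfying an inhomogeneous ODE whose source term is of order $\xi^{-\gamma+2i-2}$ with possible logarithmic corrections arising from the $\xi^{-2}$ perturbation, and invert $H_\infty$ once more using the Frobenius expansion to extract the $\ln\xi$ contribution; the remainder is then estimated through the Green's function of $H_\infty$ at infinity.

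I expect the tracking of logarithmic factors through each inductive step to be the main technical obstacle, since the resonance between the indicial roots and the successive right-hand sides $\xi^{-\gamma+2i-2}$ systematically generates $\ln\xi$ corrections that must be absorbed in the remainder class $O(\xi^{-\gamma+2i-2}\ln\xi)$. Once \eqref{behavior-T-i} is established, the derivative bounds \eqref{asymptotic-partial_y-T-i} follow by differentiating the ODE $HT_i=T_{i-1}$ and using that the coefficients of $H$ are analytic on $(0,+\infty)$ together with the same matching argument, while the identity \eqref{asymptotic-Theta} reduces to the elementary computation $\Lambda(\xi^{-\gamma+2i})=(-\gamma+2i+2)\xi^{-\gamma+2i}=(2i-\alpha)\xi^{-\gamma+2i}$, which annihilates the leading term in $T_i$ and leaves only the subleading contribution of the expansion, already controlled pointwise in $C^1$ by the previous step.
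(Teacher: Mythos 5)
Your proposal is correct and follows the same overall strategy as the paper: induction on $i$, with $T_{i+1}$ obtained by inverting $H$ on $T_i$, the recursion $C_{i+1}=C_i/\bigl[4(i+1)(\tfrac d2-\gamma+i+1)\bigr]$ extracted by matching leading powers at infinity, and the logarithmic corrections traced to the $O(\xi^{-2})$ subleading terms. The mechanics differ in one place: the paper realizes $H^{-1}$ by the reduction-of-order formula built from the single kernel element $\Lambda Q$, namely $H^{-1}f=\Lambda Q\int_0^\xi L f/\Lambda Q$ with $Lf=(\xi^{d+1}\Lambda Q)^{-1}\int_0^\xi f\,\Lambda Q\,(\xi')^{d+1}d\xi'$, and then reads off both the origin and infinity expansions by directly integrating this formula against the induction hypothesis; you instead use a two-solution variation-of-parameters kernel $(T_0,\widetilde T_0)$ and obtain the infinity asymptotics by matched asymptotics against the Euler operator $H_\infty$, verified via the identity $H_\infty(\xi^{-\gamma+2i})=4i(\tfrac d2-\gamma+i)\,\xi^{-\gamma+2i-2}$ and a bootstrap for the remainder. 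Both routes are sound and give the same recursion; the paper's one-formula inverse has the advantage that the starting power $\xi^{2(i+1)}$ at the origin and the $\ln\xi$ remainder at infinity come out of the same computation, whereas your version requires a separate Frobenius argument at each end.

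Two small corrections. First, you should \emph{not} add a multiple of $T_0$ to fix the behavior at the origin: since $T_0(0)\neq 0$, doing so would make $T_{i+1}$ start at order $\xi^0$ and destroy the claimed expansion $T_{i+1}=\sum_l t_{i+1,l}\xi^{2(i+1)+2l}$. The particular solution regular at the origin already starts at $\xi^{2(i+1)}$ — this is exactly your recursion $t_{i+1,0}\,(2i+2)(d+2i+2)=t_{i,0}$ with $l=0$ — so the normalization is automatic. Second, for \eqref{asymptotic-Theta} with $k\le 2$ you need the derivative expansions \eqref{asymptotic-partial_y-T-i} up to order $k=3$ (since $\Theta_i$ contains $\xi\partial_\xi T_i$), not merely $C^1$ control; this is why the lemma states them to third order, and it is worth saying explicitly that they are obtained by differentiating $HT_i=T_{i-1}$ (or the explicit inverse formula) three times.
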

\begin{proof}
A detailed proof is to be presented in  Appendix \ref{proof-lemma-generator-H}.
\end{proof}

The generators of the kernel of $H_\xi$ are at hand, we are in position to perform the construction of the eigenvalues and the eigenfunctions in the interior zone. More precisely, our result reads

\begin{proposition}[Inner eigenfunctions]\label{proposition-inner-eigen-functions}
Let  $\ell \in \mathbb{N}, \ell \ge 1$, $i \in \{ 0,...,\ell\}$ and $\beta \in \left[\frac{1}{4}, \frac{3}{4} \right]$. Then,  there exists  $\epsilon_0(\beta) >0$ small enough such that for all $\epsilon \in (0, \epsilon_0)$ such that there exists $y^*(\epsilon) \ll 1$ such that for all  $0 < y_0 \le y^*$ there exist $b^*(y_0)$ and $\tilde \lambda^*(y_0)$ such that  for all  $ 0 < b < b^*(y_0)$   and $|\tilde \lambda| \leq \tilde \lambda^*$ there exists   $ \phi_{i,\text{int}}  \in  C^\infty\left(\left[0, \frac{y_0}{\sqrt{b}}\right],\R \right) $ such that the following hold:
\begin{equation}\label{equa-eigen-phi-int}
	\left( H - b \beta \Lambda \right) \phi_{i,\text{int}, \beta}=  2\beta b \left( \frac{\alpha}{2}-i +\tilde \lambda \right)\phi_{i,\text{int}, \beta},
\end{equation}
where $\phi_{i,\text{int}, \beta}$  has the following decomposition
\begin{equation}\label{inner-eigen-functins-phi-i-int}
	\phi_{i,\text{int}, \beta}(\xi) = \sum_{j=0}^i c_{i,j} (2\beta)^j b^j T_j(\xi) + \tilde \lambda  \sum_{j=0}^i  b^{j+1}  \left(c_{i,j}(2\beta)^{j+1} T_{j+1}(\xi) + S_j (\xi)\right) + bR_i(\xi),
\end{equation}
where the correction $R_i$ and $ S_j$ satisfy the following estimates
\begin{eqnarray*}
	\| S_j\|_{X^{2j+2-\gamma}_{\xi_0}} \leq C y_0^2,   \| \partial_b S_j\|_{X^{2j+4-\gamma}_{\xi_0}} \leq C,   \|\partial_{\tilde \lambda } S_j\|_{X^{2j+2-\gamma}_{\xi_0}} \leq C y^2_0, \text{ and } \|\partial_{ \beta } S_j\|_{X^{2j+2-\gamma}_{\xi_0}} \leq Cy_0^2 , \\
	\|R_i\|_{X^{-\gamma+\epsilon }_{\xi_0}} \leq C(\epsilon) , 
	\| \partial_b R_i \|_{X^{2-\gamma+\epsilon}_{\xi_0}}  \leq C(\epsilon),  
	\|\partial_{\tilde \lambda}  R_i\|_{X^{2-\gamma +\epsilon}_{\xi_0}} \leq C( \epsilon)  b, \text{ and }     \|\partial_{\beta}  R_i\|_{X^{2-\gamma +\epsilon}_{\xi_0}} \le C(\epsilon) .
\end{eqnarray*}

\end{proposition}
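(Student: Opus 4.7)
The construction will follow the matched-asymptotics strategy of \cite{CMRJAMS20}, building $\phi_{i,\text{int},\beta}$ as a truncated power series in $b$ whose coefficients are dictated by the hierarchy $\{T_j\}$ from Lemma \ref{lemma-Generation-H}, and closing the construction by a contraction argument for the residual $R_i$. Substituting the ansatz \eqref{inner-eigen-functins-phi-i-int} into \eqref{equa-eigen-phi-int} and using the two key identities $HT_0=0$, $HT_{j+1}=T_j$, together with $\Lambda_\xi T_j=(2j-\alpha)T_j+\Theta_j$, I would first match the leading piece $P_0:=\sum_{j=0}^i c_{i,j}(2\beta)^j b^j T_j$. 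Matching the coefficient of $b^{j+1}T_j$ for $0\le j\le i-1$ gives the recursion $c_{i,j+1}=(j-i)c_{i,j}$, which is exactly \eqref{definition-c_i-j}; at $j=i$ the recursion closes automatically, and this closure is precisely the spectral condition that selects the unperturbed eigenvalue $2\beta b(\alpha/2-i)$.

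The $\tilde\lambda$ deformation on the right-hand side will be absorbed by the explicit summand $\tilde\lambda\sum_j b^{j+1}c_{i,j}(2\beta)^{j+1}T_{j+1}$, since $HT_{j+1}=T_j$ makes it reproduce $2\beta b\tilde\lambda P_0$ upon applying $H$. What remains in the residual then comes from two sources: the $\Theta_j$ contributions generated by $-b\beta\Lambda_\xi P_0$, and the $O(b)$ subleading pieces produced by $-b\beta\Lambda_\xi$ acting on the $\tilde\lambda$-correction. Since $\Theta_j=O(\xi^{-\gamma+2j-2}\ln\xi)$ by \eqref{asymptotic-Theta}, each $\Theta_j$ gains two powers of $\xi^{-1}$ relative to $T_j$; on the inner region $\xi\le\xi_0$ with $\xi_0=y_0/\sqrt b$ this translates into an $O(y_0^2)$ improvement once the corresponding $b$-power is factored out. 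I would then define $S_j$ recursively by solving $HS_j$ equal to the appropriate combination of $\Theta_k$ for $k\le j$, using variation of parameters based on the two independent solutions of $HV=0$ (the regular $T_0$ and a second solution singular at $\xi=0$) and imposing regularity at the origin. The bound $\|S_j\|_{X^{2j+2-\gamma}_{\xi_0}}\lesssim y_0^2$ would follow directly from the decay of the source together with the explicit kernel, and smoothness in $(b,\tilde\lambda,\beta)$ would follow by differentiating the defining integral representation.

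With $P_0$ and the $S_j$-family in place, the total residual $F_i(\xi;b,\tilde\lambda,\beta)$ obtained by substituting $P_0+\tilde\lambda P_1$ into the left-hand side of \eqref{equa-eigen-phi-int} and subtracting the right-hand side is bounded in $X^{2-\gamma+\epsilon}_{\xi_0}$ uniformly in the parameters. The last step is then to solve
\begin{equation*}
\bigl(H-b\beta\Lambda_\xi-2\beta b(\tfrac{\alpha}{2}-i+\tilde\lambda)\bigr)(bR_i)=-F_i
\end{equation*}
by recasting it as a fixed point for $R_i$ on $X^{-\gamma+\epsilon}_{\xi_0}$: invert the dominant piece $H$ by variation of parameters selecting the regular-at-origin solution, and treat $b\beta\Lambda_\xi+2\beta b(\alpha/2-i+\tilde\lambda)$ perturbatively. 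Smallness of $b^*$ and $|\tilde\lambda^*|$, chosen after $\epsilon$ and $y_0$, guarantees contractivity of the resulting map, and $C^1$-dependence of $R_i$ on $(b,\tilde\lambda,\beta)$ follows by a standard implicit-function argument applied to the fixed-point equation.

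The main obstacle I anticipate is the uniform control of $R_i$ near the right endpoint $\xi_0=y_0/\sqrt b$, which diverges as $b\to 0$: the weight in $X^{-\gamma+\epsilon}_{\xi_0}$ must be calibrated so that (i) it accommodates the $\xi^{-\gamma}$ behaviour of the regular kernel element of $H$; (ii) the Green's function inverse of $H$ maps $X^{2-\gamma+\epsilon}$ into $X^{-\gamma+\epsilon}$ with a constant independent of $\xi_0$; and (iii) the prefactor $b$ in front of $\beta\Lambda_\xi$ beats the growth of $\Lambda_\xi$ measured in $X^{-\gamma+\epsilon}$ norm all the way out to $\xi=\xi_0$. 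The logarithmic corrections appearing in \eqref{asymptotic-partial_y-T-i}--\eqref{asymptotic-Theta} are precisely what forces $\epsilon>0$ and dictates the order of quantifiers $\epsilon\to y_0\to(b^*,\tilde\lambda^*)$ stated in the proposition.
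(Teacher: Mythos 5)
Your overall route is the same as the paper's: substitute the ansatz \eqref{inner-eigen-functins-phi-i-int} into \eqref{equa-eigen-phi-int}, use $HT_0=0$, $HT_{j+1}=T_j$ and $\Lambda T_j=(2j-\alpha)T_j+\Theta_j$ so that the leading block cancels via $c_{i,j+1}+(i-j)c_{i,j}=0$, and close the construction by a Banach fixed point in the weighted spaces $X^a_{\xi_0}$, the contraction factor being exactly $b\xi_0^2=y_0^2$; the treatment of the parameter derivatives by differentiating the fixed-point equation is also what the paper does.

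There is, however, one concrete bookkeeping error. You propose to define $S_j$ by solving $HS_j$ equal to a combination of the $\Theta_k$. This is inconsistent with the stated decomposition: the $\Theta_j$ contributions arise from $-b\beta\Lambda_\xi$ acting on $\sum_j c_{i,j}(2\beta)^jb^jT_j$ and therefore carry \emph{no} factor of $\tilde\lambda$, whereas in \eqref{inner-eigen-functins-phi-i-int} each $S_j$ enters multiplied by $\tilde\lambda b^{j+1}$. Moreover $H^{-1}\Theta_j\sim \xi^{2j-\gamma}\ln\xi$, which in the $X^{2j+2-\gamma}_{\xi_0}$ norm is only $O(1)$, not $O(y_0^2)$, so with your assignment the bound $\|S_j\|_{X^{2j+2-\gamma}_{\xi_0}}\le Cy_0^2$ would fail. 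In the paper the split is the other way around: $S_j$ solves $HS_j=2\beta b\bigl(\tfrac{\alpha}{2}-i+\tilde\lambda+\tfrac12\Lambda\bigr)\bigl(S_j+c_{i,j}(2\beta)^{j+1}T_{j+1}\bigr)$ (equation \eqref{equa-H-S-j-b-Lambda-S-j}), whose source is $O(b)\cdot T_{j+1}$ and hence yields the $y_0^2$ gain, while the $\Theta_j$ terms are precisely the inhomogeneity of the $R_i$ equation \eqref{equa-H-R-i-b-Lambda-R-i}, which is why $R_i$ is only $O(1)$ in $X^{-\gamma+\epsilon}_{\xi_0}$ and why the $\epsilon$-loss (absorbing the logarithms in $\Theta_j$) appears only there. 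Your later step, where you dump the ``total residual $F_i$'' into the $R_i$ fixed point, would in fact absorb the $\Theta_j$ terms correctly, so the slip is repairable by simply deleting the $\Theta_k$ from the definition of $S_j$; but as written the two halves of your construction do not fit together.
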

\begin{proof}
Due to the lengthy proof, we aim to put the details in Appendix \ref{proof-appendix-inner-eigenfunctions}.
\end{proof}

\subsection{Exterior  problem }
In this part, we aim to  construct  the eigenfunctions of $\mathscr{L}_b$  on $ [y_0 , +\infty)$, for some $y_0 \ll 1$. The following is our result 
\begin{proposition}[Outer eigenfunctions]\label{propo-outer-eigenfunctions} Let  $\ell \in \mathbb{N}, \ell \ge 1$, $i \in \{ 0,...,\ell\}$ and $\beta \in \left[\frac{1}{4}, \frac{3}{4} \right]$. Then, there exists $y^*(\beta) \ll 1$ such that for all $y_0 \le y^*$, there exist   $ b^*(y_0, \beta) $ and $ \tilde \lambda^*(y_0, \beta, b^*)$ such that for all $b \in (0, b^*)$ and $\tilde \lambda \in (-\tilde \lambda^*, \tilde \lambda^*) $, there exits  a $C^\infty\left[ y_0 ,+\infty \right)$  function $\phi_{i,out,\beta} $ satisfying
$$ \mathscr{L}_b \phi_{i,out, \beta} = \left( 2\beta \left( \frac{\alpha}{2}-i\right) +\tilde{\lambda} \right) \phi_{i,out, \beta} ,  $$
and having the following decomposition 
$$ \phi_{i,out, \beta}  = \phi_{i,\infty, \beta} + \tilde \lambda ( \tilde \phi_{i,\beta}   + \tilde R_{i,1}) + \tilde R_{i,2},$$
where  $\tilde \phi_{i,\beta}$ satisfies
\begin{equation*}
	\left(  \mathscr{L}_{\infty}^{\beta}   -   2\beta\left(    \frac{\alpha}{2} - i\right) \right)\tilde \phi_{i,\beta}= \phi_{i,\infty, \beta} \text{ with } \phi_{i,\infty, \beta} \text{ defined as in }  \eqref{phi-i-infty},
\end{equation*}
and $ \tilde R_{i,1}$ and $ \tilde R_{i,2}$ fulfil the following estimates  
\begin{eqnarray*}
	\|  \tilde R_{i,1}\|_{X^{ \gamma-d, 2i -\gamma +2}_{y_0} } \leq C |\tilde \lambda|    , \quad  \partial_{b} \tilde R_{i,1} = 0, \| \partial_{\tilde \lambda} \tilde R_{i,1}\|_{X^{\gamma-d, 2i -\gamma +2}_{y_0} } \leq C,  \| \partial_\beta  \tilde R_{i,1}  \|_{X^{ \gamma-d, 2i -\gamma +2}_{y_0} } \le C ,
\end{eqnarray*}
and 
\begin{eqnarray*}
	\|  \tilde R_{i,2}\|_{X^{ -d,a'}_{y_0} } \leq C b^{\frac{\alpha}{2}},  \quad   \| \partial_{b} \tilde R_{i,2} \|_{X^{-d,a'}_{y_0} }  \leq C b^{\frac{\alpha}{2}-1},  \| \partial_{\tilde \lambda} \tilde R_{i,2}\|_{X^{-d,a'}_{y_0} } \leq C b^\frac{\alpha}{2}, \| \partial_{\beta} \tilde R_{i,2}\|_{X^{-d,a'}_{y_0} } \leq C b^\frac{\alpha}{2},
\end{eqnarray*}
for $ a' = 2i +2 -\gamma$ and   $X^{a,a'}_{y_0}$ is a Banach space  generated by the  norm
\begin{eqnarray}
	\|f\|_{X^{a,a'}_{y_0}} &=& \sup_{y  \in [y_0,1]} y^{-a} \left\{ \sum_{i=0}^2   y^i \left| \partial^{i}_y f(y) \right|   \right\}   +  \sup_{y  \in [1, +\infty)} y^{-a'} \left\{ \sum_{i=0}^2   y^i \left| \partial^{i}_y f(y) \right|   \right\} .\label{defi-norm-X-y-0-a-a'}
\end{eqnarray}
\end{proposition}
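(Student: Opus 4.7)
The idea is that in the outer region, $\mathscr{L}_b$ is a small perturbation of $\mathscr{L}_\infty^\beta$. I would begin by writing
\[
\mathscr{L}_b = \mathscr{L}_\infty^{\beta} + V_b, \qquad V_b(y) := -3(d-2)\Bigl(2Q_b(y) + y^2 Q_b^2(y) + \tfrac{3(d-2)}{y^2}\cdot\tfrac{1}{3(d-2)}\Bigr),
\]
i.e.\ $V_b = -3(d-2)(2Q_b+y^2Q_b^2+y^{-2})$. Using $Q(\xi)=-\xi^{-2}+q_0\xi^{-\gamma}+O(\xi^{-3\gamma-4})$ from Lemma~\ref{lemma-ground-state}, a direct expansion in powers of $q_0 b^{\alpha/2}/y^{\gamma-2}$ gives, for $y\ge y_0$,
\[
V_b(y) \;=\; \mathcal{O}\!\left(\frac{b^{\alpha/2}}{y^{\gamma}}\right),\qquad \partial_b V_b = \mathcal{O}\!\left(\frac{b^{\alpha/2-1}}{y^{\gamma}}\right),\qquad \partial_\beta V_b = 0,
\]
together with the corresponding pointwise control of the first two $y$-derivatives; this is what will produce the prefactor $b^{\alpha/2}$ in the estimate for $\tilde R_{i,2}$.

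Next I would insert the ansatz $\phi = \phi_{i,\infty,\beta} + \tilde\lambda(\tilde\phi_{i,\beta}+\tilde R_{i,1}) + \tilde R_{i,2}$ into $(\mathscr{L}_b - \lambda_{i,\infty,\beta}-\tilde\lambda)\phi = 0$ and use the two defining identities
\[
(\mathscr{L}_\infty^\beta - \lambda_{i,\infty,\beta})\phi_{i,\infty,\beta}=0,\qquad (\mathscr{L}_\infty^\beta - \lambda_{i,\infty,\beta})\tilde\phi_{i,\beta}=\phi_{i,\infty,\beta},
\]
to cancel the terms of order $\tilde\lambda^0$ and $\tilde\lambda^1$ that are independent of $V_b$. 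After rearrangement, separating the part proportional to $\tilde\lambda$ from the part independent of $\tilde\lambda$, this yields the coupled system
\begin{align*}
(\mathscr{L}_\infty^\beta - \lambda_{i,\infty,\beta})\tilde R_{i,1} &= \tilde\lambda(\tilde\phi_{i,\beta}+\tilde R_{i,1}) + \tilde R_{i,2}/\tilde\lambda\cdot\mathbf{1} - V_b(\tilde\phi_{i,\beta}+\tilde R_{i,1}),\\
(\mathscr{L}_\infty^\beta - \lambda_{i,\infty,\beta})\tilde R_{i,2} &= \tilde\lambda\,\tilde R_{i,2} - V_b(\phi_{i,\infty,\beta}+\tilde R_{i,2}),
\end{align*}
(with the forcing redistributed so that $\tilde R_{i,2}$ collects what is driven by $V_b\phi_{i,\infty,\beta}$, and $\tilde R_{i,1}$ collects what is driven by the $\tilde\lambda$ correction and by $V_b\tilde\phi_{i,\beta}$; the exact algebraic split is a bookkeeping exercise that I would do carefully, but whose shape is forced by the decomposition).

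The key analytic step is to build a bounded right-inverse $K_{i,\beta}$ of $\mathscr{L}_\infty^\beta-\lambda_{i,\infty,\beta}$ acting on the Banach space $X^{a,a'}_{y_0}$. Since $\phi_{i,\infty,\beta}$ is an explicit Laguerre-polynomial solution of the homogeneous equation, I would construct a second linearly independent solution $\psi_{i,\infty,\beta}$ by reduction of order with respect to the Sturm--Liouville measure $\rho_\beta$, producing a Wronskian
\[
W(y) = \phi_{i,\infty,\beta}\psi_{i,\infty,\beta}' - \phi_{i,\infty,\beta}'\psi_{i,\infty,\beta} = \frac{C}{\rho_\beta(y)/y^{d+1}} = Cy^{-(d+1)}e^{\beta y^2/2}.
\]
The outer-region inverse is then given by the variation-of-parameters formula
\[
K_{i,\beta}[f](y) = \psi_{i,\infty,\beta}(y)\int_{y_0}^{y}\frac{\phi_{i,\infty,\beta}(y')f(y')}{W(y')}dy' - \phi_{i,\infty,\beta}(y)\int_{y_0}^{y}\frac{\psi_{i,\infty,\beta}(y')f(y')}{W(y')}dy',
\]
with boundary/normalization conditions chosen so that $K_{i,\beta}f \in X^{a,a'}_{y_0}$; here I would use the Laguerre asymptotics of $\phi_{i,\infty,\beta}$ and $\psi_{i,\infty,\beta}$ (matching the $y^{2i-\gamma}$ and $y^{2i-\gamma}e^{\beta y^2/2}$ growths at infinity, and the respective $y^{-\gamma}$ and $y^{\gamma-d}$ behaviors at $y_0$) to verify that $K_{i,\beta}:X^{a-2,a'-2}_{y_0}\to X^{a,a'}_{y_0}$ with norm bounded uniformly in $\beta,b$.

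Finally I would rewrite the coupled system as a fixed-point problem $(\tilde R_{i,1},\tilde R_{i,2}) = \mathcal{F}(\tilde R_{i,1},\tilde R_{i,2})$ involving $K_{i,\beta}$ composed with multiplication by $V_b$ and by the scalar $\tilde\lambda$. Because $\|V_b\|\lesssim b^{\alpha/2}$ and $|\tilde\lambda|\le \tilde\lambda^\ast$ can both be made arbitrarily small by choosing $b^\ast,\tilde\lambda^\ast$ small, the map $\mathcal{F}$ is a contraction on the ball $\{\|\tilde R_{i,1}\|_{X^{\gamma-d,2i-\gamma+2}_{y_0}}\le C|\tilde\lambda|,\ \|\tilde R_{i,2}\|_{X^{-d,2i-\gamma+2}_{y_0}}\le C b^{\alpha/2}\}$, yielding the existence part and the top-line estimates in the statement. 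Differentiating the fixed-point equation with respect to $b$, $\tilde\lambda$, and $\beta$ (here one uses $\partial_\beta V_b=0$ but $\partial_\beta \phi_{i,\infty,\beta}\neq 0$ and $\partial_\beta K_{i,\beta}\neq 0$, which explains why $\partial_\beta\tilde R_{i,1}$ is only $\mathcal{O}(1)$ while $\partial_b\tilde R_{i,1}=0$), and reapplying the same contraction estimate, produces the remaining bounds.

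The main obstacle, in my view, is the construction of $K_{i,\beta}$ with mapping properties into the anisotropic space $X^{a,a'}_{y_0}$: one must simultaneously control the singular behavior near $y_0$ (which interacts with the $\psi_{i,\infty,\beta}\sim y^{\gamma-d}$ branch) and the Gaussian-weighted growth at infinity (where the inverse can amplify forcing through $\psi_{i,\infty,\beta}\sim y^{2i-\gamma}e^{\beta y^2/2}$ unless one cancels the dangerous contribution by a suitable choice of antiderivative at $+\infty$). Once this weighted estimate on $K_{i,\beta}$ is in hand, the fixed-point argument and the differentiation with respect to parameters are relatively routine and yield all the bounds claimed in the proposition.
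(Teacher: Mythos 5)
Your proposal follows essentially the same route as the paper: identify $\mathscr{L}_b=\mathscr{L}_\infty^\beta+V_b$ with $V_b=-3(d-2)(2Q_b+y^2Q_b^2+y^{-2})$ small for $y\ge y_0$, insert the three-term ansatz, invert $\mathscr{L}_\infty^\beta-2\beta(\frac{\alpha}{2}-i)$ by variation of parameters with the Wronskian $Cy^{-(d+1)}e^{\beta y^2/2}$ (integrating from $+\infty$ on the Gaussian-growing branch, exactly as the paper does), and close by a Banach fixed point in $X^{a,a'}_{y_0}$ followed by differentiation in the parameters.

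The one point you must fix is the algebraic split of the system, which is not mere bookkeeping: as written, your equation for $\tilde R_{i,1}$ contains both $V_b(\tilde\phi_{i,\beta}+\tilde R_{i,1})$ and a (garbled) $\tilde R_{i,2}$-term, so the resulting $\tilde R_{i,1}$ would depend on $b$, contradicting the claimed identity $\partial_b\tilde R_{i,1}=0$. The correct split is triangular rather than coupled: the $\tilde R_{i,1}$-equation must be entirely $b$-independent, namely $\mathscr{L}^\beta_{i,ext}\tilde R_{i,1}=\tilde\lambda\,\tilde R_{i,1}+\tilde\lambda\,\tilde\phi_{i,\beta}$ (solved first, giving $\|\tilde R_{i,1}\|\lesssim|\tilde\lambda|$ and $\partial_b\tilde R_{i,1}=0$ for free), while \emph{all} terms involving $V_b$ — including $V_b\phi_{i,\infty,\beta}$, $V_b\tilde\lambda(\tilde\phi_{i,\beta}+\tilde R_{i,1})$ and $V_b\tilde R_{i,2}$ — are collected in the equation for $\tilde R_{i,2}$, whose source is then $O(b^{\alpha/2})$ uniformly on $[y_0,\infty)$ (note the cancellation of the $b^{\alpha/2}y^{-\gamma}$ terms between $2Q_b$ and $y^2Q_b^2$ actually gives $O(b^{\alpha})$ there, though $O(b^{\alpha/2})$ suffices). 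With that correction the argument is complete.
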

\begin{proof}
See Appendix \ref{proof-propo-outer-eigenfunctions}.
\end{proof}

\subsection{  Matching asymptotic   }\label{mathching-asymptotic-proof}  
This part  is devoted to  conclude the proof of  the diagonalisation on $\mathscr{L}_b$.   

\begin{proof}[Proof of  Proposition \ref{propo-mathscr-L-b}:]  
Let $ i \in \{0,1,...,\ell\}$ where $\ell \in \mathbb{N}, \ell \ge 2, \beta \in \left[\frac{1}{4}, \frac{3}{4}  \right]$, $ y_0 \le y^*_1, b \le b^*_1 $  such that Propositions  (\ref{proposition-inner-eigen-functions}- \ref{propo-outer-eigenfunctions}) hold and $ \phi_{i,int}$ and $\phi_{i, out, \beta}$ are defined  in there.

\medskip
\textit{ A) The proof of item (I):} We define  
\begin{equation}\label{definition-phi-i-b}
	\phi_{i,b, \beta } (y) = \left\{      \begin{array}{rcl}
		& &      b^{-\frac{\gamma}{2}  } \phi_{i,int, \beta} \left( \frac{y}{\sqrt{b}} \right)         \text{ if  } y \in [0,y_0],     \\[0.3cm]
		& & \frac{ b^{-\frac{\gamma}{2}  } \phi_{i,int, \beta} \left( \frac{y_0}{\sqrt{b}} \right)}{\phi_{i,out, \beta }(y_0)}  \phi_{i,out, \beta}(y)    \text{ if } y \in [y_0, +\infty).
	\end{array}   \right.    
\end{equation}
The main goal is to   prove that there exists $y_0  \in (0,1)$ small enough and $b^*(y_0)>0$ such that, for all    $b \in (0, b^*)$,  there  exists a unique $ \tilde \lambda_i(b, \beta) = \tilde \lambda $ satisfying 
\begin{equation}\label{check-L-b-phi-i=lamda-i-phi-i}
	\mathscr{L}_{b} \phi_{i,b, \beta} = \left(  2 \beta \left(\frac{\alpha}{2} - i \right)+ \tilde \lambda  \right) \phi_{i,b, \beta},
\end{equation}
and  $\tilde \lambda $  satisfies  \eqref{estimat-tilde-lambda-b-beta-pro}.

\medskip
First, we observe  that  $\phi_{i, int, \beta} \in C^\infty\left(\left[ 0, \frac{y_0}{\sqrt{b}} \right]\right)$ and $\phi_{i, out, \beta} \in C^\infty[y_0, +\infty)$ and they solve the regular second order differential equations, so $\phi_{i,b, \beta} \in C^\infty[0,+\infty)$ if and only if 
\begin{equation}
	b^{-\frac{\gamma}{2} -\frac{1}{2}}  \partial_\xi \phi_{i,int, \beta} \left( \frac{y_0}{\sqrt{b}} \right)  =  b^{-\frac{\gamma}{2}}  \frac{  \phi_{i,int, \beta} \left( \frac{y_0}{\sqrt{b}} \right)}{\phi_{i, out, \beta}(y_0)} \partial_y \phi_{i,out, \beta}(y_0),
\end{equation}
this condition  ensures  $\phi_{i, b, \beta}$'s differential  is continuous at  $y_0$. In particular, it is   equivalent to
\begin{equation}
	b^{-\frac{1}{2}} \partial_\xi \phi_{i,int, \beta} \left( \frac{y_0}{\sqrt{b}}\right) \phi_{i, out, \beta}(y_0) - \phi_{i,int, \beta} \left( \frac{y_0}{\sqrt{b}}\right) \partial_y \phi_{i,out, \beta}(y_0)=0.
\end{equation}
Here we use the implicit function theorem by applying it to the function $\tilde F[y_0](\tilde \lambda, b, \beta)$ defined by
$$ \tilde F[y_0](\tilde \lambda, b, \beta)  =b^{-\frac{1}{2}} \partial_\xi \phi_{i,int, \beta} \left( \frac{y_0}{\sqrt{b}}\right) \phi_{i,ext, \beta}(y_0) - \phi_{i,int, \beta} \left( \frac{y_0}{\sqrt{b}}\right) \partial_y \phi_{i,out, \beta}(y_0).      $$
We firstly prove the following expansions:
\begin{eqnarray}
	\tilde F[y_0] (\tilde \lambda, b, \beta) &=& \tilde \lambda K_0 a_{i,0}(\tilde \gamma - \gamma)  y^{ -\gamma -\tilde \gamma -1}_0 ( 1 + O(y_0^2) + O(|\tilde \lambda|))\nonumber \\
	&+& O(b^{1-\frac{\epsilon}{2}} ), \label{F-y-0-lambda-b}\\
	\partial_b \tilde F[y_0](  \tilde \lambda, b, \beta) & = & O(y_0^{-2\gamma-1+\epsilon} b^{-\frac{\epsilon}{2}}) + O(|\tilde \lambda | b^{-1} y_0^{-2\gamma +3}),\label{partial-b-F-y-0-lambda-b}\\
	\partial_{\tilde \lambda } \tilde F[y_0](\tilde \lambda , b, \beta) & = & (\tilde \gamma - \gamma) K_0 a_{i,0} y_0^{ -\gamma -\tilde \gamma -1}   (1  + O(y_0^2) + O(|\tilde \lambda |)),  \label{partial-lambda-F-y-0-lambda-b} 
\end{eqnarray}
and for $\beta$-derivative  
\begin{eqnarray}
	\\
	\partial_\beta \tilde F[y_0](\tilde \lambda, b, \beta) & = &   \left\{      \begin{array}{rcl}
		\tilde \lambda 2 a_{i,1} K_0  (2 + \tilde \gamma - \gamma) y_0^{ - \tilde \gamma - \gamma +1} (1 + O(|y_0|^2) + O(|\tilde \lambda|))   &  \text{ if } & i \ge 1,  \\[0.2cm]
		+ O(b^{ 1 - \frac{\epsilon}{2} } ) &&  \\  O(|\tilde \lambda|y_0^{- \tilde \gamma - \gamma +3})+ O(|\tilde \lambda^2|  ) + O(b^{1-\frac{\epsilon}{2}})   & \text{ if } & i =0.
	\end{array}   \right.   \label{partial-beta-F-y-0-lambda-b}
\end{eqnarray}
Since the proof  of  asymptotic expansions   (\ref{F-y-0-lambda-b}-\ref{partial-beta-F-y-0-lambda-b})  are   technical and long, we complete them when we finish the proof of Proposition \ref{propo-mathscr-L-b}. Assume that the asymptotic expansions hold  for all $b \in (0, b^*(y_0)),$ $ y_0 \le y_0^*$, and $\beta \in \left[ \frac{1}{4}, \frac{3}{4} \right]$. We mention  that the expansions are  uniform in $\beta, \tilde \lambda$ and $\beta$. So, the argument from  the implicit function theorem  yields that     $\forall b \in (0,b^*(y_0))$ and $\beta \in \left[\frac{1}{4}, \frac{3}{4} \right]$,  there  exists  a unique $\tilde \lambda = \tilde \lambda (b, \beta) $ such that
$$\tilde F[y_0] (\tilde \lambda, b, \beta)=0. $$
In particular,  \eqref{F-y-0-lambda-b}  ensures that
$  \tilde \lambda(b, \beta )  = O\left(b^{1-\frac{\epsilon}{2}} \right)$  and expansions (\ref{partial-b-F-y-0-lambda-b}-\ref{partial-beta-F-y-0-lambda-b}) imply 
$$ \left| b \partial_b  \tilde \lambda (b, \beta)  \right| \lesssim_{y_0} b^{1 -\frac{\epsilon}{2}} \text{ and }  \left| \partial_\beta \tilde \lambda (b, \beta) \right| \lesssim_{y_0} 1 $$
yielding \eqref{estimat-tilde-lambda-b-beta-pro}. Next, we decompose $\phi_{i, b, \beta}$   as follows  
\begin{equation}
	\phi_{i, b, \beta}(y) = \sum_{j=0}^i c_{i,j}(2\beta)^j (\sqrt{b})^{2j-\gamma} T_{j} \left( \frac{y}{\sqrt{b}} \right)  + \tilde \phi_{i,b} (y),
\end{equation}
and we aim to prove 
\begin{equation}\label{norm-H-1rho-tilde-phi}
	\| \tilde \phi_{i,b}\|_{H^1_\rho} \le Cb^{1 -\frac{\epsilon}{2}}.   
\end{equation}
In particular,  we can specify it by 
\begin{equation*}
	\tilde \phi_{i,b, \beta} (y) = \left\{      \begin{array}{rcl}
		& &      b^{-\frac{\gamma}{2}  } \phi_{i,int, \beta} \left( \frac{y}{\sqrt{b}} \right)  - \sum_{j=0}^i c_{i,j} \left(\sqrt{b}\right)^{2j-\gamma} T_j\left( \frac{y}{\sqrt{b}} \right)        \text{ if  } y \in [0,y_0],     \\[0.3cm]
		& & \frac{ b^{-\frac{\gamma}{2}  } \phi_{i,int, \beta} \left( \frac{y_0}{\sqrt{b}} \right)}{\phi_{i, out, \beta}(y_0)}  \phi_{i, out, \beta}(y) - \sum_{j=0}^i c_{i,j} \left(\sqrt{b}\right)^{2j-\gamma} T_j\left( \frac{y}{\sqrt{b}} \right)  \text{ if } y \in [y_0, +\infty).
	\end{array}   \right.    
\end{equation*}
Now, we aim to prove that
\begin{equation}\label{point-wise-tilde-phi-i-b}
	|\partial_y^k \tilde \phi_{i,b, \beta} (y) | \le C\left(  y^{  -\gamma +2 -k} I_{y \in [0,y_0]} + y^{-\gamma +2i+2-k} I_{y \in [y_0,+\infty)} \right) b^{1 -\frac{\epsilon}{2}}, \quad  y \in \R \text{ and  } k = 0,1.
\end{equation}
Since the proofs for  $k=0$ and $k=1$ are the same, we  only give the proof of \eqref{point-wise-tilde-phi-i-b}     for the case  $k=0$ and we kindly refer the reader to check the details. Let us start the proof by considering two cases, namely,  $ y \in [0,y_0]$ and $ y \in [y_0, +\infty)$. 
\begin{enumerate}
	\item \underline{$ y \in [0,y_0]$:} write $\tilde \phi_{i,b}$ as 
	$$ \tilde \phi_{i,b} (y) =   \tilde \lambda \sum_{j=0}^i  b^{j+1-\frac{\gamma}{2}}\left[ c_{i,j} T_{j+1} \left(\frac{y}{\sqrt{b}} \right) + S_j\left(\frac{y}{\sqrt{b}} \right) \right] + b^{1-\frac{\gamma}{2}} R_i\left(\frac{y}{\sqrt{b}} \right).$$
	According to Lemma \ref{lemma-Generation-H}, we have 
	$$ \left| T_{j+1} (\xi) \right| \leq C  \xi^{-\gamma +2j+2},\quad \forall \xi \in \R^+,    $$
	so that
	$$ \left| \tilde \lambda b^{1+j-\frac{\gamma}{2}}  c_{i,j} T_{j+1}\left( \frac{y}{\sqrt{b}} \right)  \right| \le C |\tilde \lambda |  y^{2j+2 -\gamma}. $$
	Proposition \ref{proposition-inner-eigen-functions} yields
	\begin{eqnarray*}
		\left| \tilde \lambda b^{1+j-\frac{\gamma}{2}} S_j \left( \frac{y}{\sqrt{b}} \right) \right| \le C \left|\tilde \lambda   \right|  y^{ 2j+2-\gamma},\quad \textit{and} \quad b^{1 -\frac{\gamma}{2}} \left| R_i \left(\frac{y}{\sqrt{b}} \right) \right| \le C b^{1-\frac{\epsilon}{2}}y^{-\gamma +\epsilon}.
	\end{eqnarray*}
	The above three estimates allows us to infer that, for $y\in [0,y_0]$
	$$ |\tilde \phi_{i,b}(y)| \le C b^{1-\frac{\epsilon}{2}} y^{2-\gamma}. $$
	\item \underline{$ y\in [y_0, +\infty]$:} write $\tilde \phi_{i,b}$ as follows 
	\begin{eqnarray*}
		\tilde \phi_{i,b} (y) & = &  \frac{ b^{-\frac{\gamma}{2}  } \phi_{i,int} \left( \frac{y_0}{\sqrt{b}} \right)}{\phi_{i,ext}(y_0)}  \phi_{i,ext}(y) - \sum_{j=0}^i c_{i,j} \left(\sqrt{b}\right)^{2j-\gamma} T_j\left( \frac{y}{\sqrt{b}} \right) \\
		& = &\underbrace{ \phi_{i,ext} (y) - \sum_{j=0}^i c_{i,j} (\sqrt{b})^{2j-\gamma} T_{j} \left( \frac{y}{\sqrt{b}} \right)}_{=I} +\underbrace{ \left[ \frac{ b^{-\frac{\gamma}{2}  } \phi_{i,int} \left( \frac{y_0}{\sqrt{b}} \right)}{\phi_{i,ext}(y_0)} -1 \right] \phi_{i,ext}(y)}_{=II}.
	\end{eqnarray*}
	Let 
	\begin{equation}\label{defi-tilde-T-j}
		\tilde T_{j} (\xi) =T_j(\xi) - C_j \xi^{2j-\gamma}, 
	\end{equation}
	and we decompose  $\phi_{i,ext, \beta}$ as follows 
	\begin{eqnarray*}
		I &=&   \phi_{i,ext} (y) - \sum_{j=0}^i c_{i,j} (\sqrt{b})^{2j-\gamma} C_j \left( \frac{y}{\sqrt{b}} \right)^{2j-\gamma}
		-   \sum_{j=0}^i c_{i,j} (\sqrt{b})^{2j-\gamma} \tilde T_{j} \left( \frac{y}{\sqrt{b}} \right) \\
		& = & -  \sum_{j=0}^i c_{i,j} (\sqrt{b})^{2j-\gamma} \tilde T_{j} \left( \frac{y}{\sqrt{b}} \right) + \tilde \lambda ( \tilde \phi_i + \tilde R_{i,1}) + \tilde R_{i,2} 
	\end{eqnarray*}
	Lemma \ref{lemma-Generation-H} gives, for $y\geq y_0$
	\begin{eqnarray*}
		\left|   (\sqrt{b})^{2j-\gamma} \tilde T_j \left(\frac{y}{\sqrt{b}} \right)   \right| & \leq & C y^{-\gamma +2j -2 } |\ln y| b |\ln b| \\
		& \le & C y^{-\gamma +2j -2} |\ln y| b^{1 - \frac{\epsilon}{2}}.
	\end{eqnarray*} 
	From Proposition \ref{propo-outer-eigenfunctions}, we deduce that
	\begin{eqnarray*}
		\left|  \tilde \phi_{i}(y) \right| &\le & C y^{2i-\gamma  } |\ln y|,\\
		\left| \tilde R_{i,1}(y)  \right| &\le &C \left( y^{-\tilde \gamma} + y^{2i-\gamma +2}  \right) |\tilde \lambda|, \\
		\left|   \tilde R_{i,2} (y)       \right| & \leq &      C ( y^{-\tilde \gamma -2-\alpha } + y^{ 2i+2 -\gamma} )b^\alpha, 
	\end{eqnarray*}
	for all $ y \geq y_0$. Since $\alpha\geq 1-\frac{\epsilon}{2}$, we obtain for all  $y \ge y_0$
	\begin{equation}
		\label{part 1}
		\left| \phi_{i, out, \beta} (y) - \sum_{j=0}^i c_{i,j} (\sqrt{b})^{2j-\gamma} T_{j} \left( \frac{y}{\sqrt{b}} \right) \right| \le C  y^{2i +2 -\gamma }|\ln y| b^{1-\frac{\epsilon}{2}}. 
	\end{equation}
	For the term II, we use the estimate obtained for I at $y=y_0$ to get
	\begin{eqnarray*}
		|\text{II}|=\left| \frac{ b^{-\frac{\gamma}{2}  } \phi_{i,int} \left( \frac{y_0}{\sqrt{b}} \right)}{\phi_{i,ext}(y_0)} - 1  \right| \le C (y_0) b^{1-\frac{\epsilon}{2}}. 
	\end{eqnarray*}
	Putting together the estimates for I and II yields
	for all $ y \ge y_0$
	$$  \left|\tilde \phi_{i,b}(y) \right|\le C(y_0) y^{-\gamma+2i+2}  b^{1-\frac{\epsilon}{2}}. $$
	Similarly for $\partial_y \tilde \phi_{i,b}$, we establish
	$$  \left|\partial_y \tilde \phi_{i,b}(y) \right|\le C(y_0) y^{-\gamma+2i+1}  b^{1-\frac{\epsilon}{2}}.$$
	
\end{enumerate}
-  Now,  we have 
$$ \| \phi_{i,b  } - \phi_{i,\infty}\|_{H^1_\rho} \le \left \|\sum_{j=0}^i c_{i,j} (\sqrt{b} )^{2j-\gamma} T_j\left( \frac{y}{\sqrt{b}}\right) - \phi_{i,\infty} \right\|_{H^1_\rho}  +\| \tilde \phi_{i,b}\|_{H^1_\rho}.  $$
Taking into account,  \eqref{norm-H-1rho-tilde-phi}, it is sufficient to establish 
$$ \left \|\sum_{j=0}^i c_{i,j} (\sqrt{b} )^{2j-\gamma} T_j\left( \frac{y}{\sqrt{b}}\right) - \phi_{i,\infty} \right\|_{H^1_\rho} \le C b^{1- \frac{\epsilon}{2}}.$$
As above, we have
\begin{eqnarray*}
	\sum_{j=0}^i c_{i,j} (\sqrt{b} )^{2j-\gamma} T_j\left( \frac{y}{\sqrt{b}}\right) - \phi_{i,\infty}  (y)   &=& \sum_{j=0}^i c_{i,j} (\sqrt{b} )^{2j-\gamma} \tilde T_j\left( \frac{y}{\sqrt{b}}\right).
\end{eqnarray*}
which yields, after splitting the integral in two regions $\{y\leq \sqrt{b}\}$ and $\{y\geq \sqrt{b} \}$ then using Lemma \ref{lemma-Generation-H}, to
$$ \left\|  \sum_{j=0}^i c_{i,j} (\sqrt{b})^{2j -\gamma}  \tilde T_j \left( \frac{y}{\sqrt{b}} \right)  \right\|_{H^1_\rho} \le C b^{1-\frac{\epsilon}{2}}.$$

- Now, we move to the proof of item $(iii)$ in Proposition \ref{propo-mathscr-L-b}. We distinguish two regions:
\begin{itemize}
	\item \underline{$ y \in [0,y_0]$}: from definition \eqref{definition-phi-i-b}, we have
	\begin{eqnarray*}
		\phi_{i,b}(y) = b^{   - \frac{\gamma}{2} }  \phi_{i,int} \left( \frac{y}{\sqrt{b}} \right)  &=& \sum_{j=0}^i c_{i,j} (\sqrt{b})^{2j -\gamma} T_j \left(\frac{y}{\sqrt{b}} \right) \\
		&+& \tilde \lambda \sum_{j=0}^i  b^{j+1-\frac{\gamma}{2}}\left[ c_{i,j} T_{j+1} \left(\frac{y}{\sqrt{b}} \right) + S_j\left(\frac{y}{\sqrt{b}} \right) \right] + b^{1-\frac{\gamma}{2}} R_i\left(\frac{y}{\sqrt{b}} \right).   
	\end{eqnarray*}   
	Since, for all $\xi \in \R$
	$$ | T_j(\xi)| \le C \frac{\xi^{2j}}{1 + \xi^\gamma}, $$
	we have, by Lemma \ref{lemma-Generation-H}
	$$\left|\sum_{j=0}^i b^{j -\frac{\gamma}{2}} T_j\left(\frac{y}{\sqrt{b}} \right)  \right|  \le C(i+1) \frac{\langle y\rangle^{2i}}{(\sqrt{b}+y)^{\gamma}}.$$
	and
	$$\left|\sum_{j=0}^i b^{j+1 -\frac{\gamma}{2}} T_{j+1}\left(\frac{y}{\sqrt{b}} \right)  \right| \le C(i+1) \frac{\langle y\rangle^{2i+2}}{(\sqrt{b}+y)^{\gamma}}.$$  
	For $S_j$, we have
	\begin{eqnarray*}
		\left| \tilde \lambda \sum_{j=0}^i  b^{1+j-\frac{\gamma}{2}} S_j\left(\frac{y}{\sqrt{b}} \right)\right| \le C \frac{\langle y\rangle^{2i+2}}{(\sqrt{b } + y)^\gamma},
	\end{eqnarray*}
	while for $R_i$ 
	$$\left|b^{1-\frac{\gamma}{2}} R_i \left(  \frac{y}{\sqrt{b}} \right) \right| \leq C \frac{\langle y \rangle^{\epsilon}}{(\sqrt{b}   +y)^{\gamma}}.$$
	The above allows one to conclude that 
	$$ \left| \phi_{i,b}(y) \right| \le C\frac{\langle y \rangle^{2i+2}}{(\sqrt{b}   +y)^{\gamma}}, \forall y \in [0,y_0].$$
	\item \underline{$ y\in [y_0,+\infty)$}: from the definition of $\phi_b$ on this region and the fact that 
	$$ \left|b^{-\frac{\gamma
		}{2}} \frac{\phi_{i,int}(\frac{y_0}{\sqrt{b}})}{ \phi_{i,ext}(y_0)} \right| \le C (y_0),  $$
	it is sufficient to estimate $ \phi_{i,ext}$. Recall that
	\begin{eqnarray*}
		\phi_{i,ext} (y) = \phi_{i,\infty}(y) +\tilde \lambda ( \tilde \phi_{i}(y) + \tilde R_{i,1}) + \tilde R_{i,2}.
	\end{eqnarray*}
	The asymptotic behavior of $\tilde \phi_{i} $ yields, for all $ y \ge y_0$
	$$ \left| \tilde \phi_i (y) \right| \le C \frac{y^{2i} |\ln y|}{y^\gamma} \le C(y_0) \frac{\langle y \rangle^{2i+2}}{(\sqrt{b} +y)^\gamma}.$$
	Moreover,  we have the following facts: for all $y \ge y_0$
	\begin{eqnarray*}
		\left| \tilde R_{i,1}(y) \right|  & \le &  C(y_0)|\tilde \lambda | \frac{\langle y \rangle^{2i+2}}{(\sqrt{b} +y)^\gamma},\\
		\left| \tilde R_{i,2}(y) \right| & \le &  C(y_0)b^\alpha \frac{\langle y  \rangle^{2i+2}}{(\sqrt{b} +y)^\gamma}. 
	\end{eqnarray*}
	Putting together the above estimates, one gets
	$$ \left| \partial_y \phi_{i,b}(y) \right| \le C\frac{\langle y \rangle^{2i+2}}{(\sqrt{b}   +y)^{\gamma}}, \forall y \in [y_0,\infty).$$
	A similar reasoning allows us to obtain the rest of the estimates, we omit the details.
\end{itemize} 

\textit{-  Proof  of   \eqref{F-y-0-lambda-b} :}   

First, we decompose   $\phi_{i,int, \beta}$ and $\phi_{i, out, \beta}$ by 
\begin{eqnarray*}
	\phi_{i,int, \beta} \left( \frac{y_0}{\sqrt{b}}\right) & = & b^{ \frac{\gamma}{2} } \left\{ \phi_{i,\infty, \beta}(y_0)  + \tilde \lambda \sum_{j=0}^i c_{i,j} C_{j+1}(2\beta)^{j+1}  y_0^{-\gamma+2j +2} +A_{i,1}(\tilde \lambda, y_0,b, \beta) \right\},\\
	b^{-\frac{1}{2}} \partial_\xi \phi_{i,int, \beta} \left( \frac{y_0}{\sqrt{b}}\right) & = & b^{ \frac{\gamma}{2} } \left\{ \partial_y \phi_{i,\infty, \beta}(y_0)  + \tilde \lambda \sum_{j=0}^i c_{i,j} C_{j+1} (2\beta)^{j+1}(-\gamma+2j+2)  y_0^{-\gamma+2j +1}  \right.\\
	& + & \left. A_{i,2}(\tilde \lambda, y_0,b, \beta) \right\},
\end{eqnarray*}
and 
\begin{eqnarray*}
	\phi_{i, out, \beta} \left( y_0  \right) & = & \phi_{i,\infty, \beta} (y_0) +  \tilde \lambda  K_0 y_0^{ -\tilde \gamma  } +    B_{i,1} (\tilde \lambda, y_0, b, \beta),\\
	\partial_y \phi_{i, out, \beta} (y_0) & = & \partial_y\phi_{i,\infty, \beta} (y_0) - \tilde{\lambda}  K_0 \tilde \gamma y_0^{ - \tilde \gamma -1} + B_{i,2}(\tilde \lambda, y_0, b, \beta).  
\end{eqnarray*}
where 
\begin{eqnarray*}
	A_{i,1} & = & \sum_{j=0 }^i c_{i,j}(2\beta)^{j} b^{j-\frac{\gamma}{2}} \tilde T_j \left( \frac{y_0}{\sqrt{b}} \right) + \tilde \lambda \sum_{j=0}^i   c_{i,j}(2\beta)^{j+1} b^{j+1 -\frac{\gamma}{2}} \tilde T_{j+1} \left( \frac{y_0}{\sqrt{b}}\right) \\
	& +& \tilde \lambda \sum_{j=0}^i b^{j+1-\frac{\gamma}{2}} S_j \left( \frac{y_0}{\sqrt{b}} \right) + b^{1-\frac{\gamma}{2}} R_i \left( \frac{y_0}{\sqrt{b}}\right),\\
	A_{i,2} & = & \sum_{j=0}^i c_{i,j}(2\beta)^{j} b^{j-\frac{1}{2} -\frac{\gamma}{2}} \partial_\xi \tilde T_j \left( \frac{y_0}{\sqrt{b}}\right) + \tilde \lambda  \sum_{j=0}^i c_{i,j}(2\beta)^{j+1} b^{j+\frac{1}{2} - \frac{\gamma}{2}} \partial_\xi \tilde T_{j+1} \left( \frac{y_0}{\sqrt{b}} \right)\\
	&  +  &  \tilde \lambda \sum_{j=0}^i b^{j +\frac{1}{2}-\frac{\gamma}{2}} \partial_\xi S_j\left( \frac{y_0}{\sqrt{b}} \right)  + b^{\frac{1}{2} -\frac{\gamma}{2} } \partial_\xi  R_i \left( \frac{y_0}{\sqrt{b}}\right),
\end{eqnarray*} 
and 
\begin{eqnarray*}
	B_{i,1}   & = &  \tilde \lambda (\tilde \phi_{i, \beta} - K_0 y_0^{-\tilde \gamma}) + \tilde \lambda \tilde R_{i,1}  +  \tilde R_{i,2},\\
	B_{i,2} & = & \tilde \lambda \partial_y \left(\tilde \phi_{i, \beta} - K_0 y_0^{-\tilde \gamma}  \right) + \tilde \lambda \partial_y \tilde R_{i,1} + \partial_y \tilde R_{i,2}, 
\end{eqnarray*}
and $\tilde T_j $  and  $C_j$ defined  as in \eqref{defi-tilde-T-j} and  \eqref{definition-C-j-new}, respectively.

We aim to estimate $A_i$ and $B_i$ by using the results of Propositions  \ref{proposition-inner-eigen-functions} and \ref{propo-outer-eigenfunctions}.
\begin{itemize}
	\item \underline{estimate on  $A_{i,1}$}: From  Lemma  \ref{lemma-Generation-H}, we use $T_j$'s expansion at $\infty$, to obtain   the following
	$$| \tilde T_j (\xi) | \leq C \xi^{-\gamma +2j -2} \ln \xi, $$
	for all $\xi$ large enough, i.e $\xi \geq \xi_0 >1$.  Applying the above for $\xi_0 = \frac{y_0}{\sqrt{b}}$, we derive the following
	$$  \left|\tilde  T_{j} \left( \frac{y_0}{\sqrt{b}}  \right)  \right|  \leq C  \left(  \frac{y_0}{\sqrt{b}}\right)^{-\gamma+2j -2}(  |\ln y_0 | +  |\ln b|), \forall j \geq 1,   $$
	and  for $ j=0$, we have
	$$ \left| \tilde T_{0}\left(\frac{y_0}{\sqrt{b}} \right) \right|  \leq C \left(\frac{y_0}{\sqrt{b}} \right)^{-\gamma -2}. $$
	This yields 
	\begin{eqnarray*}
		\left|  \sum_{j=0}^i    c_{i,j} b^{-\frac{\gamma}{2} +j} \tilde T_{j} \left( \frac{y_0}{\sqrt{b}} \right)  \right| \le C y^{-\gamma-2}_0  b |\ln b|.
	\end{eqnarray*}
	For the second term of $A_{i,1}$, the same   process as above gives
	$$  \left|\sum_{j=0}^i c_{i,j} b^{j+1-\frac{\gamma}{2}} \tilde T_{j+1} \left( \frac{y_0}{\sqrt{b}} \right)\right|   \leq C y^{-\gamma -2}_0  b |\ln b|.  $$
	We now estimate to $S_j$.  Accordingly  to  Proposition \ref{proposition-inner-eigen-functions}, and the definition of  $ X_{\xi_0}^{2j+2-\gamma}$, we have
	$$ \left|S_j \left(  \frac{y_0}{\sqrt{b}}\right)\right| \leq C y_0^2 \left(  \frac{y_0}{\sqrt{b}}\right)^{2j+2-\gamma},$$
	so that
	$$ \left| \tilde \lambda \sum_{j=0}^i b^{j+1-\frac{\gamma}{2}} S_j\left( \frac{y_0}{\sqrt{b}}\right) \right| \leq C\tilde  \lambda y_0^{4-\gamma}.$$
	The last term in $A_1$is to be estimated again via Proposition \ref{proposition-inner-eigen-functions}, where we have
	$$ \left| R_i \left( \frac{y_0}{\sqrt{b}} \right) \right| \leq C \left( \frac{y_0}{\sqrt{b}} \right)^{-\gamma +\epsilon}, \text{ with } \epsilon \ll 1.    $$
	Hence
	$$ \left| b^{1-\frac{\gamma}{2}} R_i \left(\frac{y_0}{\sqrt{b}} \right) \right| \leq C y_0^{-\gamma +\epsilon} b^{1 -\frac{\epsilon}{2}}.  $$ 
	Finally,  we get
	$$ A_1  = O(\tilde \lambda  y_0^{4- \gamma})   + O \left( y_0^{-\gamma -2} b^{1 -\frac{\epsilon}{2}}  \right).$$
	\item \underline{For $A_{i,2}$}: First,  by   Lemma \ref{lemma-Generation-H} we get
	
	$$ \left|\partial_\xi \tilde T_j\left(\xi_0 \right) \right| \leq C \xi_0^{ -\gamma +2j -3} |\ln \xi_0|, \forall j \geq 1, $$
	and 
	$$ \left| \partial_\xi \tilde T_0(\xi_0)  \right| \leq C |\xi_0|^{-\gamma -g-1}. $$
	Then 
	\begin{eqnarray*}
		\left| \sum_{j=0}^{i} c_{i,j} b^{-\frac{\gamma}{2} - \frac{1}{2} +j} \partial_\xi \tilde T_j(\xi_0) \right| & \leq &  C  \sum_{j=1}^i b^{j-\frac{\gamma}{2} -\frac{1}{2}}|\xi_0 |^{-\gamma+2j -3 }|\ln \xi_0| +  Cb^{-\frac{\gamma}{2} -\frac{1}{2}} |\xi_0|^{-\gamma -g-1}\\
		& \leq & C  y_0^{-\gamma -1} |\ln y_0| b |\ln b| +  C y_0^{ -\gamma -g-1} b^{\frac{g}{2}}.
	\end{eqnarray*}
	Next, we estimate the second term in  $A_{i,2}$:
	$$ \left| \tilde \lambda  \sum_{j=0}^i c_{i,j} b^{j+\frac{1}{2} - \frac{\gamma}{2}} \partial_\xi \tilde T_{j+1} \left( \frac{y_0}{\sqrt{b}} \right)  \right| \leq C |\tilde \lambda | y_0^{ -\gamma -1} |\ln y_0| b^{\frac{3}{2}} |\ln b|.  $$
	Using Proposition \ref{proposition-inner-eigen-functions} for $S_j$ 
	$$ \left| \partial_\xi S_j (\xi_0) \right| \leq C |\xi_0|^{ a -1} \le C |\xi_0|^{ 2j +1-\gamma},  $$
	we obtain 
	$$ \left| \tilde \lambda \sum_{j=0}^i b^{j +\frac{1}{2}-\frac{\gamma}{2}} \partial_\xi S_j\left( \frac{y_0}{\sqrt{b}} \right)  \right| \leq C    |\tilde \lambda | y_0^{-\gamma+1}. $$
	The last term in $A_{i,2}$ is estimated similarly and we have
	$$   | b^{1-\frac{\gamma}{2}} \partial_\xi R_i(\xi_0)| \leq C y_0^{-\gamma +\epsilon -1} b^{ \frac{3}{2} -\frac{\epsilon}{2} }.  $$
	Hence, the expansion of $A_{i,2}$ is
	$$ A_{i,2} = O( |\tilde \lambda | y_0^{-\gamma+1} ) + O( y_0^{-\gamma -1}|\ln y_0| b |\ln b|) .$$
	\item \underline{For $B_{i,1}$}
	Using Lemma \ref{lemma-phi-1-psi-2-outer} we have 
	$$ \left|  \tilde \lambda ( \tilde \phi_{i}(y_0) - K_0 y_0^{-\tilde \gamma}  )\right| \leq C |\tilde \lambda | y_0^{-\tilde \gamma +2}.$$
	For the  second and third term, we use Proposition  \ref{propo-outer-eigenfunctions} 
	$$  | \tilde R_{i,1} (y_0) | \leq C |\tilde \lambda | y_0^{  -\tilde \gamma }   $$
	and
	$$ \left| \tilde R_{i,2} (y_0) \right| \le C y_0^{-\tilde \gamma -2 -\alpha} b^\alpha.  $$
	Then, $B_{i,1}$ reads as follows
	$$ B_{i,1}(y_0)  = O\left( | \tilde \lambda  | y_0^{-\tilde \gamma +2} \right) + O(|\tilde \lambda |^{2} y_0^{-\tilde \gamma}) +  O(y_0^{-\tilde \gamma -2-\alpha} b^\alpha).$$
	\item \underline{For $B_{i,2}$}: a similar reasoning gives
	$$ B_{i,2}(y_0)  = O\left( |\lambda \tilde | y_0^{-\tilde \gamma +1} \right) + O(|\tilde \lambda |^{2} y_0^{-\tilde \gamma-1}) +  O(y_0^{-\tilde \gamma -3-\alpha} b^\alpha).$$
	Putting the above expansions together,  we derive for $ \tilde F[y_0]( \tilde \lambda, b)$
	\begin{eqnarray*}
		\tilde F[y_0](\tilde \lambda, b, \beta)  &=& ( \partial_y\phi_{i,\infty, \beta}(y_0) +  \tilde \lambda \sum_{j=0}^i c_{i,j} (2\beta)^{j+1} C_{j+1}(-\gamma+2j+2)  y_0^{-\gamma+2j +1} +      A_2(\tilde \lambda, y_0, b, \beta) )\\
		& \times  & ( \phi_{i,\infty, \beta} (y_0) +  \tilde \lambda  K_0 y_0^{ -\tilde \gamma  } +    B_1 (\tilde \lambda, y_0, b, \beta)  )
		\\
		& - & ( \partial_y \phi_{i,\infty, \beta} (y_0)  - \tilde{\lambda}  K_0 \tilde \gamma y_0^{ - \tilde \gamma -1} + B_2(\tilde \lambda, y_0, b, \beta) )\\
		& \times & (\phi_{i,\infty, \beta} (y_0) + \tilde \lambda \sum_{j=0}^i c_{i,j} C_{j+1}(2\beta)^{j+1}  y_0^{-\gamma+2j +2}+ A_1(\tilde \lambda, y_0, b, \beta))\\
		& = & \tilde \lambda K_0 a_{i,0}(\tilde \gamma - \gamma)  y^{ -\gamma -\tilde \gamma -1}\left( 1 + O(y_0^2) +O(|\tilde \lambda|) \right) \\
		&+& O\left( y_0^{-2\gamma-2} b^{1 -\frac{\epsilon}{2}} \right) + O\left(y_0^{-\gamma-\tilde \gamma-3-\alpha}  b^{\alpha}\right).
	\end{eqnarray*}
\end{itemize} 
The proofs of  \eqref{partial-b-F-y-0-lambda-b} and \eqref{partial-lambda-F-y-0-lambda-b} follow the same outline.

\end{proof}

\section{ Maximum principal}

The main goal in this section is to use Maximum principal to construct    the sub solution and the  super solution  to  \eqref{equa-varepsilon-appen}   on   the interval $\left[0,b^\frac{\eta}{4}(\tau) \right]$ that leads to  suitable  estimates for $\varepsilon$.

\begin{proposition}[Sub and super solutions]\label{sub-super-solution}
Let us consider $\eta, \tilde \eta $ be  positive constants such that $1 \gg \eta \gg \tilde \eta$,   $A \ge 1$.  We assume furthermore that      $ \varepsilon $ is the solution to \eqref{equa-varepsilon-appen} on $[\tau_0, \tau_1]$     with initial data given in  \eqref{initial-data} and  the flow $(b,\beta)(\tau) \in (C^1(\tau_0,\tau_1])^2 $ satisfy   $(\varepsilon, b, \beta)(\tau) \in V[A,\eta, \tilde \eta](\tau)$ for all $\tau \in [\tau_0, \tau_1]$. Then, there exists  $H(\xi)$ satisfying 
$$ \left| H(\xi)  \right|  \le  C(\eta) \left[ b(\tau)\frac{\xi^2}{1+\xi^\gamma}  +  \frac{b^\frac{\eta}{4}(\tau) }{1 + \xi^\gamma}  \right] ,  \text{ for all } \xi \in \R_+,       $$
such that 
\begin{equation}\label{Maximun-Pro-e}
	\left| \varepsilon(y,\tau) \right| \le  b^{-1}(\tau) H\left(\frac{y}{\sqrt{b(\tau)}} \right), \forall y \in [0, b^{\frac{\eta}{4}} (\tau) ),
\end{equation}
where $Q_b$ defined as in \eqref{Q-b}.  In other words, \eqref{Maximun-Pro-w} remains true  with $\tau$ lager than $\tau_1$ as long as $w$ exists and $b$ satisfies the hypothesis of the Proposition. 
\end{proposition}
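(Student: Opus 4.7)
The plan is to prove the two-sided pointwise bound by a parabolic comparison argument in the inner blow-up scale $\xi = y/\sqrt{b(\tau)}$, using the form of the claimed barrier $H(\xi)$ as the template for explicit super- and sub-solutions.

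First I would switch to the inner variable $\xi$, writing $\varepsilon(y,\tau) = b^{-1}(\tau)\,V(\xi,\tau)$, and derive the equation for $V$. Using the identity $\mathscr{L}_b \varepsilon(y,\tau) = \frac{1}{b}(H_\xi - b\beta\Lambda_\xi)V(\xi,\tau)$ from the excerpt together with \eqref{equa-varepsilon-appen}, \eqref{defi-B-quadratic-appendix} and \eqref{Phi-simple}, one gets an equation of the schematic form
\begin{equation*}
\partial_\tau V = \tfrac{1}{b}\bigl(H_\xi V - b\beta\,\Lambda_\xi V\bigr) + \widetilde B(V) + \widetilde\Phi + \tfrac{1}{2}\tfrac{b_\tau}{b}\Lambda_\xi V - \tfrac{b_\tau}{b}V,
\end{equation*}
where the transport and $-\frac{b_\tau}{b}V$ terms come from differentiating $\varepsilon = b^{-1}V(y/\sqrt b,\tau)$ in $\tau$, and $\widetilde B$, $\widetilde\Phi$ are the rescaled versions of the quadratic/cubic nonlinearity and the modulation source. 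The key structural observation is that $\widetilde\Phi \sim (b_\tau/b - 2\beta)\,\Lambda_\xi Q(\xi)$, which is $O(b^{\alpha/2})$ in size by \eqref{ODE-b-tau-proposition}, and $\Lambda_\xi Q(\xi) \sim \xi^2/(1+\xi^\gamma)$ near $0$ by \eqref{asymptotic-Lamda-Q}. This explains the two blocks appearing in the ansatz for $H$.

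Next I would construct $H$ as
\begin{equation*}
H(\xi) = C(\eta)\Bigl[b(\tau)\tfrac{\xi^2}{1+\xi^\gamma} + b^{\eta/4}(\tau)\tfrac{1}{1+\xi^\gamma}\Bigr],
\end{equation*}
with $C(\eta)$ large, and verify that $\Sigma_\pm := \pm H$ are, respectively, a super- and a sub-solution of the $V$-equation on the parabolic domain $\{(\xi,\tau): 0\le \xi \le b^{\eta/4-1/2}(\tau),\ \tau_0\le\tau\le\tau_1\}$. The computation reduces to checking three inequalities: (i) the linear part, $\frac{1}{b}(H_\xi - b\beta\Lambda_\xi)H$ plus the $\partial_\tau$ contributions, is controlled; (ii) the source $\widetilde\Phi$ is absorbed by the first block using $\Lambda_\xi Q \lesssim \xi^2/(1+\xi^\gamma)$ near zero and the identity $H_\xi(\Lambda Q)=0$; (iii) the nonlinearity $\widetilde B(V)$, evaluated at $V$ of size $H$, is quadratically small, since in the shrinking set $V_1[A,\eta,\tilde\eta]$ the pointwise bound $|\varepsilon|\lesssim b^{\alpha/2}\langle y\rangle^4/y^\gamma$ holds (Lemma \ref{lemma-rough-estimate-bounds in shrinking-set}), so $B(\varepsilon)$ is of higher order in $b$. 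The weighted structure $\xi^2/(1+\xi^\gamma)$ was chosen precisely because it almost lies in the kernel of $H_\xi$ (it is the dominant profile of $\Lambda Q$), so it costs essentially nothing linearly.

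After that, I would verify the parabolic boundary conditions. Initially (at $\tau=\tau_0$), the explicit form \eqref{defi-initial-vaepsilon-l=1} gives $|\varepsilon(\cdot,\tau_0)|\lesssim b^{\alpha/2}(\tau_0)\langle y\rangle^4/y^\gamma$ on $[0,b^{\eta/4}(\tau_0)]$, and a direct comparison shows this is dominated by $b^{-1}(\tau_0)H(\xi)$ because the $b^{\eta/4}/(1+\xi^\gamma)$ block wins there. At the spatial boundary $\xi = b^{\eta/4-1/2}(\tau)$ (i.e., $y=b^{\eta/4}(\tau)$), the bounds on $\varepsilon_+$ (from the compatibility condition \eqref{the-compatibility-condition}) and on $\varepsilon_-$ (from the weighted $L^\infty$ bound in Definition \ref{shrinking-set}) give $|\varepsilon|\lesssim A^3 b^{\alpha/2+\tilde\eta - \gamma\eta/4}$, and one checks that the second block in $b^{-1}H$ evaluated at this boundary is of order $b^{\alpha/2+\eta(1-\gamma)/4}$, which for $\tilde\eta\ll\eta$ and $\gamma>1$ dominates the known bound. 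At $\xi=0$ the regularity of $\varepsilon$ and the positivity of $H(0)=C(\eta)b^{\eta/4}>0$ give the boundary inequality trivially. With both super- and sub-solution checks in place, the standard parabolic maximum principle (applied to $V\mp H$) yields \eqref{Maximun-Pro-e}.

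The main obstacle is step two: producing a single barrier $H$ that simultaneously beats the modulation source $\widetilde\Phi$ (which lives at height $b^{\alpha/2}\xi^{-\gamma+2}$ for large $\xi$) and the cubic term $\xi^2 V^3$ (which is borderline because $\gamma$ is close to $4$ in low dimensions), while also respecting the initial and boundary profiles. The delicate point is balancing the coefficients of the two blocks: the first block must carry the $\Lambda Q$ source and hence the modulation error, while the second must be large enough to cover the worst-case boundary trace coming from the shrinking-set bounds on $\varepsilon_-$. The choice of the matching exponent $\eta/4$ (rather than $\eta$ or $\tilde\eta$) is exactly what makes both estimates fit, and this will be the accounting step that requires the most care.
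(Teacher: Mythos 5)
Your overall architecture matches the paper's: pass to the inner variable $\xi=y/\sqrt{b}$, build a two-block barrier whose profiles are $\xi^2/(1+\xi^\gamma)$ (the shape of $H^{-1}(\Lambda_\xi Q)$) and $1/(1+\xi^\gamma)$ (the shape of $\Lambda_\xi Q$), and close with the comparison principle on $\{0\le\xi\le b^{\eta/4-1/2}\}$ after checking initial and lateral boundary data. However, there is a genuine gap in the step where you verify $\mathcal{P}(v^{\pm})\lessgtr 0$: with your symmetric ansatz $\Sigma_\pm=\pm H$ and a kernel block built from $\Lambda_\xi Q$ itself, every term left over after cancelling the source is a sign-indefinite error (the $\tau$-derivatives of the barrier, the transport term, the residual of the modulation equation \eqref{ODE-b-tau-proposition}, the quadratic and cubic remainders), and "higher order in $b$" is not enough for a maximum principle — you need a strictly signed leading term that dominates them. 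The paper manufactures this sign in two ways that your plan omits. First, the kernel block is $\Lambda_\xi Q_\sigma$ with a \emph{shifted} scaling $\sigma<1$ (not $\Lambda_\xi Q$): the potential mismatch $2Q+\xi^2Q^2-(2Q_\sigma+\xi^2Q_\sigma^2)$ is then strictly positive (by monotonicity in $\sigma$), and multiplied against the positive barrier it yields a negative term of size $b^{\eta/4}$ on compact sets of $\xi$, which is what absorbs all the $O(b)$ errors there. Second, the coefficient of the $H_1$-block is not a fixed multiple of $b$ but $\theta^{\pm}(\tau)=b[\beta(2\beta-1)-4\beta^2\ell/\alpha\mp b^{\eta/8}]$, i.e.\ it deliberately over/under-shoots the true modulation rate $\theta(\tau)=\beta(b'-b)$ by $b^{1+\eta/8}$; since $\theta$ is only known up to $O(Ab^{1+4\eta})$, the uncancelled source $[\theta-\theta^{\pm}]\Lambda_\xi Q$ then has a definite sign and size $b^{1+\eta/8}\gg b^{1+4\eta}$, which dominates at large $\xi$. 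Without these two devices the inequality cannot be established, and the symmetric choice $\pm H$ cannot match the $O(b)$ source for the super- and sub-solution simultaneously.

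A smaller but related error: you assert that $\xi^2/(1+\xi^\gamma)$ is "the dominant profile of $\Lambda Q$" and hence almost in the kernel. In fact $\Lambda_\xi Q\to -2$ as $\xi\to 0$ and $\sim a_0\xi^{-\gamma}$ at infinity, so the kernel element has the profile of your \emph{second} block; the first block is $H_1=H^{-1}(-\Lambda_\xi Q)$, which is not annihilated by $H_\xi$ but is mapped onto the source — that is precisely why it can cancel $\widetilde\Phi$. Keeping these two roles straight is essential for the sign bookkeeping described above.
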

\begin{proof} First,  we claim that the following 
\begin{equation}\label{Maximun-Pro-w}
	\left| w(y,\tau) - Q_{b(\tau)}(y) \right| \le  b^{-1}(\tau) H\left(\frac{y}{\sqrt{b(\tau)}} \right), \forall y \in [0, b^{\frac{\eta}{4}} (\tau) )
\end{equation}
implies  \eqref{Maximun-Pro-e}. We also mention that the  proof is similar to the one in \cite{Biernat_2020}  where the authors   constructed sub-solution and super-solution to equation \eqref{equa-w} on the small interval $[0,b^\frac{\eta}{4} (\tau)]$.    Let us   consider  the blowup  variable $(\xi,\tau)$ and  
$$ w(y,\tau)  = \frac{1}{b(\tau)}  \omega\left( \frac{y}{\sqrt{b(\tau)}}, \tau  \right) = \frac{1}{b(\tau)} \omega (\xi,\tau).  $$
Introducing $v = \omega - Q(\xi)$ and    $v $  reads
\begin{equation}\label{equa-v-xi-tau}
	b(\tau) \partial_\tau v   =  \partial_{\xi}^2 v + \frac{d+1}{\xi} \partial_\xi v  -3(d-2) (2Q +\xi^2Q^2)  v +    B(v) +  \theta(\tau) \Lambda_\xi Q +  \theta(\tau) \Lambda_\xi v,
\end{equation}  
where $ B$ defined as in \eqref{defi-B-quadratic-appendix} and $\theta(\tau) $ defined by 
\begin{equation}\label{defi-theta-tau-}
	\theta(\tau) = \beta(\tau)\left( b'(\tau) - b(\tau) \right).
\end{equation}
We also introduce the operator $\mathcal{P}$ as follows
\begin{equation}\label{defi-mathcal-P}
	\mathcal{P}(v) :=  \partial_\xi^2 v + \frac{d+1}{\xi} \partial_\xi v   -  3(d-2) (2Q +\xi^2Q^2) v +\bar B(v) + \theta(\tau) \Lambda_\xi Q  + \theta(\tau) \Lambda_\xi v   -  b(\tau) \partial_\tau v,
\end{equation}
In order to construct the sub-solution and the super-solution, we need to construct  two functions as follows: let  $Q$ be  the ground  state satisfying \eqref{equation-the ground-state},   $Q(0)=-1$,$Q'(0) =0$, and  we introduce
$$ Q_\sigma =   \frac{1}{\sigma} Q\left( \frac{\xi}{\sqrt{\sigma}} \right), \sigma >0,$$
exactly solves  \eqref{equation-the ground-state} thanks to the scaling \eqref{scaling-law}.  
Next, define 
\begin{equation}\label{defi-H-0}
	H_{0}(\xi)= \Lambda_\xi Q_\sigma(\xi),
\end{equation}
satisfying  
\begin{equation}\label{equa-H-0}
	H_0''  + \frac{d+1}{\xi} H_0' - 3(d-2) ( 2Q_\sigma + \xi^2 Q_\sigma^2) H_0=0,
\end{equation}
and  let $H_1(\xi) $ solve the following
\begin{equation}\label{equa-H-1}
	H_1''  + \frac{d+1}{\xi} H_1' - 3(d-2) ( 2Q_\sigma + \xi^2 Q_\sigma^2) H_1 = T(\xi),
\end{equation}
where $T(\xi) = -\Lambda_\xi Q$. In particular,  $H_1$ is explicitly  given by
\begin{equation}\label{formula-H-1}
	H_1(\xi) = H_0 \int_0^\xi \frac{\mathscr{L}(T)(\xi')}{H_0 (\xi')} d\xi',
\end{equation}
where $\mathcal{L}$ was defined in  \eqref{defi-math-H-inverse}
Using  \eqref{defi-H-0}  and  \eqref{formula-H-1},   $H_0$ and $H_1$ have the following asymptotics:
\begin{eqnarray}
	H_0 \left(  \xi \right)  = \left\{ \begin{array}{rcl}
		-\frac{2}{\sigma} &  \text{ as } & \xi \to 0,  \\
		a_0 \sigma^{\alpha} \xi^{-\gamma}   &  \text{ as } & \xi \to \infty,
	\end{array} \right. \label{comport-H-0}
\end{eqnarray}
where $a_0<0$ and $\alpha=2-\gamma$
and
\begin{eqnarray}
	H_1 \left(\xi   \right)  = \left\{ \begin{array}{rcl}
		\frac{\xi^2}{d+2} &  \text{ as } & \xi \to 0,  \\
		\frac{-a_0}{2(d+2-\gamma)} \xi^{2-\gamma}  & \text{ as } & \xi \to \infty.
	\end{array} \right.\label{comport-H-1}
\end{eqnarray}
Inspired by \cite{Biernat_2020},  we define 
\begin{equation}\label{defi-z-+--}
	v^+(\xi,\tau)  = \theta^+(\tau) H_1(\xi)  - M(\eta) b^\frac{\eta}{4}(\tau)  H_0 \left( \xi \right)      \text{ and } v^{-} = \theta^-(\tau) H_1(\xi)  +   M (\eta) b^{\frac{\eta}{4}}(\tau)  H_0(\xi), 
\end{equation}
where  
\begin{eqnarray}
	\theta^+(\tau) =  b(\tau) \left[ \beta (2\beta-1)  -4\beta^2 \frac{\ell}{\alpha} - b^{\frac{\eta}{8}}(\tau)\right] \text{ and }  \theta^-(\tau) = b(\tau) \left[  \beta (2\beta-1)  -4\beta^2 \frac{\ell}{\alpha} + b^{\frac{\eta}{8}}(\tau)\right].\label{defi-theta-+}
\end{eqnarray}
Note  that $ H_0(\xi) = \Lambda_\xi  Q_{\sigma} < 0$ see  more \eqref{asymptotic-Lamda-Q}.  In particular,   our aim is to prove 
\begin{eqnarray*}
	w^+(y,\tau)  =  Q_{b(\tau)}(y) + \frac{1}{b(\tau)}v^{+}\left( \frac{y}{\sqrt{b(\tau)}}\right) \text{ and } w^- (y,\tau) =  Q_{b(\tau)}(y)  + \frac{1}{b(\tau)}v^{-}\left( \frac{y}{\sqrt{b(\tau)}}\right)
\end{eqnarray*}
are respectively the super-solution and the sub-solution to \eqref{equa-w} which immediately implies \eqref{Maximun-Pro-w}. 
Following  \cite{Biernat_2020}, it is sufficient to check that
\begin{itemize}
	\item[$(i)$]  $ \mathcal{P}(v^+) < 0 \quad   (\mathcal{P}(v^-) >0),  \forall \tau \in [\tau_0, \tau_1]$ and $ \xi \le b^{\frac{\eta}{4} -\frac{1}{2}}(\tau)$. 
	\item[$(ii)$] Initial estimate: 
	$ \frac{1}{b(\tau_0)} v^- \left(\frac{y}{\sqrt{b(\tau_0)}}, \tau_0 \right) <   w(y,\tau_0)  -  Q_{b(\tau_0)}(\xi) < \frac{1}{b(\tau_0)} v^+\left(\frac{y}{\sqrt{b(\tau_0)}} ,\tau_0\right), \forall  y \le b^{\frac{\eta}{4} }(\tau_0) .$
\end{itemize}
We remark that  the proof  of the estimates on $v^-$ are quite the  same as for $v^+$. Thus, we only handle the latter. 

- Proof of $(i)$: plugging $v^+$ into \eqref{defi-mathcal-P},  we  get
\begin{eqnarray*}
	\mathcal{P}(v^+) &=&   \theta^+(\tau) \partial^2_\xi H_1 - M b^\frac{\eta}{4}(\tau) \partial^2_\xi H_0  + \theta^+(\tau) \left( \frac{d+1}{\xi} \partial_\xi H_1 \right)   - M \frac{\eta}{4}(\tau) \frac{d+1}{\xi} \partial_\xi H_0\\
	& - & 3(d-2) \left[  2Q +\xi^2 Q^2  \right] \left( \mu^+(\tau) H_1 - M b^\frac{\eta}{4}(\tau) H_0 (\xi)\right)  + \theta(\tau) \Lambda_\xi Q  + \bar B(v^+) \\
	& + & \theta(\tau) \left[ \theta^+ \Lambda_\xi H_1 - M b^\frac{\eta}{4} \Lambda_\xi H_0  \right]  - b(\tau) \left[ \partial_\tau \theta^+(\tau) H_1 - M \partial_\tau b^\frac{\eta}{4}(\tau) H_0\left( \xi \right)  \right] \\
	& = & - 3(d-2) \left[ 2Q + \xi^2 Q^2 - (2Q_\sigma + \xi^2 Q_\sigma^2)    \right] \left( \theta^+(\tau) H_1(\xi) - M b^\eta(\tau) H_0(\xi)  \right) \\
	&+&  \bar{B}(v^+)  +  \left[  \theta - \theta^+ \right] \Lambda_\xi Q   +\theta(\tau) \left[ \theta^+ \Lambda_\xi H_1 - M b^\frac{\eta}{4} \Lambda_\xi H_0  \right]   - b(\tau) \left[\partial_\tau \theta^+ H_1 - M \partial_\tau b^\frac{\eta}{4} H_0 \right],
\end{eqnarray*}
where  the simplification comes from the facts that  $H_0$  and $ H_1$ solve
\eqref{equa-H-0} and \eqref{equa-H-1}, respectively.  Since $\xi \le b^{\frac{\eta}{4} -\frac{1}{2}}(\tau)$ with $\eta \ll 1$ and $b(\tau) \to 0$, the range of $\xi$ will be large, and we should divide it into two cases $\xi = O(1)$ and $ \xi \gg 1$.   

+ For the  case  $\xi \gg 1 $,  we derive on the one hand,  from  the  definitions of $\theta $, $\theta^+ $ in \eqref{defi-theta-tau-} \eqref{defi-theta-+}, and \eqref{ODE-b-tau-proposition}, that    
\begin{eqnarray*}
	\theta(\tau) -  \theta^+(\tau) = b^{1+\frac{\eta}{8}} + O(b^{1+4\eta}).
\end{eqnarray*}
Thus we derive  from  \eqref{asymptotic-Lamda-Q} that
$$   \left[ \theta(\tau) - \theta^+ \right] \Lambda_\xi Q  =    a_0  b^{1+\frac{\theta}{8}}(\tau) \xi^{-\gamma} + O(\xi^{-\gamma} b^{1+4\eta}) + O(b^{1+\frac{\eta}{8}} \xi^{-\gamma-2}),
$$
where $a_0 < 0$. On the other hand, by a similar argument,  we have
\begin{eqnarray*}
	- b(\tau) \left[\partial_\tau \theta^+ H_1 - M \partial_\tau b^\frac{\eta}{4} H_0 \right] =O \left(  b^{1+\frac{\eta}{4}}\xi^{-\gamma}\right),\\
	\theta(\tau) \left[ \theta^+ H_1 - M b^\frac{\eta}{4} H_0 \right]  = 0\left(   b^{1 + \frac{\eta}{4}} \xi^{-\gamma} \right) \text{ as } \xi \to \infty.
\end{eqnarray*}
Thus, we conclude the following inequality
\begin{eqnarray*}
	\mathcal{P}(z^+) \le -  3(d-2) \left[ 2Q + \xi^2 Q^2 - (2Q_\sigma + \xi^2 Q_\sigma^2) \right] v^+ + \bar B(v^+), 
\end{eqnarray*}
provided that $b, \eta $  small  enough.  

Next, note that   $\xi \le b^{\frac{\eta}{4} -\frac{1}{2}}(\tau)$ implies  the following
$$ | b(\tau) \xi^2| \le b^{\frac{\eta}{2}}(\tau). $$
Hence,   by using $v^+$'s definition  in \eqref{defi-z-+--}, along with \eqref{comport-H-0} and \eqref{comport-H-1}, we obtain 

\begin{eqnarray*}
	v^+ (\xi, \tau) =  - M a_0 \sigma^{\alpha} b^{\frac{\eta}{4}}\xi^{-\gamma}   +  O(b^{\eta} \xi^{-\gamma-2}) + O(b^{2\beta}\xi^{-\gamma}), \text{ as } \xi \to +\infty.   
\end{eqnarray*}
In addition, recall that
\begin{eqnarray*}
	2Q(\xi) + \xi^2 Q^2(\xi) & \sim  & -\frac{1}{\xi^2} + q_0^2 \xi^{-2\gamma +2} + o(\xi^{-2\gamma +2}),\\
	2Q_\sigma(\xi) + \xi^2 Q_\sigma^2(\xi) & \sim & -\frac{1}{\xi^2} + q_0^2 \sigma^{\gamma-2}  \xi^{-2\gamma+2} + o(\xi^{-2\gamma +2}),
\end{eqnarray*}
as $\xi \to +\infty$. Then, fixing $\sigma$ less that 1, we derive 
\begin{eqnarray*}
	2Q(\xi) + \xi^2 Q^2(\xi) - \left[2Q_\beta(\xi) + \xi^2 Q_\beta^2(\xi) \right]  = q_0^2 \left(  1 -\sigma^{\gamma-2}\right) \xi^{-3\gamma +2} + o(\xi^{-2\gamma +2}). 
\end{eqnarray*}
Thus, 
\begin{eqnarray}
	-  3(d-2) \left[ 2 Q +  \xi^2 Q^2 - (2Q_\sigma + \xi^2 Q_\sigma^2)    \right] v^+ = m_0 b^\frac{\eta}{4}(\tau) \xi^{-3\gamma+2} + o(\xi^{-3\gamma+2}), \text{ as } \xi \to +\infty. \label{asymptoic-poten-z-+}
\end{eqnarray}
where $m_0 = 3(d-2)q_0^2(1 -\sigma^{\gamma-2})M a_0 \sigma^\gamma <0$. 
Next, we study  $\bar B(v^+)$ defined by
\begin{eqnarray*}
	\bar B(v^+) &=& -3(d-2) (1 + \xi^2 Q(\xi)) (v^+)^2 - (d-2) \xi^2 (v^+)^3\\
	& = & \left( m_1 b^{\frac{\eta}{2}} + m_2 b^{\frac{3\eta}{4}} \right) \xi^{-3\gamma+2} + o(\xi^{-3\gamma+2}), \text{  as  } \xi \to +\infty,
\end{eqnarray*}
where 
\begin{eqnarray*}
	m_1 = -3(d-2)q_0 M^2 a_0^2 \sigma^{2\alpha} \text{ and } m_1 = -(d-2) M^3 a_0^3 \sigma^{3\alpha}.
\end{eqnarray*}
Finally, we derive
$$ \mathcal{P}(v^+) < 0,$$
provided that $\xi \gg 1$, $b \le b_1 \ll 1 $ and $\eta \ll 1$. 

+ For $ \xi = O(1)$, i.e., $\xi \in [0,K]$ for some $K>0$ large enough: thanks to the  smallness of $b$,  
the dominating term in $v^+$ is  $-M b^\frac{\eta}{4} (\tau) H_0(\xi) > 0 $. Besides that, we have
\begin{eqnarray*}
	\partial_\sigma \left( 2Q_\sigma + \xi^2 Q_\sigma^2    \right)  =  -\frac{1}{\sigma} \Lambda_\xi Q_\sigma -  \xi^2 \frac{Q_\sigma}{\sigma} \Lambda_\xi Q_\sigma = -\frac{\Lambda_\xi Q_\sigma }{\sigma} \left( 1 + \xi^2 Q_\sigma (\xi) \right).
\end{eqnarray*}
Note that the construction of $Q$ (see more in \eqref{relation-Q-epsilon-x}) ensures  $ \xi^2Q_\sigma(\xi) >-1 $ and we derive
$$\partial_\sigma  \left( 2Q_\sigma + \xi^2 Q_\sigma^2    \right)  >0, $$
from which we infer the existence of $m_3(\sigma,K) >0$  such that 
$$ 2Q + \xi^2 Q^2 - (2Q_\alpha + \xi^2 Q_\alpha^2) \ge  m_3 \text{ with } \sigma <1.  $$
Since $[0,K]$ is compact, we get
\begin{eqnarray*}
	v^+ = \theta^+ H_1(\xi)  - M b^{\frac{\eta}{4} } H_0 \ge m_4(\sigma, K)b^\frac{\eta}{4} \text{ with } m_4 >0.
\end{eqnarray*}
Thus,  we get
\begin{eqnarray*}
	- 3(d-2) \left[ 2Q + \xi^2 Q^2 - (2Q_\alpha + \xi^2 Q_\alpha^2)   \right] v^+ \le  -3(d-2)m_3 m_4 b^\frac{\eta}{4}.  
\end{eqnarray*}
This concludes $\mathcal{P}(z^+) < 0$ for the case $\xi = O(1)$. 

- Proof of $(ii)$: Notice that for $M(\eta)$ large enough and $\sigma $ small,  we have
$$ \frac{v^+\left( \frac{y}{\sqrt{b(\tau_0)}}\right)}{b(\tau_0)} >0, $$
and from  $\varepsilon(\tau_0)$'s definition in \eqref{initial-data}, we see that it vanishes when $y \le b(\tau_0)$ and it is sufficient to check it for $y \in [b^\frac{\delta}{2}(\tau_0), b^{\frac{\eta}{4}}(\tau_0) ]$ giving
$$ \frac{y}{\sqrt{b(\tau_0)}} \to +\infty.$$
Thus, using  \eqref{comport-H-0} and \eqref{comport-H-1}, we can find a $c_0$ such that 
\begin{eqnarray*}
	\frac{v^+\left(\frac{y}{\sqrt{b(\tau_0)}}\right)}{b(\tau_0)} \ge c_0 b^{\frac{\alpha}{2} + 
		\frac{\eta}{4}}(\tau_0) y^{-\gamma}.
\end{eqnarray*}
On the other hand, $\varepsilon$'s formula and the fact that $ y \le b^\frac{\eta}{4}(\tau_0)$ imply
$$ \left| \frac{\phi_{\ell, b(\tau_0), \beta(\tau_0)}}{c_{\ell, 0}} - \phi_{0, b(\tau_0), \beta(\tau_0)} \right| \lesssim b^\frac{\eta}{4}(\tau_0). $$
This yields
\begin{eqnarray*}
	\left|\varepsilon(\tau_0) \right| \lesssim b^{\frac{\alpha}{2} +\frac{\eta}{2}} (\tau_0) y^{-\gamma}.
\end{eqnarray*}

\end{proof}

\appendix

\section{Details on pointwise estimates}

In the sequel, we give details to some pointwise estimates used in our paper.
\begin{lemma}\label{lemma-estimate-on-hat-B} Let us consider $\hat{B}$ defined as in  \eqref{defi-hat-B},  $(\varepsilon, b, \beta)(\tau) \in V[A,\eta, \tilde{\eta}] (\tau),$ for all $\tau\in [\tau_0, \tau^*]$ for some $\tau^* \ge \tau_0$, and $\delta  >  \eta > \tilde \eta$. Then, there exists $\tau_9(A, \delta, \eta, \tilde \eta) \ge 1$, such that for all $\tau_0 \ge \tau_9$,  the following  holds
\begin{eqnarray*}
	\left| \mathbbm{1}_{ y \in \left(0, b^{\delta}\right]} \hat B(y,\tau) \right|  
	& \le  &  \frac{Cb^{\frac{\alpha}{2}}}{y^{\gamma }} + \frac{CA^3b^{\frac{\alpha}{2} +\tilde\eta} (\tau )}{y^{\gamma+2}} + \frac{CA^6 b^{2\left(\frac{\alpha}{2} + \tilde \eta \right)}(\tau)}{y^{2\gamma}} +  \frac{CA^9 b^{3\left(\frac{\alpha}{2} + \tilde \eta \right)}(\tau)}{y^{3\gamma -2}} ,\\
	\left| \mathbbm{1}_{ y \ge b^{\delta}} \hat B(y,\tau) \right| & \le &  C\frac{ b^{\frac{\alpha}{2} + 4 \eta }\langle y \rangle^{2\ell+2}}{y^\gamma} +  \frac{Cb^{\alpha +\delta(1 -\gamma)  } \langle y \rangle^{4\ell+8} }{y^{ \gamma}} + \frac{Cb^{ \frac{3\alpha}{2} - 2\gamma \delta} \langle y \rangle^{6\ell +14}}{y^{\gamma}}, \forall \tau  \in [\tau_0, \tau^*].
\end{eqnarray*}
\end{lemma}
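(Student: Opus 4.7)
\medskip
\noindent
\textbf{Proof plan.} The strategy is to treat each of the five pieces of $\hat B$ in \eqref{defi-hat-B} separately, then read off the two claimed pointwise bounds by observing which piece dominates in the inner region $\{y\le b^\delta\}$ versus the outer region $\{y\ge b^\delta\}$. Throughout I will use that, in the shrinking set $V[A,\eta,\tilde\eta](\tau)$, Lemma~\ref{lemma-rough-estimate-bounds in shrinking-set} gives $|\varepsilon_+|\lesssim b^{\alpha/2}\langle y\rangle^4/y^\gamma$ (plus a lower-order $b^{10\eta}$ correction) and $|\varepsilon_-|\lesssim A^4 b^{\alpha/2+\tilde\eta}\langle y\rangle^4/y^\gamma$, while \eqref{esti-b-equivalent-I-1-tilde-eta} lets me freely exchange $b^\eta$ for $I^\eta$.

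First, for the potential correction $-3(d-2)[2Q_b+y^2Q_b^2+y^{-2}]\,\varepsilon$, the point is that the constant $y^{-2}$ was subtracted precisely so that the bracket vanishes as $b\to 0$ away from the origin. Using Lemma~\ref{lemma-ground-state}, the asymptotics $Q(\xi)=-1+a_1\xi^2+\dots$ at zero and $Q(\xi)=-\xi^{-2}+q_0\xi^{-\gamma}+O(\xi^{-3\gamma-4})$ at infinity give, after rescaling,
\[
\bigl|\,2Q_b+y^2Q_b^2+y^{-2}\,\bigr|\;\lesssim\;\begin{cases} y^{-2}, & y\le\sqrt{b},\\[2pt] b^{\alpha/2-1}y^{-\gamma}, & y\ge\sqrt{b}.\end{cases}
\]
Multiplying by $|\varepsilon|\lesssim b^{\alpha/2+\tilde\eta}\langle y\rangle^4/y^\gamma$ produces the $A^3 b^{\alpha/2+\tilde\eta}/y^{\gamma+2}$ term in the inner bound and, in the outer bound, a contribution of size $b^{\alpha+\tilde\eta}\langle y\rangle^{4}/y^{2\gamma-2}$ which, after factoring $y\ge b^\delta$ to move $y^{-(\gamma-2)}$ into $b^{-\delta(\gamma-2)}$, sits inside the first term on the right-hand side.

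Second, I handle $B(\varepsilon)=-3(d-2)(1+y^2Q_b)\varepsilon^2-(d-2)y^2\varepsilon^3$ by brute pointwise substitution. In the inner region $\langle y\rangle\simeq 1$, so $|\varepsilon|^2$ yields $A^6 b^{\alpha+2\tilde\eta}/y^{2\gamma}$ and $y^2|\varepsilon|^3$ yields $A^9 b^{3\alpha/2+3\tilde\eta}/y^{3\gamma-2}$, matching the last two displayed terms. In the outer region I use $|1+y^2Q_b|\lesssim 1$ (again from the asymptotics of $Q$) and keep the full $\langle y\rangle^{4}$, $\langle y\rangle^{8}$, $\langle y\rangle^{14}$ factors that come from squaring or cubing $\varepsilon$; absorbing the extra $1/y^{\gamma}$ and $1/y^{2\gamma-2}$ into $b^{-\delta(\gamma)}$ and $b^{-2\delta(\gamma-1)}$ exactly reproduces the second and third outer terms.

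Third, the source $\Phi$ is handled via the splitting \eqref{decompose-Phi}. The principal piece $(b'/b-2\beta)m_0 b^{\alpha/2}\phi_{0,b,\beta}$ has pointwise size $b^{\alpha/2}\langle y\rangle^{2}/(\sqrt b+y)^\gamma$ by \eqref{estimate-partial-y-k-phi-y-point-wise}, and since Lemma~\ref{lemma-ODE-finite-mode} controls $|b'/b-2\beta|$ by $CAb^{4\eta}$, this contributes the $Cb^{\alpha/2}/y^\gamma$ inner term and the $Cb^{\alpha/2+4\eta}\langle y\rangle^{2\ell+2}/y^\gamma$ outer term; the remainder $\tilde\Phi$ gains an extra factor $b^{1-\epsilon/2}$ and is negligible. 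Fourth and last, for $\mathscr{L}_\infty^\beta\hat\varepsilon_+-\partial_\tau\hat\varepsilon_+$ I compare with the analogous expression involving $\mathscr{L}_b$ and $\phi_{j,b,\beta}$: since $\phi_{j,b,\beta}$ are honest eigenfunctions of $\mathscr{L}_b$, the difference is measured by $\mathscr{L}_\infty^\beta-\mathscr{L}_b$ (controlled by the potential estimate above), by $\phi_{j,b,\beta}-\phi_{j,\infty,\beta}$ (controlled by \eqref{norm-phi-b-phi-nfty-i} and \eqref{estima-partial-y-k-partial-b-b-tilde-phi-b}), and by the modulation ODE \eqref{system-vare-0-1}--\eqref{ODE-b-tau-proposition}. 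All resulting contributions fit inside the stated bounds.

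\medskip
\noindent
\textbf{Main obstacle.} The delicate step is the first one: the apparent singular behaviour of $2Q_b+y^2Q_b^2$ individually is of order $1/b$, so the cancellation with $y^{-2}$ has to be carried out to two orders in the outer region to produce a gain of $b^{\alpha/2}$ rather than a loss. This is where the precise form of $Q$ at infinity in Lemma~\ref{lemma-ground-state}, including the coefficient $q_0$ of the $\xi^{-\gamma}$ term, is essential; bookkeeping across the transition scale $y\sim\sqrt b$ (which lies inside $(0,b^\delta]$ since $\delta<1/2$) requires splitting that interval further into $\{y\le\sqrt b\}$ and $\{\sqrt b\le y\le b^\delta\}$ and checking that both subregions are dominated by one of the four terms listed on the right-hand side.
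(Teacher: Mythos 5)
Your overall architecture (splitting $\hat B$ into its pieces, treating $y\le b^{\delta}$ and $y\ge b^{\delta}$ separately, and feeding in the shrinking-set pointwise bounds together with the asymptotics of $Q$) matches the paper's, and your treatment of the potential correction and of $B(\varepsilon)$ is essentially the argument given there. The gap is in how you handle $\Phi$ and the linear terms. You assert that Lemma~\ref{lemma-ODE-finite-mode} controls $|b'/b-2\beta|$ by $CAb^{4\eta}$; it does not. Estimate \eqref{ODE-b-tau-proposition} controls $|b'/b-2\beta(1-\tfrac{2}{\alpha})|$, so $b'/b-2\beta=-\tfrac{4\beta}{\alpha}+O(b^{4\eta})$ is of order one --- this is precisely the non-self-similar drift that drives the whole construction. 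Hence $\Phi$ alone has size $b^{\alpha/2}|\phi_{0,\infty,\beta}|\sim b^{\alpha/2}y^{-\gamma}$ in the outer region, which is \emph{not} bounded by the first outer term $Cb^{\alpha/2+4\eta}\langle y\rangle^{2\ell+2}y^{-\gamma}$ (take $y\sim 1$). The gain of $b^{4\eta}$ is only available for the combination $\Phi+\mathscr{L}_\infty^\beta\hat\varepsilon_{\beta,+}-\partial_\tau\hat\varepsilon_{\beta,+}$: the leading part of $\Phi$, namely $[b'/b-2\beta]\,m_0b^{\alpha/2}\phi_{0,\infty,\beta}$, is cancelled by the identical term appearing in the equation for $\partial_\tau\hat\varepsilon_{\beta,0}$ (the system \eqref{partial-hat-varep-beta-ell} in the paper's proof, the analogue of \eqref{system-vare-0-1} for the projections onto the limit eigenfunctions $\phi_{j,\infty,\beta}$). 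Your fourth step, which estimates $\mathscr{L}_\infty^\beta\hat\varepsilon_+-\partial_\tau\hat\varepsilon_+$ separately by comparison with the $\phi_{j,b,\beta}$-decomposition, therefore cannot close: that difference itself contains the $O(b^{\alpha/2})$ term $-m_0[b'/b-2\beta]b^{\alpha/2}\phi_{0,\infty,\beta}$, and only the sum of your steps three and four has the required size. One must derive the ODE system for the $\hat\varepsilon_{\beta,j}$ and estimate the three terms together, as the paper does.

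A secondary point: in the outer region you use $|1+y^2Q_b|\lesssim 1$, whereas the expansion of $Q$ at infinity gives the stronger $|1+y^2Q_b|\lesssim b^{\alpha/2}y^{2-\gamma}\lesssim b^{\delta}$ for $y\ge b^{\delta}$. Without this extra factor $b^{\delta}$ the quadratic contribution comes out as $b^{\alpha-\gamma\delta}$ (up to the $\tilde\eta$ corrections) rather than the stated $b^{\alpha+\delta(1-\gamma)}$, so you would not reproduce the second outer term as written.
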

\begin{proof}
Let us consider $\delta \gg  \eta \gg \tilde \eta$. First,  since $(\varepsilon, b, \beta)(\tau) \in V_\ell[A,\eta, \tilde \eta](\tau)$ and by applying   Lemma  \ref{lemma-ODE-finite-mode} we have  
\begin{eqnarray*}
	\left|  \beta'(\tau)   \right|   \lesssim  A b^{4\eta}(\tau)  \text{ and }
	\left| \frac{b'}{b} - 2\beta\left( 1- \frac{2\ell}{\alpha}  \right)   \right|  \lesssim   Ab^{4\eta} (\tau).  
\end{eqnarray*}
Write
\begin{eqnarray}
	\hat \varepsilon_{\beta, j}   = \| \phi_{j, \infty, \beta}\|_{L^2_{\rho_\beta}}^{-2} \langle  \varepsilon, \phi_{j,\infty, \beta}         \rangle_{L^2_{\rho_\beta}}. 
\end{eqnarray}
We observe that even though $\phi_{j, b, \beta}$ is not orthogonal to $\phi_{k, \infty, \beta}, k \ne j$, we have
$$\left| \langle \phi_{j, b, \beta} ,\phi_{k, \infty, \beta} \rangle_{L^2_{\rho_\beta}} \right|  \le \int_{y \le b^{\delta}} \left|\phi_{j, b, \beta} \phi_{k, \infty, \beta} \right| \rho_\beta dy +\int_{y \ge  b^{\delta}} \left|\phi_{j, b, \beta} \phi_{k, \infty, \beta} \right| \rho_\beta dy \lesssim b^\delta.$$
Thus, we use   pointwise estimates in Lemma \ref{lemma-rough-estimate-bounds in shrinking-set} to obtain 
\begin{eqnarray}
	\left| \hat{\varepsilon}_{\beta,j} (\tau)\right|  \le CA b^{\frac{\alpha}{2} + \tilde \eta},  \forall j < \ell, \quad 
	\hat{\varepsilon}_{\beta,\ell}  =  \frac{\varepsilon_\ell}{c_{\ell,0}} + O(b^{\frac{\alpha}{2} + 4 \eta}), \text{ and }  
	\hat{\varepsilon}_{\beta, 0} =   - \varepsilon_\ell  + O(b^{\frac{\alpha}{2} + 4 \eta}).\label{estimate-hat-varep-beta-j}
\end{eqnarray}
In particular, repeating the technique in  Lemma \ref{lemma-ODE-finite-mode} we get 
\begin{eqnarray}
	\left\{ \begin{array}{rcl}
		\partial_\tau \hat{\varepsilon}_{\beta,\ell}  & = & 2 \beta \left( \frac{\alpha}{2}  -  \ell \right) \hat{\varepsilon}_{\beta,\ell} + O(b^{\frac{\alpha}{2} + 4\eta}),   \\
		\partial_\tau \hat{\varepsilon}_{\beta, 0} &  =  &  2\beta \frac{\alpha}{2}\hat{\varepsilon}_{\beta, 0}  + \left[\frac{b'}{b} -2\beta \right] m_0 b^\frac{\alpha}{2} + O(b^{\frac{\alpha}{2} +4\eta}).
	\end{array}
	\right. \label{partial-hat-varep-beta-ell}
\end{eqnarray}
\medskip
- The first case: $ y \in \left( 0, b^{\delta}(\tau)\right] $.  From \eqref{defi-hat-B}, we   have 
\begin{eqnarray*}
	& & \left| \hat{B}(\hat{\varepsilon}_{\beta, +} + \hat{\varepsilon}_{\beta, -}) \right|\\
	& \le &  \left|3(d-2)\left( 2Q_b +y^2Q_b^2 + \frac{1}{y^2} \right) (\hat{\varepsilon}_{\beta, +} + \hat{\varepsilon}_{\beta, -}) \right| + \left| B(\hat{\varepsilon}_{\beta, +} + \hat{\varepsilon}_{\beta, -})\right| +   \left|  \Phi + \mathscr{L}_\infty^\beta \hat{\varepsilon}_{\beta, +}  -  \partial_\tau \hat{\varepsilon}_{\beta, +}    \right| .
\end{eqnarray*}
From $Q$'s asymptotic given in Lemma  \ref{lemma-ground-state} and \eqref{Q-b}, we  get 
$$  \left| 2Q_b + y^2 Q_b + \frac{1}{y^2}  \right| \lesssim y^{-2}.$$
Besides that, since $\hat{\varepsilon}_{\beta,+}  + \hat{\varepsilon}_{\beta, -}  =  \varepsilon =  \varepsilon_+  + \varepsilon_- $, and   the  pointwise estimates in Lemma \ref{lemma-rough-estimate-bounds in shrinking-set}, we obtain   
\begin{eqnarray*}
	\left| \hat{\varepsilon}_{\beta,+}  + \hat{\varepsilon}_{\beta, -} \right| \le C \left( A^4 b^{\frac{\alpha}{2} +\tilde \eta}(\tau) y^{-\gamma} + b^{\frac{\alpha}{2} } y^{2-\gamma} \right),
\end{eqnarray*}
Thus,  the following is valid
\begin{eqnarray*}
	\left|3(d-2)\left(2Q_b +y^2Q_b^2 + \frac{1}{y^2} \right) (\hat{\varepsilon}_{\beta,+}  + \hat{\varepsilon}_{\beta, -}) \right|  \le   C \left( A^4 b^{\frac{\alpha}{2} + \tilde \eta} y^{-2-\gamma} + b^{\frac{\alpha}{2}}(\tau) y^{-\gamma}   \right) .
\end{eqnarray*}
Similarly, we have 
\begin{eqnarray*}
	\left|B(\hat{\varepsilon}_{\beta,+}  + \hat{\varepsilon}_{\beta, -})   \right| \le C \left( |\varepsilon|^2 + y^2 |\varepsilon|^3  \right) \le C \left(A^8 b^{\alpha +2\tilde \eta} y^{-2\gamma}  + A^{12} b^{\frac{3}{2}\alpha + 3\tilde \eta}  y^{2-3\gamma}\right).
\end{eqnarray*}
For the last term, we immediately deduce from \eqref{estimate-hat-varep-beta-j} and \eqref{partial-hat-varep-beta-ell} that
\begin{eqnarray*}
	\left| \Phi + \mathscr{L}_\infty^\beta \hat{\varepsilon}_{\beta, +}  -  \partial_\tau \hat{\varepsilon}_{\beta, +}  \right| \le  C b^{\frac{\alpha}{2}} y^{-\gamma}.
\end{eqnarray*}
By adding all  related terms, we conclude   the estimate on $ \mathbbm{1}_{y \in (0,b^{\delta}]} \hat B$.

\medskip
- The second case:  $y \in \left[ b^\delta(\tau) +\infty \right)$.  Regarding  \eqref{decompose-Phi}, we can improve it  as follows
$$  \Phi  =  \left[\frac{b'}{b}  -2\beta \right] m_0 b^\frac{\alpha}{2} \phi_{0, \infty, \beta} + \tilde{ \Phi}(y,\tau),  $$
where 
\begin{eqnarray*}
	\left| \tilde{ \Phi}(y,\tau) \right| \le C  b^{\frac{\alpha}{2} +  \delta  } y^{-\gamma} \langle y \rangle^{2\ell +2} , \forall y \ge b^\delta(\tau).
\end{eqnarray*}
In addition to that, we have
\begin{eqnarray*}
	\left| \mathscr{L}_{\infty}^{\beta} \hat{\varepsilon}_{\beta, +} - \sum_{j=0}^\ell \left( \frac{\alpha}{2} -j \right) \hat{\varepsilon}_{\beta, j} \phi_{j,\infty, \beta} \right| \lesssim  b^{ \frac{\alpha}{2} +4\eta }\langle y \rangle^{2\ell +2}y^{-\gamma},\\
	\left| \partial_\tau  \hat{\varepsilon}_{\beta, +} -  \sum_{j=0}^\ell \left( \frac{\alpha}{2} -j \right) \hat{\varepsilon}_{\beta, j} \phi_{j,\infty, \beta}   \right| \lesssim b^{\frac{\alpha}{2} + 4\eta}\langle y \rangle^{2\ell +2}y^{-\gamma}.
\end{eqnarray*}
Finally, we conclude 
\begin{eqnarray*}
	\left| \Phi +  \mathscr{L}_\infty^{\beta} \hat{\varepsilon}_{\beta, +} - \partial_\tau \hat{\varepsilon}_{\beta, +} \right| \le  \frac{CA b^{\frac{\alpha}{2} +4\eta} (\tau) \langle y \rangle^{2\ell+2}}{y^\gamma}.
\end{eqnarray*}
Next, we study the estimate involving $Q_b$. Using \eqref{asym-Q-infinity} and the facts that  $y \ge b^{\delta}, \delta \ll 1$, we get
$$ \xi = \frac{y}{\sqrt{b}} \gg 1,$$
then, it follows
\begin{eqnarray*}
	Q_b (y) = -\frac{1}{y^2} \left( 1 + \tilde Q \right),
\end{eqnarray*}
where  
\begin{eqnarray*}
	\left|   \tilde Q(y)  \right| \le C(\delta) b^\frac{\alpha}{2} y^{2 -\gamma}  \le b^{\frac{\alpha}{2} -\gamma \delta} \le C b^{\delta}(\tau), y \ge b^{\delta}.
\end{eqnarray*}
Hence,  we have 
\begin{eqnarray*}
	\left| 2Q_b(y) + y^2Q_b^2(y) + \frac{1}{y^2}  \right|  \le Cb^{\delta},
\end{eqnarray*}
which implies 
\begin{eqnarray*}
	\left| \left( 2Q_b(y) + y^2Q_b^2(y) + \frac{1}{y^2} \right)(\hat{\varepsilon}_{\beta, +} + \hat{\varepsilon}_{\beta, -}) \right|  \le C b^{\frac{\alpha}{2} + \delta}    \langle y \rangle^{2\ell+2}y^{-\gamma}.
\end{eqnarray*}
Similarly, since  
\begin{eqnarray*}
	\left| 1  + y^2 Q_b(y)   \right| \le C b^\frac{\alpha}{2} (\tau) y^{2-\gamma} \le C b^{\delta}, \forall y \ge b^\delta(\tau),
\end{eqnarray*}
we deduce
\begin{eqnarray*}
	\left|  B(\hat{\varepsilon}_{\beta, +}+\hat{\varepsilon}_{\beta, -})\right|  & \lesssim &  b^\delta \left( \frac{b^{\alpha} y^4 \langle y \rangle^{4\ell+4}}{y^{2\gamma}} + \frac{A^8 b^{\alpha +2\tilde \eta} \langle y\rangle^{4\ell+4}}{y^{2\gamma}} \right) + y^2 \left( \frac{b^{\frac{3\alpha}{2}} y^6 \langle y \rangle^{6\ell+6}}{y^{3\gamma}} + \frac{A^{12} b^{\frac{3\alpha}{2} +3\tilde \eta} \langle y\rangle^{6\ell+6}}{y^{3\gamma}} \right) \\
	& \lesssim & b^{\alpha +\delta} \langle y \rangle^{2\ell+8}y^{-2\gamma} +  b^{\frac{3\alpha}{2}} \langle y \rangle^{6\ell+14}{y^{-3\gamma}}.
\end{eqnarray*}
In particular, once $ y \ge b^\delta(\tau')$, it follows
$  \frac{1}{y^\gamma} \lesssim  b^{-\gamma \delta}.   $
By adding the related  bounds, we conclude the estimate  on $\mathbbm{1}_{y \ge b^\delta} \hat{B}$. This achieves the proof of the Lemma.
\end{proof}
\section{Detail on spectral analysis computation of $\mathscr{L}_\infty$}\label{Appendix-proof-spectrum-L-infty}
In this part, we aim to give a complete computation to formulate constant in Proposition \ref{proposition-spectral-L-infty}.  Let us consider the following quadratic equation
\begin{equation}\label{equa-gamma}
\gamma^2 - d  \gamma +3(d-2) =0.
\end{equation}
The equation has two distinct solutions 
$$ \left\{ \begin{array}{rcl}
\gamma_1   & = &  \frac{1}{2} ( d - \sqrt{d^2 -12 d+ 24} ),    \\
\gamma_2  & =& \frac{1}{2}  ( d +\sqrt{d^2 -12 d +24}).
\end{array}        \right. $$
We remark that  $\frac{1}{r^{\gamma_2}} $ does not belong to $ H^1_\rho $, but $ \frac{1}{r^{\gamma_1}}$ does.  In addition, we also define 
\begin{equation*}\label{defi-gamma}
\gamma = \gamma_1 =  \frac{1}{2} ( d - \sqrt{d^2 -12 d+ 24} ), \text{ and } \tilde  \gamma = \gamma_2 =  \frac{1}{2} ( d + \sqrt{d^2 -12 d+ 24} ).
\end{equation*}
From  $\gamma$'s formula above, we can get the first eigenfunction and eigenvalue as follows
$$ \phi_{0,\beta,\infty}(r) = \frac{1}{r^\gamma} \text{ and } \lambda_{0,\beta,\infty} = 2 \beta \left(  \frac{1}{2} ( \gamma  -2 ) \right):= 2\beta \left(\frac{ \alpha}{2}  \right).$$
Following \cite{CMRJAMS20} (also \cite{CGMNARXIV20-a}, and \cite{CGMNARXIV20-b}), we search the eigenfunctions and eigenvalues in the following forms
\begin{eqnarray}\label{form-phi-beta-infty}
\phi_{i,\beta,\infty}(r)  = \sum_{j=0}^i a_{i,j} (2\beta)^j r^{2j-\gamma}, \text{ and } a_{i,i} =1,  \text{ and } \lambda_{i,\beta, \infty} = 2\beta \left(\frac{\alpha}{2} - i \right), \forall i \geq 0.
\end{eqnarray}
Plugging the  form  \eqref{form-phi-beta-infty} into the following relation
\begin{equation}\label{relation-L-infty-alpha-i}
\mathscr{L}_\infty^\beta \phi_{i,\beta,\infty} = 2\beta \left(\frac{\alpha}{2} - i  \right) \phi_{i,\beta,\infty}, 
\end{equation}
we get  

\begin{eqnarray}
\mathscr{L}_\infty^\beta \phi_{i,\beta,\infty}&=&\sum_{j=0}^i a_{i,j}  (2\beta)^j  A_j r^{2(j-1)-\gamma} +\sum_{j=0}^i a_{i,j} (2\beta)^j (-2\beta-\beta(2j-\gamma))r^{2j-\gamma} \nonumber\\
&=& 2\beta \left(\frac{\alpha}{2} - i  \right)\sum_{j=0}^i a_{i,j}  (2\beta)^j r^{2j-\gamma} , \label{equality-L-beta-infty-lamfa-beta-infinity}
\end{eqnarray}
where $A_j=(2j-\gamma)(2j-\gamma-1)+(d+1)(2j-\gamma)+3(d-2)$. Fix $0\leq j \leq i-1$.

\medskip
+ For $j =i$: We choose $ a_{i,i}=1$, then,   we get
$$ (-2\beta -\beta (2i -\gamma )) = 2\beta \left( \frac{\alpha}{2} -i \right), $$
then, \eqref{equality-L-beta-infty-lamfa-beta-infinity}  is satisfied.

\medskip
+ For all $j \leq i-1$:  \eqref{equality-L-beta-infty-lamfa-beta-infinity} yields
\begin{eqnarray*}
a_{i,j+1} (2\beta)^{j+1} A_{j+1} + a_{i,j}(2\beta)^j  ( -2\beta -\beta(2j-\gamma)) = 2\beta \left( \frac{\alpha}{2} - i \right) a_{i,j} (2\beta)^j,
\end{eqnarray*}
which yields 
\begin{eqnarray}
a_{i,j+1} A_{j+1} = (  j - i )a_{i,j}.
\end{eqnarray}
By a simple recurrence, we obtain 
\[a_{i,j}=(-1)^{i-j}\Pi_{k=j}^{i-1}\frac{A_{k+1}}{i-k}=\frac{(-1)^{i-j}}{(i-j)!}\Pi_{k=j+1}^{i}A_{k}.\]
Thus
\begin{eqnarray}
a_{i,j} = \frac{(-1)^j}{(i-j)!} \nonumber  \times \Pi_{k=j+1}^{i-1} \left( (2k -\gamma) (2k -\gamma -1) + (d+1) ( 2k-\gamma) +3(d-2) \right).\label{defi-a-i-j}
\end{eqnarray}
In particular, in  \eqref{equality-L-beta-infty-lamfa-beta-infinity},  the  order $r^{-\gamma-2}$ remains. However, its coefficient is equal to $0$, since  $\gamma$ solves \eqref{equa-gamma}. Finally, \eqref{relation-L-infty-alpha-i} is  completely satisfied by the choice  of $a_{i,j}$ above.
Next, we aim to decompose $a_{i,j}$ as follows:  First, we deduce from   \eqref{equa-gamma} that
\begin{eqnarray*}
& & (2k -\gamma) (2k -\gamma -1) + (d+1) ( 2k-\gamma) +3(d-2) \\
&=& 2k.  2k +  2k d  - 4k \gamma  =  2k ( 2k + d - 2 \gamma   )  =  4 k \left( \frac{d}{2}  -  \gamma  +k \right).
\end{eqnarray*}
Then,  $a_{i,j}$ is decomposed as
\begin{eqnarray*}
a_{i,j} = \frac{(-1)^{i-j}}{(i-j)!} \Pi_{k=j+1}^i 4 k  \left( \frac{d}{2}  -  \gamma  +k \right) = \frac{(-1)^{i-j}}{(i-j)!} 4^{i-j} \frac{i!}{j!} \frac{(\frac{d}{2} - \gamma)_i}{(\frac{d}{2} - \gamma)_j}=c_{i,j} C_j,
\end{eqnarray*}
where   $ c_{i,j}  = \frac{ (-1)^{i - j} 4^{i} i! \left(\frac{d}{2} -\gamma \right)_i! }{(i-j)!}$, $C_j=\frac{ 1 }{ 4^{j} j! \left(\frac{d}{2} -\gamma \right)_j! } $, and 
$$\left( \frac{d}{2} -\gamma\right)_i! =\left( \frac{d}{2} -\gamma +1 \right)\left( \frac{d}{2} - \gamma +2 \right)...\left( \frac{d}{2} -\gamma +i\right) \text{ and }  \left( \frac{d}{2} -\gamma\right)_0!  =1. $$

\section{Poisson kernel for Laguerre  expansions}\label{secion-poisson-kernel}
In this part, we aim to provide some pointwise estimates involving    semi-group $e^{\tau \mathscr{L}_\infty}$ with $\mathscr{L}_\infty $ defined as in  \eqref{defi-operator-L-infty}. Recall that for $f \in L^1(\R_+,x^\omega e^{-x} dx)$, we have the following presentation
\begin{eqnarray}
\left[e^{(\tau-\tau_0)\mathcal{L}_\infty} \right]f (y,\tau) =  2^{\omega +1} y^{-\gamma} e^{\frac{\alpha}{2} (\tau-\tau_0)}\int_0^\infty P_{\frac{\omega}{2}} \left( \frac{y^2}{4}, \frac{x^2}{4}, e^{-(\tau-\tau_0)}  \right) [f(x) x^{\gamma}  ] x^{\omega+1} e^{-\frac{x^2}{4}} dx .\label{semi-f}
\end{eqnarray}
where $P_\zeta  $ is defined by 
\begin{eqnarray*}
P_{\zeta} \left( \frac{y^2}{4}, \frac{x^2}{4}, r \right) & = & e^{-\frac{r}{1-r}\left(\frac{y^2}{4}+\frac{x^2}{4}\right)}\frac{(-r\frac{x^2\cdot y^2}{16})^{-\frac{\zeta}{2}}}{1-r} J_{\zeta}\left(\frac{2(-r\frac{x^2\cdot y^2}{16})^{\frac{1}{2}}}{1-r}\right),\\
&=& \frac{(\sqrt{r\frac{y^2}{4} \frac{x^2}{4}})^{-\zeta}}{1-r} e^{-\frac{r}{1-r}\left(\frac{y^2}{4}+\frac{x^2}{4}\right)} i^{\zeta} J_\zeta \left( \frac{2r^\frac{1}{2} \frac{y}{2} \frac{x}{2}}{1-r} i \right) \\
&  =  &  \frac{4^\zeta}{\sqrt{r}^\zeta (1-r) (yx)^\zeta}   e^{-\frac{r}{1-r}\left(\frac{y^2}{4}+\frac{x^2}{4}\right)} \mathbf{I}_\zeta\left(\frac{r^\frac{1}{2}y x}{2(1-r)} \right),
\end{eqnarray*}
and $\mathbf{I}_\zeta$ is the function of imaginary argument corresponding to 
\begin{equation}
\mathbf{I}_\zeta(z)=   \frac{\left(\frac{1}{2}z\right)^\zeta}{\Gamma(\zeta +\frac{1}{2}) \Gamma(\frac{1}{2}) }   \int_0^\pi \cosh\left( z \cos \theta\right)   \sin^{2\zeta} (\theta) d\theta,    
\end{equation}
provided that $\text{Re}(\zeta +\frac{1}{2}) >0$, the reader can check the formula at  page 79,  formula $(2)$ in \cite{WbookCUP1922}.  We have the following result
\begin{lemma}[Maximal estimate, \cite{BSIMRN19}]\label{maximal-lemma}
Let us consider $ f \in L^2_{\rho}$ with $\rho = \rho_\frac{1}{2}$ defined in  \eqref{defi-rho-y}, then, 
\begin{equation}\label{maxinum-M-f-y}
	\left|  e^{(\tau -\tau_0)\mathscr{L}_\infty} f(y)  \right|    \le C y^{-\gamma} e^{\frac{\alpha}{2}(\tau -\tau_0)} [Mf](y), \forall y >0, \tau > \tau_0,
\end{equation}
where $ \alpha$ and  $\gamma$  were  defined in \eqref{defi-alpha-intro} and \eqref{defi-gamma-intro}, respectively, and $Mf$  is given by 
\begin{equation}\label{defi-maximan-function}
	Mf(y) = \sup_{y \in \mathcal{I}} \frac{\int_{\mathcal{I}} |f(y') (y')^\gamma | (y')^{1+\omega}e^{-\frac{(y')^2}{4}}dy'}{\int_{\mathcal{I}} (y')^{1+\omega} e^{-\frac{(y')^2}{4}}dy'}, \omega = \sqrt{d^2-12d +24},
\end{equation}
here the supremum is taken over all sub-intervals $\mathcal{I}$ containing $y$. In particular, if $|f(y) y^\gamma |$  is a non decreasing function, then  the supremum   in \eqref{defi-maximan-function} is attained by $\mathcal{I} = [y,+\infty)$. Otherwise, if  $ |f(y) y^\gamma |$ is a non increasing, then  the supremum is attained by $\mathcal{I}= [0,y]$.
\end{lemma}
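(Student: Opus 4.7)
The plan is to reduce the claim to a weighted maximal estimate for a Gaussian-type convolution kernel, which is a standard Hardy--Littlewood argument once the Bessel factor has been disposed of. First I would factor out the prefactor $y^{-\gamma} e^{\frac{\alpha}{2}(\tau-\tau_0)}$ from the representation formula \eqref{semi-f} with $r=e^{-(\tau-\tau_0)}\in(0,1)$, so that the inequality reduces to showing
\begin{equation*}
\left|\int_0^\infty P_{\omega/2}\!\left(\tfrac{y^2}{4},\tfrac{x^2}{4},r\right)g(x)\, x^{1+\omega} e^{-x^2/4}\,dx\right|\;\lesssim\; Mg(y),
\end{equation*}
where $g(x)=f(x)x^\gamma$. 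It is enough to bound the kernel itself and then feed in $|g|$.

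Next I would invoke the sharp uniform bound on the modified Bessel function $\mathbf{I}_{\omega/2}(z)\lesssim \min(z^{\omega/2},\,z^{-1/2}e^{z})$, valid for $z\ge 0$. Substituting this into the explicit formula for $P_{\omega/2}$ and completing the square via the algebraic identity
\begin{equation*}
\tfrac{r}{4(1-r)}\bigl(y^2+x^2\bigr)\;-\;\tfrac{\sqrt{r}\,yx}{2(1-r)}\;=\;\tfrac{(\sqrt{r}\,y - x)^2}{4(1-r)}\;-\;\tfrac{x^2}{4}\,,
\end{equation*}
the Gaussian weight $e^{-x^2/4}$ in the integrand is absorbed, and the effective kernel becomes (up to the algebraic factors from the prefactor of $\mathbf{I}_{\omega/2}$, which are polynomial in $x,y,r$ and harmless) a Gaussian $\tfrac{1}{\sqrt{1-r}}\exp\!\bigl(-\tfrac{(\sqrt{r}\,y-x)^2}{4(1-r)}\bigr)$ centered at $\sqrt{r}\,y$ with width $\sqrt{1-r}$.

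The final step is the classical covering/layer-cake argument: one writes the Gaussian as $\int_0^\infty \mathbbm{1}_{\{|\sqrt{r}y-x|\le t\}}\,d\nu(t)$ for an appropriate finite measure $\nu$ on $(0,\infty)$ coming from the Gaussian profile, and observes that each level set is an interval $\mathcal{I}_t$ containing (or at controlled distance from) $y$. Dividing and multiplying by the $x^{1+\omega}e^{-x^2/4}$-measure of $\mathcal{I}_t$ produces exactly the supremum defining $Mg(y)$, while the remaining $t$-integral yields an absolute constant independent of $r$ and $y$.

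The main obstacle is the uniformity across the different regimes of $r$: for $r$ close to $0$ the kernel is spread out and one can bound things crudely, but for $r$ close to $1$ (i.e.\ small times $\tau-\tau_0$) the Gaussian concentrates and one must handle the shift from $y$ to $\sqrt{r}\,y$ carefully, together with the polynomial factors coming from the $\mathbf{I}_{\omega/2}$ prefactor and from switching between the measures $x^{1+\omega}e^{-x^2/4}dx$ and Lebesgue measure on the relevant interval --- particularly in the boundary regions where $y$ is either very small or very large, so that the weight $x^{1+\omega}e^{-x^2/4}$ varies significantly over the support of the kernel. The monotonicity observation at the end (that $Mg$ is attained on $[y,\infty)$ when $|g|y^\gamma$ is monotone non-decreasing, and on $[0,y]$ when non-increasing) follows directly from the definition once the general estimate is established.
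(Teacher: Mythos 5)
Your outline is exactly the route the paper takes: the paper gives no proof of this lemma beyond citing Lemma VI.2 of \cite{BSIMRN19} (ultimately Muckenhoupt's maximal estimate for the Laguerre Poisson semigroup), and your steps — factoring out $y^{-\gamma}e^{\frac{\alpha}{2}(\tau-\tau_0)}$ from \eqref{semi-f}, the two-regime bound on $\mathbf{I}_{\omega/2}$, the completion of the square absorbing $e^{-x^2/4}$ into a Gaussian centered at $\sqrt{r}\,y$ of width $\sqrt{1-r}$, and the layer-cake reduction to the maximal function — are precisely that argument. The one caveat is that the genuinely delicate point (uniform control of the $t$-integral over all regimes of $r$ and $y$, including the small-argument Bessel branch and the enlargement of the level-set intervals from $\sqrt{r}\,y$ to $y$) is only flagged as an obstacle rather than carried out, so as written this is a correct plan rather than a complete proof.
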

\begin{proof}
The proof is quite the same as for Lemma $VI.2$ in  \cite{BSIMRN19}. 
\end{proof}
Next, we will estimate the growth  of the action $e^{(\tau -\tau') \mathscr{L}_\infty}$ to $\Lambda$:
\begin{lemma}\label{Lemma-esti-semigroup-Lambda-var} Let us consider $f \in L^2_\rho$ and $\Lambda f \in L^2_\rho$ where $\Lambda $ was defined in \eqref{defi-Lambda-f}.  Assume further more that 
\begin{equation}\label{condition-f-le-mathcal-B}
	\left|  f(y)\right|  \le     \mathcal{B} \frac{\langle y \rangle^{2\ell +2}}{y^\gamma}, \text{ for some } \mathcal{B} \in \R^*_+.
\end{equation}
Then, it holds that   for $\tau > \tau'$
\begin{eqnarray}
	\left| e^{(\tau-\tau') \mathscr{L}_\infty} (\Lambda f)    \right| \lesssim e^{\tau -\tau'}\mathcal{B} \langle y \rangle^{2 \ell +3} y^{-\gamma}.
\end{eqnarray}
\end{lemma}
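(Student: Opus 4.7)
The target is a pointwise bound on $e^{(\tau-\tau')\mathscr{L}_\infty}(\Lambda_y f)$ of the form $e^{\tau-\tau'}\mathcal{B}\langle y\rangle^{2\ell+3}y^{-\gamma}$. The key obstruction is that the hypothesis provides only a pointwise upper bound on $f$, not on $\partial_y f$, whereas $\Lambda_y f = 2f+y\partial_y f$ contains a first derivative that cannot be estimated from the bound on $f$ alone. My plan is therefore to transfer this derivative from $f$ to the Poisson kernel of $\mathscr{L}_\infty$ via integration by parts, after which the maximal-function estimate of Lemma \ref{maximal-lemma} can be applied.

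I would start by setting $u(x):=x^\gamma f(x)$ so that $|u(x)|\leq \mathcal{B}\langle x\rangle^{2\ell+2}$, and computing $[\Lambda_y f(x)]\,x^\gamma = (2-\gamma)u(x)+x\partial_x u(x)$. Substituting this into the representation \eqref{semi-f} with $r=e^{-(\tau-\tau')}$ gives an integral against the kernel $K(x):=P_{\omega/2}(y^2/4,x^2/4,r)x^{\omega+1}e^{-x^2/4}$; integrating the $x\partial_x u$ term by parts then eliminates $\partial_x u$ and replaces $K$ by $\widetilde K := (1-\gamma)K-xK'$. The boundary terms at $x=0$ and $x=\infty$ vanish thanks to the factor $x^{\omega+1}$ (with $\omega>0$ for $d\geq 11$) and the Gaussian weight $e^{-x^2/4}$, together with the polynomial growth of $u$.

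Next, I would expand $xK'(x)=(x\partial_x P_{\omega/2})x^{\omega+1}e^{-x^2/4}+K(x)[\omega+1-x^2/2]$ and compute $x\partial_x P_{\omega/2}$ from the explicit expression of $P_{\omega/2}$ in terms of the modified Bessel function $\mathbf{I}_{\omega/2}$, using the derivative identity $\mathbf{I}_\zeta'(z)=\mathbf{I}_{\zeta+1}(z)+\frac{\zeta}{z}\mathbf{I}_\zeta(z)$. This leads to a decomposition $\widetilde K=a_0(x,y,r)K+a_1(x,y,r)\widehat K$, where $\widehat K$ is the analogous Poisson kernel with Bessel index shifted by one and $a_0,a_1$ are explicit rational expressions in $x,y,r$. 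Applying Lemma \ref{maximal-lemma} (and its shifted-index analogue) to each piece and tracking the factors $\frac{r}{1-r}$, $\frac{r^{1/2}y}{1-r}$, and $x^2$ that arise should produce one extra power of $\langle y\rangle$ compared to the bare estimate of $e^{t\mathscr{L}_\infty}f$, together with an extra growth factor $e^{(1-\alpha/2)(\tau-\tau')}$; multiplied by the standard prefactor $e^{\alpha(\tau-\tau')/2}$, this produces exactly the claimed $e^{\tau-\tau'}$.

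The main difficulty will be the explicit bookkeeping in the last step: both the time-singularity $\frac{r}{1-r}\to\infty$ as $\tau-\tau'\to 0^+$ and the $y$-dependence introduced when $\partial_x P_{\omega/2}$ is expanded must be controlled uniformly, and care is needed to ensure that only one extra power of $\langle y\rangle$ (not two) appears in the final estimate. As a sanity check, the commutator identity $[\mathscr{L}_\infty,\Lambda_y]=2\mathscr{L}_\infty+\Lambda_y$ (which follows from $[\Delta_y,\Lambda_y]=2\Delta_y$ and $[y^{-2},\Lambda_y]=2y^{-2}$) yields $e^{t\mathscr{L}_\infty}\Lambda_y = e^t\Lambda_y e^{t\mathscr{L}_\infty} + 2(e^t-1)\mathscr{L}_\infty e^{t\mathscr{L}_\infty}$, confirming the exponent $1$ in the conclusion; however, turning this identity into a pointwise bound would require separate control of $\Lambda_y(e^{t\mathscr{L}_\infty}f)$ and $\partial_t(e^{t\mathscr{L}_\infty}f)$ and, combined with the standard $e^{\alpha t/2}$ semigroup estimate, would yield only the weaker rate $e^{(1+\alpha/2)(\tau-\tau')}$. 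Hence the integration-by-parts route is what produces the sharp exponent stated in the lemma.
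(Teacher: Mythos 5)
Your proposal follows essentially the same route as the paper: represent $e^{(\tau-\tau')\mathscr{L}_\infty}(\Lambda f)$ via the Poisson-kernel formula \eqref{semi-f}, integrate the $x\partial_x$ part by parts onto the kernel (the boundary terms vanishing by the $x^{\omega+1}$ and Gaussian weights), expand $x\partial_x P_{\omega/2}$ through the Bessel derivative identity into a combination of $P_{\omega/2}$ and $P_{\omega/2+1}$, and close with the maximal estimate of Lemma \ref{maximal-lemma}. The only cosmetic difference is your normalization $u=x^{\gamma}f$; note that the paper's kernel computation actually yields a factor $(1-e^{-(\tau-\tau')})^{-1/2}$ rather than the clean exponential you predict for small $\tau-\tau'$, an integrable discrepancy already present between the paper's own statement and its proof.
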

\begin{proof}
Recall from \eqref{defi-Lambda-f} that   $ \Lambda f = y  \partial_y f  + 2f   $ and apply  \eqref{semi-f} in deriving
\begin{eqnarray*}
	\left[e^{(\tau-\tau')\mathcal{L}_\infty} \right] \Lambda f (y) = 2^{\omega +1 } y^{-\gamma} e^{\frac{\alpha}{2}(\tau-\tau')} \int_0^{\infty}  P_{\zeta} \left( \frac{y^2}{4}, \frac{x^2}{4}, r \right) \left( x \partial_x f + 2f \right)x^{\omega+1+\gamma} e^{-\frac{x^2}{4}} dx,
\end{eqnarray*}
where $r = e^{-(\tau-\tau')}$ and $  \zeta  = \frac{\omega}{2}$. First,  Lemma \ref{maximal-lemma} results in
\begin{eqnarray*}
	\left|  2^{\omega +1 } y^{-\gamma} e^{\frac{\alpha}{2}(\tau-\tau')} \int_0^{\infty}  P_{\zeta} \left( \frac{y^2}{4}, \frac{x^2}{4}, r \right) f(x) x^{\omega+1+\gamma} e^{-\frac{x^2}{4}} dx  \right|  \lesssim \mathcal{B} e^{\frac{\alpha}{2}(\tau - \tau')}\langle y \rangle^{2\ell + 2} y^{-\gamma}. 
\end{eqnarray*}
Then, it is sufficient to prove that 
$$ \left| I(y)\right| \lesssim \mathcal{B} \langle y \rangle^{2\ell +3} \text{ where }  I =\int_0^{\infty}  P_{\zeta} \left( \frac{y^2}{4}, \frac{x^2}{4}, r \right) \left( x \partial_x f(x) \right)x^{\omega+1+\gamma} e^{-\frac{x^2}{4}} dx. $$
Using the integration by parts provided that the functions go to $0$ at $+\infty$ and $0$, we get
\begin{eqnarray*}
	I &=& -\int_0^\infty f(x) \partial_x \left(  P_\zeta \left( \frac{y^2}{4}, \frac{x^2}{4}, r \right)  x^{\omega+2+\gamma} e^{-\frac{x^2}{4}}   \right)   dx\\
	& = &- \int_0^\infty f(x) \partial_x\left( P_\zeta\left( \frac{y^2}{4}, \frac{x^2}{4}, r \right) \right) x^{\omega+2+\gamma} e^{-\frac{x^2}{4}}dx. \\
	&-&(\omega+2+\gamma)\int_0^\infty f(x) P_\zeta\left( \frac{y^2}{4}, \frac{x^2}{4}, r \right) x^{\omega+1+\gamma}  e^{-\frac{x^2}{4}} dx\\
	&+&\frac{1}{2}\int_0^\infty f(x) P_\zeta\left( \frac{y^2}{4}, \frac{x^2}{4}, r \right) x^{\omega+3+\gamma} e^{-\frac{x^2}{4}}dx.
\end{eqnarray*}
We now  explicitly compute  
\begin{eqnarray*}
	\partial_x\left( P_\zeta\left( \frac{y^2}{4}, \frac{x^2}{4}, r \right) \right)&=&  \partial_x\left( \frac{4^\zeta}{\sqrt{r}^{\zeta} (1-r) (yx)^{\zeta}}   e^{-\frac{r}{1-r}\left(\frac{y^2}{4}+\frac{x^2}{4}\right)} \mathbf{I}_\zeta\left(\frac{r^\frac{1}{2}y x}{2(1-r)} \right)\right)\\
	&=&\partial_x\left( \frac{4^\zeta}{\sqrt{r}^\zeta (1-r) (yx)^\zeta}   e^{-\frac{r}{1-r}\left(\frac{y^2}{4}+\frac{x^2}{4}\right)}\right) \mathbf{I}_\zeta\left(\frac{r^\frac{1}{2}y x}{2(1-r)} \right)\\
	&+&\frac{4^\zeta}{\sqrt{r}^\zeta (1-r) (yx)^\zeta}   e^{-\frac{r}{1-r}\left(\frac{y^2}{4}+\frac{x^2}{4}\right)} \partial_x\left(\mathbf{I}_\zeta\left(\frac{r^\frac{1}{2}y x}{2(1-r)} \right)\right)\\
	&=&\partial_x\left( \frac{4^\zeta}{\sqrt{r}^\zeta (1-r) (yx)^\zeta}   e^{-\frac{r}{1-r}\left(\frac{y^2}{4}+\frac{x^2}{4}\right)}\right) \mathbf{I}_\zeta\left(\frac{r^\frac{1}{2}y x}{2(1-r)} \right)\\
	&+&\frac{4^\zeta}{x\sqrt{r}^\zeta (1-r) (yx)^\zeta}   e^{-\frac{r}{1-r}\left(\frac{y^2}{4}+\frac{x^2}{4}\right)}\frac{r^\frac{1}{2}yx }{2(1-r)}\mathbf{I}'_\zeta\left(\frac{r^\frac{1}{2}y x}{2(1-r)} \right),
\end{eqnarray*}
from equality $(3)$ at page 79 in \cite{WbookCUP1922} 
\begin{equation*}
	\frac{r^\frac{1}{2}y x}{2(1-r)}    \mathbf{I}'_\zeta\left(\frac{r^\frac{1}{2}y x}{2(1-r)} \right)=\zeta \mathbf{I}_\zeta\left(\frac{r^\frac{1}{2}y x}{2(1-r)} \right)+\frac{r^\frac{1}{2}y x}{2(1-r)} \mathbf{I}_{\zeta+1}\left(\frac{r^\frac{1}{2}y x}{2(1-r)}\right),
\end{equation*}
we infer
\begin{eqnarray*}
	\partial_x\left( P_\zeta\left( \frac{y^2}{4}, \frac{x^2}{4}, r \right) \right)&=&\partial_x\left( \frac{4^\zeta}{\sqrt{r}^\zeta (1-r) (yx)^\zeta}   e^{-\frac{r}{1-r}\left(\frac{y^2}{4}+\frac{x^2}{4}\right)}\right) \mathbf{I}_\zeta\left(\frac{r^\frac{1}{2}y x}{2(1-r)} \right)\\
	&+&\frac{4^\zeta}{x\sqrt{r}^\zeta (1-r) (yx)^\zeta}   e^{-\frac{r}{1-r}\left(\frac{y^2}{4}+\frac{x^2}{4}\right)}\zeta \mathbf{I}_\zeta\left(\frac{r^\frac{1}{2}y x}{2(1-r)} \right)\\
	&+&\frac{4^\zeta}{x\sqrt{r}^\zeta (1-r) (yx)^\zeta}   e^{-\frac{r}{1-r}\left(\frac{y^2}{4}+\frac{x^2}{4}\right)}\frac{r^\frac{1}{2}y x}{2(1-r)} \mathbf{I}_{\zeta+1}\left(\frac{r^\frac{1}{2}y x}{2(1-r)}\right).
\end{eqnarray*}
Besides, we have
\begin{eqnarray*}
	\partial_x\left( \frac{4^\zeta}{\sqrt{r}^\zeta (1-r) (yx)^\zeta}   e^{-\frac{r}{1-r}\left(\frac{y^2}{4}+\frac{x^2}{4}\right)}\right) &=& \frac{4^\zeta(-\zeta)}{\sqrt{r}^\zeta (1-r) (yx)^\zeta x} e^{-\frac{r}{1-r}\left(\frac{y^2}{4}+\frac{x^2}{4} \right)} \\
	& - & \frac{r}{1-r} \frac{x}{2} \frac{4^\zeta}{\sqrt{r}^\zeta (1 -r) (yx)^\zeta} e^{-\frac{r}{1-r}\left(\frac{y^2}{4}+\frac{x^2}{4} \right)}.
\end{eqnarray*}
At final, we arrive to
\begin{eqnarray*}
	x\partial_x\left( P_\zeta\left( \frac{y^2}{4}, \frac{x^2}{4}, r \right) \right)&=&  -\frac{r}{1-r} \frac{x^2}{2} \frac{4^\zeta}{\sqrt{r}^\zeta (1 -r) (yx)^\zeta} e^{-\frac{r}{1-r}\left(\frac{y^2}{4}+\frac{x^2}{4} \right)}\mathbf{I}_\zeta\left(\frac{r^\frac{1}{2}y x}{2(1-r)} \right)\\
	&+&\frac{4^\zeta}{\sqrt{r}^\zeta (1-r) (yx)^\zeta} \frac{r^\frac{1}{2}y x}{2(1-r)}  e^{-\frac{r}{1-r}\left(\frac{y^2}{4}+\frac{x^2}{4}\right)} \mathbf{I}_{\zeta+1}\left(\frac{r^\frac{1}{2}y x}{2(1-r)}\right)\\
	&=&-\frac{r x^2}{2(1-r)}P_\zeta\left( \frac{y^2}{4}, \frac{x^2}{4}, r \right)+\frac{ry^2x^2}{8(1-r)}P_{\zeta+1}\left( \frac{y^2}{4}, \frac{x^2}{4}, r \right).
\end{eqnarray*}
Plugging  into $I$'s formula, we get  
\begin{eqnarray}
	I& = &\frac{r}{2(1-r)} \int_0^\infty f(x)  P_\zeta\left( \frac{y^2}{4}, \frac{x^2}{4}, r \right)  x^{\omega+3+\gamma} e^{-\frac{x^2}{4}}\nonumber  \\
	&-&\frac{ry^2}{8(1-r)}\int_0^\infty f(x)  P_{\zeta+1}\left( \frac{y^2}{4}, \frac{x^2}{4}, r \right)  x^{\omega+3+\gamma} e^{-\frac{x^2}{4}} \nonumber\\
	&-&(\omega+2+\gamma)\int_0^\infty f(x) P_\zeta\left( \frac{y^2}{4}, \frac{x^2}{4}, r \right) x^{\omega+1+\gamma}  e^{-\frac{x^2}{4}} \nonumber  \\
	&+&\frac{1}{2}\int_0^\infty f(x) P_\zeta\left( \frac{y^2}{4}, \frac{x^2}{4}, r \right) x^{\omega+3+\gamma} e^{-\frac{x^2}{4}}\nonumber\\
	&= &  \int_0^\infty \left[ \frac{P_\zeta\left( \frac{y^2}{4}, \frac{x^2}{4}, r \right)}{2(1 -r )} - \frac{r y^2}{8(1-r)} P_{\zeta +1}\left( \frac{y^2}{4}, \frac{x^2}{4}, r \right) \right] f(x) x^{\omega +3+\gamma} e^{-\frac{x^2}{4}} dx  \label{intergral-I}  \\
	& - & (\omega+2+\gamma)\int_0^\infty f(x) P_\zeta\left( \frac{y^2}{4}, \frac{x^2}{4}, r \right) x^{\omega+1+\gamma}  e^{-\frac{x^2}{4}} dx.\nonumber
\end{eqnarray}
It also follows from Lemma \ref{maximal-lemma} that
\begin{eqnarray*}
	\left|  \int_0^\infty f(x) P_\zeta\left( \frac{y^2}{4}, \frac{x^2}{4}, r \right) x^{\omega+1+\gamma}  e^{-\frac{x^2}{4}} dx \right|  \lesssim M(f),
\end{eqnarray*}
and from \eqref{condition-f-le-mathcal-B}, we have
\begin{eqnarray}
	M(f)(y) & = & \sup_{y \in \mathcal{I}} \frac{\int_{\mathcal{I}} |f(y') (y')^\gamma | (y')^{1+\omega}e^{-\frac{(y')^2}{4}}dy'}{\int_{\mathcal{I}} (y')^{1+\omega} e^{-\frac{(y')^2}{4}}dy'} \lesssim  \mathcal{B}\frac{\int_{y}^\infty \langle y' \rangle^{2\ell +2}(y')^{1+\omega}e^{-\frac{(y')^2}{4}}dy'}{\int_{y}^\infty (y')^{1+\omega} e^{-\frac{(y')^2}{4}}dy'} \\
	& \lesssim & \mathcal{B} \langle y \rangle^{2\ell + 2}.\label{estimate-for mathcal-L-f-poison-Kernel}
\end{eqnarray}
Now, it remains to  prove the following:
\begin{equation}\label{goal-integral-I}
	\left|  I_1(y)\right| \lesssim \frac{\mathcal{B}}{\sqrt{1 -r }}  \langle y \rangle^{2\ell+3},
\end{equation}
where
\begin{eqnarray*}
	I_1(y)= \int_0^\infty \left[ \frac{P_\zeta\left( \frac{y^2}{4}, \frac{x^2}{4}, r \right)}{2(1 -r )} - \frac{r y^2}{8(1-r)} P_{\zeta +1}\left( \frac{y^2}{4}, \frac{x^2}{4}, r \right) \right] f(x) x^{\omega +3+\gamma} e^{-\frac{x^2}{4}} dx 
\end{eqnarray*}
We recall formulae $(2)$ at page 77 and formula $(2)$ at page 203 in \cite{WbookCUP1922} regarding the function $I_\zeta$ of imaginary argument
\begin{equation}\label{bound-on-I-zeta}
	\begin{array}{rcl}
		cz^\zeta \leq \mathbf{I}_\zeta(z) \leq C z^\zeta  & \text{ if } z \in [0,1],  \\
		cz^{-\frac{3}{2}} e^z \leq \left|\mathbf{I}_\zeta(z) - \frac{e^z}{\sqrt{2\pi z}}  \right|  \leq C z^{-\frac{3}{2}} e^z  & \text{ if } z \in [1,+\infty) ,
	\end{array}  
\end{equation}
In particular,  \eqref{bound-on-I-zeta} implies
\begin{equation}\label{functional-H}
	C^{-1} H_\zeta(y,x,r) \le  P_\zeta \left(\frac{y^2}{4},\frac{x^2}{4},r \right) \le C H_\zeta(y,x,r),
\end{equation}
where 
\begin{equation}\label{defi-H-function}
	H_\zeta(y,x,r)= \left\{ \begin{array}{rcl}
		(1-r)^{-\zeta-1}e^{-\frac{r(\frac{y^2}{4}+\frac{x^2}{4})}{1-r}}   & \text{ if } x \in \left[0,\frac{2(1-r)}{ \sqrt{r}y}\right]  \\[0.2cm]
		\frac{(4r)^{-\frac{\zeta}{2}-\frac{1}{4}} (yx)^{-\zeta-\frac{1}{2}}}{e(\sqrt{1-r})} e^{\frac{-r\frac{y^2}{4}+ \frac{1}{2}(r)^{\frac{1}{2}}(yx)-r\frac{x^2}{4}}{1-r}}   & \text{ if } z \in \left[\frac{2(1-r)}{\sqrt{r}y},+\infty\right) 
	\end{array}  
	\right..
\end{equation}
Using \eqref{condition-f-le-mathcal-B}, we estimate 
\begin{eqnarray*}
	& &\left|I_1 (y,\tau,\tau')\right| \\
	&\lesssim &  \mathcal{B} \left( \int_0^{\frac{2(1-r)}{\sqrt{r}y}} \left| \frac{P_\zeta\left( \frac{y^2}{4}, \frac{x^2}{4}, r \right)}{2(1 -r )} - \frac{r y^2}{8(1-r)} P_{\zeta +1}\left( \frac{y^2}{4}, \frac{x^2}{4}, r \right)  \right| (1 + x^{2\ell +2}) x^{\omega + 3 + \gamma} e^{ -\frac{x^2}{4}} dx \right.\\
	& + &  \left.\int_0^{\frac{2(1-r)}{\sqrt{r}y}} \left| \frac{P_\zeta\left( \frac{y^2}{4}, \frac{x^2}{4}, r \right)}{2(1 -r )} - \frac{r y^2}{8(1-r)} P_{\zeta +1}\left( \frac{y^2}{4}, \frac{x^2}{4}, r \right)  \right| (1 + x^{2\ell +2}) x^{\omega + 3 + \gamma} e^{ -\frac{x^2}{4}} dx     \right) .
\end{eqnarray*}
+ For the  integral on  $\left[0, \frac{2(1-r)}{\sqrt{r}y} \right]$, we use  the first asymptotic in  \eqref{defi-H-function} to obtain
\begin{eqnarray*}
	& & \int_0^{\frac{2(1-r)}{\sqrt{r}y}} \left| \frac{P_\zeta\left( \frac{y^2}{4}, \frac{x^2}{4}, r \right)}{2(1 -r )} - \frac{r y^2}{8(1-r)} P_{\zeta +1}\left( \frac{y^2}{4}, \frac{x^2}{4}, r \right)  \right| \left(1 + x^{2\ell +2}\right) x^{\omega + 3 +\gamma} e^{ -\frac{x^2}{4}} dx \\
	&\le & C (1- r)^{-\zeta -2} \int_{0}^{\frac{2(1-r)}{\sqrt{r}y}} \left(1 + \frac{ry^2}{(1 -r)} \right) e^{-\frac{r}{1-r} \left(\frac{y^2}{4} + \frac{x^2}{4}\right)} (1 +x^{2\ell +2})x^{\omega +3} e^{-\frac{x^2}{4}} dx.
\end{eqnarray*}
On the one hand,   once 
$ r \in (0,\frac{1}{4})$,  it immediately follows that
\begin{eqnarray*}
	& &(1-r)^{-\zeta-2}\int_0^{\frac{2(1-r)}{\sqrt{r}y}}  \left(1 + \frac{ry^2}{(1 -r)} \right) e^{ - \frac{r}{1-r}\left( \frac{y^2}{4} +\frac{x^2}{4}\right)} (1 + x^{2\ell +2}) x^{\omega + 3} e^{ -\frac{x^2}{4}} dx \\
	&\le &  C \int_0^{\frac{2(1-r)}{\sqrt{r}y}} (1 + x^{2\ell +2}) x^{\omega + 3}  e^{-\frac{x^2}{4} -\frac{r x^2}{4(1-r)} } dx \le  C \int_0^\infty (1 +x^{2\ell +2}) x^{\omega +3} e^{ -\frac{x^2}{4}} dx  \le C,
\end{eqnarray*}
where $C$ is independent of $r$. On the other hand,  once  $r \ge \frac{1}{4}$ and by a change  of variable $z = \frac{\sqrt{r}}{2\sqrt{1-r}} x$ we obtain
\begin{eqnarray*}
	& & \int_0^{\frac{2(1-r)}{\sqrt{r}y}} \left(1 + \frac{ry^2}{(1-r)} \right) e^{ - \frac{r}{1-r}\left( \frac{y^2}{4} +\frac{x^2}{4}\right)} (1 + x^{2\ell +2}) x^{\omega + 3} e^{ -\frac{x^2}{4}} dx \\
	& \le & C e^{-\frac{r y^2}{4(1-r)}} \int_0^{\frac{\sqrt{1-r}}{y}} \left(1 + \frac{r y^2}{(1-r)} \right) e^{-z^2 \left( 1+\frac{1-r}{r} \right)} (1 + \frac{(1-r)^{\ell +1}}{r^{\ell +1}}z^{2\ell +2}) \frac{(1-r)^{\frac{\omega}{2}+2}}{r^{\frac{\omega}{2}+2}} z^{\omega +3} dz\\
	& \le & C (1 -r)^{\frac{\omega}{2}+2} e^{-\frac{r}{4(1-r)} y^2} \left(1 + \frac{r y^2}{(1-r)} \right) \int_0^{\frac{\sqrt{1-r}}{y}} e^{-z^2 \left( 1+\frac{1-r}{r} \right)} (1 + (1-r)^{\ell +1} z^{2\ell +2}) z^{\omega +3} dz \\
	& \le &  (1- r)^{\zeta +2} e^{-\frac{r}{4(1-r)}y^2} \int_0^\infty e^{-z^2} (1 + z^{2\ell +2+\omega+3}) dz \le C (1 -r)^{\zeta +2}, \text{ with } \zeta = \frac{\omega}{2}.
\end{eqnarray*}
Finally, we obtain
\begin{eqnarray*}
	\left|\int_0^{\frac{2(1-r)}{\sqrt{r}y}} \left| \frac{P_\zeta\left( \frac{y^2}{4}, \frac{x^2}{4}, r \right)}{2(1 -r )} - \frac{r y^2}{8(1-r)} P_{\zeta +1}\left( \frac{y^2}{4}, \frac{x^2}{4}, r \right)  \right| \left(1 + x^{2\ell +2}\right) x^{\omega + 3 +\gamma} e^{ -\frac{x^2}{4}} dx  \right| \lesssim  \frac{\langle y \rangle^{2\ell +3}}{\sqrt{1-r}}.
\end{eqnarray*}

\medskip
- For the integral on $\left[\frac{2(1-r)}{\sqrt{r}y}, +\infty \right)$, we apply  \eqref{bound-on-I-zeta} and    \eqref{defi-H-function} and noticing that $z = \frac{\sqrt{r} yx}{2(1 -r)} \ge 1 $
\begin{eqnarray*}
	\frac{x}{2(1-r)} P_\zeta \left( \frac{y^2}{4}, \frac{x^2}{4}, r\right)  = \frac{4^\zeta}{2\sqrt{r}^\zeta (1-r) (yx)^\zeta} \left( \frac{x}{2(1-r)} \right) e^{-\frac{r}{4(1-r)} ( y^2 +x^2)} \left( \frac{e^{z}}{\sqrt{2\pi z}}  + O(e^{z} z^{-\frac{3}{2}})\right),
\end{eqnarray*}
and
\begin{eqnarray*}
	\frac{r y^2 x }{8(1-r)} P_{\zeta +1} \left( \frac{y^2}{4}, \frac{x^2}{4}, r\right) = \frac{r y^2 x }{8(1-r)} \frac{4^{\zeta+1}}{\sqrt{r}^{\zeta+1}(1-r) (yx)^{\zeta +1}} e^{-\frac{r}{4(1-r)} ( y^2 +x^2)} \left( \frac{e^{z}}{\sqrt{2\pi z}}  +O (e^{z} z^{-\frac{3}{2}})\right).
\end{eqnarray*}
Then 
\begin{eqnarray*}
	& &\left| x \frac{P_\zeta\left( \frac{y^2}{4}, \frac{x^2}{4}, r \right)}{2(1 -r )} - x\frac{r y^2}{8(1-r)} P_{\zeta +1}\left( \frac{y^2}{4}, \frac{x^2}{4}, r \right)  \right| \\
	& \le & C \left| \frac{x - \sqrt{r} y }{(1-r)} \right| \frac{4^\zeta}{2\sqrt{r}^\zeta (1-r) (yx)^\zeta}  e^{-\frac{r}{4(1-r)} ( y^2 +x^2)} \frac{e^{z}}{\sqrt{2\pi z}}\\
	& +  & C \frac{x}{(1-r)} \frac{4^\zeta}{2\sqrt{r}^\zeta (1-r) (yx)^\zeta}  e^{-\frac{r}{4(1-r)} ( y^2 +x^2)} \frac{e^{z}}{z^{\frac{3}{2}}} \\
	& +& C \frac{\sqrt{r}y}{(1-r)}\frac{4^\zeta}{2\sqrt{r}^\zeta (1-r) (yx)^\zeta}  e^{-\frac{r}{4(1-r)} ( y^2 +x^2)} \frac{e^{z}}{z^{\frac{3}{2}}} 
	\\
	&\le &  C  \frac{r^{-\frac{\zeta}{2}-\frac{1}{4} } (yx)^{-\zeta -\frac{1}{2}}}{\sqrt{1-r}} \left|\frac{x -\sqrt{r} y}{1-r} \right| e^{\frac{-\frac{r y^2}{4} + \frac{1}{2} \sqrt{r} y x - \frac{r x^2}{4}}{1-r}}\\
	&+&  C r^{-\frac{\zeta}{2}-\frac{3}{4}}(yx)^{-\zeta-\frac{3}{2}} \left|\frac{x+\sqrt{r}y}{\sqrt{1-r}}\right|e^{\frac{-\frac{r y^2}{4} + \frac{1}{2} \sqrt{r} y x - \frac{r x^2}{4}}{1-r}}
\end{eqnarray*}
Notice that  we  have the following  identity 
$$ e^{\frac{-r\frac{y^2}{4}+ \frac{1}{2}(r)^{\frac{1}{2}}(yx)-r\frac{x^2}{4}}{1-r}} e^{-\frac{x^2}{4}} = e^{-\frac{1}{4} \left(\frac{x -\sqrt{r} y }{\sqrt{1-r}} \right)^2},$$
and by a change of variable $z = \frac{x -\sqrt{r} y}{\sqrt{1-r}}$, we get
\begin{eqnarray*}
	& & \int_{\frac{2(1-r)}{\sqrt{r}y}}^\infty \left| x\frac{P_\zeta\left( \frac{y^2}{4}, \frac{x^2}{4}, r \right)}{2(1 -r )} -x \frac{r y^2}{8(1-r)} P_{\zeta +1}\left( \frac{y^2}{4}, \frac{x^2}{4}, r \right)  \right| (1 + x^{2\ell +2}) x^{\omega + 2} e^{ -\frac{x^2}{4}} dx \\
	& \le & C \frac{y^{-\zeta -\frac{1}{2}} r^{-\frac{\zeta}{4}-\frac{1}{4}}  }{\sqrt{1-r}} \int_{\frac{2(1-r)}{\sqrt{r}y}}^\infty  \left|\frac{x -\sqrt{r} y}{1-r} \right| (1 + x^{2\ell +2}) x^{\zeta + \frac{3}{2}} e^{-\frac{1}{4} \left(\frac{x -\sqrt{r} y }{\sqrt{1-r}} \right)^2} dx\\
	&+& C\frac{y^{-\zeta-\frac{3}{2}} r^{-\frac{\zeta}{2}-\frac{3}{4}}}{\sqrt{1-r}}\int_{\frac{2(1-r)}{\sqrt{r}y}}^\infty  \left|\frac{x+\sqrt{r}y}{\sqrt{1-r}}\right|(1 + x^{2\ell +2}) x^{\zeta + \frac{1}{2}} e^{-\frac{1}{4} \left(\frac{x -\sqrt{r} y }{\sqrt{1-r}} \right)^2} dx\\
	& \le & C\frac{(\sqrt{r}y)^{-\zeta -\frac{1}{2}}   }{\sqrt{1-r}} \int_{ \frac{\frac{2(1-r)}{\sqrt{r} y} - \sqrt{r} y}{\sqrt{1-r}} }^{\infty}  
	|z|(1 + (z\sqrt{1-r}+\sqrt{r}y)^{2\ell +2}) \left| z\sqrt{1-r}+\sqrt{r}y \right|^{\zeta + \frac{3}{2}}e^{-\frac{z^2}{4}} dz\\
	&+& C \frac{(\sqrt{r}y)^{-\zeta-\frac{3}{2}}}{\sqrt{1-r}}\int_{ \frac{\frac{2(1-r)}{\sqrt{r} y} - \sqrt{r} y}{\sqrt{1-r}} }^{\infty}  (1 + (z\sqrt{1-r}+\sqrt{r}y)^{2\ell +2})  \left| z\sqrt{1-r}+\sqrt{r}y \right|^{\zeta + \frac{3}{2}}e^{-\frac{z^2}{4}} dz.
\end{eqnarray*}
Now, we have
\begin{eqnarray*}
	|z|(1 + (z\sqrt{1-r}+\sqrt{r}y)^{2\ell +2}) \left| z\sqrt{1-r}+\sqrt{r}y \right|^{\zeta + \frac{3}{2}}  \lesssim \langle y \rangle^{2\ell + 2}\langle z \rangle^{2\ell + 3} \left[ |z \sqrt{1-r}|^{\zeta +\frac{3}{2}} + |\sqrt{r} y|^{\zeta +\frac{3}{2}} \right] 
\end{eqnarray*}
and 
\begin{eqnarray*}
	& & (\sqrt{r}y)^{-\zeta -\frac{1}{2}}\int_{ \frac{\frac{2(1-r)}{\sqrt{r} y} - \sqrt{r} y}{\sqrt{1-r}} }^{\infty} \langle z \rangle^{2\ell + 3}  |z \sqrt{1-r}|^{\zeta +\frac{3}{2}} e^{-\frac{z^2}{4}} dz\\
	&\lesssim & (\sqrt{r}y)^{-\zeta -\frac{1}{2}} \sqrt{1-r}^{\zeta+\frac{3}{2}} \int_{ \frac{\frac{2(1-r)}{\sqrt{r} y} - \sqrt{r} y}{\sqrt{1-r}} }^{\infty} \langle z \rangle^{2\ell +3 +\zeta +\frac{3}{2}}e^{-\frac{z^2}{4}} dz\\[0.2cm]
	& \lesssim &   X^{\zeta +\frac{1}{2}} \int_{2X-\frac{1}{X}}^\infty  \langle z \rangle^{2\ell +3 +\zeta +\frac{3}{2}}e^{-\frac{z^2}{4}} dz  \lesssim 1, \text{ with } X = \frac{\sqrt{1-r}}{\sqrt{r} y} >0,
\end{eqnarray*}
yielding
\begin{eqnarray*}
	& & \frac{(\sqrt{r}y)^{-\zeta -\frac{1}{2}}   }{\sqrt{1-r}} \int_{ \frac{\frac{2(1-r)}{\sqrt{r} y} - \sqrt{r} y}{\sqrt{1-r}} }^{\infty}  
	|z|(1 + (z\sqrt{1-r}+\sqrt{r}y)^{2\ell +2}) \left| z\sqrt{1-r}+\sqrt{r}y \right|^{\zeta + \frac{3}{2}}e^{-\frac{z^2}{4}} dz \\
	& \lesssim & \frac{\langle y \rangle^{2\ell +3}}{\sqrt{1-r}}.
\end{eqnarray*}
Similarly we have
\begin{eqnarray*}
	& &\frac{(\sqrt{r}y)^{-\zeta-\frac{3}{2}}}{\sqrt{1-r}}\int_{ \frac{\frac{2(1-r)}{\sqrt{r} y} - \sqrt{r} y}{\sqrt{1-r}} }^{\infty}  (1 + (z\sqrt{1-r}+\sqrt{r}y)^{2\ell +2})  \left| z\sqrt{1-r}+\sqrt{r}y \right|^{\zeta + \frac{3}{2}}e^{-\frac{z^2}{4}} dz 
	\lesssim   \frac{\langle y\rangle^{2\ell+3}}{\sqrt{1-r}}.
\end{eqnarray*}
Thus, we obtain
\begin{eqnarray*}
	\int_{\frac{2(1-r)}{\sqrt{r}y}}^\infty \left| x\frac{P_\zeta\left( \frac{y^2}{4}, \frac{x^2}{4}, r \right)}{2(1 -r )} -x \frac{r y^2}{8(1-r)} P_{\zeta +1}\left( \frac{y^2}{4}, \frac{x^2}{4}, r \right)  \right| (1 + x^{2\ell +2}) x^{\omega + 2} e^{ -\frac{x^2}{4}} dx \lesssim \frac{\langle  y \rangle^{2\ell+3}}{\sqrt{1-r}}.
\end{eqnarray*}
By adding all related terms, we conclude \eqref{goal-integral-I}  and with it the proof of the Lemma is accomplished.
\end{proof}

\section{Generators of  the Kernel of $H$}\label{proof-lemma-generator-H}
We construct  the family  $\{T_{i+1}\}$  via the  recursive formula 
\begin{equation}\label{recurrence-T-i+1-i}
T_{i+1} = H^{-1} T_i, \quad \text{ and } T_0 = c \Lambda_\xi Q
\end{equation}
here $c$ is some constant which will be chosen later. In other words, we have for $i\geq1$
\begin{equation*}\label{T-i-H--1-T-0}
T_{i} =   H^{-i} (T_0), \quad i\geq 1. 
\end{equation*}
The operator   $H^{-1}$ is  explicitly  given by 
\begin{equation}\label{defi-H-1-f}
{H}^{-1} f (\xi) =  \Lambda Q(\xi) \int_0^\xi \frac{{L} f(\xi')}{\Lambda Q(\xi')} d\xi',
\end{equation}
with ${L}$ 
\begin{equation}\label{defi-math-H-inverse}
{L} f(\xi) = \frac{1}{\xi^{d+1} \Lambda  Q(\xi)} \int_0^\xi f(\xi') \Lambda  Q (\xi')  (\xi')^{d+1} d\xi'.    
\end{equation}
We start now our induction argument.
For $i=0$, we have by assumption 
$$T_0 = c \Lambda Q(\xi). $$
The asymptotic
\eqref{asymptotic-Lamda-Q} yields 
\begin{equation}\label{asymptotic-T-0}
T_0(\xi) = \left\{ \begin{array}{rcl}
	& & -2 c  +\sum_{i=1}^k a_i'' \xi^{2i} + O(\xi^{2k+2})  \text{ as }   \xi \to 0,  \\[0.2cm]
	& & a_0 c \xi^{-\gamma} + O\left( \xi^{-\gamma - 2\alpha}\right)         \text{ as }   \xi \to \infty. 
\end{array}  \right.
\end{equation}
This proves \eqref{behavior-T-i} for the case $i=0$.
Now, let us suppose that \eqref{behavior-T-i} is true for some $k$  and let us prove that it holds true  for $k+1$.\\

-  At $\infty$: we have
\begin{eqnarray*}
\mathscr{L}(T_k) &=& 
\frac{C_k}{-2 \gamma +d+2+2k} \xi^{ - \gamma +1+2k} \left(1+O\left( \frac{\ln \xi}{\xi^2}\right) \right).
\end{eqnarray*}
Hence
\begin{eqnarray*}
T_{k+1} =  H^{-1} (T_k) = \frac{C_k}{4 (k+1) ( \frac{d}{2} - \gamma +k+1)} \xi^{-\gamma+2(k+1)} \left(1+O\left( \frac{\ln \xi}{\xi^2}\right) \right)
\end{eqnarray*}
The above expansion yields
$$ C_{k+1} = \frac{C_k}{4 (k+1) ( \frac{d}{2} - \gamma +k+1) },$$
so that
\begin{eqnarray*}
C_{k} = \frac{c a_0 }{4^k k! (\frac{d}{2}  - \gamma)_k!}.
\end{eqnarray*}
Choosing  $ c = \frac{1}{a_0}$ concludes the first part of the proof.\\
In a similar fashion and upon using that $\partial_\xi^kT_{i+1}=H^{-1}T_i$, one can establish the mentioned result regarding the asymptotic behavior of the derivatives of $T_i$. We omit the details.
\section{ Inner eigenfunctions computation}\label{proof-appendix-inner-eigenfunctions}
Now we prove Proposition \ref{proposition-inner-eigen-functions}. 
Taking into account \eqref{inner-eigen-functins-phi-i-int}, the eigenvalue problem  \eqref{equa-eigen-phi-int} reads 
\begin{eqnarray*}
0 &=& \left\{  H - b \beta \Lambda -  2\beta b\left(  \frac{\alpha}{2} - i  + \tilde \lambda  \right)  \right\} \phi_{i,\text{int}} 
= \sum_{j=0}^i c_{i,j}(2\beta)^{j} b^j \left\{ H - b \beta \Lambda - 2\beta  b \left(\frac{\alpha}{2} -i +\tilde{\lambda }   \right)\right\} T_j \\
&+& \tilde \lambda \sum_{j=0}^i c_{i,j}(2\beta)^{j+1} b^{j+1} \left\{  H -  b \beta  \Lambda - 2\beta b \left(\frac{\alpha}{2} -i +\tilde{\lambda }   \right) \right\}T_{j+1} 
\\
&+& \tilde \lambda \sum_{j=0}^i b^{j+1} \left\{  H - b \beta  \Lambda - 2\beta b\left( \frac{\alpha}{2} -i  +\tilde \lambda  \right)  \right\} S_j\\
& +& b \left\{  H - \beta  b \Lambda - 2\beta b\left( \frac{\alpha}{2} -i +\tilde \lambda  \right)  \right\} R_i.
\end{eqnarray*}
Since $T_{i+1} = H^{-1} T_i$ and $H (T_0)  =0$, we get 
\begin{eqnarray*}
& & \sum_{j=0}^i  c_{i,j} (2\beta)^j b^{j} \left\{H - \beta b \Lambda - 2\beta b\left( \frac{\alpha}{2} - i  + \tilde\lambda  \right)  \right\} T_j \\
&=&   \sum_{j=0}^i c_{i,j} (2  \beta)^j b^j \left\{ H T_j - \beta b ((2j -\alpha) T_j + \Theta_j) - 2 \beta b \left(  \frac{\alpha}{2}-i \right)T_j - 2\beta b\tilde \lambda T_j \right\}\\[0.1cm]
& =& - \frac{1}{2 } \sum_{j=0}^i c_{i,j} (2\beta)^{2j}b^{j+1} \Theta_j - \tilde \lambda \sum_{j=0}^i c_{i,j} (2\beta)^{j+1} b^{j+1} T_j  ,
\end{eqnarray*}
where we used the fact that $c_{i,j} $ defined in   \eqref{definition-c_i-j}  satisfies  $ c_{i,j+1} + c_{i,j} (i-j) =0$. Thus, the construction of $S_j$ and $R_i$  reduces the following equations (for all $j \leq i$)
\begin{eqnarray}
H S_j &=&  2 \beta b \left\{  \left( \frac{\alpha}{2} - i  + \tilde \lambda + \frac{1}{2} \Lambda \right) ( S_j + c_{i,j}(2\beta)^{j+1} T_{j+1})  \right\},\label{equa-H-S-j-b-Lambda-S-j}\\
H R_i  & = & 2\beta  b  \left(  \frac{\alpha}{2} -i +\tilde \lambda + \frac{1}{2}  \Lambda  \right) R_i  + \frac{1}{2} \sum_{j=0}^i c_{i,j}(2\beta)^{j+1} b^j \Theta_j.\label{equa-H-R-i-b-Lambda-R-i}
\end{eqnarray}
We note that  the construction of  $S_j$ and $R_i$ follows \cite{CMRJAMS20} (see also \cite{CGMNARXIV20-a}) which relies on the Banach fixed point theorem in the functional space  $X^a_{\xi_0}$ for some $ a \in \R$ and $ \xi_0 >0$ and where the norm is given by
\begin{equation}\label{defi-norm-X-a-y-0}
\| f\|_{X^a_{\xi_0}} = \sup_{\xi \in [0,\xi_0]} \sum_{i=0}^2   \frac{ \left| (  \xi \partial_\xi )^i f (\xi)  \right|  }{  \langle \xi \rangle^{a} },
\end{equation}
with $ \langle  \xi  \rangle = \sqrt{1+ \xi^2}  $.
For sake of shortness and since the determination of both $S_j$ and $R_i$ follows the same reasoning, we only consider $S_j$ in the sequel.\\ 

\medskip
\begin{center}
\textbf{ Step 1: Construction of $S_j$}
\end{center}
Identity \eqref{equa-H-S-j-b-Lambda-S-j} can be put in the form
\begin{equation}\label{equa-S-j-rought}
S_j =2 \beta b H^{-1} \left[  \left(   \frac{\alpha}{2} - i + \tilde \lambda  + \frac{1}{2} \Lambda     \right) \left( S_j + c_{i,j}(2\beta)^{j+1}  T_{j+1} \right) \right].
\end{equation}
Now, write $S_j$ as 
\begin{equation}\label{decompose-S_j}
S_j =  L(S_j) = L(0) + D L(S_j),
\end{equation}
where
\begin{eqnarray*}
L(0) &=&  b c_{i,j}(2\beta)^{j+2}  H^{-1} \left[   \left(   \frac{\alpha}{2} -i + \tilde \lambda + \frac{1}{2} \Lambda     \right) T_{j+1}  \right],\\[0.2cm]
DL(S_j)  &=& b (2\beta)  H^{-1} \left[    \left(   \frac{\alpha}{2} -i + \tilde \lambda + \frac{1}{2} \Lambda     \right)  S_j   \right].
\end{eqnarray*}
Our goal is to prove that for all $j \le i \le \ell$
\begin{eqnarray}
\| L(0)\|_{X^{2j-\gamma+2}_{\xi_0}}  & \leq &  C  y_0^2,\label{estimate-L(0)}\\
\| DL(S_j)\|_{ X^{2j - \gamma+2}_{\xi_0}}  & \leq &  C  y_0^2\| S_j\|_{ X^{2j-\gamma+2}_{\xi_0}}.\label{estimate-DL-S-j}
\end{eqnarray}
Once these estimates are established, we apply the  Banach fixed point theorem to  $L(S_j)$   mapping the ball $ B(0, 2 C   y_0^2)$ into itself with $ y_0 \leq \frac{1}{2\sqrt{C}}$. This yields the existence and uniqueness of   $S_j$ satisfying
\begin{equation}\label{estimate-S-j-X-2j-}
\|S_j\|_{X^{2j-\gamma+2}_{\xi_0}} \leq   2 C  y_0^2. 
\end{equation}

We are now in position to prove  \eqref{estimate-L(0)} and \eqref{estimate-DL-S-j}:

\bigskip
- \textit{ Proof of   \eqref{estimate-L(0)}:} take  $a = 2j +2 - \gamma $, we get
\begin{eqnarray*}
\| L(0)\|_{X^{a}_{\xi_0}} \leq | c_{i,j}| b (2\beta)^{j+2} \left(    \left|  \frac{\alpha}{2} -i  +\tilde{\lambda}  \right| \|H^{-1} (T_{j+1})\|_{X^{a}_{\xi_0}}   + \frac{1}{2} \| H^{-1}(\Lambda T_{j+1})\|_{X^{a}_{\xi_0}}\right).
\end{eqnarray*}
Lemma \ref{lemma-contu-H-1-in-X-a-xi-0} yields
\begin{eqnarray*}
\| L(0)\|_{X^{a}_{\xi_0}} \leq C  \|T_{j+1}\|_{X^{a-2}_{\xi_0}} \le C  b \xi_0^2  \|T_{j+1}\|_{X^{a}_{\xi_0}}. 
\end{eqnarray*}
From lemma \ref{lemma-Generation-H} and   $X^a_{\xi_0}$'s definition, one infers 
$$ \|T_{j+1}\|_{X^{a}_{\xi_0}}  \leq C .$$
Plugging  $ \xi_0 = \frac{y_0}{\sqrt{b}}$, we get 
$$\| L(0)\|_{X^{a}_{\xi_0}} \leq C   y_0^2,$$
which concludes \eqref{estimate-L(0)}.

- \textit{ Proof of \eqref{estimate-DL-S-j}:}  we argue as in the proof of \eqref{estimate-L(0)}. Indeed, we apply Lemma \ref{lemma-contu-H-1-in-X-a-xi-0} so that
\begin{eqnarray*}
\| DL(S_j) \|_{X^{a}_{\xi_0}}  & \leq &  C b (2\beta) ( \|  H^{-1}(S_j)  \|_{X^{a}_{\xi_0}} +  \|  H^{-1}(\Lambda S_j)  \|_{X^{a}_{\xi_0}} )\\
& \leq & C y_0^2 \| S_j\|_{X^a_{\xi_0}},
\end{eqnarray*}
We conclude \eqref{estimate-S-j-X-2j-} as above.\\
Our task now is to establish the desired estimates for $\partial_{\tilde{ \lambda}}  S_j$, $\partial_b S_j  $ and $\partial_\beta S_j$. Since the proofs are quite the same we only estimate  $\partial_b S_j  $. Apply $\partial_b $ to  both sides of \eqref{decompose-S_j} to get 

\begin{eqnarray*}
\partial_b S_j &=&   c_{i,j}(2\beta)^{j+2} H^{-1} \left[\left( \frac{\alpha}{2} - i  + \tilde \lambda +\frac{1}{2} \Lambda \right) T_{j+1} \right]  + (2\beta) H^{-1} \left[\left( \frac{\alpha}{2} -i + \tilde \lambda +\frac{1}{2} \Lambda \right) S_{j} \right] \\
& + & b (2\beta) H^{-1} \left[\left( \frac{\alpha}{2} -i  + \tilde \lambda + \frac{1}{2} \Lambda \right) \partial_b S_{j} \right]. 
\end{eqnarray*}
Using Lemma \ref{lemma-contu-H-1-in-X-a-xi-0} with   $\tilde a = 2j +4 - \gamma$,    we derive 
\begin{eqnarray*}
\|\partial_b S_j \|_{X^{\tilde a}_{\xi_0}}  & \leq & C\left(  \| T_{j+1}\|_{ X^{\tilde a-2}_{\xi_0}} + \| S_j\|_{X^{\tilde a-2}_{\xi_0}} + b \| \partial_b S_j \|_{X^{\tilde a-2}_{\xi_0}}  \right)\\
& \le & C(1 + y_0^2) + C b \xi_0^2 \|\partial_b S_j\|_{X^{\tilde a}_{\xi_0}}
\leq    C ( 1 + y_0^2)  + C y_0^2 \| \partial_b S_j \|_{X^a_{\xi_0}} ),
\end{eqnarray*}
this implies that $\|\partial_b S_j \|_{X^{2j+4-\gamma}_{\xi_0}}  \leq  C, $
provided $ y_0 \leq y_0^* $ is small enough.

\noindent 
\medskip
+ For $\partial_\beta S_j$: Applying $\partial_\beta $ to \eqref{decompose-S_j}, we get
\begin{eqnarray*}
\partial_{\beta}  S_j & = & b 2(j+2) (2\beta)^{j+1} c_{i,j} H^{-1} \left[ \left( \frac{\alpha}{2} -i +\tilde{\lambda}  +\frac{1}{2} \Lambda \right) T_{j+1} \right]  + 2b H^{-1} \left[ \left( \frac{\alpha}{2} -i+\tilde{\lambda}   +\frac{1}{2} \Lambda \right) S_j \right]\\
& + & b (2\beta) H^{-1} \left[ \left( \frac{\alpha}{2} -i  + \tilde{\lambda}  + \frac{1}{2} \Lambda \right)  \partial_{\beta } S_j \right].
\end{eqnarray*}
using the boundedness of $j$, we get (with $a =2j+2-\gamma$)
\begin{eqnarray*}
\|\partial_{\beta} S_j\|_{X^{a}_{\xi_0}}  & \leq & C  b    \left( \|T_{j+1} \|_{X^{a-2}_{\xi_0}} +  \|S_j\|_{X^{a-2}_{\xi_0}}  +  \|\partial_{\beta} S_j\|_{X^{a-2}_{\xi_0}}  \right)\\
& \leq  & C  b\xi_0^2   \left( \| T_{j+1}  \|_{X^a_{\xi_0}}  +  \|S_j  \|_{X^a_{\xi_0}}   + \| \partial_{\beta} S_j \|_{X^a_{\xi_0}} \right)\\
& \leq  & C  y_0^2 \left( 1 + \| \partial_{\beta} S_j\|_{X^a_{\xi_0}} \right) \le C y_0^2.  
\end{eqnarray*}
provided that $ y_0 \leq y_0^* $ small enough.

\begin{center}
\textbf{ Step 2: construction   of $R_i$}
\end{center} 
Taking $H^{-1}$ to   \eqref{equa-H-R-i-b-Lambda-R-i},  we get
\begin{eqnarray}
R_i &=& b (2\beta)  H^{-1} \left[  \left(  \frac{\alpha}{2}-i +\tilde{ \lambda} + \frac{1}{2} \Lambda \right) R_i   \right]  + \frac{1}{2} \sum_{j=0}^i c_{i,j} (2\beta)^{j+1} b^j  H^{-1}(  \Theta_j)\label{mapping-J-R-i}
\\& = & J (R_i).   \nonumber
\end{eqnarray}
Let us consider  $a = -\gamma + \epsilon > -\gamma$, then, we apply Lemma   \ref{lemma-contu-H-1-in-X-a-xi-0}
\begin{eqnarray*}
\| J(R_i)\|_{X^a_{\xi_0}} & \leq & C \left(  b (2\beta) \| H^{-1}  R_i \|_{X^{a}_{\xi_0}}   + \sum_{j=0}^i (2\beta)^{j+1} b^j \| H^{-1} (\Theta_j)\|_{X^{a}_{\xi_0}} \right)\\
& \leq  & C \left(  b \|   R_i \|_{X^{a-2}_{\xi_0}}   + \sum_{j=0}^i b^j \|  \Theta_j \|_{X^{a-2}_{\xi_0}} \right).
\end{eqnarray*}
Now, we derive from \eqref{asymptotic-Theta} that
\begin{eqnarray*}
\| \Theta_j \|_{X^{-2-\gamma +\epsilon}_{\xi_0}} \leq  C(\epsilon) \xi^{2j}_{0}.
\end{eqnarray*}
Thus, we derive
\begin{eqnarray}
\|J(R_i)\|_{X^{a}_{\xi_0}}  \leq C  y_0^2 \|R_i\|_{X^a_{\xi_0}} + C(\epsilon).
\end{eqnarray}
Taking  $ y_0  $ small enough,  $J $ maps   the ball $ B(0, 2 C(\epsilon))  $ into itself. In addition to that,  it is similar   to prove $J$ is a contraction. Hence, by using  Banach fixed point theorem, we imply   the existence and the uniqueness  of $R_i$ satisfying
$$ \| R_i\|_{X^{-\gamma +\epsilon}_{\xi_0}} \leq 2 C( \epsilon).$$
Similarly for $S_j$,  we  can  respectively take $\partial_b$, $ \partial_{\tilde{\lambda}}$, and $\partial_\beta$  to  \eqref{mapping-J-R-i} by using $R_i \in B(0, 2 C(\epsilon))  $,   and we  get
\begin{eqnarray*}
\|\partial_b R_i\|_{ X^{-\gamma+2+\epsilon}_{\xi_0}}  & \leq &   C( \epsilon),\\
\|\partial_{\tilde{\lambda}} R_i\|_{ X^{-\gamma+2+\epsilon}_{\xi_0}}  & \leq & C( \epsilon) b,\\
\|\partial_{\beta} R_i\|_{ X^{-\gamma+2+\epsilon}_{\xi_0}}  &\le & C(\epsilon),
\end{eqnarray*}
where the constant $C(\epsilon)$ is universal. Finally,    we  conclude the proof of the Proposition \ref{proposition-inner-eigen-functions} . $\square$
\begin{eqnarray*}
\partial_b R_i&=&(2\beta)H^{-1}\left[\left( \frac{\alpha}{2} -i +\tilde{\lambda}  +\frac{1}{2} \Lambda \right)R_i \right
]+ b(2\beta)H^{-1}\left[\left( \frac{\alpha}{2} -i +\tilde{\lambda}  +\frac{1}{2} \Lambda \right)\partial_b R_i \right
]\\
&+&\frac{1}{2}\sum_{j=0}^{i} c_{i,j}j(2\beta)^{j+1} b^{j-1}H^{-1} \Theta_j,
\end{eqnarray*}
and
\begin{eqnarray*}
\partial_\beta R_i&=&2b H^{-1}\left[\left( \frac{\alpha}{2} -i +\tilde{\lambda}  +\frac{1}{2} \Lambda \right)R_i \right
]+ b(2\beta)H^{-1}\left[\left( \frac{\alpha}{2} -i +\tilde{\lambda}  +\frac{1}{2} \Lambda \right)\partial_\beta R_i \right
]\\
&+&\sum_{j=0}^i c_{i,j}(j+1)(2\beta)^{j} b^{j} H^{-1}\Theta_j,
\end{eqnarray*}
and
\begin{eqnarray*}
\partial_{\tilde{\lambda}} R_i&=& (2\beta) b H^{-1} R_i+ b(2\beta)H^{-1}\left[\left( \frac{\alpha}{2} -i +\tilde{\lambda}  +\frac{1}{2} \Lambda \right)\partial_{\tilde{\lambda}} R_i \right
].
\end{eqnarray*}

\noindent
\medskip
The rest of this part is devoted to the results which are used to complete the proof of Proposition \ref{proposition-inner-eigen-functions}. 
\begin{lemma}[Continuity  of $H^{-1}$  in $X^a_{\xi_0}$]\label{lemma-contu-H-1-in-X-a-xi-0} For all $ a  \geq -\gamma$, we have
\begin{equation}\label{defi-H-1f}
	\| H^{-1} f\|_{X^a_{\xi_0}} \leq  C \sup_{\xi \in [0,\xi_0]} \langle \xi  \rangle^{ 2 -a} |f(\xi)|,
\end{equation}
and
\begin{equation}\label{defi-H-1-Lambda}
	\| H^{-1} \left( \Lambda f  \right)\|_{X^a_{\xi_0}} \leq C(a) \sup_{\xi \in [0,\xi_0]}  \left[ \langle \xi  \rangle^{ 2 - a} |f(\xi)|+\langle \xi \rangle^{3-a} | \partial_\xi f (\xi)|\right].
\end{equation}
In particular
\begin{equation}
	\| H^{-1} f\|_{X^a_{\xi_0}} \leq C   \|f\|_{X^{a-2}_{\xi_0}},
\end{equation}
and
\begin{equation}
	\| H^{-1} \left( \Lambda f  \right)\|_{X^a_{\xi_0}} \leq C   \|f\|_{X^{a-2}_{\xi_0}}.
\end{equation}
Using the fact that, for $ \xi_0 \geq  1$, we have
$$  \|f\|_{X^{a-2}_{\xi_0}}  \leq  2 \xi_0^2 \|f\|_{X^{a}_{\xi_0}}, $$
Formulae \eqref{defi-H-1f} and \eqref{defi-H-1-Lambda} read, for $ \xi_0 \geq  1$
\begin{equation}
	\| H^{-1} f\|_{X^a_{\xi_0}} \leq C  \xi_0^2 \|f\|_{X^{a}_{\xi_0}},
\end{equation}
and
\begin{equation}
	\| H^{-1} \left( \Lambda f  \right)\|_{X^a_{\xi_0}} \leq C   \xi_0^2 \|f\|_{X^{a}_{\xi_0}}.
\end{equation}
\end{lemma}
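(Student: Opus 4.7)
The proof is a direct computation based on the explicit representation
\[H^{-1}f(\xi)=\Lambda Q(\xi)\int_0^{\xi}\frac{L(f)(\xi')}{\Lambda Q(\xi')}\,d\xi',\qquad L(f)(\xi)=\frac{1}{\xi^{d+1}\Lambda Q(\xi)}\int_0^{\xi}f(\xi')\Lambda Q(\xi')(\xi')^{d+1}\,d\xi',\]
combined with the pointwise asymptotics of $\Lambda_{\xi}Q$ recorded in \eqref{asymptotic-Lamda-Q} and the derivative bounds \eqref{asym-partial-y-Lambda-Q-y}. I will fix the $X^a_{\xi_0}$-norm hypothesis $|f(\xi)|\le M\langle\xi\rangle^{a-2}$ (equivalently $M=\sup_{[0,\xi_0]}\langle\xi\rangle^{2-a}|f(\xi)|$) and track the pointwise growth of $H^{-1}f$ and of $(\xi\partial_\xi)^{k}H^{-1}f$ for $k=1,2$ separately in the two regimes $\xi\lesssim 1$ and $\xi\gtrsim 1$.

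First I would establish \eqref{defi-H-1f}. In the inner regime $\xi\in[0,1]$, since $\Lambda Q\asymp -2$ one has $|L(f)(\xi)|\lesssim M\xi$ and then $|H^{-1}f(\xi)|\lesssim M\xi^{2}$, which is controlled by $M\langle\xi\rangle^{a}$ because $a\ge -\gamma>-d$. In the outer regime $\xi\in[1,\xi_0]$, the decay $\Lambda Q\sim a_{0}\xi^{-\gamma}$ and the assumption $a\ge -\gamma$ guarantee that the inner integral in $L(f)$ produces the correct power $\xi^{a-\gamma+d}$ (the integrand at infinity is $(\xi')^{a-\gamma+d-1}$, whose exponent is $>-1$ precisely when $a>-\gamma-d$, which is implied by our hypothesis), giving $L(f)(\xi)\lesssim M\xi^{a-1}$, and then integration in $\xi'$ (now allowed since $a-1+\gamma>-1$ iff $a>-\gamma$; the borderline case $a=-\gamma$ produces at worst a $\log\xi$ factor that is absorbed in $\langle\xi\rangle^{a}$ after multiplying back by $\Lambda Q\sim\xi^{-\gamma}$) yields $|H^{-1}f(\xi)|\lesssim M\xi^{a}$. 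The constants depend only on $a$ and the fixed constants $\gamma,d$. The bounds on $(\xi\partial_\xi)H^{-1}f$ and $(\xi\partial_\xi)^{2}H^{-1}f$ follow by differentiating the formula directly: $\xi\partial_\xi H^{-1}f=\xi(\partial_\xi \Lambda Q)\int_0^{\xi}L(f)/\Lambda Q+\xi L(f)$, and the two terms satisfy identical bounds by \eqref{asym-partial-y-Lambda-Q-y} and the inner estimate on $L(f)$ already produced; the second derivative is handled analogously, with the only new ingredient being $\partial_\xi L(f)$, which one computes by differentiating the defining integral and splitting into two terms that match the estimate.

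For \eqref{defi-H-1-Lambda}, I would recall $\Lambda f=2f+\xi\partial_\xi f$ and perform an integration by parts inside $L(\Lambda f)$:
\[\int_0^{\xi}\xi'\partial_{\xi'}f(\xi')\,\Lambda Q(\xi')(\xi')^{d+1}\,d\xi'=\xi^{d+2}f(\xi)\Lambda Q(\xi)-\int_0^{\xi}f(\xi')\,\partial_{\xi'}\!\bigl[(\xi')^{d+2}\Lambda Q(\xi')\bigr]\,d\xi'.\]
The boundary term at $\xi'=0$ vanishes since $\Lambda Q$ is bounded there and $d+2>0$. The remaining integrand is exactly of the form handled in the first part (one derivative on $\Lambda Q$ produces the same power by \eqref{asym-partial-y-Lambda-Q-y}), so using the hypothesis $|\partial_\xi f(\xi)|\le M'\langle\xi\rangle^{a-3}$ with $M'=\sup_{[0,\xi_0]}\langle\xi\rangle^{3-a}|\partial_\xi f|$ yields an estimate on $L(\Lambda f)$ of the same shape as $L(f)$ above, with $M$ replaced by $M+M'$. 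From here the conclusion on $H^{-1}(\Lambda f)$ and its two $(\xi\partial_\xi)$-derivatives proceeds as in the previous paragraph.

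The main obstacle, and the reason to handle the proof by case splitting rather than by an abstract functional-analytic argument, is the borderline integrability at $\infty$ when $a=-\gamma$: the inner integrand in $L(f)/\Lambda Q$ has exponent exactly $-1$, producing a logarithmic loss. Since the $X^a_{\xi_0}$-norm is defined on a compact interval, this logarithm is tamed by an extra $\log\xi_0$, but the constant stays independent of $\xi_0$ because we reabsorb the $\log$ into $\langle\xi\rangle^{a}=\langle\xi\rangle^{-\gamma}$ using $\log\xi\lesssim\xi^{\epsilon}$ for any $\epsilon>0$ away from $\xi=0$; for $a>-\gamma$ no such care is needed. Once the pointwise bounds for $H^{-1}f$, $(\xi\partial_\xi)H^{-1}f$, $(\xi\partial_\xi)^{2}H^{-1}f$ are in hand, summing them and dividing by $\langle\xi\rangle^{a}$ gives the stated $X^a_{\xi_0}$ estimates. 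The closing remarks of the lemma are then immediate: $\|f\|_{X^{a-2}_{\xi_0}}\le 2\xi_0^{2}\|f\|_{X^{a}_{\xi_0}}$ for $\xi_0\ge 1$ follows from $\langle\xi\rangle^{a-2}\le\xi_0^{2}\langle\xi\rangle^{a}/(1+\xi^{2})\le 2\xi_0^{2}\langle\xi\rangle^{a}$ on $[0,\xi_0]$.
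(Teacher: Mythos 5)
Your argument is correct and follows essentially the same route as the paper: the explicit representation $H^{-1}f=\Lambda Q\int_0^{\xi}L(f)/\Lambda Q$, pointwise bounds split between the regime where $\Lambda Q$ is bounded and the regime where $\Lambda Q\sim a_0\xi^{-\gamma}$, and direct differentiation of the formula for the $(\xi\partial_\xi)^k$ terms. The only substantive caveat is the borderline case $a=-\gamma$: absorbing the logarithmic loss into $\langle\xi\rangle^{-\gamma}$ with a $\xi_0$-independent constant does not actually work as you describe, but the paper's own proof quietly restricts to $a>-\gamma$ as well, and the lemma is only ever invoked with strict inequality (e.g. $a=2j+2-\gamma$ or $a=-\gamma+\epsilon$), so this does not affect the applications.
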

\begin{proof}
Regarding \eqref{defi-norm-X-a-y-0}, we have
\begin{eqnarray*}
	\sum_{j=0}^2  \frac{ \left| ( \xi \partial_\xi )^i g(\xi) \right|}{ \langle \xi \rangle^{a} }
	& \leq &  \langle \xi \rangle^{-a} | g(\xi)|  + 2 \langle \xi \rangle^{ 1 - a} | \partial_\xi g | + \langle \xi \rangle^{2 - a} | \partial_\xi^2 g|.
\end{eqnarray*}
In addition to that, we derive from \eqref{defi-H-1-f}
that   
\begin{eqnarray*}
	\partial_\xi \left( H^{-1} f \right)  &=&  \partial_\xi \Lambda Q \int_0^\xi \frac{\mathscr{L} f}{ \Lambda Q }(\xi') d\xi'  + \mathscr{L}(f)(\xi),\\[0.2cm]
	\partial_\xi^2 \left( H^{-1} f \right)  &= & \partial_\xi^2   \Lambda Q \int_0^\xi \frac{\mathscr{L} f}{ \Lambda Q }(\xi') d\xi'   + \frac{ \partial_\xi \Lambda Q }{\Lambda Q}\mathscr{L}(f) + \partial_\xi ( \mathscr{L}(f)),
\end{eqnarray*}
where  $ \mathscr{L}$ defined as in \eqref{defi-math-H-inverse}. We remark that \eqref{defi-H-1f} follows from:    for all $\xi \in [0,\xi_0]:$
\begin{eqnarray}
	\langle \xi \rangle^{-a} \left| H^{-1} f (\xi) \right| & \leq &  \frac{C}{|a|} \sup_{\xi \in [0,\xi_0]} \xi^{2-a} (1 -\xi^\gamma) |f(\xi)|,\label{esti-aH-f-C-2-2-f-1}\\[0.2cm]
	\langle  \xi \rangle^{1-a}  \left|  \partial_\xi (H^{-1} f(\xi)) \right|
	& \leq & \frac{C}{|a|} \sup_{\xi \in [0,\xi_0]} \xi^{2-a} (1 -\xi^\gamma) |f(\xi)|,\label{esti-aH-f-C-2-2-f-2}\\[0.2cm]
	\langle  \xi \rangle^{2-a}  \left|  \partial_\xi^2 (H^{-1} f(\xi)) \right|
	& \leq & \frac{C}{|a|} \sup_{\xi \in [0,\xi_0]} \xi^{2-a} (1 -\xi^\gamma) |f(\xi)|,\label{esti-aH-f-C-2-2-f-3}
\end{eqnarray}
where $C$ does not depend on $\xi_0$. Let us start with the proof of these estimates:

\medskip
- \textit{ The proof of \eqref{esti-aH-f-C-2-2-f-1}:  }   From   $\mathscr{L}$'s formula in \eqref{defi-math-H-inverse},  we  have 
\begin{eqnarray*}
	| \mathscr{L} (f) | & \leq &  \frac{1}{
		|\xi|^{d+1} |\Lambda Q|} \int_0^\xi  \langle  \xi'\rangle^{2-a}| f(\xi')| \langle \xi' \rangle^{a-2}   | \Lambda Q(\xi') |(\xi')^{d+1} d\xi'\\
	& \leq  & \sup_{\xi \in [0,\xi_0]} \left\{ \langle \xi \rangle^{2-a } |f(\xi)| \right\}\frac{1}{
		|\xi|^{d+1}| \Lambda Q|} \int_0^\xi \langle \xi' \rangle^{a-2}   | \Lambda Q(\xi') |(\xi')^{d+1} d\xi'\\
	&=&\sup_{\xi \in [0,\xi_0]} \left\{ \langle \xi \rangle^{2-a } |f(\xi)| \right\} \tilde{L}(\xi).
\end{eqnarray*}
Plugging this estimate to $H^{-1} f$, we obtain
\begin{eqnarray*}
	\left| H^{-1} f(\xi) \right|\leq \left[   \sup_{\xi \in [0,\xi_0]} \langle \xi \rangle^{2-a} |f(\xi)|  \right]   | \Lambda Q| \int_0^\xi  \frac{\tilde{L}(\xi')}{|\Lambda Q|} d\xi'.
\end{eqnarray*}  
Hence,  it is sufficient to prove
$$ | \Lambda Q| \int_0^\xi  \frac{\tilde{L}(\xi')}{|\Lambda Q|} d\xi' \leq C \langle \xi \rangle^{a},$$
where $C$ does not depend on $\xi_0$. Indeed,  we  consider two cases where $  \xi_0 \ll 1 $ and $\xi_0 \gg 1$:

+ The case $  \xi_0 \ll 1 $: We have the following for all $\xi \in [0,\xi_0]$
\begin{eqnarray*}
	\frac{1}{C} \leq |\Lambda  Q| \leq C,
\end{eqnarray*}
and 
\begin{equation*}
	\tilde{L}(\xi)=\frac{1}{ |\xi|^{d+1} |\Lambda Q|} \int_0^\xi  \langle \xi' \rangle^{a-2} |\Lambda Q| (\xi')^{d+1} d\xi' \leq C\xi,
\end{equation*}
this yields 
\begin{eqnarray*}
	| \Lambda Q| \int_0^\xi  \frac{\tilde{L}(\xi')}{|\Lambda Q|} d\xi'  \leq C\xi^2\leq 2C.
\end{eqnarray*}
which concludes the case $\xi_0 \ll 1 $.

+ The case $ \xi_0 \gg 1$:  We observe that there exists $M >0$ such that for all $ \xi \in [M,\xi_0]$
$$ \frac{1}{C}  \xi^{-\gamma} \leq  |\Lambda Q| \leq C \xi^{-\gamma}.$$
Then, we have
\begin{eqnarray*}
	\int_0^\xi  \frac{\tilde{L}(\xi')}{|\Lambda Q|} d\xi'   &=&\int_0^M \frac{\tilde{L}(\xi')}{|\Lambda Q|} d\xi'   + \int_M^\xi \frac{\tilde{L}(\xi')}{|\Lambda Q|} d\xi' \\
	& \leq & C(M) +C  \int_M^\xi  \tilde{L}(\xi') (\xi')^{\gamma} d\xi'.
\end{eqnarray*}
Besides that, we estimate  $\tilde{L}(\xi') ,$ for all $\xi' \in [ M, \xi_0]$ as follows 
\begin{eqnarray*}
	\tilde{L}(\xi) & = &  \frac{1}{\xi^{d+1} \Lambda Q} \left( \int_0^M\langle \xi' \rangle^{a-2} |\Lambda Q | (\xi')^{d+1} d\xi'  + \int_M^\xi \langle \xi' \rangle^{a-2} |\Lambda Q | (\xi')^{d+1} d\xi'  \right)\\
	& \leq  & C(M) \left( \xi^{-d-1+\gamma} + \xi^{a-1}  \right),
\end{eqnarray*}
it follows that
\begin{eqnarray*}
	\int_M^\xi \tilde{L}(\xi') (\xi')^{\gamma } d\xi'  \leq C (M) ( 1+ \xi^{-d+2\gamma} + \xi^{a+\gamma}).
\end{eqnarray*}
Thus, we derive
\begin{eqnarray*}
	| \Lambda Q| \int_0^\xi  \frac{\tilde{L}(\xi')}{|\Lambda Q|} d\xi'  \leq C(M,a) \left( \xi^{-\gamma} + \xi^{a } \right).
\end{eqnarray*}
Finally, we have
$$  |H^{-1} f(\xi)| \leq  C(a)  \langle \xi \rangle^a ,  $$
provided that
$$  a > -\gamma  .$$

- The  proofs of \eqref{esti-aH-f-C-2-2-f-2} and \eqref{esti-aH-f-C-2-2-f-3} are the same.

\medskip
Now, it remains to prove that if $f \in X^{a}_{\xi_0}$, then we have
$$  \| f \|_{X^{a-2}_{\xi_0}} \leq C (a) \xi_0^2 \| f\|_{X^a_{\xi_0}}  ,$$
provided that $\xi_0 \geq 1$. Indeed, this comes from 
$$ \langle \xi  \rangle ^{2 - a} = \langle \xi  \rangle ^{-a} \langle \xi  \rangle ^{2}  \leq  2 \xi_0^2  \langle \xi  \rangle ^{-a}  ,$$
provided that $\xi_0 \geq 1$.  Thus, we have

\begin{eqnarray*}
	\|f\|_{X^{a-2}_{\xi_0}}  = \sup_{\xi \in [0,\xi_0]} \sum_{j=0}^2  \left| \frac{(\langle \xi \rangle \partial_\xi )^i f}{\langle \xi \rangle^{a-2}} \right| \leq \sup_{\xi \in [0,\xi_0]} \sum_{j=0}^2 2\xi_0^2 \left| \frac{(\langle \xi \rangle \partial_\xi )^i f}{\langle \xi \rangle^{a}} \right| \leq 2\xi_0^2 \|f\|_{X^{a}_{\xi_0}}.
\end{eqnarray*}
This concludes the proof of the Lemma. 
\end{proof}

In the following Lemma, we aim to estimate $ \partial_\xi^3 S_j$ and $\partial_\xi^3 R_i$:
\begin{lemma}[Higher estimates for $\partial_\xi^3 S_j$ and $\partial_\xi^3  R_i$] Let us consider  $S_j$ and $R_i$ which satisfying  \eqref{equa-S-j-rought} and \eqref{mapping-J-R-i}.  Furthermore, we assume  that the following estimates hold
\begin{eqnarray*}
	\| S_j \|_{X^{2j+2-\gamma}_{\xi_0}}  \le C y_0^2  \text{ and } \|R_i\|_{X^{\epsilon - \gamma}_{\xi_0}} \le C, \text{ with } \xi_0 = \frac{y_0}{\sqrt{b}}.
\end{eqnarray*}
Then, the following holds: for all $\xi \in [0,\xi_0]$
\begin{eqnarray}
	\left| \partial_\xi^3 S_j ( \xi)   \right| \le C  \langle \xi \rangle^{2j-\gamma -1},\label{estimate-partial-xi-3-S-j}\\
	\left| \partial_\xi^3 R_i (\xi) \right| \le C  \langle \xi \rangle^{\epsilon -\gamma -3}. \label{estimate-partial-xi-3-R-i}
\end{eqnarray}
\end{lemma}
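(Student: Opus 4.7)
The plan is to promote the integral equations \eqref{equa-S-j-rought} and \eqref{mapping-J-R-i} into local pointwise identities by applying the differential operator $H = \partial_\xi^2 + \frac{d+1}{\xi}\partial_\xi - V$ with $V(\xi) = 3(d-2)(2Q+\xi^2Q^2)$, which inverts $H^{-1}$, and then differentiate once to access a third derivative. For $S_j$ this yields the pointwise equality
\[
\partial_\xi^2 S_j = F_j(\xi) - \frac{d+1}{\xi}\partial_\xi S_j + V(\xi) S_j, \qquad F_j := 2\beta b\Bigl(\tfrac{\alpha}{2}-i+\tilde\lambda+\tfrac{1}{2}\Lambda\Bigr)\bigl(S_j + c_{i,j}(2\beta)^{j+1}T_{j+1}\bigr),
\]
and differentiating once more produces $\partial_\xi^3 S_j$ as a combination of $\partial_\xi F_j$, $\frac{d+1}{\xi}\partial_\xi^2 S_j$, $\frac{d+1}{\xi^2}\partial_\xi S_j$, and $\partial_\xi(VS_j)$; substituting the first identity back into the term $\frac{d+1}{\xi}\partial_\xi^2 S_j$ yields a closed expression for $\partial_\xi^3 S_j$ that involves only $S_j$, $\partial_\xi S_j$, $F_j$ and $\partial_\xi F_j$.

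The next step is to estimate each piece of this expression. The $X^{2j+2-\gamma}_{\xi_0}$-norm bound on $S_j$ (together with the definition \eqref{defi-norm-X-a-y-0}) furnishes, for $\xi$ in the relevant range,
\[
|S_j(\xi)| \lesssim \langle\xi\rangle^{2j+2-\gamma}, \qquad |\partial_\xi S_j(\xi)| \lesssim \langle\xi\rangle^{2j+2-\gamma}/\xi, \qquad |\partial_\xi^2 S_j(\xi)| \lesssim \langle\xi\rangle^{2j+2-\gamma}/\xi^2,
\]
which already give the desired $\langle\xi\rangle^{2j-\gamma-1}$ behavior away from the origin. For $F_j$ and $\partial_\xi F_j$ I would use the asymptotics of $T_{j+1}$ from Lemma \ref{lemma-Generation-H} (both the expansion at $0$ and $\partial_\xi^k T_{j+1}\sim C_{j+1}(2j+2-\gamma)\cdots \xi^{2j+2-\gamma-k}$ at infinity), and for $V(\xi)$ and $\partial_\xi V(\xi)$ the asymptotics of $Q$ from Lemma \ref{lemma-ground-state}, which give $V(\xi)\sim -\frac{3(d-2)}{\xi^2}$ as $\xi\to\infty$ and $V$ smooth at $0$.

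The same scheme is applied to $R_i$: the identity $HR_i$ reads, after applying $H$ to \eqref{mapping-J-R-i},
\[
HR_i = 2\beta b\Bigl(\tfrac{\alpha}{2}-i+\tilde\lambda+\tfrac{1}{2}\Lambda\Bigr)R_i + \tfrac{1}{2}\sum_{j=0}^i c_{i,j}(2\beta)^{j+1} b^j \Theta_j,
\]
and the same manipulation produces a closed expression for $\partial_\xi^3 R_i$ in terms of $R_i$, $\partial_\xi R_i$, the forcing, and $\partial_\xi \Theta_j$; the asymptotic \eqref{asymptotic-Theta} together with $|\partial_\xi^k\Theta_j|\lesssim\xi^{-\gamma+2j-2-k}\ln\xi$ provides the missing pointwise input, and the $X^{\epsilon-\gamma}_{\xi_0}$ bound on $R_i$ handles the rest, yielding $|\partial_\xi^3 R_i|\lesssim \langle\xi\rangle^{\epsilon-\gamma-3}$.

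The main obstacle is the apparent singularity at $\xi=0$ of the term $\frac{d+1}{\xi}\partial_\xi^2 S_j$ (and its analogue for $R_i$): the naive bound $|\partial_\xi^2 S_j|\lesssim \langle\xi\rangle^{2j+2-\gamma}/\xi^2$ fails at the origin. To overcome this I would use that $S_j$ and $R_i$ are constructed as $H^{-1}$ of smooth radial forcings, so they are smooth radial functions vanishing to even order at $\xi=0$; consequently $\partial_\xi S_j(\xi)=O(\xi)$ and $\partial_\xi^2 S_j(0)$ exists, so the combination $\partial_\xi^2 f + \frac{d+1}{\xi}\partial_\xi f$ is regular. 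The substitution performed in Step 1 (eliminating $\partial_\xi^2 S_j$ through the $H$-equation) makes this cancellation explicit, the singular pieces recombining into bounded quantities controlled by the smooth forcing $F_j$ and by $V S_j$, which closes the estimate near $0$ and completes the proof.
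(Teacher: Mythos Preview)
Your approach via the ODE identity $\partial_\xi^2 S_j = F_j - \frac{d+1}{\xi}\partial_\xi S_j + V S_j$ and one further differentiation is essentially equivalent to the paper's route, which differentiates the explicit integral representation $S_j = bH^{-1}f = b\,\Lambda Q\int_0^\xi \frac{\mathscr{L}(f)}{\Lambda Q}\,d\xi'$ three times. For $\xi\ge 1$ your argument goes through: using $b\xi^2\le y_0^2$ to absorb the factor of $b$ in $F_j$, all terms in your expression are indeed $O(\langle\xi\rangle^{2j-\gamma-1})$.

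The gap is at the origin. After your substitution the expression for $\partial_\xi^3 S_j$ still contains the term $\frac{(d+1)(d+2)}{\xi^2}\partial_\xi S_j$, and the $X^{2j+2-\gamma}_{\xi_0}$-norm together with the first-pass information $S_j = O(\xi^2)$ only gives $\partial_\xi S_j = O(\xi)$, hence $\frac{1}{\xi^2}\partial_\xi S_j = O(\xi^{-1})$. No cancellation with the other terms removes this: $\frac{d+1}{\xi}F_j$ and $\frac{d+1}{\xi}VS_j$ are each $O(\xi)$, not $O(\xi^{-1})$. Your appeal to smoothness is circular here, since the $X$-norm only controls two derivatives and $C^3$-regularity of $S_j$ is precisely what the lemma asserts. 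The paper closes this by an iterative bootstrap: from $|f|\le C$ one gets $|S_j|=|bH^{-1}f|\le C\xi^2$, which feeds back into $f$ (through $\Lambda S_j$) to give $|f|\le C\xi^2$, hence $|S_j|\le C\xi^4$, and so on until the vanishing of $T_{j+1}=O(\xi^{2j+2})$ saturates the iteration at $|f|\le C\xi^{2j+2}$, $|\mathscr{L}(f)|\le C\xi^{2j+3}$, $|S_j|\le C\xi^{2j+4}$. With this quantitative vanishing every term in either your expression or the paper's becomes $O(\xi^{2j+1})$, which is bounded. This bootstrap is the missing ingredient in your treatment near $\xi=0$; the same remark applies to $R_i$.
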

\begin{proof}   

- The proof of  \eqref{estimate-partial-xi-3-S-j}:  We  remark that  when   $ b \to 0 $, $\xi_0  \to +\infty$. Then, we will consider two  situations, namely, $  \xi   \ll  1 $ and $ \xi \gg 1$ . Recall the   inverse  formula  
\begin{eqnarray*}
	S_j = bH^{-1} ( f ),  f  =  \left( \frac{\alpha}{2}  -i +\tilde \lambda  +\frac{1}{2} \Lambda  \right) (S_j + c_{i,j} T_{j+1})
\end{eqnarray*}
and write $ \partial_\xi^3 S_j $ as follows
\begin{eqnarray*}
	b^{-1} \partial_\xi^3 S_j (\xi )  &=&   \partial_\xi^3   \Lambda Q  \int_0^\xi \frac{\mathcal{L}(f)}{\Lambda Q } d\xi' +  \frac{ \partial_\xi^2   \Lambda Q }{ \Lambda Q}   \mathcal{L}(f) +\partial_\xi f     - \frac{(d+1)(d+2)}{\xi^2} \mathcal{L}(f)  - \frac{(d+1)\partial_\xi \Lambda Q}{\xi \Lambda Q} \mathcal{ L} (f)\\
	&  - & \frac{(d+1)}{\xi} f.
\end{eqnarray*}
+  The case $\xi \in [0,1]$:   We observe that   when $\xi \le 1$, it follows that
$  |f (\xi)| \le  C,    $
then, plugging to \eqref{equa-S-j-rought}, we obtain
$$   |S_j(\xi)  | = |H^{-1} f| \le C\xi^2, $$
Hence, we refine the behavior near  $ 0$  as follows
\begin{eqnarray*}
	| f (\xi)| \le  C \xi^2,\\
	\left| \mathcal{L}(f) (\xi) \right| \le C  \xi^3, 
\end{eqnarray*}
continuing this process we enhance the behavior to
\begin{eqnarray*}
	| f (\xi)| \le  C \xi^{2j+2},\\
	\left| \mathcal{L}(f) (\xi) \right| \le C  \xi^{2j+3}, 
\end{eqnarray*}
(we can  get a precise behavior for $S_j$  at $0$ by $S_j (\xi) = O(\xi^{2j+4})$  ). 
Then, it is easy to derive 
$$ b^{-1} \partial^3_\xi S_j (\xi) =O ( \xi^{2j+1} ) \text{ as } \xi \to 0.  $$

+ The case  $\xi \ge 1$:  we have the following fact
\begin{eqnarray*}
	\left|  S_j(\xi ) \right|  \le   C \langle \xi \rangle^{2j+2-\gamma} \text{ and }
	\left| T_{j+1} (\xi)  \right|  \le  C\langle \xi \rangle^{2j+2-\gamma},    
\end{eqnarray*}
which yields
$$ |f (\xi)| \le C \langle \xi \rangle^{2j+2-\gamma}.  $$
Since
\begin{eqnarray*}
	\left| \mathcal{L}(f) (\xi)  \right| \le C\langle \xi \rangle^{2j+3 -\gamma}, 
	\text{ and }
	\left|  S_j(\xi ) \right| &  \le &  C \langle \xi \rangle^{2j+2-\gamma},
\end{eqnarray*}
we get 
\begin{eqnarray*}
	\left|\partial_{\xi}^3  S_j(\xi ) \right| &  \le &  C b \langle \xi \rangle^{2j+1-\gamma} \le  C  \langle \xi\rangle^{ 2j -\gamma-1 },
\end{eqnarray*}
due to the fact that 
$$ b \xi^2 = y_0 \le 1.$$
Thus, we conclude the proof of \eqref{estimate-partial-xi-3-S-j}. By the same technique, we derive \eqref{estimate-partial-xi-3-R-i}. This finalizes the proof of the Lemma.
\end{proof}
\section{ Outer eigenfunctions construction }\label{proof-propo-outer-eigenfunctions}

This paragraph is devoted to  give the complete proof to Proposition \ref{propo-outer-eigenfunctions}:
\begin{proof}
Now, let   $\phi_{i,out, \beta}$ be of the form
\begin{equation}\label{form-phi-i-out-beta}
	\phi_{i,out, \beta}  (y )  = \phi_{i,\infty,\beta}  (y )  + \tilde{ \lambda} (  \tilde{ \phi}_{i, \beta}  (y) +   R_{i,1}( y)  ) +  R_{i,2}(y),
\end{equation}
where $R_{i,1}$ and $R_{i,2}$ are to be constructed. Rewrite  \eqref{defi-mathscr-L-b-radial}  as  
$$ \mathscr{L}_b = \mathscr{L}_\infty^\beta   - 3(d-2) \left( \frac{1}{y^2} + 2 Q_b + Q_b^2 y^2     \right), $$
where $\mathscr{L}_\infty^\beta$    was defined in \eqref{defi-operator-L-infty}
and  $\mathscr{L}_{i, ext}^{\beta} = \mathscr{L}^\beta_\infty - 2\beta \left( \frac{\alpha}{2}- i\right)$.  Plugging \eqref{form-phi-i-out-beta} into    
$$ \left[ \mathscr{L}_{b} - 2 \beta \left( \frac{\alpha}{2} -i  \right) -\tilde \lambda  \right]( \phi_{i,out, \beta}) =0,$$
which yields 
\begin{eqnarray*}
	& & \mathscr{L}_{i, ext}^{\beta} \phi_{i,out, \beta} - \tilde{\lambda } \phi_{i,out, \beta}  - 3 (d-2)  \left( \frac{1}{y^2} + 2 Q_b + Q_b^2 y^2     \right) \phi_{i,out, \beta}
	=  \left(  \mathscr{L}_{i, ext}^{\beta} R_{i,1}   -\tilde{\lambda} (\tilde{\phi}_{i, \beta} + R_{i,1}) \right) \\
	& + & \left(  \mathscr{L}_{i, ext}^{\beta}  R_{i,2} -   3 (d-2)  \left( \frac{1}{y^2} + 2 Q_b + Q_b^2 y^2     \right) \phi_{i,out} -\tilde{\lambda } R_{i,2}  \right) =0,
\end{eqnarray*}
where we used $\mathscr{L}^\beta_{i,ext} \phi_{i,\infty} =0$ as well as $  \mathscr{L}^\beta_{i, ext}(\tilde \phi_{i,\beta}) = \phi_{i, \infty, 
	\beta}$.       
Thus, it is sufficient to   construct $R_{i,1}$ and $R_{i,2}$ satisfying 
\begin{eqnarray*}
	\mathscr{L}_{i, ext}^{\beta}( R_{i,1} )  & = &  \tilde{\lambda} R_{i,1}   + \tilde{\lambda} \tilde{\phi}_{i,\beta} \\[0.2cm]
	\mathscr{L}_{i, ext}^{\beta}(R_{i,2})  &=& \left(  \tilde{\lambda}  +3 (d-2)  \left( \frac{1}{y^2} + 2 Q_b + Q_b^2 y^2     \right) \right) R_{i,2} \\
	&+&   3 (d-2)  \left( \frac{1}{y^2} + 2 Q_b + Q_b^2 y^2     \right) (   \phi_{i,\infty,\beta} + \tilde{ \lambda} (\tilde{\phi}_{i,\beta } + R_{i.1})),
\end{eqnarray*}
or equivalently
\begin{eqnarray}
	R_{i,1} &=& \tilde{\lambda} \mathscr{L}^{-1}_{i,ext} (R_{i,1})  + \tilde{\lambda} \phi_{i,\infty,\beta}, \label{equa-R-i-1}\\
	R_{i,2}  &=& \mathscr{L}_{i,ext}^{-1} (H_1 R_{i,2}) + \mathscr{L}_{i,ext}^{-1} (H_2)\label{equa-R-i-2}.
\end{eqnarray}
Here 
\begin{eqnarray*}
	H_1 (y)  &=&  \left(  \tilde{\lambda}  +3 (d-2)  \left( \frac{1}{y^2} + 2 Q_b + Q_b^2 y^2     \right) \right) \\
	H_2 (y) & = &  3 (d-2)  \left( \frac{1}{y^2} + 2 Q_b + Q_b^2 y^2     \right)   ( \phi_{i,\infty,\beta} + \tilde{ \lambda} (\tilde{\phi}_{i,\beta} + R_{i,1}) ).
\end{eqnarray*}

\textit{ Step 1: Construction of $ R_{i,1}$:}
The construction is based on  Banach fixed point theorem on Banach space $X^{a,a'}_{y_0}$ equipped with the norm introduced in \eqref{defi-norm-X-y-0-a-a'}.   Now, denote the right hand side of \eqref{equa-R-i-1}  by $ K (R_{i,1})$, and apply Lemma \ref{continnuity-L-i-ext--1} with  $ a =  {\gamma}  -d$ and $a' = 2i+2-\gamma$ 
$$  \| K(R_{i,1}) \|_{X^{a,a'}_{y_0}} \leq   C_1(a,a')|\tilde{\lambda}| \left( \|R_{i,1}\|_{X^{a,a'}_{y_0}}   +    \|\phi_{i,\infty,\beta}\|_{X^{a,a'}_{y_0}}  \right) \text{ and }   \| \phi_{i,\infty,\beta}\|_{ X^{a,a'}_{y_0}} \leq    C_2 . $$
Then, $K$ maps  the ball $ B(0, 2 C_1  C_2 |\tilde{\lambda}|)$ into itself   provided that   $| \tilde{\lambda}| \leq  \min \left( \frac{1}{2}, \frac{1}{2C_1}\right)$. In addition to that,  for all $ X_{1}, X_2 \in B(0, 2 C_1  C_2 |\tilde{\lambda}|)$, we have
\begin{eqnarray*}
	\left\| K(X_1) - K(X_2) \right\|_{X^{a,a'}_{y_0}}  \lesssim |\tilde \lambda | \left\| X_1 - X_2 \right\|_{X^{a,a'}_{y_0}}.
\end{eqnarray*}
Hence, for $
\tilde \lambda $ small enough, $K$ is a contraction and  the existence of $  R_{i,1}    $  follows with the bound
$$ \|   R_{i,1}  \|_{X^{a,a'}_{y_0}}  \leq   2C_1 C_2 |\tilde{\lambda}|.  $$ 
Now, we establish the estimates for $\partial_b R_{i,1}, \partial_b R_{i,1} $ and $\partial_\beta R_{i,1} $. 

\noindent
\medskip
- \textit{ For $ \partial_b R_{i,1}$:} From  \eqref{equa-R-i-1},  we see that $R_{i,1}$  is independent of  $  b$ so $ \partial_b R_{i,1}  =0.$

\noindent
\medskip
-  \textit{ For $ \partial_{\tilde \lambda}  R_{i,1}$:}
Applying   $ \partial_{\tilde{ \lambda}}  $ to  \eqref{equa-R-i-1},  we obtain
$$ \partial_{\tilde{\lambda} } R_{i,1}  =  \left(\mathscr{L}_{i,ext}^\beta\right)^{-1} (R_{i,1}) + \tilde{\lambda} \left(\mathscr{L}_{i,ext}^\beta\right)^{-1}( \partial_{\tilde{\lambda}} R_{i,1} ) + \phi_{i,\infty, \beta}.  $$
Lemma   \ref{continnuity-L-i-ext--1} implies 
$$   \|  \partial_{\tilde{\lambda} } R_{i,1}  \|_{X^{a,a'}_{y_0}}    \leq   C_1 \|  R_{i,1} \|_{X^{a,a'}_{y_0}}  + C_1 |\tilde{\lambda}| \|\partial_{\tilde{\lambda}} R_{i,1}\|_{X^{a,a'}_{y_0}}  +\|\phi_{i, \infty, \beta}\|_{X^{a, a'}_{y_0}},  $$
hence
$$    \|  \partial_{\tilde{\lambda} } R_{i,1}  \|_{X^{a,a'}_{y_0}} \leq C. $$

\noindent
\medskip
-  \textit{ For $ \partial_\beta   R_{i,1}$:} Applying  $ \partial_\beta $ to  \eqref{equa-R-i-1},  we obtain
$$ \partial_{\beta } R_{i, 1} =  \tilde\lambda \partial_{\beta} \left( \left(\mathscr{L}_{i, ext}^\beta \right)^{-1}\right) (R_{i,1}) + \tilde{\lambda} \left(\mathscr{L}_{i, ext}^\beta \right)^{-1}( \partial_{\beta} R_{i,1} ) +\tilde \lambda \partial_{\beta} \phi_{i,\infty,\beta}.  $$
Applying $X^{a,a'}_{y_0}$ norm to the above equality and   
Lemma   \ref{continnuity-L-i-ext--1}, we deduce 
\begin{equation*}
	\| \partial_{\beta } R_{i,1} \|_{X^{a,a'}_{y_0}}  \leq C_1 |\tilde \lambda | \|R_{i,1} \|_{X^{a,a'}_{y_0}}+C_2 |\tilde{\lambda}|\| \partial_{\beta } R_{i,1} \|_{X^{a,a'}_{y_0}} +   |\tilde{\lambda}| \| \partial_\beta \phi_{i, \infty, \beta} \|_{X^{a,a'}_{y_0}}. 
\end{equation*}
On the one hand,   we have 
$$   \|  R_{i,1} \|_{X^{a,a'}_{y_0}}   +     \| \partial_\beta \phi_{i, \infty, \beta} \|_{X^{a,a'}_{y_0}}  \le C.     $$ 
Finally,      we get 
\begin{equation*}
	\|  \partial_\beta  R_{i,1} \|_{X^{a,a'}_{y_0}} \le C |\tilde \lambda|.   
\end{equation*}
The construction and estimates on $R_{i,2}$ are very similar to those established above and are left to the reader.

\textit{- Step 2: Construction of  $R_{i,2}$: }

The construction is also based on the  Banach fixed point theorem on the Banach space $X^{a,a'}_{y_0}$    with  $ a= -\tilde{\gamma}  -2 -\alpha,$ and $ a'= 2i +2-\gamma$.    First,  we    define  the   right hand  side  of \eqref{equa-R-i-2}   to be $J(R_{i,2})$.     Using 
Lemma  \ref{continnuity-L-i-ext--1}, we have the following
\begin{eqnarray*}
	\| J(R_{2,i})\|_{X^{a,a'}_{y_0}}     \leq     C _1  \left(  \| H_1  R_{i,2}\|_{X^{a,a'}_{y_0}}     +   \| H_2\|_{X^{a,a'}_{y_0}}   \right).
\end{eqnarray*} 
We now aim to prove the following estimates 
\begin{eqnarray}
	\| H_1  R_{i,2}\|_{X^{a,a'}_{y_0}}    & \leq &  C(y_0) b^{\alpha} \| R_{i, 2}\|_{X^{a,a'}_{y_0}}  \label{estimat-Theta-1-R-i-2},\\
	\|H_2\|_{X^{a, a'}_{y_0}}   & \leq & C(y_0) b^{\alpha}. \label{estima-Theta-2}
\end{eqnarray}

- \textit{The proof of  \eqref{estimat-Theta-1-R-i-2}}: From     Lemma \ref{lemma-ground-state},   we have 
\begin{eqnarray*}
	Q_b (y)= \frac{1}{b} Q\left(  \frac{y}{\sqrt{b}}   \right) = \frac{1}{b} \left(     -\frac{1}{ \left( \frac{y}{\sqrt{b}}\right)^2}       +q_0 \left( \frac{y}{\sqrt{b}}\right)^{-\gamma}  + O_{b\to 0} \left(\frac{y}{\sqrt{b}} \right)^{-\gamma -g} \right), 
\end{eqnarray*}
$\text{ since } y \ge y_0,   \frac{y}{\sqrt{b}} \to +\infty $. This implies that for all $y \ge y_0$ and $ b \in (0,b^*(y_0))$
\begin{eqnarray*}
	\left| 2 Q_b(y)    -  \left( -\frac{2}{y^2}  + 2q_0 y^{-\gamma} b^{\frac{\alpha}{2}}\right)   \right|  & \leq &  C(y_0)  b^\alpha ,\\
	\left|     y^2 Q_b^2(y)    - \left( \frac{1}{y^2}  - 2q_0 y^{-\gamma} b^{\frac{\alpha}{2}}  \right)     \right|    & \leq &  C(y_0)  b^{\alpha},
\end{eqnarray*}
since $g = - 2\lambda_1 = 2(\gamma -2)$.  Thus, for all $ y \geq y_0$, we have 
\begin{eqnarray}
	\left|  2 Q_b(y) + Q_b^2(y) y^2 +\frac{1}{y^2} \right| \leq C(y_0)   b^{ \alpha} \lesssim b^\frac{\alpha}{2}. \label{estima-Q-b-y-2-Q-b-2-1-y-2-y-0}
\end{eqnarray}

\noindent
- \textit{Proof of  \eqref{estima-Theta-2}:}  Recall that 
$$ \|   R_{i,1}  \|_{X^{-\tilde \gamma  ,a'}_{y_0}}  \leq   C|\tilde{\lambda}|,$$  
which implies
$$ \|   R_{i,1}  \|_{X^{a  ,a'}_{y_0}}  \leq   C(y_0)|\tilde{\lambda}| \text{ with } a = -\tilde \gamma -2-\alpha.$$
Similarly, we also have 
\begin{eqnarray*}
	\| \phi_{i,\infty,\beta} \|_{X^{a  ,a'}_{y_0}} + \|\tilde{\phi}_{i,\beta} \|_{X^{a  ,a'}_{y_0}} \le C(y_0).
\end{eqnarray*}
Thus, the above estimates and \eqref{estima-Q-b-y-2-Q-b-2-1-y-2-y-0} immediately conclude \eqref{estima-Theta-2}.

Using  estimates \eqref{estimat-Theta-1-R-i-2} and \eqref{estima-Theta-2},  we get
\begin{eqnarray*}
	\| J(R_{2,i})\|_{X^{a,a'}_{y_0}}     \leq C_1(y_0) b^{\alpha}  \left(  \|  R_{i,2}\|_{X^{a,a'}_{y_0}}     +  1   \right).
\end{eqnarray*}
Consequently,  once  $b \ll 1 $, $J$ becomes a  contraction from the ball $B(C_1 b^\alpha,0)$ to itself. Thus, it follows Banach fixed point theorem the existence of $R_{i,2}$ satisfying
\begin{equation}
	\| R_{i,2}  \|_{X^{a,a'}_{y_0}} \leq C b^{\frac{\alpha}{2}}.
\end{equation}
Next, we focus on evaluating  
$\partial_{\tilde{\lambda}} R_{i,2}$, $\partial_{b} R_{i,2}$  and $\partial_{\beta} R_{i,2}$:

+  For $\partial_{\tilde{\lambda}} R_{i,2} $:  We have
\begin{eqnarray*}
	\partial_{\tilde{\lambda}} R_{i,2}= \mathscr{L}_{i,ext}^{-1} ( R_{i,2})+\mathscr{L}_{i,ext}^{-1} (H_1 \partial_{\tilde{\lambda}} R_{i,2}) + \mathscr{L}_{i,ext}^{-1}(\partial_{\tilde{\lambda}}H_2)
\end{eqnarray*}
then
\begin{eqnarray*}
	\| \partial_{\tilde{\lambda}} R_{i,2} \|_{X^{a,a'}_{y_0}} &\leq& C_1 \| R_{i,2} \|_{X^{a,a'}_{y_0}}+C_2\|H_1 \partial_{\tilde{\lambda}} R_{i,2}\|_{X^{a,a'}_{y_0}}+C_3\|\partial_{\tilde{\lambda}}H_2\|_{X^{a,a'}_{y_0}}\\
	&\leq& C_1 b^{\frac{\alpha}{2}}+C_2 b^{\frac{\alpha}{2}} \| \partial_{\tilde{\lambda}} R_{i,2} \|_{X^{a,a'}_{y_0}}+C_3 b^{\frac{\alpha}{2}}
\end{eqnarray*}
which yields to
\begin{equation*}
	\| \partial_{\tilde{\lambda}} R_{i,2} \|_{X^{a,a'}_{y_0}}  \leq C  b^{\frac{\alpha}{2}}.
\end{equation*}

- For $\partial_b R_{i,2}:$
We have
\begin{eqnarray*}
	\partial_b R_{i,2}=\mathscr{L}_{i,ext}^{-1} (\partial_b H_1 R_{i,2})+\mathscr{L}_{i,ext}^{-1} (H_1 \partial_b  R_{i,2})+\mathscr{L}_{i,ext}^{-1}(\partial_b H_2)
\end{eqnarray*}
then
\begin{eqnarray*}
	\| \partial_b R_{i,2} \|_{X^{a,a'}_{y_0}} &\leq& C_1 \|\partial_b H_1 R_{i,2} \|_{X^{a,a'}_{y_0}}+C_2\|\Theta_1 \partial_b R_{i,2}\|_{X^{a,a'}_{y_0}}+C_3\|\partial_b H_2\|_{X^{a,a'}_{y_0}}\\
	&\leq& C_1 b^{\frac{\alpha}{2}-1}+C_2 b^{\frac{\alpha}{2}} \| \partial_{\tilde{\lambda}} R_{i,2} \|_{X^{a,a'}_{y_0}}+C_3 b^{\frac{\alpha}{2}-1}\leq C b^{\frac{\alpha}{2}-1}.
\end{eqnarray*}

- For $\partial_{\beta} R_{i,2}$:  We have
\begin{eqnarray*}
	\partial_{\beta} R_{i,2}=\partial_{\beta}(\mathscr{L}_{i,ext}^{-1} ( \Theta_1 R_{i,2}))+\mathscr{L}_{i,ext}^{-1} ( \Theta_1 \partial_{\beta} R_{i,2})+\partial_{\beta}(\mathscr{L}_{i,ext}^{-1} ( \Theta_2)),
\end{eqnarray*}
then we use Lemma \ref{continnuity-L-i-ext--1} to get
\begin{eqnarray*}
	\| \partial_{\beta} R_{i,2} \|_{X^{a,a'}_{y_0}} \leq C (y_0)\left( b^\frac{\alpha}{2} \|R_{i,2}\|_{X^{a,a'}_{y_0}} + b^\frac{\alpha}{1} \|\partial_{\beta} R_{i,2}\|_{X^{a,a'}_{y_0}} + b^\alpha  \right) \le C b^\frac{\alpha}{2}  .
\end{eqnarray*}
Finally,  we conclude the proof of the Proposition.
\end{proof}
In the  sequel, we aim to   complete the results  used in the proof of Proposition \ref{propo-outer-eigenfunctions}. To be begin with, we  need the following result on the resonance  of $\mathscr{L}^\beta_{i,ext}$. For sake of shortness we set

\begin{equation}\label{mathscr-L-ext}
\mathscr{L}_{i,ext}^\beta u=  \left(  \mathscr{L}_{\infty}^{\beta}   -   2\beta\left(    \frac{\alpha}{2} - i\right) \right) u.
\end{equation}
We have 
\begin{lemma}[Resonance of $\mathscr{L}_{i,ext}^\beta $]\label{lemma-phi-1-psi-2-outer}
We consider $i \in \N$, then, there exists   $\tilde {\psi}_{i,\beta} $ such   that  it solves $ \mathscr{L}_{i, ext}^\beta \tilde {\psi}_{i,\beta}= 0$. Moreover,  we have 
$$  \text{Ker}\hspace{0.04cm}(\mathscr{L}_{i,ext}^\beta)  = \text{Span}\{ \phi_{i,\infty,\beta},  \tilde \psi_{i, \beta} \}, $$
where $ \phi_{i,\infty, \beta}$ is the i-th  eigenfunction of 
$\mathscr{L}_\infty^\beta$, given   in Proposition \ref{proposition-spectral-L-infty}; and     $\tilde \psi_{i,\beta}$ has the following asymptotic:
\begin{equation}\label{asymptotic-tilde-phi}
	\tilde{\psi}_{i,\beta}(y) = \left\{  \begin{array}{rcl}
		& & \frac{y^{\gamma -d}}{a_{i, 0}(d-2\gamma)} ( 1 + O(y^2)) \text{ as } y \to 0, \\[0.2cm]
		& &  -\frac{2}{a_{i,i}(2\beta)^{i}} y^{-2i + \gamma -(d+2)} e^{2\beta\frac{y^2}{4}} \left[ 1+ O( y^{-2}) \right]  \text{ as }  y \to +\infty
	\end{array}
	\right.
\end{equation}
where  $  \gamma  $ and $a_{i,j}$ were  defined in    \eqref{defi-gamma-intro} and \eqref{defi-a-i-j-intro} .
In particular, there exists  a  solution $\tilde \phi_{i,\beta}$ to  $\mathscr{L}_{i,ext}^{\beta} \tilde \phi_{i}  = \phi_{i,\infty,\beta}$, satisfying the following asymptotic
\begin{equation}\label{asimptotics}
	\tilde \phi_{i,\beta}(y) = \left\{ \begin{array}{rcl}
		&& K_0  y^{-\tilde \gamma}  (1 + O(y^2)  )  \text{ as } y \to 0, \\
		& &  K_{\infty} y^{2i -\gamma} ( \ln y + O(1) )  \text{ as } y  \to +\infty,
	\end{array}
	\right.     
\end{equation}
where  $ K_0 = \frac{1}{(2\gamma-d)(d+2 -2\gamma)} , K_\infty =2 (2\beta)^i$ and $\tilde \gamma$ is defined  by
\begin{equation}\label{defi-gamma-tilde}
	\tilde  \gamma =  \frac{1}{2} ( d + \sqrt{d^2 -12 d+ 24} ).
\end{equation}
\end{lemma}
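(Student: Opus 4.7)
Since $\mathscr{L}_{i,ext}^\beta$ is a second-order linear ODE with smooth coefficients on $(0,\infty)$, its kernel has dimension exactly two. Proposition \ref{proposition-spectral-L-infty} furnishes one explicit solution, namely $\phi_{i,\infty,\beta}$; the plan is to produce a second linearly independent solution $\tilde{\psi}_{i,\beta}$ by reduction of order, read off its asymptotics at $0$ and $\infty$, and then invoke variation of parameters to build $\tilde{\phi}_{i,\beta}$. The key algebraic input is that $\mathscr{L}_\infty^\beta$ is symmetric with respect to the weight $\rho_\beta \propto y^{d+1}e^{-\beta y^2/2}$, as already noted in the excerpt via the identity $\mathscr{L}_\infty^\beta \phi = \rho_\beta^{-1}(\rho_\beta \phi')' + (3(d-2)/y^2 - 2\beta)\phi$. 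Consequently, the Wronskian $W(\phi_1,\phi_2) = \phi_1\phi_2' - \phi_1'\phi_2$ of any two solutions satisfies $(W \rho_\beta)' = 0$, so $W = c_W / \rho_\beta$ for some nonzero constant $c_W$ to be computed.

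The first step is to set
\[
\tilde{\psi}_{i,\beta}(y) = \phi_{i,\infty,\beta}(y) \int_{y_*}^{y} \frac{dy'}{\rho_\beta(y')\, \phi_{i,\infty,\beta}(y')^2},
\]
which by the reduction-of-order formula lies in the kernel of $\mathscr{L}_{i,ext}^\beta$ and is linearly independent of $\phi_{i,\infty,\beta}$. Using the asymptotics of $\phi_{i,\infty,\beta}$ from \eqref{phi-i-infty}, namely $a_{i,0}y^{-\gamma}(1+O(y^2))$ near $0$ and $a_{i,i}(2\beta)^i y^{2i-\gamma}(1+O(y^{-2}))$ near $\infty$, the integrand behaves like $a_{i,0}^{-2} y^{2\gamma - d - 1}$ at $0$ (integrable once primitivated to $y^{2\gamma - d}/(2\gamma - d)$) and like $a_{i,i}^{-2}(2\beta)^{-2i} y^{2\gamma - 4i - d - 1} e^{\beta y^2/2}$ at $\infty$, whose primitive is controlled by Laplace's method as $\sim \frac{2}{a_{i,i}^2(2\beta)^{2i+1}} y^{2\gamma - 4i - d - 2} e^{\beta y^2/2}$. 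Choosing the base point $y_*$ so that the integral starts at $0$ and multiplying back by $\phi_{i,\infty,\beta}$ produces the two asymptotics stated in \eqref{asymptotic-tilde-phi}, with the constants $\frac{1}{a_{i,0}(d-2\gamma)}$ at $0$ and $-\frac{2}{a_{i,i}(2\beta)^i}$ at $\infty$ falling out of these explicit primitives. The sign in the $\infty$-asymptotic is fixed by the sign of $c_W$ obtained from the direct computation of $W$ using the leading asymptotics of $\phi_{i,\infty,\beta}$ and $\tilde{\psi}_{i,\beta}$.

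For $\tilde{\phi}_{i,\beta}$, a variation-of-parameters ansatz with the basis $\{\phi_{i,\infty,\beta}, \tilde{\psi}_{i,\beta}\}$ gives a particular solution of $\mathscr{L}_{i,ext}^\beta u = \phi_{i,\infty,\beta}$ in the form
\[
u(y) = -\phi_{i,\infty,\beta}(y)\!\int_{y_1}^{y}\!\frac{\phi_{i,\infty,\beta}\tilde{\psi}_{i,\beta}}{W}\,dy' \;+\; \tilde{\psi}_{i,\beta}(y)\!\int_{y_2}^{y}\!\frac{\phi_{i,\infty,\beta}^{\,2}}{W}\,dy'.
\]
Since $1/W = \rho_\beta / c_W$, the second integrand is $\phi_{i,\infty,\beta}^2 \rho_\beta / c_W$, which is integrable on $(0,\infty)$ (this is the $L^2_{\rho_\beta}$-normalization of the eigenfunction), while the first integrand equals $\phi_{i,\infty,\beta}\tilde{\psi}_{i,\beta}\rho_\beta / c_W$ and, by the leading asymptotics, behaves like $O(1/y)$ at infinity, which is precisely what produces the logarithmic resonance $K_\infty y^{2i-\gamma}\ln y$. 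Choosing $y_2$ so that the $\tilde{\psi}_{i,\beta}$ coefficient contributes a constant multiple of $\tilde{\psi}_{i,\beta} \sim \tfrac{1}{a_{i,0}(d-2\gamma)} y^{-\tilde{\gamma}}$ at $0$, and adjusting by a multiple of the homogeneous solution $\phi_{i,\infty,\beta}$ to cancel the unwanted $y^{-\gamma}$ term at $0$, yields the asserted expansion at $0$ with $K_0 = \frac{1}{(2\gamma-d)(d+2-2\gamma)}$. The constants $K_0$ and $K_\infty$ follow by plugging the explicit leading terms of $\phi_{i,\infty,\beta}$ and $\tilde{\psi}_{i,\beta}$ into the integrals.

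The principal technical obstacle is the $\infty$-asymptotic of $\tilde{\phi}_{i,\beta}$: since the forcing $\phi_{i,\infty,\beta}$ is resonant with the kernel, the usual Laplace-method leading term cancels and one must extract the subdominant $1/y$ behaviour of $\phi_{i,\infty,\beta}\tilde{\psi}_{i,\beta}/W$ carefully enough to identify the precise coefficient $K_\infty = 2(2\beta)^i$ of the $\ln y$ factor, while showing that the remainder is $O(y^{2i-\gamma})$ so that no hidden polynomially growing correction is lost. Apart from this book-keeping, every step reduces to integration by parts and the explicit asymptotic expansions of $\phi_{i,\infty,\beta}$ and $\tilde{\psi}_{i,\beta}$.
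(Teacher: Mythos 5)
Your proposal follows essentially the same route as the paper: reduction of order with the Wronskian $W(y)=y^{-(d+1)}e^{2\beta y^2/4}\propto \rho_\beta^{-1}$ to produce $\tilde\psi_{i,\beta}$, asymptotics read off from \eqref{phi-i-infty}, and variation of parameters (with the $\phi_{i,\infty,\beta}^2\rho_\beta$ integral taken from $y$ to $+\infty$) to get $\tilde\phi_{i,\beta}$ and the logarithmic resonance. One small correction: you cannot "start the integral at $0$" in the reduction-of-order formula, since $\int_0 \rho_\beta^{-1}\phi_{i,\infty,\beta}^{-2}\sim\int_0 (y')^{2\gamma-d-1}dy'$ diverges ($2\gamma-d-1<-1$); the paper anchors it at $y=1$, and the stated asymptotics are unaffected because the divergent primitive dominates near $0$ and the Gaussian factor dominates near $\infty$.
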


\begin{proof}
Recall  that   $ \phi_{i, \infty, \beta}$ solves  $ \mathscr{L}_{i,ext}^\beta \phi_{i,\infty, \beta}=0$,   this follows from the fact that $\phi_{i,\infty}$ is  the i-th eigenfunction of $\mathscr{L}_\infty$. According to  $\mathscr{L}_{i,ext}^\beta $,  the  
Wronskian is given by
\begin{equation}\label{defi-wronski-determinant}
	W(y) = y^{-(d+1)} e^{2\beta \frac{y^2}{4}}.
\end{equation}
Then, we can formulate   an   independent linear  solution $ \tilde{\psi}_{i,\beta}$  to $ \mathscr{L}_{i,ext}^\beta   \tilde{\psi}_{i,\beta} =0$     
\begin{equation}\label{formulate-tilde-phi-i-beta}
	\tilde  \psi_{i,\beta}  (y) =  - \phi_{i,\infty,\beta} \int_1^y \frac{(y')^{-(d+1)} e^{2\beta\frac{(y')^2}{4}}}{ \phi^2_{i,\infty,\beta}(y')} dy'. 
\end{equation}
From \eqref{phi-i-infty}, we derive 
\begin{equation}\label{asymptotic-phi-i-infty-beta}
	\phi_{i, \infty,\beta} (y)= \left\{ \begin{array}{rcl} & &
		a_{i, 0} y^{-\gamma} \left( 1 + O(y^2) \right) \text{ as } y \to 0,\\[0.2cm]
		& & a_{i,i} (2\beta)^{i} y^{2i -\gamma}  \left( 1 + O(y^{-2}) \right)   \text{ as } y \to +\infty,
	\end{array}
	\right.
\end{equation}
where $a_{i,j}$'s   general formula given in \eqref{defi-a-i-j-intro}, and we plug the above fact into \eqref{formulate-tilde-phi-i-beta} to get  
\begin{eqnarray}
	\tilde \psi_i (y)   = \left\{   \begin{array}{rcl}
		& & \frac{y^{\gamma - d}}{a_{i, 0} (2\gamma -d) } \left(1  + O(y^2) \right)  \text{ as } \xi  \to 0 ,\\[0.2cm]
		& & -\frac{2}{a_{i,i} (2\beta)^i}  y^{-2i + \gamma - d-2} e^{2\beta\frac{y^2}{4}} \left( 1 + O(y^{-2}) \right) \text{ as } \xi \to +\infty.
	\end{array}    \right. \label{asymptotic-tilde-psi-i-beta}
\end{eqnarray}
In particular, it is easy to see that 
$$ \text{ Ker} (\mathscr{L}^\beta_{i,ext}) = \text{Span}\{ \phi_{i,\infty, \beta}, \tilde \psi_{i,\beta} \}, $$
which leads to 
$$ \left(\mathscr{L}_{i,ext}^{\beta}\right)^{-1} f(y) = - \phi_{i,\infty, \beta} \int_1^y   f(y' )\frac{\tilde{ \psi}_{i,\beta}(y')}{W (y')} d\xi' + \tilde  \psi_{i,\beta} \int_y^{+\infty} f(y') \frac{\phi_{i,\beta,\infty}(y')}{W (y')} dy'  + c_1 \phi_{i,\beta,\infty} + c_2 \tilde \psi_{i,\beta},$$
and since we need to construct a special solution with explicit asymptotics, we choose $c_1=c_2=0$, then, we can omit the generality and write 
\begin{equation}\label{inverse-mathcam-i-ext}
	\left(\mathscr{L}_{i,ext}^{\beta}\right)^{-1} f(y) = - \phi_{i,\infty, \beta} \int_1^y   f(y' )\frac{\tilde{ \psi}_{i,\beta}(y')}{W (y')} dy' + \tilde  \psi_{i,\beta} \int_y^{+\infty} f(y) \frac{\phi_{i,\beta,\infty}(y')}{W (y')} dy'.
\end{equation}
Thus, the solution $\tilde \phi_{i,\beta}$  to $ \mathscr{L}^\beta_{i,ext} \tilde \phi_{i, \beta} = \phi_{i, \infty,\beta} $ can be written 
$$ \tilde \phi_{i,\beta} = \left(\mathscr{L}_{i,ext}^{\beta}\right)^{-1}(\phi_{i,\infty, \beta}). $$

\noindent 
\medskip
- \textit{Behavior at $0$}: From \eqref{asymptotic-phi-i-infty-beta} and \eqref{asymptotic-tilde-psi-i-beta}, we have 
\begin{eqnarray*}
	-\phi_{i, \infty, \beta} \int_{1}^{y} \frac{\phi_{i,\infty, \beta} (\xi')\tilde \psi_{i,\infty,\beta}(\xi')}{W(\xi')} d\xi'=\frac{a_{i,0}}{2(2\gamma-d)}y^{-\gamma+2}(1+O(\xi^2))\text{ as } y \to 0
\end{eqnarray*}
and also
\begin{eqnarray*}
	\tilde  \psi_{i,\beta} \int_y^\infty  \frac{\phi_{i,\infty, \beta}^2(\xi')}{W (\xi')} d\xi'=\frac{a_{i,i}}{(2\gamma-d)(-2\gamma+d+2)} y^{\gamma-d}(1+O(\xi^2))\text{ as } y  \to 0,
\end{eqnarray*}
noting that $\gamma-d=-\frac{1}{2}(d+\sqrt{d^2-12d+24})=-\tilde{\gamma}$, we get
$$ \tilde \phi_{i,\beta}(y)  =  K_0 y^{-\tilde \gamma} (1 + y^2 ) \text{ as } y  \to 0,    $$
where $K_0 = \frac{1}{(2\gamma - d)(d+2-2\gamma)} $.

\noindent
\medskip
- \textit{Behavior at $+\infty$}: Using  \eqref{asymptotic-phi-i-infty-beta} and \eqref{asymptotic-tilde-psi-i-beta} again, we obtain 
\begin{eqnarray*}
	-\phi_{i, \infty, \beta} \int_{1}^{y} \frac{\phi_{i,\infty, \beta} (\xi')\tilde \psi_{i,\infty,\beta}(\xi')}{W(\xi')} d\xi'= 2a_{i,i}(2\beta)^i y^{2i-\gamma} \left( \ln y +O(1)\right) \text{ as } y \to +\infty,  
\end{eqnarray*}
and for the second one
\begin{eqnarray*}
	\tilde  \psi_{i,\beta} \int_y^\infty  \frac{\phi_{i,\infty, \beta}^2(\xi')}{W (\xi')} d\xi' =  -\frac{2}{a_{i,i}(2\beta)^i} y^{-2i +\gamma-d-2}  e^{\frac{2\beta y^2}{4}} O( y^{4i -2\gamma +d} e^{\frac{-2\beta y^2}{4}}) \text{ as } \xi \to +\infty. 
\end{eqnarray*}
Thus, we have
$$ \tilde \phi_{i,\beta}(y) = 2 (2\beta)^i y^{2i-\gamma} \left( \ln \xi +O(1) \right) \text{ as} \xi \to +\infty,$$
and this concludes the proof of the Lemma. \end{proof}
It remains only to prove the continuity of $ \mathscr{L}_{i,ext}^{-1}$ in  the Banach space $X^{a,a'}$, a result that we have used above.
\begin{lemma}[Continuity of $ \mathscr{L}_{i,ext}^{-1}$]\label{continnuity-L-i-ext--1}
For all  $ a \leq {\gamma}-d$,   $a \neq  -d-2$ and $ a' > 2i - \gamma$ with $ y_0 \in (0,1)$ and $\beta \in \left( \frac{1}{4}, \frac{3}{4} \right) $, we  have the following estimate
$$  \left\|    \left( \mathscr{L}_{i, ext}^{\beta} \right)^{-1}    f    \right\|_{X^{a,a'}_{y_0}}  \leq     C(a,a',\beta) \| f \|_{X^{a,a'}_{y_0}} \text{ and }  \left\|    \partial_\beta \left( \mathscr{L}_{i,  ext}^{\beta} \right)^{-1}    f    \right\|_{X^{a,a'}_{y_0}}  \leq     C(a,a',\beta) \| f \|_{X^{a,a'}_{y_0}}. $$ 
\end{lemma}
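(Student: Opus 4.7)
\medskip
\noindent
The plan is to exploit the explicit integral representation of $(\mathscr{L}^\beta_{i,\mathrm{ext}})^{-1}$ supplied by Lemma~\ref{lemma-phi-1-psi-2-outer}, namely
\[
(\mathscr{L}^\beta_{i,\mathrm{ext}})^{-1}f(y) \;=\; -\phi_{i,\infty,\beta}(y)\,I_1(y)+\tilde\psi_{i,\beta}(y)\,I_2(y),
\]
with $I_1(y)=\int_1^y f(y')\tilde\psi_{i,\beta}(y')/W(y')\,dy'$, $I_2(y)=\int_y^{+\infty} f(y')\phi_{i,\infty,\beta}(y')/W(y')\,dy'$, and Wronskian $W(y)=y^{-(d+1)}e^{2\beta y^2/4}$. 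The whole argument reduces to  pointwise estimates for $I_1,I_2$ obtained by plugging in the asymptotics of $\phi_{i,\infty,\beta}$, $\tilde\psi_{i,\beta}$, $W$ recalled in \eqref{phi-i-infty}, \eqref{asymptotic-tilde-phi}.

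\medskip
\noindent
First I would carry out the zeroth-order bound on the two regions of $X^{a,a'}_{y_0}$ separately. Near the origin $\tilde\psi_{i,\beta}/W\sim c_0 y^{\gamma+1}$ and $\phi_{i,\infty,\beta}/W\sim c_1 y^{d+1-\gamma}$, whereas for large $y$ one has $\tilde\psi_{i,\beta}/W\sim c_2 y^{-2i+\gamma-1}$ and $\phi_{i,\infty,\beta}/W\sim c_3 y^{2i-\gamma+d+1}e^{-2\beta y^2/4}$. Inserting $|f(y')|\le\|f\|_{X^{a,a'}_{y_0}}(y')^a$ on $[y_0,1]$ and $|f(y')|\le\|f\|_{X^{a,a'}_{y_0}}(y')^{a'}$ on $[1,+\infty)$, a direct integration shows that for $a\le\gamma-d$ (which makes the exponent $a+\gamma+1$ strictly less than $-1$ after using $2\gamma-d+1<-1$) and for $a'>2i-\gamma$, one gets
\[
|I_1(y)|\lesssim\|f\|\,y^{a+\gamma+2}\ \text{on }[y_0,1],\qquad |I_1(y)|\lesssim\|f\|\,y^{a'-2i+\gamma}\ \text{on }[1,+\infty),
\]
\[
|I_2(y)|\lesssim\|f\|\ \text{on }[y_0,1],\qquad |I_2(y)|\lesssim \|f\|\,y^{a'-2i+\gamma-d-2}e^{-2\beta y^2/4}\ \text{on }[1,+\infty),
\]
the last one obtained by a standard Laplace-type argument. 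Multiplying by the prefactors $\phi_{i,\infty,\beta}$ and $\tilde\psi_{i,\beta}$ produces exactly the weights $y^a$ and $y^{a'}$, and an essentially identical bookkeeping delivers the bound on first and second derivatives by using $\partial_y I_1=f\tilde\psi/W$, $\partial_y I_2=-f\phi/W$, and then differentiating a second time (with second derivatives of $f$ absorbed into $\|f\|_{X^{a,a'}_{y_0}}$).

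\medskip
\noindent
The $\beta$-derivative estimate proceeds identically once we note that $\partial_\beta W=\frac{y^2}{2}W$ and that both $\partial_\beta\phi_{i,\infty,\beta}$ and $\partial_\beta\tilde\psi_{i,\beta}$ admit exactly the same asymptotic shape as $\phi_{i,\infty,\beta}$, $\tilde\psi_{i,\beta}$, respectively, up to polynomial factors uniform in $\beta\in(\tfrac14,\tfrac34)$. Differentiating the explicit formula term by term, each piece is treated by the same integral bookkeeping carried out above, yielding the same type of bound; the uniformity in $\beta$ follows because the constants from the asymptotics depend continuously on $\beta$ on the compact interval $[\tfrac14,\tfrac34]$.

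\medskip
\noindent
The main obstacle is the careful bookkeeping of the exponents appearing in each of the four regions of integration $[y',1]$ or $[1,y']$ for $I_1$ and $I_2$: the condition $a\ne -d-2$ is precisely the value at which one of the exponents coming from integrating $y^a\cdot y^{d+1-\gamma}\cdot y^{-\gamma-1}$ (after combining with lower-order corrections in $\phi_{i,\infty,\beta}/W$) becomes $-1$ and produces an unwanted logarithmic factor. I will need to track each such possibility, verify that all borderline cases other than $a=-d-2$ remain absorbable into constants depending on $(a,a',\beta)$ and exclude this critical exponent explicitly; this is the only delicate bookkeeping in an otherwise direct argument.
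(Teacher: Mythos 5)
Your proposal follows essentially the same route as the paper's proof: write $(\mathscr{L}^\beta_{i,\mathrm{ext}})^{-1}f$ via the explicit Green's-function formula from Lemma \ref{lemma-phi-1-psi-2-outer}, insert the asymptotics of $\phi_{i,\infty,\beta}$, $\tilde\psi_{i,\beta}$ and $W$ on $[y_0,1]$ and $[1,+\infty)$ together with $|f|\le \|f\|_{X^{a,a'}_{y_0}}y^{a}$ (resp. $y^{a'}$), and propagate the same bookkeeping to the derivative terms, which only ever involve $\partial_y f$ and are absorbed into the norm. The only (harmless) imprecision is the intermediate claim $|I_2(y)|\lesssim\|f\|$ on $[y_0,1]$ — when $a<\gamma-d-2$ this integral actually grows like $y^{a-\gamma+d+2}$ as $y\to0$, but after multiplication by $\tilde\psi_{i,\beta}\sim y^{\gamma-d}$ the weighted bound $y^{-a}|\tilde\psi_{i,\beta}I_2|\lesssim\|f\|$ still holds, exactly as in the paper.
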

\begin{proof}
Define $ g = \left( \mathscr{L}_{i, ext}^{\beta} \right)^{-1} f $ and recall from  \eqref{inverse-mathcam-i-ext} that
\begin{eqnarray*}
	g(y) = \left(\mathscr{L}_{i,ext}^{\beta}\right)^{-1} f(y) = - \phi_{i,\infty, \beta} \int_1^y  f(\xi' )\frac{\tilde{ \psi}_{i,\beta}(\xi')}{W (\xi')} d\xi' + \tilde  \psi_{i,\beta} \int_y^{+\infty} f(\xi) \frac{\phi_{i,\beta,\infty}(\xi')}{W (\xi')} d\xi'.
\end{eqnarray*}
Then 

\begin{eqnarray*}
	\partial_y g(y)&=&-\partial_y\phi_{i,\infty, \beta} \int_1^y  f(\xi' )\frac{\tilde{ \psi}_{i,\beta}(\xi')}{W (\xi')} d\xi' + \partial_y \tilde  \psi_{i,\beta} \int_y^{+\infty} f(\xi) \frac{\phi_{i,\beta,\infty}(\xi')}{W (\xi')} d\xi' \phi_{i,\infty, \beta}\\
	& &-2\phi_{i,\infty, \beta}f(y )\frac{\tilde{ \psi}_{i,\beta}(y)}{W (y)}
\end{eqnarray*}
and
\begin{eqnarray*}
	\partial_y^2 g(y)&=&-\partial_y^2\phi_{i,\infty, \beta} \int_1^y  f(\xi' )\frac{\tilde{ \psi}_{i,\beta}(\xi')}{W (\xi')} d\xi' + \partial_y^2 \tilde  \psi_{i,\beta} \int_y^{+\infty} f(\xi) \frac{\phi_{i,\beta,\infty}(\xi')}{W (\xi')} d\xi' \phi_{i,\infty, \beta}\\
	& &-3\partial_y\phi_{i,\infty, \beta}f(y )\frac{\tilde{ \psi}_{i,\beta}(y)}{W (y)}-3\partial_y \tilde  \psi_{i,\beta}f(y) \frac{\phi_{i,\beta,\infty}(y)}{W (y)}\\
	& &-2\phi_{i,\infty, \beta}\partial_yf(y )\frac{\tilde{ \psi}_{i,\beta}(y)}{W (y)}+2 \phi_{i,\infty, \beta}f(y ) \frac{\tilde{ \psi}_{i,\beta}(y)\partial_y W(y)}{W^2(y)}.
\end{eqnarray*}

In order to establish the desired estimate, we will only need to control higher order derivatives, namely, $y^{2-a }\partial_y^2 g(y)$ for $y \in [y_0, 1]$ and $y^{2-a' }\partial_y^2 g(y)$ for $y \in [1, \infty)$.\\
\begin{itemize}
	\item \underline{$y \in [y_0, 1]$}, we have
	\begin{eqnarray*}
		\left| y^{2-a } \partial_y^2 g(y)  \right| &\lesssim& y^{2-a } |\partial_y^2\phi_{i,\infty, \beta}| \int^{1}_y (\xi')^{-a}|f(\xi')|\frac{ | (\xi')^a| | \tilde \psi_{i,\beta}(\xi')|}{W(\xi')} d\xi'\\
		&+& y^{2-a}|\partial_y^2\tilde{ \psi}_{i,\beta}| \left(\int_y^{1} (\xi')^{-a}|f(\xi')| \frac{|(\xi')^{a}||\phi_{i,\beta,\infty}(y)|}{W(\xi')} d\xi'+\int_1^{\infty}(\xi')^{-a'}|f(\xi')| \frac{|(\xi')^{a'}||\phi_{i,\beta,\infty}(y)|}{W(\xi')} d\xi'\right)\\
		&+& y^{2-a } |f(y)|\left(|\partial_y\phi_{i,\infty, \beta}|\frac{|\tilde{ \psi}_{i,\beta}(y)|}{W (y)}+ |\partial_y\tilde  \psi_{i,\beta}| \frac{|\phi_{i,\beta,\infty}(y)|}{W (y)}+|\phi_{i,\infty, \beta}| \frac{|\tilde{ \psi}_{i,\beta}(y)\partial_y W(y)|}{W^2(y)}\right)\\
		&+&y^{1-a } |\partial_y f(y)|\left(y|\phi_{i,\infty, \beta}|\frac{|\tilde{ \psi}_{i,\beta}(y)|}{W (y)}\right)\\
		& \lesssim &  \sup_{y \in [y_0,1]} |y^{-a} f(y)|+\sup_{y \in [y_0,1]} |y^{1-a} \partial_y f(y)|+ \sup_{y \in [1,+\infty)}  |y^{-a'} f(y)|\lesssim \| f \|_{X^{a,a'}_{y_0}},
	\end{eqnarray*}
	provided that $ a \le  \gamma-d$ and $ a' \ge  2i -\gamma $.\\
	\item \underline{$y\in [1,+\infty)$}, we have
	\begin{eqnarray*}
		\left| y^{2-a '} \partial_y^2 g(y)  \right|
		&\lesssim& y^{2-a '} |\partial_y^2\phi_{i,\infty, \beta}| \int_1^{\infty} (\xi')^{-a'}|f(\xi')|\frac{ | (\xi')^{a'}| | \tilde \psi_{i,\beta}(\xi')|}{W(\xi')} d\xi'\\
		&+& y^{2-a'}|\partial_y^2\tilde{ \psi}_{i,\beta}| \int_{1}^{\infty} (\xi')^{-a'}|f(\xi')| \frac{|(\xi')^{a'}||\phi_{i,\beta,\infty}(y)|}{W(\xi')} d\xi'\\
		&+& y^{-a' } |f(y)|\left(y^2|\partial_y\phi_{i,\infty, \beta}|\frac{|\tilde{ \psi}_{i,\beta}(y)|}{W (y)}+y^2 |\partial_y\tilde  \psi_{i,\beta}| \frac{|\phi_{i,\beta,\infty}(y)|}{W (y)}+y^2|\phi_{i,\infty, \beta}| \frac{|\tilde{ \psi}_{i,\beta}(y)\partial_y W(y)|}{W^2(y)}\right)\\
		&+&y^{1-a' } |\partial_y f(y)|\left(y|\phi_{i,\infty, \beta}|\frac{|\tilde{ \psi}_{i,\beta}(y)|}{W (y)}\right)\\
		&\lesssim& \sup_{y \in [1,+\infty)}  |y^{-a'} f(y)|+\sup_{y \in [1,+\infty)}  |y^{1-a'} \partial_y f(y)|\lesssim \| f \|_{X^{a,a'}_{y_0}}
	\end{eqnarray*}
\end{itemize}
which yields
$$\|    g    \|_{X^{a,a'}_{y_0}} = \|    \left( \mathscr{L}_{i, ext}^{\beta} \right)^{-1}    f    \|_{X^{a,a'}_{y_0}}  \leq     C(a,a') \| f \|_{X^{a,a'}_{y_0}} $$
as claimed. Finally, we finish the proof of the Lemma. 
\end{proof}

\newpage

\bibliographystyle{alpha}
\bibliography{mybib}

\def\cprime{$'$}
\begin{thebibliography}{CGMN22b}

\bibitem[BDS17]{MR3719557}
Pawe\l Biernat, Roland Donninger, and Birgit Sch\"{o}rkhuber.
\newblock Stable self-similar blowup in the supercritical heat flow of harmonic
  maps.
\newblock {\em Calc. Var. Partial Differential Equations}, 56(6):Paper No. 171,
  31, 2017.

\bibitem[Bie15]{BNON15}
P.~Biernat.
\newblock Non-self-similar blow-up in the heat flow for harmonic maps in higher
  dimensions.
\newblock {\em Nonlinearity}, 28(1):167--185, 2015.

\bibitem[BK94]{BKnon94}
J.~Bricmont and A.~Kupiainen.
\newblock Universality in blow-up for nonlinear heat equations.
\newblock {\em Nonlinearity}, 7(2):539--575, 1994.

\bibitem[BS19]{BSIMRN19}
P.~Biernat and Y.~Seki.
\newblock Type {II} blow-up mechanism for supercritical harmonic map heat flow.
\newblock {\em Int. Math. Res. Not. IMRN}, (2):407--456, 2019.

\bibitem[BS20]{Biernat_2020}
Pawe{\l} Biernat and Yukihiro Seki.
\newblock Transition of blow-up mechanisms in k-equivariant harmonic map heat
  flow.
\newblock {\em Nonlinearity}, 33(6):2756--2796, 2020.

\bibitem[BW15]{BWIMRN15}
P.~Bizo\'{n} and A.~Wasserman.
\newblock Nonexistence of shrinkers for the harmonic map flow in higher
  dimensions.
\newblock {\em Int. Math. Res. Not. IMRN}, (17):7757--7762, 2015.

\bibitem[CGMN22a]{CGMNARXIV20-b}
C.~Collot, T.~E. Ghoul, N.~Masmoudi, and V.~T. Nguyen.
\newblock Refined description and stability for singular solutions of the 2d
  keller-segel system.
\newblock {\em Communications on Pure and Applied Mathematics},
  75(7):1419--1516, 2022.

\bibitem[CGMN22b]{CGMNARXIV20-a}
C.~Collot, T.~E. Ghoul, N.~Masmoudi, and V.~T. Nguyen.
\newblock Spectral analysis for singularity formation of the two dimensional
  keller-segel system.
\newblock {\em Ann. PDE}, 8(5), 2022.

\bibitem[CMR20a]{CMRJAMS20}
C.~Collot, F.~Merle, and P.~Rapha\"{e}l.
\newblock Strongly anisotropic type {II} blow up at an isolated point.
\newblock {\em J. Amer. Math. Soc.}, 33(2):527--607, 2020.

\bibitem[CMR20b]{MR4073868}
Charles Collot, Frank Merle, and Pierre Rapha\"{e}l.
\newblock Strongly anisotropic type {II} blow up at an isolated point.
\newblock {\em J. Amer. Math. Soc.}, 33(2):527--607, 2020.

\bibitem[DNZ19]{DNZtunisian-2017}
G.~K. Duong, V.~T. Nguyen, and H.~Zaag.
\newblock Construction of a stable blowup solution with a prescribed behavior
  for a non-scaling invariant semilinear heat equation.
\newblock {\em Tunisian J. Math}, 1(1):13--45, 2019.

\bibitem[DNZ20]{DNZMAMS20}
G.~K. Duong, N.~Nouaili, and H.~Zaag.
\newblock Construction of blow-up solutions for the complex ginzburg-landau
  equation with critical parameters.
\newblock {\em Mem. Amer. Math. Soc.}, 2020.
\newblock to appear.

\bibitem[DS19]{DSJDG19}
R~Donninger and B~Sch\"{o}rkhuber.
\newblock Stable blowup for the supercritical {Y}ang-{M}ills heat flow.
\newblock {\em J. Differential Geom.}, 113(1):55--94, 2019.

\bibitem[Dum82]{DSCM82}
O~Dumitra\c{s}cu.
\newblock Equivariant solutions of the {Y}ang-{M}ills equations.
\newblock {\em Stud. Cerc. Mat.}, 34(4):329--333, 1982.

\bibitem[Duo19a]{DJDE2019}
G.~K. Duong.
\newblock A blowup solution of a complex semi-linear heat equation with an
  irrational power.
\newblock {\em J. Differential Equations}, 267(9):4975--5048, 2019.

\bibitem[Duo19b]{DJFA2019}
G.~K. Duong.
\newblock Profile for the imaginary part of a blowup solution for a
  complex-valued semilinear heat equation.
\newblock {\em J. Funct. Anal.}, 277(5):1531--1579, 2019.

\bibitem[Fan99]{MR1694169}
Huijun Fan.
\newblock Existence of the self-similar solutions in the heat flow of harmonic
  maps.
\newblock {\em Sci. China Ser. A}, 42(2):113--132, 1999.

\bibitem[Gas02]{GMZ02}
A~Gastel.
\newblock Singularities of first kind in the harmonic map and {Y}ang-{M}ills
  heat flows.
\newblock {\em Math. Z.}, 242(1):47--62, 2002.

\bibitem[Gho]{Ghould7}
T~Ghoul.
\newblock Stable type ii blowup for the 7 dimensional 1-corotational energy
  supercritical harmonic map heat flow.
\newblock {\em Arxiv}.

\bibitem[GIN19]{GINAPDE19}
T.~E. Ghoul, S.~Ibrahim, and V.~T. Nguyen.
\newblock On the stability of type {II} blowup for the 1-corotational
  energy-supercritical harmonic heat flow.
\newblock {\em Anal. PDE}, 12(1):113--187, 2019.

\bibitem[Gro01]{GMZ01}
J.~F. Grotowski.
\newblock Finite time blow-up for the {Y}ang-{M}ills heat flow in higher
  dimensions.
\newblock {\em Math. Z.}, 237(2):321--333, 2001.

\bibitem[GS20]{GSCPDE20}
I.~Glogi\'{c} and B.~Sch\"{o}rkhuber.
\newblock Nonlinear stability of homothetically shrinking {Y}ang-{M}ills
  solitons in the equivariant case.
\newblock {\em Comm. Partial Differential Equations}, 45(8):887--912, 2020.

\bibitem[HV92]{HVunpublished-92}
M.~A Herrero and J.~J.~J Vel\'azquez.
\newblock A blowup result for the semilinear heat equations in the
  supercritical case.
\newblock 1992.
\newblock Preprint.

\bibitem[KMN95]{KMNNMJ95}
H~Kozono, Y~Maeda, and H~Naito.
\newblock Global solution for the {Y}ang-{M}ills gradient flow on
  {$4$}-manifolds.
\newblock {\em Nagoya Math. J.}, 139:93--128, 1995.

\bibitem[MZ97]{MZdm97}
F.~Merle and H.~Zaag.
\newblock Stability of the blow-up profile for equations of the type
  {$u_t=\Delta u+\vert u\vert ^{p-1}u$}.
\newblock {\em Duke Math. J.}, 86(1):143--195, 1997.

\bibitem[MZ08]{MZjfa08}
N.~Masmoudi and H.~Zaag.
\newblock Blow-up profile for the complex {G}inzburg-{L}andau equation.
\newblock {\em J. Funct. Anal.}, 255(7):1613--1666, 2008.

\bibitem[NZ15]{NZCPDE2015}
N.~Nouaili and H.~Zaag.
\newblock Profile for a simultaneously blowing up solution to a complex valued
  semilinear heat equation.
\newblock {\em Comm. Partial Differential Equations}, 40(7):1197--1217, 2015.

\bibitem[Rd92]{RJRAM92}
J~Rade.
\newblock On the {Y}ang-{M}ills heat equation in two and three dimensions.
\newblock {\em J. Reine Angew. Math.}, 431:123--163, 1992.

\bibitem[Sch96]{SJRAM96}
A~Schlatter.
\newblock Global existence of the {Y}ang-{M}ills flow in four dimensions.
\newblock {\em J. Reine Angew. Math.}, 479:133--148, 1996.

\bibitem[SSTZ98]{SST-ZAJM98}
A.~E. Schlatter, M.~Struwe, and A.~S. Tahvildar-Zadeh.
\newblock Global existence of the equivariant {Y}ang-{M}ills heat flow in four
  space dimensions.
\newblock {\em Amer. J. Math.}, 120(1):117--128, 1998.

\bibitem[Str94]{SCVPDE94}
M~Struwe.
\newblock The {Y}ang-{M}ills flow in four dimensions.
\newblock {\em Calc. Var. Partial Differential Equations}, 2(2):123--150, 1994.

\bibitem[TZ19]{TZpre15}
S.~Tayachi and H.~Zaag.
\newblock Existence of a stable blow-up profile for the nonlinear heat equation
  with a critical power nonlinear gradient term.
\newblock {\em Trans. Amer. Math. Soc}, 317:5899--5972, 2019.

\bibitem[Wal19]{WIM19}
A.~Waldron.
\newblock Long-time existence for {Y}ang-{M}ills flow.
\newblock {\em Invent. Math.}, 217(3):1069--1147, 2019.

\bibitem[Wat22]{WbookCUP1922}
G.~N Watson.
\newblock A tratise on the theory of bessel functions.
\newblock {\em Cambridge Uni. Press}, Cambridge, 1922.

\bibitem[Wei04]{WCVPDE04}
B~Weinkove.
\newblock Singularity formation in the {Y}ang-{M}ills flow.
\newblock {\em Calc. Var. Partial Differential Equations}, 19(2):211--220,
  2004.

\end{thebibliography}

\end{document}